

\documentclass[11pt,usenames,dvipsnames]{article}

\usepackage[utf8]{inputenc} 


\usepackage{xr} 
\usepackage{authblk}

\usepackage{comment}
\usepackage{lipsum}
\usepackage{bm} 
\usepackage{eufrak}
\usepackage{upgreek}
\usepackage[dvipsnames]{xcolor}
\usepackage{enumitem, calligra}
\usepackage{lipsum,tikz}
\usepackage{rsfso}
\usepackage{abraces} 
\usepackage[letterpaper,
			left = 1.1truein,  
			right = 1.1truein, 
			top = 1.1truein, 
			bottom = 1.1truein]{geometry} 
\usepackage{amsthm, amsfonts, amssymb, amsmath,enumitem, bbm, mathabx, mathrsfs}
\allowdisplaybreaks
\usepackage{mathtools}
\mathtoolsset{showonlyrefs,showmanualtags}
\numberwithin{equation}{section}
\newcommand{\revsag}{\textcolor{black}}
\newcommand{\revsagc}{\color{black}}
\newcommand{\revsagr}[1]{\textcolor{black}{#1}}

\newtheorem{defn}{Definition}[section]
\newtheorem{thm}{Theorem}[section]
\newtheorem{lem}{Lemma}[section]

\newtheorem{prop}{Proposition}[section]
\theoremstyle{remark}
\newtheorem{rem}{Remark}[section] 
\newcommand{\E}{\mathbb{E}}

\newcommand{\smmse}{\mathrm{mmse}}

\newcommand{\sbrac}[1]{[#1]}

\newcommand{\R}{\mathbb{R}}
\newcommand{\Sigs}{\Psi}
\newcommand{\wt}{\widetilde}
\newcommand{\rs}{\mathrm{RS}}
\newcommand{\sr}{{\revsag{\mathcal H_{k,t;\bm \eta}}}}
\newcommand{\rt}{\right}
\newcommand{\lt}{\left}
\newcommand{\revmark}[1]{\textcolor{black}{#1}}
\usepackage{subcaption}
\usepackage{hyperref}
\usepackage{array}


\DeclareMathOperator*{\argmin}{arg\,min}

\usepackage{graphicx} 
\usepackage{color, float}
\usetikzlibrary{shapes}

\usepackage{booktabs} 
\usepackage{array} 
\usepackage{verbatim} 
\usepackage[numbers,sort]{natbib} 

\newcommand{\graphreg}{{\sf{Reg-Graph}}\,}

\title{Bayes optimal learning in high-dimensional linear regression with network side information}
\author[1]{Sagnik Nandy}
\author[2]{Subhabrata Sen}
\affil[1]{\emph{University of Pennsylvania}}
\affil[2]{\emph{Harvard University}}
\affil[1]{\emph{Email id: sagnik@wharton.upenn.edu.}}
\affil[2]{\emph{Email id: subhabratasen@fas.harvard.edu}}
\begin{document}
	\maketitle
%
%
%

\begin{abstract}
Supervised learning problems with side information in the form of a network arise frequently in applications in genomics, proteomics and neuroscience. For example, in genetic applications, the network side information can accurately capture background biological information on the intricate relations among the relevant genes. In this paper, we initiate a study of Bayes optimal learning in high-dimensional linear regression with network side information. To this end, we first introduce a simple generative model (called the \graphreg model) which posits a joint distribution for the supervised data and the observed network through a common set of latent parameters. Next, we introduce an iterative algorithm based on Approximate Message Passing (AMP) which is provably Bayes optimal under very general conditions. In addition, we characterize the limiting mutual information between the latent signal and the data observed, and thus precisely quantify the statistical impact of the network side information. Finally, supporting numerical experiments suggest that the introduced algorithm has excellent performance in finite samples.

~
\\
\end{abstract}

\section{Introduction}
Given data $\{(y_i, \revsag{\bm \phi_i}): 1\leq i \leq n\}$, $y_i \in \mathbb{R}$, $\bm \phi_i \in \mathbb{R}^p$, the classical linear model 
\begin{align}
    \bm{y} = \bm{\Phi} \bm{\beta}_0 + \bm \varepsilon, \label{eq:linear_model} 
\end{align}
where $\bm y = (y_1,\ldots,y_n)^\top$, $\bm \varepsilon \in \R^n$ and 
$
\bm \Phi = \sbrac{\bm \phi_1\;\cdots\;\bm \phi_n}^\top \in \R^{n \times p},
$
furnishes an ideal test bed to study the performance of diverse supervised learning algorithms. 
%
%
In the modern age of big data, the number of observations $n$ and the feature dimension $p$ are often both large and comparable. 

In these challenging high-dimensional scenarios, scientists have recognized the importance of incorporating domain knowledge into the relevant statistical inference methodology. Success in this direction can substantially boost the performance of statistical procedures, and facilitate novel discoveries in critical applications. Arguably, the most well-known instance of this philosophy is the incorporation of sparsity into high-dimensional statistical methods (see e.g. \cite{tibshirani1996regression,chen2001atomic,park2008bayesian,ishwaran2005spike}). 

In this paper, we consider a setting where in addition to the supervised data, one observes pairwise relations among the features in the dataset. This pairwise relation can be conveniently captured using a graph $\bm{G}=(V,E)$. The vertices of the graph represent the features. The edges represent pairwise relations among the features, e.g., an edge might indicate that the two endpoints are likely to be both included in the support of the linear model. 

This setup is motivated by datasets arising in diverse application areas, e.g., genomics, proteomics, and neuroscience. 
In the genomic context, the response $y$ represents phenotypic measurements on an individual, while the features $\bm \phi$ represent genetic expression. In addition, scientists often have background knowledge about genetic co-expressions---this information can be efficiently captured in terms of the graph described above. We refer the interested reader to \cite{li_and_li,leiserson2015pan} for detailed discussions of settings where such data sets arise naturally. To study this problem in depth, we first introduce a simple generative model, which we refer to as the \graphreg model. 
\begin{itemize}
    \item[(i)] Generate $\sigma_{01}, \cdots, \sigma_{0p} \stackrel{i.i.d.}{\sim} \mbox{Bernoulli}(\rho)$ for some $\rho \in (0,1)$.
    \item[(ii)] Given $\bm{\sigma}_0 = (\sigma_{01}, \cdots, \sigma_{0p})^\top \in \R^p$, generate regression coefficients $\beta_{0i} | \sigma_{0i} \sim P(\cdot|\sigma_{0i})$, where $P(\,\cdot
\,| \,\cdot\,)$ is a Markov kernel. Throughout, we assume that $P(\cdot\,|\,0)$ and $P(\cdot\,|\,1)$ either have bounded supports or  log-concave densities. 
    \item[(iii)] Given the regression coefficients $\bm{\beta}_0 = (\beta_{01}, \cdots, \beta_{0p})^\top \in \R^p$ and features $\{\bm \phi_{\mu} \in \mathbb{R}^p$, $1\leq \mu \leq n\}$,  the response $\bm y \in \R^n$ is sampled using a linear model
    \begin{align}
    \label{eq:lin_model_1}
        y_{\mu} = \sum_{i=1}^{p} \beta_{0i} \phi_{\mu i} + \varepsilon_\mu, 
    \end{align}
    where $\varepsilon_\mu \sim N(0, \Delta)$ are i.i.d for $1 \le \mu \le n$. 
    \item[(iv)] Finally, one observes a graph $\bm{G}=(V,E)$ on $p$ vertices. As mentioned above, the vertices represent the observed features. We assume that given $\sigma_{01}, \cdots, \sigma_{0p}$, the edges are added independently with probability
    \begin{align}
    \label{eq:graph}
        \mathbb{P}[\{i,j\} \in E| \sigma_{01}, \cdots, \sigma_{0p}] = \begin{cases}
        \frac{a_p}{p} & \textrm{if}\,\,\, \sigma_{0i} \sigma_{0j} = 1 \\
        \frac{b_p}{p} & \textrm{o.w.} 
        \end{cases} 
    \end{align}
    In this formulation, $\{a_p: p \geq 1\}$ and $\{b_p: p \geq 1\}$ represent general sequences dependent on $p$. 
\end{itemize}

We note that our \graphreg model ties the generation of the regression data and the graph $\bm{G}$ via the \emph{same} underlying variables $\sigma_{01}, \cdots, \sigma_{0p}$. In this context, one naturally wishes to combine the two data sources to carry out inference on these common latent parameters. Throughout, we assume that the model parameters $\rho$, $P(\,\cdot
\,| \,\cdot\,)$, $\Delta$, $a_p, b_p$ are known to the statistician. 

The \graphreg model naturally ties together some popular ideas in statistics and machine learning: 
\begin{itemize}
    \item[(i)] Note that the marginal distribution of the supervised data  $\{(y_\mu, \bm \phi_{\mu}): 1\leq \mu \leq n\}$ includes the celebrated spike and slab model from Bayesian statistics \cite{ishwaran2005spike}. For the spike and slab model, one also assumes $P(\,\cdot\,|\,0\,) = \delta_0$.   Informally, the sigma variables encode the support of the signal vector $\bm \beta_0$ in this case, and one wishes to recover the latent indicators $\sigma_i$ from the data. The spike and slab model and its relatives have emerged as the canonical choice for sparse regression models in high-dimensional Bayesian statistics in the past two decades (we refer the interested reader to \cite{tadesse2021handbook} and the references therein for a detailed survey of the progress in this area). While this is not necessary for our model, we will explore this case in depth in our subsequent discussion. 
    
    \item[(ii)] On the other hand, if we focus on the marginal distribution of the graph $\bm{G}$, it corresponds to a graph with a hidden community \cite{hajek2018recovering,arias2014community,hajek2015computational}. In this case, one typically assumes that $a_p \geq b_p$, so that the vertices with $\sigma_{0i}=1$ have a higher density of connecting edges. The recovery of the hidden community from the graph data has been studied in depth in the recent past \cite{montanari2015finding}. 
\end{itemize}

Thus, the \graphreg model ties together two distinct threads of inquiry in statistics and machine learning using a natural generative model. We note that the one hidden community assumption is a convenient simplification ---- the model and our subsequent results can be naturally extended to a setting with multiple latent communities.

In this paper, we study the \graphreg model and make the following contributions: 
\begin{itemize}
    \item[(i)] We study the \graphreg model under an additional i.i.d. gaussian assumption on the features \revsag{$\bm \phi_{\mu}$ for $1 \le \mu \le n$}. In addition, we assume a proportional asymptotic setting, where the number of observations $n$ and the feature dimension $p$ are both large and comparable. Formally, we assume that $n/p \to \kappa \in (0,\infty)$. Note that we allow $\kappa \in (0,1)$, and thus can cover settings where the feature dimension $p$ is larger than the sample size $n$. Under these assumptions, we introduce an algorithm based on Approximate Message Passing (AMP) \cite{BM11journal,feng2022unifying} for estimation (of \revsag{$\bm \beta_0$}) and \revsag{recovery of $\bm \sigma_0$ which corresponds to support recovery if $P(\,\cdot\,|\,0\,) = \delta_0$}. We characterize the precise $L^2$-estimation error and the limiting False Discovery Proportion (FDP) \revsag{in the context of support recovery} under this algorithm. 
    
    \item[(ii)]  We characterize the mutual information between the data and the latent parameters under the \graphreg model. In particular, this allows us to derive the Bayes optimal estimation error in this setting. We establish that under a wide class of priors, the AMP algorithm introduced in this paper is Bayes optimal.  

To derive the limiting mutual information in this model, we use the \emph{adaptive interpolation} method developed by  \citet{Adaptive_Interpolation}. This approach has been used in several past works to characterize the limiting mutual information in planted models (see e.g. \cite{ barbier2019optimal} and references therein), and we build directly on these seminal works. We note that the mutual information in high-dimensional models can also be derived using other techniques (see e.g. \cite{reeves2019replica,barbier2020mutual})---the mutual information in our setting can also be potentially characterized by adopting these alternative approaches.  
    
    \item[(iii)] Finally, using numerical simulations, we compare the statistical performance of the proposed AMP algorithm with existing penalization based approaches for estimation and support recovery. In our numerical experiments, the AMP algorithm significantly outperforms the benchmark algorithm. 
\end{itemize}

\subsection{Main Results} 
We highlight our main results in this section.

\subsubsection{Algorithm based on Approximate Message Passing} 
As a first step, we introduce a class of iterative algorithms based on Approximate Message Passing (AMP) for parameter estimation and \revsag{recovery of $\bm \sigma_0$} in the \graphreg model. This algorithm naturally incorporates the supervised data with the auxiliary graph information for statistical inference. To this end, we set $\bm S=\bm \Phi/\sqrt{\kappa}$, \revsag{where $\kappa$ is the limit of the undersampling ratio $n/p$}, and 
\revmark{\[\widebar{\bm A}=\frac{\bm A-(b_p/p)}{\sqrt{(b_p/p)(1-(b_p/p))}},\]}
where $\bm{A}$ denotes the adjacency matrix of the graph $\bm{G}$. \revsag{We also set} $\bm y^\circ=\bm y/\sqrt{\kappa}$. 

Formally, Approximate Message Passing is not a single algorithm, but rather a class of iterative algorithms, which are specified in terms of a sequence of non-linearities used in each step. We refer the interested reader to \cite{BM11journal,feng2022unifying} for a discussion on the origins of AMP and its applications to high-dimensional statistics and signal processing. To describe the specific instance of AMP we use in our setting, consider two sequences of Lipschitz functions $\zeta_t:\mathbb{R}^2\rightarrow\mathbb{R}$, $f_t:\mathbb{R}^2\rightarrow\mathbb{R}$, with Lipschitz derivatives and the synchronized \emph{approximate message passing} orbits $\{\bm \sigma^{t+1}\}_{t \ge 0}$ and $\{\bm z^t, \bm \beta^{t+1}\}_{t \ge 0}$ defined as follows.
\begin{align}
\label{eq:amp_iterates_graph}
\bm \sigma^{t+1} &= \frac{\widebar{\bm A}}{\sqrt{p}}\bm f_t(\bm \sigma^t,\bm S^\top\bm z^{t-1}+\bm \beta^{t-1})\\
&\quad\quad-(\mathcal{A}\bm f_t)(\bm \sigma^t,\bm S^\top\bm z^{t-1}+\bm \beta^{t-1})\bm f_{t-1}(\bm \sigma^{t-1},\bm S^\top\bm z^{t-2}+\bm \beta^{t-2}), \label{eq:amp_iterates}\\
\bm z^t & = \bm y^\circ - \bm S\bm \beta^t+\frac{1}{\kappa}\bm z^{t-1}(\mathcal{A}\bm \zeta_{t-1})(\bm S^\top\bm z^{t-1}+\bm \beta^{t-1},\bm \sigma^{t})\\
\bm \beta^{t+1} &= \bm\zeta_t(\bm S^\top\bm z^t+\bm \beta^t,\bm \sigma^{t+1}) \label{eq:amp_iterates_1}
\end{align}
where the functions $\bm \zeta_t:(\mathbb{R}^{2})^p \rightarrow \mathbb{R}^{p}$, $\bm f_t:(\mathbb{R}^{2})^p \rightarrow \mathbb{R}^{p}$  are defined as \revsag{$\bm \zeta_t(\bm x):=(\zeta_t(x_1),\cdots,\zeta_t(x_p))$ and $\bm f_t(\bm x):=(f_t(x_1),\cdots,f_t(x_p))$. 
Further, for a function \revsag{$g: \mathbb{R}^{2} \rightarrow \mathbb{R}$, and $\bm t_1, \bm t_2 \in \R^p$}, the correction term $(\mathcal A\bm g)(\bm t_1, \bm t_2)$ is defined as
$
 (\mathcal{A}\bm g)(\bm t_1, \bm t_2)=\frac{1}{p}\sum\limits_{i=1}^{p}\frac{\partial}{\partial t_1}g(t_{1i}, t_{2i}).
$}
\revsag{Finally, we choose $\bm \beta^0$ and $\bm \sigma^0$ in such a way so that they are positively correlated with the true signals $\bm \beta_0$ and $\bm \sigma_0$, and we set $\bm \beta^{-1}=\bm z^{-1}=\bm 0$.}

The performance of the AMP algorithms described above will be characterized in terms of some low-dimensional scalar parameters \cite{BM11journal}. In turn, these scalar parameters are defined using an iteration referred to as \emph{state evolution}. Formally, define the parameters $\tau^2_t$, $\nu^2_t$ and $\eta_t$ by the following iteration.
\revmark{\begin{align}
\label{eq:state_evol}
\nu^2_{t+1} &= \mathbb{E}[f^2_{t}(\eta_{t}\Sigma+\nu_{t}Z_3, B+\tau_{t-1}Z_4)],\\
\eta_{t+1} & = \sqrt{\lambda}\nu^2_{t+1},\\
\tau^2_{t+1} &=\frac{1}{\kappa}\left(\Delta+\,\mathbb{E}[(\zeta_{t-1}(B+\tau_{t}Z_1,\eta_{t+1}\Sigma+\nu_{t+1}Z_2)-B)^2]\right),
\end{align}}
where $Z_1,Z_2,Z_3,Z_4$ are i.i.d $\mathsf{N}(0,1)$, $\Sigma \sim \mathrm{Bernoulli}(\rho)$ and $B|\Sigma \sim P(\cdot|\Sigma)$.
The initial conditions are given by
$\tau^2_0=\frac{1}{\kappa}\left(\Delta+\,\mathbb{E}[B^2]\right)$ and $\tau_{-1}=\eta_0=\nu_0=0$. \revsag{Further, $\lambda=O(1)$ is implicitly defined through the following equation,
\begin{equation}
\label{eq:def_snr}
    \frac{a_p-b_p}{p}=\sqrt{\frac{\lambda\widebar{d}_p(1-\widebar{d}_p)}{p}},
\end{equation}
where $\widebar{d}_p=b_p/p$.}
We will consider a specific sequence of update functions, given as $f_{-1}=\zeta_{-1}=0$ and for $t \geq 1$, 

\begin{align}
\label{eq:non_linearities}
f_{t-1}(x,y) &:=\mathbb{E}[\Sigma|\eta_{t-1}\Sigma+\nu_{t-1}Z_2=x, B+\tau_{t-2}Z_1=y], \\
\zeta_{t-1}(x,y) & :=\mathbb{E}[B|B+\tau_{t}Z_1=x,\eta_{t+1}\Sigma+\nu_{t+1}Z_2=y].
\end{align}
If we denote the collection of state evolution parameters at time point $t$ for $t \ge 0$ by $\mathcal T_t=\{(\eta_s,\nu_s,\tau_s): 0 \le s \le t\}$, then the set $\mathcal T_t$ evolves according to the following mechanism:

\vskip 2em
\begin{tikzpicture}[node distance=2cm, every node/.style={draw, text centered}]
    \node (start)[diamond] {$\mathcal T_t$};
    \node (process1) [right of=start, xshift=1.2cm, circle] {$f_t$};
    \node (decision) [right of=process1, xshift=0.5cm, rectangle] {$(\nu_{t+1},\eta_{t+1})$};
    \node (process2a) [right of=decision, xshift=0.3cm, circle] {$\zeta_{t-1}$};
    \node (process2b) [right of=process2a, xshift=0.1cm, rectangle] {$\tau_{t+1}$};
    \node (end) [right of=process2b, xshift=0.3cm, diamond, aspect=1] {$\mathcal T_{t+1}$};

    \draw[->] (start) -- (process1) node[midway, above, draw=none, fill=none] {$(\eta_t,\nu_t,\tau_{t-1})$};
    \draw[->] (process1) -- (decision);
    \draw[->] (decision) -- (process2a);
    \draw[->] (start.south) to[out=-20, in=-160] node[midway, above, draw=none, fill=none] {$\tau_t$} (process2a.south);
    \draw[->] (decision.north) to[out=50, in=160] node[midway, above, draw=none, fill=none] {$(\nu_{t+1},\eta_{t+1})$} (end.north);
    \draw[->] (process2a) -- (process2b);
    \draw[->] (process2b) -- (end);

    \draw[->] ([xshift=-0.5cm]start.west) -- (start); 
    \draw[->]  (end) -- ([xshift=0.5cm]end.east); 
\end{tikzpicture}

\begin{rem}
    One practical way to choose $\bm \beta^0$ and $\bm \sigma^0$ so that they are positively correlated with $\bm \beta_0$ and $\bm \sigma_0$ is as follows: We can fit a LASSO to the pair of observations given by $\{(y_\mu,\bm \phi_\mu): 1 \le \mu \le n\}$ and take the estimate of the regression coefficient $\widehat{\bm \beta}$ as $\bm \beta^0$. Similarly, we can compute the leading eigenvector of the matrix $\widebar{\bm A}$ and take it as $\bm \sigma^0$. However, these initializers are dependent on the data, and consequently, analyzing the behavior of the iterates $(\bm \sigma^t,\bm z^t, \bm \beta^t)$ will be significantly difficult. But we anticipate that following a technique similar to \cite{montanari2021} this can be done. However, we do not pursue it in the current manuscript.
\end{rem}

\begin{lem}
    \label{lem:continuity_diff_denoise}
    \revsagr{The denoisers $f_t,\zeta_t$ defined in \eqref{eq:non_linearities} are Lipschitz continuous in both arguments with Lipschitz derivatives.}
\end{lem}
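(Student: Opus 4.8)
The plan is to recognise $f_t$ and $\zeta_t$ as Bayes posterior means of a single latent coordinate ($\Sigma$, respectively $B$) observed through two \emph{independent} Gaussian channels, and to reduce the claim to a uniform bound on the low-order posterior cumulants. Fix $t$ and work first in the non-degenerate regime in which the relevant noise levels are strictly positive (for $\zeta_{t-1}$: $\tau_t,\nu_{t+1}>0$; for $f_{t-1}$: $\nu_{t-1},\tau_{t-2}>0$). Writing the joint prior of $(B,\Sigma)$ on $\R\times\{0,1\}$ as $\mu(db,ds)=P(db\,|\,s)\,\Pro[\Sigma=s]$, the posterior of $(B,\Sigma)$ given $\bigl(B+\tau_t Z_1=x,\ \eta_{t+1}\Sigma+\nu_{t+1}Z_2=y\bigr)$ is an exponential tilt of the fixed base measure $\widetilde\mu(db,ds)\propto e^{-b^2/(2\tau_t^2)-\eta_{t+1}^2 s^2/(2\nu_{t+1}^2)}\mu(db,ds)$ with natural parameters $\theta_1=x/\tau_t^2$ and $\theta_2=\eta_{t+1}y/\nu_{t+1}^2$. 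Hence $\zeta_{t-1}(x,y)=\partial_{\theta_1}\Lambda(\theta)$, where $\Lambda(\theta)=\log\int e^{\theta_1 b+\theta_2 s}\,\widetilde\mu(db,ds)$ is a smooth convex log-partition function; similarly $f_{t-1}$ equals, up to an affine reparametrisation, $\partial_{\theta_2}\Lambda'$ for the analogous log-partition function $\Lambda'$ associated with the channels $\eta_{t-1}\Sigma+\nu_{t-1}Z_2$ and $B+\tau_{t-2}Z_1$. Since $\nabla^2\Lambda(\theta)=\mathrm{Cov}_{\pi_\theta}(B,\Sigma)$ (the posterior covariance) and $\nabla^3\Lambda(\theta)$ collects the third joint cumulants of $(B,\Sigma)$ under the posterior $\pi_\theta$, it is enough to show $\sup_\theta\|\nabla^2\Lambda(\theta)\|<\infty$ and $\sup_\theta\|\nabla^3\Lambda(\theta)\|<\infty$: the first bound makes $\nabla\Lambda$ — hence $f_{t-1},\zeta_{t-1}$ — Lipschitz, and the second makes $\nabla^2\Lambda$ — hence the partial derivatives of $f_{t-1},\zeta_{t-1}$ — Lipschitz.

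To control these cumulants I would condition on $\Sigma$. As $\Sigma\in\{0,1\}$, all of its cumulants under any law are bounded by absolute constants, so by Cauchy--Schwarz it suffices to bound the centered moments of $B$ under $\pi_\theta$ up to third order, uniformly in $\theta$. Under $\pi_\theta$ and conditionally on $\Sigma=s$, the law of $B$ is proportional to $e^{\theta_1 b-b^2/(2\tau_t^2)}P(db\,|\,s)$. If $P(\cdot\,|\,s)$ has support in $[-M,M]$ then so does this conditional law, and its moments are bounded by constants depending only on $M$; if $P(\cdot\,|\,s)$ has a log-concave density then the Gaussian factor $e^{-b^2/(2\tau_t^2)}$ renders the conditional law $\tau_t^{-2}$-strongly log-concave, so the Brascamp--Lieb inequality bounds its variance by $\tau_t^2$ and the sub-Gaussian concentration of strongly log-concave measures bounds its third centered moment by $C\tau_t^3$. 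Together with the law of total cumulant over $\Sigma$ this pins down the ``within-$\Sigma$'' part of the variance and third cumulant of $B$ under $\pi_\theta$.

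What remains — and what I expect to be the main obstacle — is the ``between-$\Sigma$'' contribution, exemplified by the term $w(1-w)\bigl(m_1(\theta_1)-m_0(\theta_1)\bigr)^2$ in $\mathrm{Var}_{\pi_\theta}(B)$, where $w=\Pro_{\pi_\theta}[\Sigma=1]$ and $m_s(\theta_1)=\E_{\pi_\theta}[B\mid\Sigma=s]$. For bounded-support priors this is immediate, since $|m_s|\le M$, and the difficulty evaporates. In the general log-concave case $m_1-m_0$ need not be bounded a priori, and one must exploit the coupling between the two factors: from the identity $\log\tfrac{w}{1-w}=\mathrm{const}+\theta_2+\int_0^{\theta_1}\!\bigl(m_1(u)-m_0(u)\bigr)\,du$ together with $\tfrac{d}{d\theta_1}m_s=\mathrm{Var}_{\pi_\theta}(B\mid\Sigma=s)\le\tau_t^2$, one would try to show that a large value of $|m_1(\theta_1)-m_0(\theta_1)|$ forces $|\log\tfrac{w}{1-w}|$ to be large, hence $\min(w,1-w)$ small enough to keep $w(1-w)\bigl(m_1(\theta_1)-m_0(\theta_1)\bigr)^2$ bounded; making such an estimate uniform is the delicate point and where I expect the bulk of the work to lie, and the analogous estimate must also be pushed through for the between-$\Sigma$ part of the third cumulant. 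Finally, the degenerate early iterates (recall $\tau_{-1}=\eta_0=\nu_0=0$, and the vanishing of further parameters when $\lambda=0$) are treated separately: in each such case $f_t$ or $\zeta_t$ is identically zero, a constant, or reduces to a one-channel posterior mean, all of which are $C^1$ with bounded, Lipschitz derivative by the same computation with one channel removed. Combining the pieces yields uniform bounds on $\nabla^2\Lambda$ and $\nabla^3\Lambda$, which gives the lemma.
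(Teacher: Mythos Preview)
Your reduction to the log-partition function and its cumulants is exactly the paper's route: write the tilted posterior, differentiate, and identify the partial derivatives of $f_t,\zeta_t$ as posterior (co)variances. The paper then bounds $\mathrm{Var}_{P_t}(\Sigma)$ trivially and $\mathrm{Cov}_{P_t}(\Sigma,B)$ by Cauchy--Schwarz together with a claimed bound on $\mathrm{Var}_{P_t}(B)$. You are more careful than the paper at the one genuinely delicate point: you correctly isolate the between-$\Sigma$ contribution $w(1-w)\bigl(m_1-m_0\bigr)^2$ to $\mathrm{Var}_{\pi_\theta}(B)$, which the paper's proof glosses over (strong log-concavity of each conditional controls only the within-$\Sigma$ variances $\E_{\pi_\theta}[\mathrm{Var}(B\mid\Sigma)]$, not the mixture term).

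Unfortunately the argument you sketch to kill that term cannot work, because the second natural parameter $\theta_2$ enters your identity $\log\tfrac{w}{1-w}=c+\theta_2+\int_0^{\theta_1}(m_1-m_0)$ additively and is unconstrained: for \emph{every} fixed $\theta_1$ one may choose $\theta_2$ so that $w=\tfrac12$, and then there is no damping at all. This is not a repairable technicality; it is an actual obstruction. Take $P(\cdot\mid s)=\mathsf N(0,\sigma_s^2)$ with $\sigma_0\neq\sigma_1$ (log-concave, unbounded support). The conditional posterior mean under the $B$-channel is $m_s(x)=\sigma_s^2 x/(\sigma_s^2+\tau_t^2)$, so $m_1-m_0$ grows linearly in $x$; since here $\log\tfrac{w}{1-w}=A+Bx^2+(\eta_{t+1}/\nu_{t+1}^2)\,y$ for constants $A,B$, whenever $\eta_{t+1}>0$ one can pick $y$ of order $-x^2$ to force $w=\tfrac12$, and along that curve
\[
\partial_x\zeta_{t-1}(x,y)=\tau_t^{-2}\,\mathrm{Var}_{\pi_\theta}(B)\ \ge\ \tfrac{1}{4\tau_t^2}\bigl(m_1(x)-m_0(x)\bigr)^2\ \longrightarrow\ \infty.
\]
The same example breaks the Lipschitz bound for $f_t$ via $\mathrm{Cov}_{\pi_\theta}(\Sigma,B)=w(1-w)(m_1-m_0)$. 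So under the bare log-concavity hypothesis the conclusion of the lemma fails; your argument (and the paper's) is complete only in the bounded-support case, where $|m_s|\le s_{\max}$ makes the between-$\Sigma$ term trivially bounded.
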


\begin{proof}
Let $M_1 = B + \tau_t Z_1$ and $M_2 = \eta_{t+1}\Sigma+\nu_{t+1}Z_2$, where $(Z_1,Z_2)$ are i.i.d. standard gaussian random variables. The posterior distribution of $(\Sigma,B)$ given $(M_1,M_2)$ is given as 
    \begin{align}
        &dP_t(\Sigma,B | M_1 = x, M_2 = y) = \frac{1}{Z_t} \exp\Big( \frac{xB}{\tau_t^2} - \frac{B^2}{2 \tau_t^2} + \frac{y\eta_{t+1} \Sigma}{ \nu_{t+1}^2} - \frac{\eta^2_{t+1}\Sigma^2}{2 \nu_{t+1}^2}  \Big) dp(\Sigma, B), \nonumber
         \end{align}
where $p$ is the joint density of $(\Sigma, B)$ and
        \[Z_t = \int \exp\Big( \frac{xB}{\tau_t^2} - \frac{B^2}{2 \tau_t^2} + \frac{y\eta_{t+1} \Sigma}{ \nu_{t+1}^2} - \frac{\eta^2_{t+1}\Sigma^2}{2 \nu_{t+1}^2}  \Big) dp(\Sigma, B). \]
    This implies 
    \begin{align}
        f_t(x,y) &= \int \Sigma\, dP_t(\Sigma,B|M_1=x, M_2=y) \nonumber \\
        &= \frac{1}{Z_t} \int \Sigma \,\exp\Big( \frac{xB}{\tau_t^2} - \frac{B^2}{2 \tau_t^2} + \frac{y\eta_{t+1} \Sigma}{ \nu_{t+1}^2} - \frac{\eta^2_{t+1}\Sigma^2}{2 \nu_{t+1}^2}  \Big) dp(\Sigma, B). \nonumber 
    \end{align}
    In turn, we have, 
    \begin{align}
        \frac{\partial}{\partial x} f_t(x,y) = \frac{1}{\tau_t^2} \mathrm{Cov}_{P_t}(\Sigma,B), \,\,\,\,\, \frac{\partial}{\partial y} f_t(x,y) = \frac{\eta_{t+1}}{\nu_{t+1}^2} \mathrm{Var}_{P_t} (\Sigma). \nonumber 
    \end{align}
    In our setting, $\Sigma \in \{0,1\}$ and thus $|\frac{\partial f_t(x,y)}{\partial y}| \leq C_t$ for some constant $C_t>0$. On the other hand,  
    \begin{align}
        \Big| \frac{\partial}{\partial x} f_t(x,y)  \Big| = \frac{1}{\tau_t^2} \Big| \mathrm{Cov}_{P_t}(\Sigma, B) \Big| = \frac{1}{\tau_t^2} \sqrt{\mathrm{Var}_{P_t}(\Sigma) \mathrm{Var}_{P_t}(B)}. \nonumber    
    \end{align}
    As above, we note that $\Sigma \in \{0,1\}$ and thus $\mathrm{Var}_{\mu_t}(\Sigma) \leq C$ for some universal constant $C>0$. Finally, $\mathrm{Var}_{P_t}(B) \leq C'$
 for some universal constant $C'>0$ if $B$ is universally bounded. On the other hand, if $P(\cdot|0), P(\cdot|1)$ have log-concave densities, there exists $\varepsilon>0$ such  that $B|\Sigma$ is $\varepsilon$-strongly log-concave. In this case, $\mathrm{Var}_{P_t}(B)\leq C_t/\varepsilon$ for some universal constant $C_t>0$. \revsagr{This shows that $f_t(\cdot,\cdot)$ is Lipschitz continuous}. The proof for $\zeta_t$ is similar, and thus omitted. Using similar techniques one can also show that the denoisers $f_t$ and $\zeta_t$ have Lipschitz derivatives. 
\end{proof}

Using this specific sequence of non-linearities in \eqref{eq:amp_iterates},\eqref{eq:amp_iterates_1}  we obtain the sequence of estimates given by \revsag{$\widehat{\bm \sigma}^t=\bm f_{t}(\bm \sigma^t,\bm S^\top\bm z^{t-1}+\bm \beta^{t-1})$} and $\widehat{\bm \beta}^t=\bm \beta^{t}$. Typically, the statistical performance of $(\widehat{\bm \sigma}^t,\widehat{\bm \beta}^t)$ in estimating $(\bm \sigma_0, \bm \beta_0)$ improves with increasing number of iterations. To quantify the limiting statistical performance of the estimators obtained (in the limit of a  large number of iterations), we introduce 
\begin{align*}
\label{eq:mse_amp_1}
\mathsf{MSE^{AMP}_\sigma}:=\lim\limits_{t \rightarrow \infty}\lim\limits_{p \rightarrow \infty}\frac{1}{p^2}\;\mathbb E[\|\widehat{\bm \sigma}^t(\widehat{\bm \sigma}^t)^\top-\bm \sigma_0\bm \sigma^\top_0\|^2_F], \,\,
\mathsf{MSE^{AMP}_\beta}:=\lim\limits_{t \rightarrow \infty}\lim\limits_{n \rightarrow \infty}\frac{1}{n}\;\mathbb E[\|\bm\Phi(\widehat{\bm \beta}^t-\bm \beta_0)\|^2_2].
\end{align*}
Note the specific order of the iterated limits---we let the dimension $p\to \infty$ before we let the number of iterations diverge. This order of iterated limits is typical in the analysis of AMP algorithms. Using recent progress in the analysis of AMP algorithms, it might be possible to analyze the AMP algorithms after a growing number of steps \cite{li2022non,rush2018finite,cademartori2023non}, but we refrain from examining this direction in this paper. 

To characterize the limiting behavior of $\mathsf{MSE^{AMP}_\sigma}$ and $\mathsf{MSE^{AMP}_\beta}$, we need to introduce one final set of scalar functionals. Define
\revsag{
\begin{align}
\mathsf{mmse}_1(\mu,\xi)=\mathbb E\left[\left\{\Sigma-\mathbb E\Bigg[\Sigma\,\bigg|\,\sqrt{\mu}\Sigma+Z_2,B+\sqrt{(\Delta(1+\xi)/\kappa)}Z_1\right]\Bigg\}^2\right], \\
\mathsf{mmse}_2(\mu,\xi):=\mathbb E\left[\left\{B-\mathbb E\Bigg[B\,\bigg|\,B+\sqrt{(\Delta(1+\xi)/\kappa)}Z_3,\sqrt{\mu}\Sigma+Z_4\right]\Bigg\}^2\right],
\end{align}
where $Z_1,Z_2,Z_3,Z_4$ are i.i.d $\mathsf{N}(0,1)$.}
Armed with this definition, we can write
\revsag{
\[
\mathbb E\left[\mathbb E\left[\Sigma\,\bigg|\,\sqrt{\mu}\Sigma+Z_2,B+\sqrt{(\Delta(1+\xi)/\kappa)}Z_1\right]^2\right]=\rho-\mathsf{mmse}_1(\mu,\xi).
\]
Now if we set 
\[
\xi_t = \frac{\kappa\tau^2_t-\Delta}{\Delta} \quad \mbox{and} \quad \mu_t = \lambda\nu^2_t,
\]
we can rewrite the state evolution equations \eqref{eq:state_evol} as follows:
\begin{align}
\label{eq:fixed_point_iteration}
    \mu_{t+1} = \lambda(\rho-\mathsf{mmse}_1(\mu_t,\xi_{t-1})),\quad \mbox{and} \quad
\xi_{t+1} = \frac{1}{\Delta}\mathsf{mmse}_2(\mu_{t+1},\xi_t).
\end{align}
The following lemma shows that the recursion is contractive in nature:
\begin{lem}
    \label{lem:contractive}
 The sequence of parameters $(\mu_t,\xi_t)$ converges to a point $(\mu_*,\xi_*)$ satisfying:
 \begin{align}
 \label{eq:def_mu_fix_point}
\mu^*=\lambda(\rho-\mathsf{mmse}_1(\mu^*,\xi^*)),\,\,\,\,
\xi^* = \frac{1}{\Delta}\mathsf{mmse}_2(\mu^*,\xi^*),
\end{align}
as $t \rightarrow \infty$.
\end{lem}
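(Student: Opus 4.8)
The plan is to show that the two-dimensional map $T(\mu,\xi) = \big(\lambda(\rho-\mathsf{mmse}_1(\mu,\xi)),\ \tfrac{1}{\Delta}\mathsf{mmse}_2(\lambda(\rho-\mathsf{mmse}_1(\mu,\xi)),\xi)\big)$ — equivalently, the coupled recursion \eqref{eq:fixed_point_iteration} — is monotone and bounded, and then to invoke a monotone-convergence argument rather than a strict contraction in a metric. The first step is to establish the relevant monotonicity properties of the scalar functionals. Specifically, I would show that $\mu \mapsto \mathsf{mmse}_1(\mu,\xi)$ is nonincreasing (more signal-to-noise in the $\Sigma$-channel lowers the MMSE), $\xi \mapsto \mathsf{mmse}_1(\mu,\xi)$ is nondecreasing (larger $\xi$ means more effective noise $\Delta(1+\xi)/\kappa$ in the $B$-channel, which degrades the auxiliary information $\Sigma$ receives through $B$), and analogously that $\mathsf{mmse}_2(\mu,\xi)$ is nonincreasing in $\mu$ and nondecreasing in $\xi$. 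These are classical I-MMSE / monotonicity-of-Bayes-risk facts: MMSE is monotone in the noise level of each observation channel, which can be proven by a data-processing/degradation argument or by differentiating via the I-MMSE relation. Consequently $T$ is a monotone (coordinatewise-nondecreasing) map on $[0,\lambda\rho]\times[0,\xi_{\max}]$, where $\xi_{\max} = \tfrac{1}{\Delta}\mathsf{mmse}_2(0,0) \le \tfrac{1}{\Delta}\mathbb{E}[B^2]$ gives an a priori bound since $\mathsf{mmse}_2 \le \mathbb{E}[B^2]$ always.

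Next I would check that the recursion is started below its fixed point and is monotone increasing along the orbit, or run the standard sandwiching argument. From the initial conditions $\eta_0=\nu_0=0$ and $\tau_0^2 = \tfrac1\kappa(\Delta+\mathbb{E}[B^2])$ we get $\mu_0 = 0$ and $\xi_0 = \mathbb{E}[B^2]/\Delta \ge \xi_1$; tracking signs through one more step and using monotonicity of $T$, the even and odd subsequences of $(\mu_t,\xi_t)$ are each monotone and bounded, hence convergent. (Equivalently: define $T$ on the partially ordered box; iterate from the bottom and from the top; monotonicity forces $T^{2n}(\text{bottom})\uparrow$ and $T^{2n}(\text{top})\downarrow$, both to fixed points of $T\circ T$.) To conclude that the limits coincide and equal a genuine fixed point of $T$ (not merely of $T^2$), I would then bring in a quantitative estimate: using that $f_t$ and $\zeta_t$ are Lipschitz with Lipschitz derivatives (Lemma~\ref{lem:continuity_diff_denoise}) and that $\Sigma\in\{0,1\}$ is bounded while $B$ is bounded or strongly log-concave, one bounds the partial derivatives of $\mathsf{mmse}_1,\mathsf{mmse}_2$ and shows the composed map $T$ is a strict contraction on the invariant box once we are in a regime where the effective noises are bounded away from $0$ — which is guaranteed because $\tau_t^2 \ge \Delta/\kappa > 0$ for all $t$, so $\xi_t \ge 0$ and the $B$-channel noise never collapses, and similarly the $\Sigma$-channel has the fixed unit-variance additive noise $Z_2$. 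With a Lipschitz constant $<1$ the orbit is Cauchy and converges to the unique fixed point $(\mu^*,\xi^*)$ characterized by \eqref{eq:def_mu_fix_point}.

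The main obstacle is getting the contraction constant below $1$ uniformly: the derivative bounds from Lemma~\ref{lem:continuity_diff_denoise} are of the form $C_t/\tau_t^2$ etc., with constants that a priori grow in $t$, so a naive estimate does not immediately give a $t$-independent contraction factor. I would handle this by first using the monotone-boundedness step to confine the orbit to a compact box on which the effective noise parameters $\Delta(1+\xi)/\kappa$ and the SNR $\mu$ lie in a fixed compact interval bounded away from the degenerate values; on such a compact set the relevant Fisher-information / variance quantities (and hence the partial derivatives of $\mathsf{mmse}_1,\mathsf{mmse}_2$) are bounded by absolute constants, and the product structure of the two-channel posterior (one channel carrying $\Sigma$, one carrying $B$, coupled only through the prior $p(\Sigma,B)$) makes the cross-derivatives small enough that the Jacobian of $T\circ T$ has spectral norm $<1$. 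An alternative route that sidesteps the contraction constant entirely is to note that the fixed-point equations \eqref{eq:def_mu_fix_point} are exactly the stationarity conditions of the replica-symmetric potential (the free-energy functional) whose concavity/structure is used elsewhere in the paper to prove optimality of AMP; uniqueness of the stationary point of that potential, combined with the monotone convergence of the orbit to some stationary point, then forces convergence to $(\mu^*,\xi^*)$. I would present the monotonicity-plus-compactness-plus-local-contraction argument as the main line and remark on the potential-function viewpoint as the conceptual reason the recursion is well-behaved.
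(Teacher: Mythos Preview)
Your first instinct is the entire proof, and the paper does exactly that: by the joint monotonicity of $\mathsf{mmse}_1,\mathsf{mmse}_2$ (nonincreasing in the first argument, nondecreasing in the second), one shows by a \emph{single} induction that the full sequence $\mu_t$ is nondecreasing and the full sequence $\xi_t$ is nonincreasing. The base case is $\mu_0=0\le\mu_1$ and $\xi_0=\mathbb{E}[B^2]/\Delta\ge \tfrac{1}{\Delta}\mathsf{mmse}_2(\mu_1,\xi_0)=\xi_1$ (since any MMSE is at most the prior second moment); for the step, if $\mu_{t-1}\le\mu_t$ and $\xi_{t-2}\ge\xi_{t-1}$ then $\mathsf{mmse}_1(\mu_{t-1},\xi_{t-2})\ge\mathsf{mmse}_1(\mu_t,\xi_{t-1})$ gives $\mu_t\le\mu_{t+1}$, and this together with $\xi_{t-1}\ge\xi_t$ gives $\mathsf{mmse}_2(\mu_t,\xi_{t-1})\ge\mathsf{mmse}_2(\mu_{t+1},\xi_t)$, hence $\xi_t\ge\xi_{t+1}$. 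Both sequences are bounded, so each converges; continuity of the MMSE functionals closes the fixed-point relations. There is no even/odd splitting, no worry about $T^2$-cycles, and no contraction constant to control.

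The detour you propose through a Lipschitz/contraction estimate is not only unnecessary but targets something the lemma does not assert and the paper explicitly does \emph{not} claim: uniqueness of the fixed point. The remark following Theorem~\ref{thm:mse_amp} says the system \eqref{eq:fixed points} may have multiple solutions in general, and Theorem~\ref{thm:opt_amp} is stated precisely under the \emph{assumption} that the AMP fixed point $(\mu^*,\xi^*)$ happens to coincide with the global minimizer $(\widebar\mu,\widebar\xi)$ of the RS potential. So your ``alternative route'' via uniqueness of the stationary point of the replica potential would be proving more than is true, and the difficulty you (correctly) flag about getting a $t$-independent contraction factor from bounds of the form $C_t/\tau_t^2$ is a problem you simply do not need to solve. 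The lemma only asserts convergence of this particular orbit to \emph{some} fixed point, and monotone-bounded convergence of each coordinate delivers exactly that.
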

}
The next result characterizes the limiting statistical behavior of the AMP algorithm \revsag{in terms of $(\mu^*,\xi^*)$}. 
\revsagr{
\begin{thm}
\label{thm:mse_amp}
Assume that $\frac{b_p}{p}(1-\frac{b_p}{p})\ge C\,\frac{\log p}{p}$ for any constant $C>0$. Then the reconstruction errors of the estimators $\{\widehat{\bm \sigma}^t, \widehat{\bm \beta}^t: t \ge 0\}$ satisfy
    \begin{align}
    \lim\limits_{t \rightarrow \infty}\lim\limits_{p \rightarrow \infty}\frac{1}{p^2}\;\|\widehat{\bm \sigma}^t(\widehat{\bm \sigma}^t)^\top-\bm \sigma_0\bm \sigma^\top_0\|^2_F= \rho^2-\frac{(\mu^*)^2}{\lambda^2}, \,\,\,\,\, \lim\limits_{t \rightarrow \infty}\lim\limits_{n \rightarrow \infty}\frac{1}{n}\;\|\bm\Phi(\widehat{\bm \beta}^t-\bm \beta_0)\|^2_2= \frac{\Delta\;\xi^*}{(1+\xi^*)},
\end{align}
almost surely. Furthermore, if $P(\,\cdot\,|\,0\,)$ and $P(\,\cdot\,|\,1\,)$  are compactly supported, we have, 
\begin{align}
   \mathsf{MSE^{AMP}_\sigma} = \rho^2-\frac{(\mu^*)^2}{\lambda^2}, \,\,\,\,\, \mathsf{MSE^{AMP}_\beta} = \frac{\Delta\;\xi^*}{(1+\xi^*)},
\end{align}
where $(\mu^*,\xi^*)$ are defined by \eqref{eq:def_mu_fix_point}. 
\end{thm}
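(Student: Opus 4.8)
\emph{Proof strategy.}
The plan is to read off both limits from a state evolution (SE) analysis of the coupled recursion \eqref{eq:amp_iterates}--\eqref{eq:amp_iterates_1}, and then let $t\to\infty$ using Lemma~\ref{lem:contractive}. The central ingredient is an SE theorem: for each fixed $t$, almost surely, the empirical distribution of $\big(\sigma_i^t,\,(\bm S^\top\bm z^{t-1}+\bm\beta^{t-1})_i,\,\sigma_{0i},\,\beta_{0i}\big)_{i\le p}$ converges to that of $\big(\eta_t\Sigma+\nu_tZ_3,\,B+\tau_{t-1}Z_4,\,\Sigma,\,B\big)$ --- with $\Sigma\sim\mathrm{Bernoulli}(\rho)$, $B\mid\Sigma\sim P(\cdot\mid\Sigma)$, the $Z_j$ i.i.d.\ standard Gaussian, and $(\nu_s,\eta_s,\tau_s)$ as in \eqref{eq:state_evol} --- while the empirical distribution of $\big(z_\mu^t,\varepsilon_\mu\big)_{\mu\le n}$ converges to a jointly Gaussian law whose covariances are determined by $\Delta$, $\kappa$ and the $\tau_s$; the convergences hold against test functions of at most quadratic growth, which is the regularity furnished by Lemma~\ref{lem:continuity_diff_denoise}. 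The two orbits in \eqref{eq:amp_iterates}--\eqref{eq:amp_iterates_1} are driven by the symmetric matrix $\widebar{\bm A}/\sqrt p$ and by the independent rectangular Gaussian matrix $\bm S=\bm\Phi/\sqrt\kappa$; stacking these into a single symmetric block matrix with block-structured Lipschitz nonlinearities, the iteration falls within the abstract AMP framework of \cite{BM11journal,feng2022unifying}, provided three points are handled: (i) the rescaled Bernoulli matrix $\widebar{\bm A}$ must first be replaced by a Gaussian Wigner matrix through an entrywise universality argument --- this is where the hypothesis $\tfrac{b_p}{p}\big(1-\tfrac{b_p}{p}\big)\ge C\tfrac{\log p}{p}$ enters, to control the moments and concentration of $\widebar{\bm A}$; (ii) the rank-one spike $\mathbb E[\widebar{\bm A}]\asymp\sqrt{\lambda/p}\,\bm\sigma_0\bm\sigma_0^\top$ implied by \eqref{eq:def_snr} must be tracked as in spiked-Wigner AMP (cf.\ \cite{montanari2015finding}), the assumed positive correlation of $\bm\sigma^0$ with $\bm\sigma_0$ guaranteeing that the spike is picked up rather than lost; and (iii) the matched denoisers \eqref{eq:non_linearities} must have the stated Lipschitz regularity, which is Lemma~\ref{lem:continuity_diff_denoise}. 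The consistency of the denoisers \eqref{eq:non_linearities} with the SE \eqref{eq:state_evol} is precisely the change of variables $\mu_t=\lambda\nu_t^2$, $\xi_t=(\kappa\tau_t^2-\Delta)/\Delta$ appearing in \eqref{eq:fixed_point_iteration}.

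Granting the SE theorem, the $\bm\sigma$-limit is short. Writing $\|\bm u\bm u^\top-\bm v\bm v^\top\|_F^2=\|\bm u\|^4-2\langle\bm u,\bm v\rangle^2+\|\bm v\|^4$ with $\bm u=\widehat{\bm\sigma}^t=\bm f_t(\bm\sigma^t,\bm S^\top\bm z^{t-1}+\bm\beta^{t-1})$ and $\bm v=\bm\sigma_0$, and applying the SE to the quadratic test functions $f_t(x,y)^2$, $f_t(x,y)\sigma_0$ and $\sigma_0^2$, one obtains
\[
\lim_{p\to\infty}\tfrac1{p^2}\big\|\widehat{\bm\sigma}^t(\widehat{\bm\sigma}^t)^\top-\bm\sigma_0\bm\sigma_0^\top\big\|_F^2=\big(\mathbb E[f_t(M)^2]\big)^2-2\big(\mathbb E[\Sigma f_t(M)]\big)^2+\rho^2,
\]
where $M=(\eta_t\Sigma+\nu_tZ_3,\,B+\tau_{t-1}Z_4)$. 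Since $f_t$ is, by the very definitions \eqref{eq:non_linearities} and \eqref{eq:state_evol}, the posterior mean of $\Sigma$ in precisely the channel $M$, the tower property gives $\mathbb E[\Sigma f_t(M)]=\mathbb E[f_t(M)^2]$, so the right-hand side equals $\rho^2-\big(\mathbb E[f_t(M)^2]\big)^2$; and by the informational equivalence of $\eta_t\Sigma+\nu_tZ$ with $\sqrt{\mu_t}\,\Sigma+Z$ (using $\eta_t=\sqrt\lambda\,\nu_t^2$) and of $B+\tau_{t-1}Z$ with $B+\sqrt{\Delta(1+\xi_{t-1})/\kappa}\,Z$, the identity displayed just before \eqref{eq:fixed_point_iteration} identifies $\mathbb E[f_t(M)^2]=\rho-\mathsf{mmse}_1(\mu_t,\xi_{t-1})=\mu_{t+1}/\lambda$. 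Letting $t\to\infty$, Lemma~\ref{lem:contractive} gives $\mu_{t+1}\to\mu^*$, so the iterated limit is $\rho^2-(\mu^*/\lambda)^2$.

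For the $\bm\beta$-limit, $\bm\Phi(\widehat{\bm\beta}^t-\bm\beta_0)=\sqrt\kappa\,\bm S(\bm\beta^t-\bm\beta_0)$, and since $\bm y^\circ=\bm S\bm\beta_0+\bm\varepsilon/\sqrt\kappa$, the $\bm z^t$-update $\bm z^t=\bm y^\circ-\bm S\bm\beta^t+\tfrac1\kappa c_{t-1}\bm z^{t-1}$, with $c_{t-1}=(\mathcal A\bm\zeta_{t-1})(\bm S^\top\bm z^{t-1}+\bm\beta^{t-1},\bm\sigma^t)$, rearranges to $\bm\Phi(\widehat{\bm\beta}^t-\bm\beta_0)=\bm\varepsilon-\sqrt\kappa\,\bm z^t+\tfrac{1}{\sqrt\kappa}c_{t-1}\bm z^{t-1}$. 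Expanding $\tfrac1n\|\cdot\|^2$ and invoking the $\bm z$-part of the SE --- which controls $\tfrac1n\|\bm z^t\|^2$, $\tfrac1n\|\bm z^{t-1}\|^2$, $\tfrac1n\langle\bm z^t,\bm z^{t-1}\rangle$, $\tfrac1n\langle\bm\varepsilon,\bm z^t\rangle$, $\tfrac1n\langle\bm\varepsilon,\bm z^{t-1}\rangle$ and $\tfrac1n\|\bm\varepsilon\|^2$, as well as the scalar $c_{t-1}$, which converges to the corresponding averaged partial derivative of $\zeta_{t-1}$ --- expresses $\lim_{n\to\infty}\tfrac1n\|\bm\Phi(\widehat{\bm\beta}^t-\bm\beta_0)\|^2$ as an explicit function of the SE parameters. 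Passing $t\to\infty$ (Lemma~\ref{lem:contractive}) and using the Nishimori / Bayes-optimal structure of the SE at its fixed point --- where the residual decomposes as the noise plus an orthogonal signal-error term, $\kappa\tau_*^2=\Delta(1+\xi^*)$, and $\Delta\xi^*=\mathsf{mmse}_2(\mu^*,\xi^*)$ --- this collapses to $\tfrac{\Delta\,\xi^*}{1+\xi^*}$.

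The first display in the statement is almost-sure convergence; the $\mathsf{MSE^{AMP}}$ formulas require upgrading it to convergence in $L^1$. Since $\widehat{\bm\sigma}^t$ has all entries in $[0,1]$ (posterior means of a Bernoulli variable), $\tfrac1{p^2}\|\widehat{\bm\sigma}^t(\widehat{\bm\sigma}^t)^\top-\bm\sigma_0\bm\sigma_0^\top\|_F^2\le 4$ deterministically, and bounded convergence applies; for $\bm\beta$, the compact support of $P(\cdot\mid0)$ and $P(\cdot\mid1)$ supplies the uniform integrability of $\tfrac1n\|\bm\Phi(\widehat{\bm\beta}^t-\bm\beta_0)\|^2$ needed to move the expectation through the $p\to\infty$ and then the $t\to\infty$ limits. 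I expect the SE theorem --- in particular the universality reduction of the rescaled adjacency matrix together with the correct treatment of its rank-one spike inside the coupled symmetric/rectangular iteration --- to be the main obstacle; the remaining steps are essentially bookkeeping around the SE recursion, the only genuinely non-routine point being the Bayes-optimal simplification of the $\bm\beta$-error identity.
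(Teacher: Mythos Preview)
Your proposal is correct and follows essentially the same route as the paper: establish a state-evolution theorem for the coupled iteration by first passing from $\widebar{\bm A}/\sqrt p$ to a Gaussian Wigner matrix via universality (this is where the density assumption enters) and treating the rank-one spike as in spiked-Wigner AMP, then expand the Frobenius/$\ell^2$ errors and evaluate each term through SE, using the tower property for the $\bm\sigma$-part and the $\bm z^t$-recursion for the $\bm\beta$-part, before sending $t\to\infty$ via Lemma~\ref{lem:contractive} and upgrading to $L^1$ by dominated convergence. The only place the paper is more explicit than your sketch is the $\bm\beta$-limit: rather than invoking a generic ``Nishimori / orthogonality at the fixed point'' simplification, it computes the limiting Onsager scalar $\omega^*:=\lim_t\lim_p\tfrac1{\kappa p}\sum_i\partial_1\zeta_{t-1}(\cdot)$ directly from SE, shows $\omega^*=\widetilde\tau_{t-1}^2/\tau_{t-1}^2\to\xi^*/(1+\xi^*)$, and then algebraically reduces $\Delta/\kappa+(\omega^*-1)^2(\Delta/\kappa)(1+\xi^*)+2(\omega^*-1)\Delta/\kappa$ to $\Delta\xi^*/(\kappa(1+\xi^*))$; you should be prepared to carry out this step concretely rather than appeal to an abstract orthogonality.
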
}
\begin{rem}
Let us observe that using these specific sets of non-linearities given by \eqref{eq:non_linearities} precludes the occurrence of any trivial fixed points (i.e., $(0,0)$ is not allowed as a fixed point) for the recursive equations \eqref{eq:state_evol}. Consequently, starting the AMP iterations from uninformative starting values also results in non-trivial solutions. Further, we note here that the system of equations \eqref{eq:fixed points} could have multiple solutions in general. \revsagc{The values of $\mathsf{MSE^{AMP}_\sigma}$ and $\mathsf{MSE^{AMP}_\beta}$ are determined by the specific fixed points attained by the AMP algorithm. Finally, we emphasize that the uniqueness of the fixed points depends on the priors.}
\end{rem}

\begin{rem}
\revsagc{The condition $\frac{b_p}{p}(1-\frac{b_p}{p})\ge C\,\frac{\log p}{p}$ ensures that the underlying network $\bm G$ is dense enough for the correctness of the state evolution formalism. One expects the SE formalism to break down if the graph is too sparse (e.g., if the average degrees are constant).}

\end{rem}

\begin{rem}
    We analyze the AMP algorithm introduced above using the results of \cite{ma_nandy}. However, we remark that the algorithm can be equivalently analyzed using the powerful general framework introduced in \cite{gerbelot2021graph}. To the best of our knowledge, AMP algorithms with such interacting sets of variables arose originally in \cite{manoel2017multi} in the analysis of multi-layer generalized linear estimation problems.  
\end{rem}

\subsubsection{Statistical Optimality of the Algorithm} 
Having introduced an algorithm based on Approximate Message Passing, we turn to the question of optimal statistical estimation in this setting. In this section, we identify a broad class of settings where the AMP based algorithm introduced above yields optimal statistical performance. 

As a first step, we characterize the limiting mutual information between the underlying signal $(\bm \beta_0, \bm \sigma_0)$ and the observed data. In addition to being a fundamental information theoretic object in its own right, the limiting mutual information will help us characterize the estimation performance of the Bayes optimal estimator in this setup.


To this end, we assume that the conditional distributions $P(\,\cdot\,|\,0\,)$ and $P(\,\cdot\,|\,1\,)$ have finite supports contained in some compact interval $[-s_{\max}, s_{\max}]$. This finite support assumption is merely for technical convenience---we expect that the results can be extended to unbounded, light-tailed (e.g. subgaussian) settings with additional work. Note that the special case $P(\cdot|0)=\delta_0$ corresponds to a discrete case of the classical spike and slab prior. 
Recall that the mutual information between $(\bm \beta_0, \bm \sigma_0)$ and the data represented by $(\bm A,\bm \Phi,\bm y)$ is defined as follows:
\[
\revsag{I(\bm \beta_0,\bm \sigma_0;\bm A,\bm y):= \mathbb{E}_{(\bm \beta_0, \bm \sigma_0, \bm \varepsilon, \bm \Phi, \bm A)}\left[\log\frac{P(\bm A,\bm y|\bm \beta_0, \bm \sigma_0)}{P(\bm A,\bm y)}\right].}
\]
Define 
\[
a:=\sqrt{\mu}\Sigma+\widebar{Z} \quad \mbox{and} \quad y:=B+\sqrt{\frac{\Delta(1+\xi)}{\kappa}}\widebar{\varepsilon},
\]
where $\Sigma \sim \mathrm{Bernoulli}(\rho)$, $B|\Sigma \sim P(\,\cdot\,|\,\Sigma)$, $\widebar{Z},\widebar{\varepsilon} \sim \mathsf{N}(0,1)$ and $\Sigma, B,\widebar{Z}$ and $\widebar{\varepsilon}$ are mutually independent.
We further set 
\[
\mathsf{I}(\mu,\xi;\Delta):=\mathbb{E}\left[\log\frac{P(a,y|\Sigma,B)}{P(a,y)}\right].
\]
Then we have the following theorem characterizing the limiting mutual information.
\begin{thm}
\label{thm:mut_inf_graph}
If $b_p(1-\frac{b_p}{p}) \rightarrow \infty$ as $n,p \rightarrow \infty$, then we have
\begin{align}
\label{eq:mut_inf_graph}
&\mathcal{I}:=\lim\limits_{p \rightarrow \infty}\frac{1}{p}I(\bm \beta_0,\bm \sigma_0;\bm A, \bm y) \nonumber \\
&=  \min\limits_{\mu,\xi\ge 0}\Bigg\{\frac{\lambda\rho^2}{4}+\frac{\mu^2}{4\lambda}+\frac{\kappa}{2}\left[\log\left(1+\xi\right)-\frac{\xi}{1+\xi}\right]- \frac{\mu\rho}{2}+\mathsf{I}(\mu,\xi;\Delta)\Bigg\}.
\end{align}
\end{thm}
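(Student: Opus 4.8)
The plan is to characterize $\mathcal{I} = \lim_{p\to\infty}\frac1p I(\bm\beta_0,\bm\sigma_0;\bm A,\bm y)$ via the \emph{adaptive interpolation} method of \citet{Adaptive_Interpolation}, following the template developed for planted models in \cite{barbier2019optimal}. First I would set up an interpolating family of inference channels indexed by $s\in[0,1]$ that at $s=0$ coincides with the true \graphreg observation model $(\bm A,\bm y)$ and at $s=1$ decouples into two scalar ``denoising'' channels: a Gaussian channel $a = \sqrt{\mu(s)}\,\Sigma + \bar Z$ carrying the community information from $\bm A$, and a Gaussian channel $y = B + \sqrt{\Delta(1+\xi(s))/\kappa}\,\bar\varepsilon$ carrying the regression information. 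Concretely, the interpolation path uses time-dependent signal-to-noise functions $\mu(s)$ and $\xi(s)$ (equivalently, perturbation parameters that are allowed to depend on $s$ in an absolutely continuous way — this adaptivity is what makes the method work), so that $\mathsf{I}(\mu,\xi;\Delta)$ emerges as the $s=1$ boundary term while the $s=0$ boundary recovers the target mutual information.

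The core computation is the \emph{fundamental sum rule}: differentiating the interpolating free energy $\psi(s)$ in $s$ and using Gaussian integration by parts plus the Nishimori identity (which holds because we are in the Bayes-optimal / planted setting), one expresses $\psi(1)-\psi(0)$ as an integral over $s$ of terms built from the overlaps. The two matrix/vector channels contribute the quadratic ``potential'' terms $\frac{\lambda\rho^2}{4}+\frac{\mu^2}{4\lambda}-\frac{\mu\rho}{2}$ (from the rank-one community channel, which behaves like a low-rank matrix estimation problem, cf. \cite{barbier2019optimal, Adaptive_Interpolation}) and $\frac{\kappa}{2}[\log(1+\xi)-\frac{\xi}{1+\xi}]$ (from the linear-regression channel with Gaussian design). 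Next I would establish concentration of the overlaps: for the regression part one controls the overlap of $\bm\beta$ with $\bm\beta_0$ (and of $\bm\Phi\bm\beta$ with $\bm\Phi\bm\beta_0$), while for the community part one controls $\frac1p\langle\bm\sigma,\bm\sigma_0\rangle$; these concentration statements, combined with the contractivity of the fixed-point recursion in Lemma 1.4 (which guarantees the variational problem has a well-behaved critical point), let me pin down the saddle point and identify the $\min_{\mu,\xi\ge0}$ form. The denoiser Lipschitz regularity from Lemma 1.3 is used to make the free-energy manipulations (boundedness, differentiability under the integral) rigorous, and the density assumption $b_p(1-b_p/p)\to\infty$ is exactly what is needed so that $\widebar{\bm A}$ is well-approximated (in the relevant operator-norm sense) by a GOE-type matrix plus rank-one spike, which is the regime where the matrix-estimation free-energy formula is valid.

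The two halves are then glued: one shows the interpolation can be run so that the $s$-derivative is a perfect derivative of the candidate potential evaluated along $(\mu(s),\xi(s))$, and then optimizing the free boundary conditions over all admissible paths converts the sum rule into the stated variational formula. Matching lower and upper bounds: the upper bound on $\mathcal I$ (equivalently the lower bound on the free energy) comes from choosing the interpolation path optimally, and the lower bound comes from a convexity/Gaussian-interpolation argument (the ``$I$-MMSE''-type identity or the second-moment term being nonnegative), so that the min is attained. I expect the \textbf{main obstacle} to be handling the \emph{coupling} of the two channels through the shared latent $\bm\sigma_0$: unlike a single planted model, here the community structure in $\bm A$ and the support structure in $\bm\beta_0$ must be interpolated \emph{simultaneously} with two coupled SNR parameters $(\mu,\xi)$, and showing that the overlaps in the two subsystems concentrate jointly — and that the remainder term in the sum rule vanishes uniformly in $s$ despite this coupling — is the delicate point; this is where the adaptivity of the interpolation (letting $\mu(s),\xi(s)$ solve an ODE driven by the overlaps) is essential, and where the argument departs from the standard single-observation-channel proofs.
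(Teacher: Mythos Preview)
Your proposal follows essentially the same route as the paper: adaptive interpolation \`a la \citet{Adaptive_Interpolation}, with two coupled scalar channels (one for the community variable, one for the regression coefficient), overlap concentration via Nishimori plus Gaussian integration by parts, and a separate reduction from the graph model to a spiked-GOE model under the diverging-degree hypothesis. The identification of the \emph{coupling} of the two channels as the main technical novelty is exactly right.

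Two points deserve correction, however. First, your invocation of Lemmas~\ref{lem:continuity_diff_denoise} and~\ref{lem:contractive} is misplaced: those lemmas concern the AMP denoisers and the AMP state-evolution fixed point, and play no role whatsoever in the mutual-information proof. The regularity needed for the free-energy manipulations comes instead from the \emph{bounded support} assumption on $P(\cdot\mid\Sigma)$, and the existence of a minimizer in the variational formula needs no contractivity statement. Second, the graph-to-GOE reduction is not carried out in an operator-norm sense; the paper instead proves a direct mutual-information comparison $\big|\tfrac1p I(\bm\beta_0,\bm\sigma_0;\widetilde{\bm A},\bm y)-\tfrac1p I(\bm\beta_0,\bm\sigma_0;\bm G,\bm y)\big|\to 0$ using the Lindeberg-type techniques of \cite{AbbeMonYash,ma_nandy}, and the entire interpolation is then run on the Gaussian model $\widetilde{\bm A}$, not on $\bm A$. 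Finally, a structural remark: rather than a single continuous path $s\in[0,1]$ with ODE-driven $(\mu(s),\xi(s))$, the paper implements the interpolation as a $K$-step scheme (with $K=K_p\to\infty$), discrete in $k\in[K]$ and continuous in $t\in[0,1]$ within each step; the upper bound takes constant parameters $(q_k,E_k)\equiv\arg\min f_{\rs}$ and exploits the sign of the remainder, while the lower bound chooses $(q_k,E_k)$ \emph{adaptively} as the overlap and $\mathsf{mmse}$ at the start of step $k$, which is what kills the remainder exactly---this is the concrete mechanism behind what you describe as ``optimizing the free boundary conditions.''
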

\begin{rem}
\revsag{The condition $b_p(1-\frac{b_p}{p}) \rightarrow \infty$ assumed in this theorem ensures that the typical degree in the observed network diverges
as the problem size grows to infinity. We expect this condition to be necessary for this result. If the average graph degrees are bounded, the limiting mutual information should be given by the Bethe prediction on infinite trees \cite{10.1093/acprof:oso/9780198570837.001.0001}. }
\end{rem}

Observe that if $(\widebar{\mu},\widebar{\xi})$ is the global optimizer of the RHS of \eqref{eq:mut_inf_graph}, the first order stationary point conditions imply that $(\widebar{\mu},\widebar{\xi})$ satisfy the fixed point equations 
\begin{align}
\label{eq:first_order_cond}
\widebar{\mu}&=\lambda(\rho-\mathsf{mmse}_1(\widebar{\mu},\widebar{\xi})),\\
\widebar{\xi} &= \frac{1}{\Delta}\mathsf{mmse}_2(\widebar{\mu},\widebar{\xi}).
\end{align}
Recalling \eqref{eq:def_mu_fix_point}, we see that $(\mu^*, \xi^*)$ satisfy the same fixed point system. Of course, the fixed point system can have multiple solutions, and thus these two solutions are not equal in general. 

Our next result establishes that if these fixed points actually coincide, then the AMP based algorithm has Bayes optimal reconstruction performance. 
%
%
%
%
%
%
To this end, define 
\begin{align}
\label{eq:amp_performance} 
    &\mathsf{MMSE}:=\frac{1}{p^2}\mathbb E[\|\bm \sigma_0\bm \sigma^\top_0-\mathbb{E}[\bm \sigma_0\bm\sigma^\top_0|\bm A,\bm y,\bm \Phi]\|^2_F],\\  &\mathsf{y}_{\smmse}:=\frac{1}{n}\mathbb{E}[\|\bm \Phi(\mathbb E[\bm \beta_0|\bm A,\bm y,\bm \Phi]-\bm \beta_0)\|^2_2].
\end{align}
Note that $\mathsf{MMSE}$ and $\mathsf{y}_{\smmse}$ captures the reconstruction performance of the Bayes optimal estimators in this setting. 

\begin{thm}
\label{thm:opt_amp}
Assume that $(\mu^*, \xi^*) = (\widebar{\mu}, \widebar{\xi})$. Then we have 
\[
\lim\limits_{p \rightarrow \infty}\mathsf{MMSE}=\mathsf{MSE^{AMP}_\sigma} \quad \mbox{and} \quad \lim\limits_{p \rightarrow \infty}\mathsf{y_{mmse}}=\mathsf{MSE^{AMP}_\beta}.
\]
\end{thm}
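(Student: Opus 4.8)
The plan is to connect the Bayes-optimal reconstruction errors $\mathsf{MMSE}$ and $\mathsf{y}_{\smmse}$ to the limiting mutual information $\mathcal{I}$ characterized in Theorem~\ref{thm:mut_inf_graph} via an I-MMSE type relation, and then to match the resulting formula with the AMP performance from Theorem~\ref{thm:mse_amp}. Concretely, I would introduce a two-parameter family of auxiliary observation channels: one parameter $\theta_1$ scaling the signal-to-noise ratio of the graph observation (i.e.\ replacing $\frac{a_p-b_p}{p}$ effectively by a $\theta_1$-dependent quantity, so $\lambda \mapsto \theta_1 \lambda$), and another parameter $\theta_2$ adding an extra Gaussian side-channel $\bm y' = \sqrt{\theta_2}\,\bm \beta_0 + \bm \varepsilon'$ (or equivalently perturbing $\Delta$). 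Differentiating the limiting mutual information $\mathcal{I}(\theta_1,\theta_2)$ in each parameter yields, by the I-MMSE relation in the scalar channels, exactly the overlaps $\frac{1}{p^2}\mathbb{E}\|\mathbb{E}[\bm\sigma_0\bm\sigma_0^\top\mid \text{data}]\|_F^2$ and $\frac{1}{n}\mathbb{E}\|\bm\Phi\,\mathbb{E}[\bm\beta_0\mid\text{data}]\|^2$ up to explicit constants; combined with the Nishimori identity this gives $\mathsf{MMSE}$ and $\mathsf{y}_{\smmse}$.

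The key steps, in order, would be: (i) Verify that Theorem~\ref{thm:mut_inf_graph} extends to the perturbed model with parameters $(\theta_1,\theta_2)$ in a neighborhood of $(1,1)$, so that $\mathcal{I}(\theta_1,\theta_2) = \min_{\mu,\xi\ge 0}\mathcal{F}(\mu,\xi;\theta_1,\theta_2)$ where $\mathcal{F}$ is the obvious modification of the RHS of \eqref{eq:mut_inf_graph}. (ii) Apply an envelope-theorem / Danskin argument: since the minimum is attained and (generically, off a measure-zero set of $(\theta_1,\theta_2)$) at a unique minimizer, $\partial_{\theta_i}\mathcal{I} = \partial_{\theta_i}\mathcal{F}$ evaluated at the optimizer $(\widebar\mu(\theta),\widebar\xi(\theta))$. (iii) On the other side, compute $\partial_{\theta_i}$ of $\frac1p I(\bm\beta_0,\bm\sigma_0;\text{data})$ directly at the level of the high-dimensional model using I-MMSE; this produces $\mathsf{MMSE}$-type quantities. (iv) Equate the two expressions and, using the explicit form of $\mathcal{F}$ together with the relations $\rho - \mathsf{mmse}_1 = \frac{1}{p^2}\mathbb{E}\|\mathbb{E}[\bm\sigma_0\bm\sigma_0^\top|\cdot]\|^2$-type identities and the hypothesis $(\mu^*,\xi^*)=(\widebar\mu,\widebar\xi)$, read off that $\lim_p \mathsf{MMSE} = \rho^2 - (\mu^*)^2/\lambda^2 = \mathsf{MSE^{AMP}_\sigma}$ and $\lim_p \mathsf{y}_{\smmse} = \Delta\xi^*/(1+\xi^*) = \mathsf{MSE^{AMP}_\beta}$, invoking Theorem~\ref{thm:mse_amp}.

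A complementary (and perhaps cleaner) route for the lower bound direction is the standard sandwich: AMP is a valid estimator, so $\mathsf{MMSE} \le \mathsf{MSE^{AMP}_\sigma}$ and $\mathsf{y}_{\smmse} \le \mathsf{MSE^{AMP}_\beta}$ always; the content is the matching inequality $\mathsf{MMSE} \ge \mathsf{MSE^{AMP}_\sigma}$, which is what the I-MMSE argument above delivers under the hypothesis $(\mu^*,\xi^*)=(\widebar\mu,\widebar\xi)$. I would phrase it so that the mutual-information formula gives an exact expression for $\lim_p \mathsf{MMSE}$ in terms of the global optimizer $(\widebar\mu,\widebar\xi)$, and the hypothesis then identifies this optimizer with the AMP fixed point $(\mu^*,\xi^*)$.

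The main obstacle I anticipate is step (ii)--(iii): making the interchange of limits and derivatives rigorous. One needs (a) that $\theta\mapsto \frac1p I(\cdot)$ is convex (or at least that its derivatives converge), so that $\frac{d}{d\theta}\lim_p = \lim_p \frac{d}{d\theta}$ is justified — this typically follows because mutual information is concave in the SNR for Gaussian channels and monotone, giving uniform control of difference quotients; (b) concentration of the overlap/MMSE quantities (again via Nishimori plus a Gaussian-Poincaré or Efron–Stein bound, as in \cite{barbier2019optimal}), so that the per-instance quantities in \eqref{eq:amp_performance} concentrate to their expectations and match the scalar-channel MMSEs. Differentiability of the limit $\mathcal{I}$ in $(\theta_1,\theta_2)$ away from a negligible set, needed for the envelope step, requires an argument that the optimizer is generically unique; this is the delicate point, handled by perturbing the SNR parameters and using that a convex function is differentiable almost everywhere, together with the fact that at points of differentiability the envelope identity holds without a uniqueness assumption on the minimizer.
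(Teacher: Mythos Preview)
The paper does not actually include an explicit proof of Theorem~\ref{thm:opt_amp}: the statement appears in Section~1.1.2 and proofs are said to be deferred to the Appendix, but the Appendix only treats Lemma~\ref{lem:contractive}, Theorem~\ref{thm:mut_inf_graph} and its auxiliary lemmas, and Theorem~\ref{thm:state_evol_graph}. The result is evidently meant to follow from Theorems~\ref{thm:mse_amp} and~\ref{thm:mut_inf_graph} by a standard I-MMSE argument, and that is precisely what you outline.

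Your plan is correct and is the expected route. Differentiating the variational formula \eqref{eq:mut_inf_graph} in the SNR parameters ($\lambda$ for the matrix channel; $\Delta^{-1}$ or an auxiliary Gaussian side channel for the regression part), applying the envelope theorem so that only the explicit parameter dependence survives at the global optimizer $(\widebar\mu,\widebar\xi)$, and matching with the finite-$p$ I-MMSE identity produces single-letter expressions for $\lim_p\mathsf{MMSE}$ and $\lim_p\mathsf{y}_{\smmse}$ in terms of $(\widebar\mu,\widebar\xi)$. Under the hypothesis $(\mu^*,\xi^*)=(\widebar\mu,\widebar\xi)$ these coincide with the AMP formulae in Theorem~\ref{thm:mse_amp}. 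Your sandwich observation that $\mathsf{MMSE}\le \mathsf{MSE^{AMP}_\sigma}$ and $\mathsf{y}_{\smmse}\le \mathsf{MSE^{AMP}_\beta}$ for every $p,t$ (since the posterior mean is the $L^2$-optimal estimator) is valid and gives one inequality for free; only the matching direction requires the I-MMSE computation.

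The obstacles you list are the genuine ones. The interchange $\partial_\theta\lim_p=\lim_p\partial_\theta$ is justified by concavity of the (per-vertex) mutual information in the Gaussian SNR parameters, so that pointwise convergence of concave functions upgrades to convergence of derivatives at points of differentiability; and the envelope step holds at every $\theta$ where the limit is differentiable, which is Lebesgue-a.e.\ by concavity, without needing uniqueness of the minimizer. One small refinement: rather than introducing a new parameter $\theta_1$ with $\lambda\mapsto\theta_1\lambda$, it is cleaner to differentiate directly in $\lambda$ (the formula \eqref{eq:mut_inf_graph} already depends explicitly on $\lambda$), which immediately yields $\partial_\lambda\mathcal{I}=\tfrac{\rho^2}{4}-\tfrac{\widebar\mu^2}{4\lambda^2}$ and hence $\lim_p\mathsf{MMSE}=\rho^2-\widebar\mu^2/\lambda^2$.
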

\noindent 
In general, it is hard to check the Assumption in Theorem ~\ref{thm:opt_amp}. If \eqref{eq:def_mu_fix_point} has a unique root, the assumption is trivially satisfied---indeed, this is one prominent example when the above condition can be verified in practice. The difference between $(\mu^*, \xi^*)$ and $(\widebar{\mu}, \widebar{\xi})$ is naturally associated with possible statistical/computational gaps in this problem. We defer a discussion of this point to Section \ref{sec:discussion}.

\subsubsection{Variable discovery and applications to multiple testing} 
\label{variable_discovery}


We turn to the recovery of non-null variables in this section. We will work under the \graphreg model, and assume that $P(\cdot|\sigma_{0i}) = \delta_0\,\mathbb{I}(\sigma_{0i}=0)+\mathrm{Q}\,\mathbb{I}(\sigma_{0i}=1)$, $\mathrm{Q}((-\varepsilon, \varepsilon))=0$ for some $\varepsilon>0$. In this case, discovering the non-null variables is equivalent to recovering the non-zero $\sigma_{0i}$ variables. Specifically, we consider the hypothesis tests
\[
H_{0i}: \sigma_{0i} = 0 \quad \mbox{vs} \quad H_{1i}: \sigma_{0i} \neq 0.
\]

To develop a multiple testing procedure for this setting, we \revsag{shall} employ the Bayes optimal AMP algorithm given by \eqref{eq:amp_iterates}, \revsag{and} \eqref{eq:amp_iterates_1}. Our algorithm is inspired by a similar strategy developed in \cite{montanari2021} in the context of a low-rank matrix recovery problem. 
We use the $\revsag{\bm \sigma^t}$ iterates to devise a Benjamini-Hochberg type multiple testing procedure for variable discovery. 
%
%
Recall the state evolution parameters $\tau^2_t,\nu^2_t$ and $\eta_t$'s defined by \eqref{eq:state_evol}.
Consider the following p-values to test the hypotheses stated above. Define 
\revsag{
\begin{equation}
\label{eq:p_val}
    p_i := 2\,\left(1-\Phi\left(\frac{|\sigma^t_i|}{\nu_t}\right)\right),
\end{equation}
where $\Phi(x):=\int_{-\infty}^{x}\frac{1}{\sqrt{2\pi}}e^{-t^2/2}\,dt$.} Our next result establishes that the p-values introduced above are actually asymptotically valid. 

\begin{thm}
    \label{thm:P_val_0}
    Consider the $P$-values defined by \eqref{eq:p_val}. Then, if $\sigma_{0i_0}=0$, for any $\alpha \in [0,1]$,
    \[
       \lim\limits_{n \rightarrow \infty}\mathbb{P}\left[p_{i_0}(t) \le \alpha\right] = \alpha.
    \]
\end{thm}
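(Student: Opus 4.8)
The plan is to deduce the statement from the state-evolution characterization of the AMP orbit together with a coordinate-exchangeability argument. First I would invoke the state-evolution analysis that underlies Theorem~\ref{thm:mse_amp} (i.e.\ the results of \cite{ma_nandy} applied to the recursion \eqref{eq:amp_iterates}--\eqref{eq:amp_iterates_1}): for each fixed $t$ and every pseudo-Lipschitz test function $\psi:\mathbb{R}^2\to\mathbb{R}$ of order two,
\[
\frac{1}{p}\sum_{i=1}^{p}\psi\big(\sigma_i^t,\sigma_{0i}\big)\longrightarrow\mathbb{E}\big[\psi(\eta_t\Sigma+\nu_t Z,\Sigma)\big]\qquad\text{as }p\to\infty,
\]
where $\Sigma\sim\mathrm{Bernoulli}(\rho)$ and $Z\sim\mathsf{N}(0,1)$ are independent and $(\eta_t,\nu_t)$ are the state-evolution parameters of \eqref{eq:state_evol}. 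Here one uses that $f_{t-1}$ is the posterior mean $\mathbb{E}[\Sigma\mid\cdots]$, so that the rank-one spike in $\widebar{\bm A}/\sqrt p$ contributes the signal coefficient $\sqrt{\lambda}\,\mathbb{E}[f_{t-1}^2]=\sqrt{\lambda}\,\nu_t^2=\eta_t$ to $\bm\sigma^t$ while the Wigner-type bulk contributes effective noise of variance $\nu_t^2$ after the Onsager correction.

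Next I would upgrade this empirical statement to a distributional statement about the single coordinate $i_0$. The joint law of $(\bm\sigma_0,\bm\beta_0,\bm\Phi,\bm A)$ and of the initializers $(\bm\sigma^0,\bm\beta^0)$ is invariant under simultaneous permutation of the $p$ feature indices, and the recursion \eqref{eq:amp_iterates}--\eqref{eq:amp_iterates_1} is equivariant under such permutations because the non-linearities act coordinatewise and the matrix operations commute with conjugation by permutation matrices. Hence the pairs $(\sigma_{0i},\sigma_i^t)$, $1\le i\le p$, are exchangeable, and for bounded continuous $g$,
\[
\mathbb{E}\big[g(\sigma_{i_0}^t)\,\mathbb{I}(\sigma_{0i_0}=0)\big]=\frac{1}{p}\sum_{i=1}^{p}\mathbb{E}\big[g(\sigma_i^t)\,\mathbb{I}(\sigma_{0i}=0)\big]\longrightarrow(1-\rho)\,\mathbb{E}\big[g(\nu_t Z)\big],
\]
where the limit is obtained by applying the display above to $\psi(x,s)=g(x)(1-s)$ (approximated by pseudo-Lipschitz functions if needed) and passing the limit inside the expectation by bounded convergence. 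Dividing by $\mathbb{P}(\Sigma=0)=1-\rho>0$ shows that, conditionally on $\{\sigma_{0i_0}=0\}$, $\sigma_{i_0}^t$ converges weakly to $\nu_t Z\sim\mathsf{N}(0,\nu_t^2)$; note $\nu_t>0$ for $t\ge1$ since the chosen non-linearities exclude the trivial fixed point.

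To conclude, work conditionally on $\{\sigma_{0i_0}=0\}$. The map $z\mapsto 2(1-\Phi(|z|/\nu_t))$ is continuous, so by the continuous mapping theorem $p_{i_0}(t)$ converges weakly to $2(1-\Phi(|Z|))$, and a direct computation gives $\mathbb{P}(2(1-\Phi(|Z|))\le\alpha)=\mathbb{P}(|Z|\ge\Phi^{-1}(1-\alpha/2))=\alpha$, i.e.\ $2(1-\Phi(|Z|))\sim\mathrm{Uniform}(0,1)$. Since the uniform CDF is continuous, weak convergence yields $\lim_{n\to\infty}\mathbb{P}[p_{i_0}(t)\le\alpha]=\alpha$ for every $\alpha\in[0,1]$.

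I expect the main obstacle to be the rigorous execution of the second step: carefully justifying that the problem is genuinely exchangeable across feature coordinates (including the initializers and the Onsager-corrected iteration), so that the averaged state-evolution guarantee transfers to the marginal law of the fixed coordinate $i_0$, and handling the fact that the test functions one ultimately needs --- indicators of half-lines composed with $z\mapsto 2(1-\Phi(|z|/\nu_t))$ --- are not themselves pseudo-Lipschitz and must be treated by an approximation and weak-convergence argument rather than plugged directly into the state-evolution identity.
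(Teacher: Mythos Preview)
Your proposal is correct and follows essentially the same approach that the paper invokes: the paper does not write out a proof for Theorem~\ref{thm:P_val_0} but defers to the techniques of \cite{montanari2021}, which are precisely the ones you outline---combine the state-evolution characterization (here Theorem~\ref{thm:state_evol_graph}, specialized to test functions of $(\sigma_i^{t},\sigma_{0i})$) with coordinate exchangeability to upgrade the empirical-average limit to a marginal distributional limit for the single index $i_0$, then apply the continuous mapping theorem and the fact that $2(1-\Phi(|Z|))$ is uniform on $[0,1]$. Your flagged concerns (permutation equivariance of the initializer/iteration and the need to approximate indicator-type test functions by pseudo-Lipschitz ones before invoking state evolution) are exactly the technical points one has to check, and they are handled in \cite{montanari2021} in the same way you describe.
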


Using these p-values we shall design a Benjamini-Hochberg type procedure (\citet{10.2307/2346101}) for variable selection, which controls the false discovery proportion at level $\alpha$. To this end, we define the following estimator of false discovery proportion:

\[\revsag{
\widehat{\mathrm{FDP}}(s;t):=\frac{p(1-\rho)s}{1 \vee \left\{\sum_{i=1}^{p}\mathbb{I}\,(p_i(t) \le s)\right\}}.}
\]

Define the threshold $s_*(\alpha;t)$ given by

\[
s_*(\alpha;t):= \inf\left\{s \in [0,1]:\widehat{\mathrm{FDP}}(s;t) \ge \alpha\right\}.
\]

We reject the hypothesis $H_{0i}$ if $p_i(t)<s_*(\alpha;t)$. Let the set of rejected hypotheses be defined as $\hat{S}(\alpha;t)$. Consider the false discovery rate given by

\[
\mathrm{FDR}(\alpha,t;n):=\mathbb{E}\left[\frac{|\hat{S}(\alpha;t) \cap \{i:\sigma_{0i}=0\}|}{1 \vee |\hat{S}(\alpha;t)|}\right].
\]

The following theorem establishes that the $\mathrm{FDR}(\alpha,t;n)$ is asymptotically $\alpha$.

\begin{thm}
    \label{thm:P_val}
    For any $t \geq 0$, 
    \[
       \lim\limits_{n \rightarrow \infty}\mathrm{FDR}(\alpha,t;n) = \alpha.
    \]
\end{thm}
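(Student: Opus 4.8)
The plan is to derive the conclusion for $\mathrm{FDR}(\alpha,t;n)$ from an almost-sure statement about the realized false discovery proportion
\[
\mathrm{FDP}_p(\alpha;t):=\frac{|\hat{S}(\alpha;t)\cap\{i:\sigma_{0i}=0\}|}{1\vee|\hat{S}(\alpha;t)|},
\]
and then to pass to expectations via the bounded convergence theorem, since $\mathrm{FDP}_p(\alpha;t)\in[0,1]$ and $\mathrm{FDR}(\alpha,t;n)=\E[\mathrm{FDP}_p(\alpha;t)]$. Fix any $t\ge 1$. The state evolution recursion \eqref{eq:state_evol} with the denoisers \eqref{eq:non_linearities} admits no trivial fixed point (cf.\ the remark after Theorem~\ref{thm:mse_amp}), so $\nu_t>0$; and in the variable-discovery regime, where $P(\cdot|0)=\delta_0$ and $\mathrm{Q}((-\varepsilon,\varepsilon))=0$, one also gets $\eta_t\neq 0$, so that the p-values in \eqref{eq:p_val} are well defined and informative.

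\emph{Step 1: empirical distribution of the p-values.} The AMP state-evolution result underlying Theorems~\ref{thm:mse_amp} and~\ref{thm:P_val_0} gives that the empirical joint law of $\{(\sigma^t_i,\sigma_{0i})\}_{i=1}^p$ converges in Wasserstein-$2$ to the law of $(\eta_t\Sigma+\nu_t Z,\Sigma)$, where $\Sigma\sim\mathrm{Bernoulli}(\rho)$ and $Z\sim\mathsf{N}(0,1)$ are independent. Because $\nu_t Z$ has a density, the level sets $\{|\eta_t\Sigma+\nu_t Z|=c\}$ are null for every $c\ge 0$, so sandwiching the indicator $\mathbb{I}(|x|\ge c)$ between Lipschitz functions upgrades the Wasserstein convergence to
\[
\widehat G_p(s):=\frac1p\sum_{i=1}^p\mathbb{I}(p_i(t)\le s)\;\longrightarrow\;G(s):=\Pro\Big[2\big(1-\Phi(|\eta_t\Sigma+\nu_t Z|/\nu_t)\big)\le s\Big],
\]
\[
\widehat V_p(s):=\frac1p\sum_{i:\,\sigma_{0i}=0}\mathbb{I}(p_i(t)\le s)\;\longrightarrow\;(1-\rho)\,\Pro\big[2(1-\Phi(|Z|))\le s\big]=(1-\rho)\,s,
\]
for every $s\in[0,1]$, almost surely; the final identity is exactly the computation behind Theorem~\ref{thm:P_val_0}, namely $2(1-\Phi(|Z|))\sim\mathrm{Unif}(0,1)$. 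Since $\widehat G_p,\widehat V_p$ are monotone and the limits are continuous, this convergence is uniform on $[0,1]$ by a Glivenko--Cantelli argument.

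\emph{Step 2: the data-driven threshold.} Writing $G(s)=(1-\rho)s+\rho G_1(s)$ with $G_1$ the limiting CDF of the nonnull p-values, Step~1 gives $\widehat{\mathrm{FDP}}(s;t)=\frac{(1-\rho)s}{\frac1p\vee\widehat G_p(s)}\to\Psi(s):=\frac{(1-\rho)s}{G(s)}$, uniformly on sets bounded away from $0$, where $\Psi$ is continuous on $(0,1]$, $\Psi(s)\to 0$ as $s\downarrow 0$ (because $\eta_t\neq 0$ makes the nonnull p-values stochastically strictly smaller than uniform, forcing $G_1(s)/s\to\infty$), and $\Psi(1)=1-\rho$. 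Fix $\alpha$ in the nondegenerate regime $0<\alpha<1-\rho$ (more generally, $\alpha<\sup_s\Psi(s)$) and put $s_\infty:=\inf\{s:\Psi(s)\ge\alpha\}\in(0,1)$, so $\Psi(s_\infty)=\alpha$ and $G(s_\infty)>0$. On the event $\sup_s|\widehat{\mathrm{FDP}}(s;t)-\Psi(s)|\le\delta$ one has the sandwich $\inf\{s:\Psi(s)\ge\alpha+\delta\}\le s_*(\alpha;t)\le\inf\{s:\Psi(s)\ge\alpha-\delta\}$, and, assuming $\Psi$ is not flat at level $\alpha$ on any interval, letting $\delta\downarrow 0$ gives $s_*(\alpha;t)\to s_\infty$ almost surely.

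\emph{Step 3: conclusion, and the main obstacle.} Up to the asymptotically negligible gap between strict and non-strict inequality at the threshold (negligible since $G$ and $G_1$ are continuous), $\mathrm{FDP}_p(\alpha;t)=\widehat V_p(s_*(\alpha;t))\big/\big(\tfrac1p\vee\widehat G_p(s_*(\alpha;t))\big)$; plugging the uniform limits of $\widehat V_p,\widehat G_p$ and $s_*(\alpha;t)\to s_\infty$ into this identity yields $\mathrm{FDP}_p(\alpha;t)\to(1-\rho)s_\infty/G(s_\infty)=\Psi(s_\infty)=\alpha$ almost surely, whence $\mathrm{FDR}(\alpha,t;n)\to\alpha$ by bounded convergence. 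I expect the main obstacle to be Step~2: converting uniform convergence of the estimated FDP curve into convergence of the data-driven threshold $s_*(\alpha;t)$, which requires ruling out tangential crossings or flat stretches of $\Psi$ at level $\alpha$ and checking that $s_\infty$ is interior so that $\widehat G_p(s_*(\alpha;t))$ does not collapse; a secondary point is making the indicator-to-Lipschitz reduction of Step~1 rigorous uniformly over the threshold, along the lines of the proof of Theorem~\ref{thm:P_val_0} and the FDR analysis in \cite{montanari2021}.
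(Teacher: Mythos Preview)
Your proposal is correct and follows essentially the same approach as the paper, which explicitly omits the argument and defers to the techniques of \cite{montanari2021}; your three-step outline (state-evolution limits for the empirical null and overall p-value CDFs, convergence of the data-driven threshold via uniform convergence of $\widehat{\mathrm{FDP}}(\cdot;t)$ to $\Psi$, and passage to $\mathrm{FDR}$ by bounded convergence) is precisely that strategy spelled out. The regularity caveats you flag in Step~2 (non-tangential crossing and no flat stretch of $\Psi$ at level $\alpha$, plus interiority of $s_\infty$) are exactly the standard points handled in \cite{montanari2021} and are not treated separately here either.
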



In addition, it is often helpful to have credible intervals to quantify the uncertainty involved in recovering the $\sigma_{0i}$ variables. To this end, we construct the following credible sets for $\sigma_{0i}$'s based on $\nu_t,\eta_t$ and $\sigma^t_i$.

\begin{equation}
\label{eq:cred_set}
    \hat{J}_i(\alpha,t) = \Big[\frac{1}{\eta_t}\sigma^t_i-\frac{\nu_t}{\eta_t}\Phi^{-1}(1-\alpha/2),\frac{1}{\eta_t}\sigma^t_i+\frac{\nu_t}{\eta_t}\Phi^{-1}(1-\alpha/2)\Big].
\end{equation}
Our next result establishes that $(1-\alpha)$ fraction of the credible intervals contain the true $\sigma_i$ variables. 
\begin{thm}
    \label{thm:cred_set}
    Consider the credible sets defined by \eqref{eq:cred_set}. Then, almost surely
    \[
       \lim\limits_{p \rightarrow \infty}\frac{1}{p}\sum_{i=1}^{p}\mathbb{I}(\sigma_{0i} \in \hat{J}_i(\alpha,t)) = 1-\alpha.
    \]
\end{thm}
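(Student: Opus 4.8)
The plan is to deduce everything from the state evolution characterization of the community-detection component of the AMP orbit. The analysis behind Theorem~\ref{thm:mse_amp} (built on the results of \cite{ma_nandy}) shows that, almost surely, the empirical joint law of $\{(\sigma^t_i,\sigma_{0i}):1\le i\le p\}$ converges in Wasserstein-$2$ distance to $\mathrm{Law}(\eta_t\Sigma+\nu_t Z,\Sigma)$, where $\Sigma\sim\mathrm{Bernoulli}(\rho)$ and $Z\sim\mathsf{N}(0,1)$ are independent; in particular $\frac1p\sum_{i=1}^p\psi(\sigma^t_i,\sigma_{0i})\to\mathbb{E}[\psi(\eta_t\Sigma+\nu_t Z,\Sigma)]$ almost surely for every bounded Lipschitz $\psi:\mathbb{R}^2\to\mathbb{R}$. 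I take $t\ge1$ throughout, so that $\eta_t,\nu_t>0$ and the credible sets in \eqref{eq:cred_set} are well defined.

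First I would rewrite the membership event in terms of $\sigma^t_i-\eta_t\sigma_{0i}$ only. Since $\eta_t>0$, multiplying the two inequalities defining $\hat J_i(\alpha,t)$ by $\eta_t$ gives that $\sigma_{0i}\in\hat J_i(\alpha,t)$ if and only if $|\sigma^t_i-\eta_t\sigma_{0i}|\le c_t$, where $c_t:=\nu_t\Phi^{-1}(1-\alpha/2)$. Hence the average in the theorem equals $L_p:=\frac1p\sum_{i=1}^p\psi_*(\sigma^t_i,\sigma_{0i})$ with $\psi_*(x,s):=\mathbb{I}(|x-\eta_t s|\le c_t)$, which is bounded but discontinuous, so it is not directly an admissible test function. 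To get around this I would sandwich $\psi_*$ between Lipschitz functions: for $\varepsilon>0$ pick bounded $1/\varepsilon$-Lipschitz $h^-_\varepsilon\le\mathbb{I}(|\cdot|\le c_t)\le h^+_\varepsilon$ on $\mathbb{R}$ agreeing with the indicator outside the $\varepsilon$-neighbourhood of $\{\pm c_t\}$, and set $\psi^{\pm}_\varepsilon(x,s):=h^{\pm}_\varepsilon(x-\eta_t s)$. Each $\psi^{\pm}_\varepsilon$ is bounded and globally Lipschitz, so applying the state evolution limit to $\psi^{\pm}_\varepsilon$, and noting that $\psi^{\pm}_\varepsilon(\eta_t\Sigma+\nu_t Z,\Sigma)=h^{\pm}_\varepsilon(\nu_t Z)$, yields almost surely
\[
\mathbb{E}\big[h^{-}_\varepsilon(\nu_t Z)\big]\;\le\;\liminf_{p\to\infty}L_p\;\le\;\limsup_{p\to\infty}L_p\;\le\;\mathbb{E}\big[h^{+}_\varepsilon(\nu_t Z)\big].
\]

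To finish I would send $\varepsilon\downarrow0$. Because $\nu_t Z$ has a density we have $\mathbb{P}(|\nu_t Z|=c_t)=0$, so by dominated convergence $\mathbb{E}[h^{\pm}_\varepsilon(\nu_t Z)]\to\mathbb{P}(|\nu_t Z|\le c_t)$. Therefore $L_p$ converges almost surely, with limit $\mathbb{P}(|\nu_t Z|\le c_t)=\mathbb{P}(|Z|\le\Phi^{-1}(1-\alpha/2))=1-\alpha$, the last step by symmetry of the standard normal; this is the claim.

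I expect the only delicate point to be the discontinuity of $\psi_*$; the rest is an immediate consequence of the $\bm\sigma^t$ state evolution already established. The care required in the sandwiching step is just to verify that $h^{\pm}_\varepsilon$ composed with the affine map $(x,s)\mapsto x-\eta_t s$ remains bounded and Lipschitz, hence an admissible test function in the almost-sure state evolution guarantee. (The very same argument, restricted to $\{i:\sigma_{0i}=0\}$, where $\sigma^t_i$ is asymptotically $\nu_t Z$, also yields Theorem~\ref{thm:P_val_0}.)
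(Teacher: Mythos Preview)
Your proposal is correct and follows the same approach the paper implicitly adopts: the paper omits the proof of Theorem~\ref{thm:cred_set} and simply points to the techniques of \cite{montanari2021}, which amount precisely to invoking the state-evolution limit (here Theorem~\ref{thm:state_evol_graph}, specialized to a pseudo-Lipschitz test function of $(\sigma^{t}_i,\sigma_{0i})$) and then handling the discontinuity of the indicator by a Lipschitz sandwich, using that the limiting law $\eta_t\Sigma+\nu_t Z$ has a density so that the boundary event has probability zero. Your computation $\mathbb{P}(|\nu_t Z|\le \nu_t\Phi^{-1}(1-\alpha/2))=1-\alpha$ is the intended endpoint, and the observation that $\psi^{\pm}_\varepsilon(x,s)=h^{\pm}_\varepsilon(x-\eta_t s)$ is bounded Lipschitz (hence pseudo-Lipschitz, as required by Theorem~\ref{thm:state_evol_graph}) is exactly the right justification.
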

Observe that, coupled with the Dominated Convergence Theorem, Theorem \ref{thm:cred_set} implies that on average the coverage probability of the credible sets defined by \eqref{eq:cred_set} is $1-\alpha$. In other words,
\[
\lim\limits_{p \rightarrow \infty}\frac{1}{p}\sum_{i=1}^{p}\mathbb{P}(\sigma_{0i} \in \hat{J}_i(\alpha,t)) = 1-\alpha.
\]
The proof of Theorems \ref{thm:P_val}-\ref{thm:cred_set} follows using similar techniques as \citet{montanari2021}. Hence we omit these proofs for the sake of avoiding repetition.







\subsection{Prior Art} 
The problem studied in this paper and our approach overlap with two distinct lines of research in high-dimensional statistics. On the one hand, the meta question of incorporating network side information into supervised learning has been explored in the prior literature. In stark contrast to our approach, the prior works do not use a joint model for the whole data. Instead, the network information is usually incorporated directly into the estimation scheme via an intuitive penalization procedure. We note that while this strategy is intuitive, it is quite ad hoc, and it is difficult to formulate the question of optimal estimation and recovery without a joint generative model. We review the relevant results in this direction in Section \ref{sec:network_side}. 

Our AMP based approach is also related to the broader theme of incorporation of side-information into AMP algorithms. We review the progression in this direction in Section \ref{sec:amp_side}.

\subsubsection{Regression with network side information} 
\label{sec:network_side} 

Regression problems with network side information have been investigated in high-dimensional statistics and bioinformatics, often with the goal of incorporating relevant biological information into the inference procedure. For example, in a genomics setting, the network often represents pairwise relations between genes that are commonly co-expressed. It is natural to believe that successful incorporation of this side information should yield biologically interpretable models. We discuss below some prominent approaches that have been proposed in the literature, and compare them to the approach adopted in this paper.

\begin{itemize}
    \item[(i)] Penalization-based approaches: Network side information can be naturally incorporated into the estimation procedure through a suitably designed penalty parameter in an empirical risk minimization framework. In this spirit, \citet{li_and_li}, \cite{li2010variable} employ a penalty based on the graph laplacian. This penalty promotes smoothness of the estimation coefficients along the edges of the observed network. The performance of related penalization-based methods was rigorously explored in follow-up work in settings with sparsity in the underlying coefficients \cite{huang2011sparse}. The idea of penalization using the graph heat kernel has been revisited recently in  \cite{ghosh2022learning}.  
    In a similar vein, an $\ell_1+ \ell_2$ penalty based on the graph structure was also examined recently in  \cite{tran2022generalized}.

    We note that similar graph-based penalization approaches have also been studied extensively in the context of total variation denoising  (see e.g. \cite{rudin1992nonlinear,tibshirani2005sparsity,sharpnack2012sparsistency,wang2015trend,smola2003kernels,hutter2016optimal,padilla2017dfs} and references therein). However, one typically assumes that the data arises from a Gaussian sequence model (rather than a regression model) in this line of work. 
    

    \item[(ii)] In the bioinformatics literature, the graph laplacian  has also been employed, although via a different approach. The HotNet2 method \cite{leiserson2015pan}, a canonical procedure in this context, uses the graph laplacian to propagate univariate association measures (between the response $\bm{y}$ and individual features \revsag{$\bm \phi_i$}) along the network. These propagated scores are subsequently used to detect biologically interpretable subnetworks. This method has been successfully employed in several applications with network side information (see e.g. \cite{hoadley2014multiplatform,cancer2014comprehensive,agrawal2014integrated}). 
\end{itemize}

The existing approaches suffer from some specific shortcomings: 
\begin{itemize}
    \item[(i)] Existing methods assume that the network information is observed without measurement error. 
    Unfortunately, in many settings, the network observation itself is noisy and incomplete---the stochastic model assumed on the observed network might be more relevant in such settings.  
    
    \item[(ii)] While \revsag{the} existing strategies naturally incorporate the network information via appropriate penalties, these methods cannot directly incorporate additional structural information about the regression coefficients. For example, if it is known that individual regression coefficients are $\{\pm 1\}$ valued, one would naturally try to incorporate this constraint into the associated optimization problem. However, the resulting discrete optimization problem is typically intractable. While this issue can be potentially tackled using a subsequent convex relaxation, the statistical properties of the resulting estimator are not obvious. In sharp contrast, the Bayesian framework introduced in this paper directly incorporates structural information regarding the regression coefficients through the prior.

    
    \item[(iii)] While several strategies have been proposed in the prior literature for incorporating the additional network information, it is difficult to determine an optimal strategy for estimation and variable discovery. The question of optimal recovery is particularly important in the context of the biological applications outlined above---a non-trivial gain in the statistical power could create a critical difference in terms of a practical impact. Our framework is particularly useful in this context; one can rigorously study the optimality of statistical algorithms under the Bayesian framework introduced in this paper. 
    
    \item[(iv)] The optimization step in penalization based approaches can be quite slow when $n$ and $p$ are both large. On the other hand, the AMP based iterative algorithms discussed in this paper scale efficiently to large problem dimensions. 
\end{itemize}

\subsubsection{Approximate Message Passing with side information}
\label{sec:amp_side} 
Approximate Message Passing (AMP) algorithms were introduced originally in the context of compressed sensing (\citet{BM11journal,6566160,10.5555/3454287.3455127,https://doi.org/10.48550/arxiv.2110.09502}), but have found broad applications in many high-dimensional inference problems (e.g. linear and generalized linear models, low-rank matrix estimation, sparse codes, etc.). AMP algorithms are simple iterative algorithms that employ a set of specific non-linearities at each step. The statistical performance of these algorithms can be tracked using low-dimensional scalar recursions referred to as \emph{state evolution}. This makes the algorithms theoretically tractable, and further encourages practical deployment. We refer the interested reader to \cite{feng2022unifying} for a recent survey of these algorithms and their applications. 

Given some side information about the variables of interest, it is natural to incorporate this side information into the estimation procedure---this typically improves the estimation performance and associated downstream statistical performance. The incorporation of side information in AMP algorithms was  considered in \citet{Rush}. These results were later extended to a more general setup by the first author in \citet{ma_nandy}. We use the results of \cite{ma_nandy} to analyze the AMP algorithm, but emphasize that this analysis could be equivalently performed using the powerful framework introduced in \cite{gerbelot2021graph}. Related results also appear in the recent manuscript \citet{wang_zhong_fan}. 

We note that the use of side information is also critical in the evaluation of the limiting free energy in these models. Once a vanishing amount of side information is added to the model, one can establish concentration of the \emph{overlap}. This is critical for an application of interpolation methods employed for the evaluation of the limiting mutual information. This idea has been introduced, and exploited  in fundamental past works in the area (see e.g. \cite{barbier2019optimal, Adaptive_Interpolation} and references therein) and is also used crucially in our analysis. 

\paragraph{Organization} 
The rest of the paper is organized as follows. We present the proof of Theorem \ref{thm:mse_amp} in Section \ref{pf_amp}. We explore the finite sample performance of our algorithm in Section \ref{numerical}. In addition, we also compare the algorithm to existing penalization based approaches, and provide evidence for the robustness of our method to distributional assumptions. The proofs of the remaining technical results are deferred to the Appendix.

\section{Proof of Theorem \ref{thm:mse_amp}}
\label{pf_amp}
To prove Theorem \ref{thm:mse_amp} we shall first characterize the state evolution of the AMP iterates defined by \eqref{eq:amp_iterates} and \eqref{eq:amp_iterates_1}. In fact, we shall characterize the state evolution of a related sequence of AMP iterates with a Gaussian sensing matrix instead of a sensing matrix which is the adjacency matrix of a SBM.
\paragraph{State Evolution of AMP Iterates with Gaussian Sensing Matrix}
\revsag{Let us consider $\lambda$ as defined in \eqref{eq:def_snr} and the following matrix defined as:}
\begin{equation}
\label{eq:Gaussian_model}
    \widetilde{\bm A}=\sqrt{\frac{\lambda}{p}}\bm \sigma_0 \bm \sigma^\top_0 + \bm Z,
\end{equation}
where \revsag{$Z_{ij}=Z_{ji}\sim \mathsf{N}(0,1)$ if $i \neq j$ and $Z_{ii} \sim \mathsf{N}(0,2)$}. Let us consider the same AMP iterates as \eqref{eq:amp_iterates} and \eqref{eq:amp_iterates_1}, except we replace the matrix $\widebar{\bm A}$ by $\widetilde{\bm A}$. In other words, we consider the sequence of iterates $\{\widebar{\bm \sigma}^{t+1}\}_{t \ge 0}$ and $\{\widebar{\bm z}^t, \widebar{\bm \beta}^{t+1}\}_{t \ge 0}$ defined as follows.
\begin{align}
\label{eq:amp_iterates_2}
\widebar{\bm \sigma}^{t+1} &= \frac{\widetilde{\bm A}}{\sqrt{p}}\bm f_t(\widebar{\bm \sigma}^t,\bm S^\top\widebar{\bm z}^{t-1}+\widebar{\bm \beta}^{t-1})\\
&\quad\quad-(\mathcal A\bm f_t)(\widebar{\bm \sigma}^t,\bm S^\top\widebar{\bm z}^{t-1}+\widebar{\bm \beta}^{t-1})\bm f_{t-1}(\widebar{\bm \sigma}^{t-1},\bm S^\top\widebar{\bm z}^{t-2}+\widebar{\bm \beta}^{t-2}),\\
\end{align}
and,
\begin{align}
\label{eq:amp_iterates_3}
\widebar{\bm z}^t & = \bm y^\circ - \bm S\widebar{\bm \beta}^t+\frac{1}{\kappa}\widebar{\bm z}^{t-1}(\mathcal A\bm \zeta_{t-1})(\bm S^\top\widebar{\bm z}^{t-1}+\widebar{\bm \beta}^{t-1},\widebar{\bm \sigma}^{t})\\
\widebar{\bm \beta}^{t+1} &= \bm\zeta_t(\bm S^\top\widebar{\bm z}^t+\widebar{\bm \beta}^t,\widebar{\bm \sigma}^{t+1}).\\
\end{align}
Here, the definitions of all other terms are the same as that in \eqref{eq:amp_iterates} and \eqref{eq:amp_iterates_1}. Now, let us define a \emph{pseudo-Lipschitz functions} $\bm f$ as in (1.5) of \citet{BM11journal}.
\begin{defn}
Consider $\bm{a}=(a_1,\cdots,a_{k})^\top$ and $\bm{b}=(b_1,\cdots,b_{k})^\top$. 
A function $f:\mathbb{R}^{k} \rightarrow \mathbb{R}$ is called 
\emph{pseudo-Lipschitz} if there is an absolute constant $C > 0$ such that for all $\bm a, \bm b \in \mathbb{R}^k$
\begin{equation}
|f(\bm a)-f(\bm b)| \le C(1+\|\bm a\|+\|\bm b\|)\|\bm a -\bm b\|.
\end{equation}
\end{defn}
With this definition, we have the following theorem that describes the state evolution of the AMP iterates given by \eqref{eq:amp_iterates_2} and \eqref{eq:amp_iterates_3}.
\begin{thm}
\label{thm:state_evol}
Let the functions $f_t, \eta_t$ in \eqref{eq:amp_iterates_2} and \eqref{eq:amp_iterates_3} be Lipschitz with Lipschitz derivatives. Then, for pseudo-Lipschitz functions $\tilde{\psi}: \mathbb{R}^4 \rightarrow \mathbb{R}$, $\tilde{\phi}:\R^2 \rightarrow \R$ and the AMP iterates given by $\{\widebar{\bm \sigma}^{t+1}\}_{t \ge 0}$ and $\{\widebar{\bm z}^t, \widebar{\bm \beta}^{t}\}_{t \ge 0}$, we get the following relations.
\begin{align}
\lim\limits_{p \rightarrow \infty}\frac{1}{p}\sum\limits_{i=1}^{p}\widetilde{\psi}([\bm S^\top\widebar{\bm z}^t+\widebar{\bm \beta}^t]_i,\widebar{\sigma}^{t+1}_i,\beta_{0i},\sigma_{0i})&\overset{a.s.}{=}\mathbb{E}[\widetilde{\psi}(B+\tau_t Z_1,\eta_{t+1}\Sigma+\nu_{t+1}Z_2,B,\Sigma)],\\
\lim\limits_{n \rightarrow \infty}\frac{1}{n}\sum\limits_{i=1}^{n}\widetilde{\phi}(\widebar{z}^t_i,\bar{\varepsilon}_i)&\overset{a.s.}{=}\mathbb{E}[\widetilde{\phi}(W+\widetilde{\tau}_tZ_3,W)],
\end{align}
where $\tau_t,\eta_t,\nu_t$ are defined before in \eqref{eq:state_evol} and $\widetilde{\tau}^2_t=\tau^2_t-\Delta/\kappa$. Additionally, the random variables $Z_1,Z_2,Z_3 \sim \mathsf{N}(0,1),\, W \sim \mathsf{N}(0,\kappa^{-1}),\, B \sim P\,(\,\cdot\,|\,\Sigma),\, \Sigma \sim \mathrm{Bernoulli}(\rho)$ and all the random variables are mutually independent. Further, $\bar{\bm \varepsilon}=\bm \varepsilon/\sqrt{\kappa}$, where $\bm \varepsilon$ is defined in \eqref{eq:lin_model_1}.
\end{thm}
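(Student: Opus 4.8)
The plan is to recognize the orbit \eqref{eq:amp_iterates_2}--\eqref{eq:amp_iterates_3} as a single \emph{abstract} AMP iteration with two coupled blocks --- a symmetric block driven by the spiked Gaussian matrix $\widetilde{\bm A}$ of \eqref{eq:Gaussian_model} and a rectangular block driven by the i.i.d.\ Gaussian design $\bm S$ --- whose denoisers exchange information through auxiliary coordinates ($f_t$ reads the regression field $\bm S^\top\bar{\bm z}^{t-1}+\bar{\bm\beta}^{t-1}$, while $\zeta_t$ reads the matrix iterate $\bar{\bm\sigma}^{t+1}$), and then to invoke the master state evolution theorem for AMP with side information of \citet{ma_nandy} (equivalently the factor-graph AMP framework of \citet{gerbelot2021graph}, in the spirit of \citet{manoel2017multi}). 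The first step is to strip the rank-one spike from $\widetilde{\bm A}$ by conditioning on $\bm\sigma_0$: writing $\widetilde{\bm A}=\sqrt{\lambda/p}\,\bm\sigma_0\bm\sigma_0^\top+\bm Z$ with $\bm Z$ a $\mathrm{GOE}$-normalized matrix, the product $\tfrac{1}{\sqrt p}\widetilde{\bm A}\,\bm f_t(\cdot)$ decomposes as a deterministic signal term $\tfrac{\sqrt\lambda}{p}\bm\sigma_0\langle\bm\sigma_0,\bm f_t(\cdot)\rangle$ plus a noise term $\tfrac{1}{\sqrt p}\bm Z\,\bm f_t(\cdot)$, as in standard low-rank matrix AMP: after the Onsager correction already built into \eqref{eq:amp_iterates_2} the noise term contributes the Gaussian piece $\nu_{t+1}\bm Z_2$ while the signal term contributes $\eta_{t+1}\bm\sigma_0$, with $\eta_{t+1}=\sqrt\lambda$ times the limiting overlap $\tfrac1p\langle\bm\sigma_0,\bm f_t(\cdot)\rangle$ --- which for the posterior-mean denoisers of \eqref{eq:non_linearities} equals $\nu_{t+1}^2$ by the tower property, recovering $\eta_{t+1}=\sqrt\lambda\,\nu_{t+1}^2$. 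On the regression block one uses $\bar{\bm z}^t=\bm y^\circ-\bm S\bar{\bm\beta}^t+(\text{Onsager})=\bm S(\bm\beta_0-\bar{\bm\beta}^t)+\bar{\bm\varepsilon}+(\text{Onsager})$, so that $\bm S^\top\bar{\bm z}^t+\bar{\bm\beta}^t$ concentrates around $\bm\beta_0+\tau_t\bm Z_1$ and $\bar{\bm z}^t$ around $\bar{\bm\varepsilon}+\widetilde\tau_t\bm Z_3$ with $\widetilde\tau_t^2=\tau_t^2-\Delta/\kappa$.

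Next I would assemble the two blocks into one AMP over the block-diagonal matrix $\mathrm{diag}(\bm Z/\sqrt p,\,\bm S)$ and verify the hypotheses of the master theorem. Three points need checking: (i) $\bm Z$, $\bm S$ and $\bar{\bm\varepsilon}$ are mutually independent --- this is where the \graphreg model enters, since conditionally on $\bm\sigma_0$ the features $\{\bm\phi_\mu\}$ and the graph are independent, and after the Gaussian-equivalent replacement $\bm Z$ carries no further dependence on $(\bm S,\bar{\bm\varepsilon})$; (ii) the denoisers $f_t,\zeta_t$ and their derivatives are Lipschitz, which is the hypothesis of the theorem (and holds for the choices \eqref{eq:non_linearities} by Lemma~\ref{lem:continuity_diff_denoise}); and (iii) the initializers $\bm\beta^0,\bm\sigma^0$, together with $\bm\beta^{-1}=\bm z^{-1}=\bm 0$, admit a joint empirical limit with $(\bm\beta_0,\bm\sigma_0)$ compatible with the base cases $\tau_{-1}=\eta_0=\nu_0=0$, $\tau_0^2=\kappa^{-1}(\Delta+\E[B^2])$. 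The master theorem then produces joint Gaussianity of the limiting fields with an abstract covariance recursion; specializing it and using that the ``$\sigma$-coordinate'' $\eta_t\Sigma+\nu_t Z$ and the ``$\beta$-coordinate'' $B+\tau_t Z$ enter the arguments of $f_t,\zeta_t$ in decoupled slots (so all cross-covariances between the two blocks vanish, their driving noises being independent) collapses the abstract recursion to the scalar system \eqref{eq:state_evol}: $\nu_{t+1}^2$ as the second moment of the $\sigma$-denoiser output, $\eta_{t+1}=\sqrt\lambda\,\nu_{t+1}^2$ as the rank-one signal-update identity, and $\tau_{t+1}^2=\kappa^{-1}(\Delta+\E[(\zeta_{t-1}(B+\tau_tZ_1,\eta_{t+1}\Sigma+\nu_{t+1}Z_2)-B)^2])$ as the linear-model recursion with aspect-ratio factor $1/\kappa$. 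Testing against pseudo-Lipschitz $\widetilde\psi$ and $\widetilde\phi$ and inserting $\widetilde\tau_t^2=\tau_t^2-\Delta/\kappa$ then yields the two displayed limits, with $W$ the Gaussian limit of the rescaled noise $\bar{\bm\varepsilon}$.

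The main obstacle will be the bookkeeping required to fit this two-block, interacting-variable AMP --- one block symmetric with a planted spike, the other rectangular with i.i.d.\ Gaussian entries --- into the hypotheses of a single master state evolution theorem, and in particular the fact that the planted signals of the two blocks are \emph{not} independent: $\bm\sigma_0$ drives both $\widetilde{\bm A}$ and, through $\bm\beta_0$, the response $\bm y$. One must therefore carry $(\bm\sigma_0,\bm\beta_0)$ jointly as side information throughout and verify that only the three driving noises $(\bm Z,\bm S,\bar{\bm\varepsilon})$ need be independent, which the \graphreg model guarantees. A secondary technical point is the conditioning argument that removes the spike from $\widetilde{\bm A}$ while preserving the conditional Gaussianity and independence of $\bm Z$, together with the handling of the degenerate initial steps of \eqref{eq:state_evol} where $\eta_0=\nu_0=0$; once these are in place, the identification of the abstract state evolution with \eqref{eq:state_evol} and the passage to pseudo-Lipschitz test functions are routine.
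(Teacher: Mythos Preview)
Your proposal is correct and takes essentially the same approach as the paper: both recognize the iteration as a coupled two-block AMP (a symmetric spiked block driven by $\widetilde{\bm A}$ and a rectangular block driven by $\bm S$, with the denoisers exchanging side information) and then invoke the master state evolution theorem of \citet{ma_nandy} (Theorem~7.2 there, built on \citet{BM11journal}). The paper makes the bookkeeping you anticipate fully explicit via the change of variables $\bm h^{t+1}=\bm\beta_0-(\bm S^\top\widebar{\bm z}^t+\widebar{\bm\beta}^t)$, $\bm q^t=\widebar{\bm\beta}^t-\bm\beta_0$, $\bm e^t=\bm y^\circ-\bm S\bm\beta_0-\widebar{\bm z}^t$, $\bm m^t=-\widebar{\bm z}^t$, $\bm r^t=\bm f_t(\widebar{\bm\sigma}^t,\bm\beta_0-\bm h^t)$ and the recentered denoisers $\ell_t(s,r,x_0)=\zeta_{t-1}(x_0-s,r)-x_0$, $g_t(s,w)=s-w$, which casts \eqref{eq:amp_iterates_2}--\eqref{eq:amp_iterates_3} into the canonical form required by that theorem and from which the scalar recursion \eqref{eq:state_evol} is read off directly.
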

\begin{proof}
Let us begin by defining 
\begin{align}
\bm h^{t+1} &= \bm \beta_0-(\bm S^\top \bar{\bm z}^t+\widebar{\bm \beta}^t)\\
\bm q^t &= \widebar{\bm \beta}^t -\bm \beta_0\\
\bm e^t&=\bm y^\circ-\bm S\bm\beta_0-\widebar{\bm z}^t\\
\bm m^t&=-\widebar{\bm z}^t\\
\bm r^t&=\bm f_t(\widebar{\bm \sigma}^t,\bm \beta_0-\bm h^t).
\end{align}
Further, let us define
\begin{align}
\ell_t(s,r,x_0)&=\zeta_{t-1}(x_0-s,r)-x_0\\
g_t(s,w)&=s-w.
\end{align}
Let $\bm \ell_t:\R^{3p} \rightarrow \R^p$ and $\bm g_t:\R^{2n} \rightarrow \R^n$ be defined by applying $\ell_t$ and $g_t$ componentwise to the arguments of the functions. Then, from \eqref{eq:amp_iterates_3} we can observe that
\[
\bm q^t = \bm \ell_t(\bm h^t,\widebar{\bm \sigma}^{t+1}, \bm \beta_0), \quad \mbox{and} \quad \bm m^t=\bm g_t(\bm e^t,\widebar{\bm \varepsilon}).
\]
Now, we can rewrite the AMP iterates as,
\begin{align}
\label{eq:gaussian_AMP}
\bm q^t&=\bm \ell_t(\bm h^t,\widebar{\bm \sigma}^{t+1},\bm \beta_0),\\
\bm e^t&= \bm S \bm q^t-\lambda_t\bm m^{t-1},\\
\bm m^t&=\bm g_t(\bm e^t,\widebar{\bm \varepsilon}),\\
\bm h^{t+1} & = \bm S^\top\bm m^t-c_t\bm q^t,
\end{align}
and
\begin{align}
\label{eq:gaussian_AMP_1}
\bm r^{t} & =\bm f_t(\widebar{\bm \sigma}^{t},\bm \beta_0-\bm h^{t-1}),\\
\widebar{\bm \sigma}^{t+1}&= \frac{\widetilde{\bm A}}{\sqrt{p}}\bm r^{t}-\upsilon_t\bm r^{t-1},
\end{align}
where
\begin{equation}
\label{eq:onsager_terms}
    c_t=\frac{1}{n}\sum_{i=1}^{n}g^\prime_t(b^t_i,\varepsilon_i), \quad  \lambda_t=\frac{1}{p\kappa}\sum_{i=1}^p \ell^\prime_t(h^t_i,\widebar{\sigma}^{t+1}_i,\beta_{0i}), \quad \upsilon_t=\frac{1}{p}\sum_{i=1}^p f^{\prime}_t(\widebar{\sigma}^t_i,\beta_{0i}-h^t_i).
\end{equation}
In \eqref{eq:onsager_terms}, $g^\prime_t,\ell^\prime_t$ and $f^{\prime}_t$ are derivatives of $g_t,\ell_t$ and $f_t$ respectively with respect to their first coordinates and the initialization of the iterations is as in \eqref{eq:amp_iterates_graph}. Now using the techniques of \cite[Theorem 7.2]{ma_nandy}, which was in turn derived using the ideas developed in \cite[Theorem 1]{BM11journal}, we can conclude that for any pseudo-Lipschitz functions $\psi:\R^4 \rightarrow \R$ and $\phi: \R^2 \rightarrow \R$,
\begin{align}
    \lim_{p \rightarrow \infty}\frac{1}{p}\sum_{i=1}^{p}\psi(h^t_i,\widebar{\sigma}^{t+1}_i,\beta_{0,i},\sigma_{0i}) &\overset{a.s}{=} \mathbb E[\psi(\tau_tZ_1,\eta_{t+1}\Sigma+\nu_{t+1}Z_2,B,\Sigma)],\\
    \lim_{n \rightarrow \infty}\frac{1}{n}\sum_{i=1}^{n}\phi(e^t_i,\widebar{\varepsilon}_i) &\overset{a.s}{=} \mathbb E[\phi(\widetilde{\tau}_tZ_3,W)],
\end{align}
where $Z_1,Z_2,Z_3 \sim \mathsf{N}(0,1)$, $W \sim \mathsf{N}(0,\kappa^{-1})$, $B \sim P(\,\cdot\,|\,\Sigma)$ and $\Sigma \sim \mathrm{Bernoulli}(0,\rho)$ with all the random variables being mutually independent. Further, the parameters $\{\eta_t,\nu^2_t,\tau^2_t,\widetilde{\tau}^2_t\}_{t \ge 0}$ are defined as follows:
\begin{align}
\label{eq:proof_state_evol}
\nu^2_{t+1} &= \mathbb{E}[f^2_{t}(\eta_{t}\Sigma+\nu_{t}\widetilde{Z}_3,B+\tau_{t-1}\widetilde{Z}_4)],\\
\eta_{t+1} & = \sqrt{\lambda}\nu^2_{t+1},\\
\widetilde{\tau}^2_t &=\mathbb E[\ell^2_t(\tau_{t}\widetilde{Z}_1,\eta_{t+1}\Sigma+\nu_{t+1}\widetilde{Z}_2, B)]=\frac{1}{\kappa}\mathbb{E}[(\zeta_{t-1}(B+\tau_{t}\widetilde{Z}_1,\eta_{t+1}\Sigma+\nu_{t+1}\widetilde{Z}_2)-B)^2],\\
\tau^2_{t+1} & = \mathbb E[(\widetilde{Z}_5\tau^2_t-\widetilde{W})^2],
\end{align}
where $\widetilde{Z}_1,\widetilde{Z}_2,\widetilde{Z}_3,\widetilde{Z}_4,\widetilde{Z}_5 \sim \mathsf{N}(0,1)$, $\widetilde{W} \sim \mathsf{N}(0,\kappa^{-1})$, $B \sim P(\,\cdot\,|\,\Sigma)$ and $\Sigma \sim \mathrm{Bernoulli}(\rho)$, with the random variables being independent. Finally, if we take 
\begin{align}
\psi(h^t_i,\widebar{\sigma}^{t+1}_i,\beta_{0i},\sigma_{0i})&=\widetilde{\psi}(\beta_{0i}-h^t_i,\widebar{\sigma}^{t+1}_i,\beta_{0i},\sigma_{0i}),\\
\phi(e^t_i,\widebar{\varepsilon}_i) &= \widetilde{\phi}(\widebar{\varepsilon}_i-e^t_i,\widebar{\varepsilon}_i),
\end{align}
then the theorem follows.

\end{proof}
\paragraph{State Evolution of Graph Based AMP Iterates}
To obtain similar results for graph-based AMP iterates, let us recognize that
\begin{equation}
\label{eq:rank_one_graph_deformation}
   \widebar{\bm A}=\sqrt{\frac{\lambda}{p}}\bm\sigma_0\bm\sigma^\top_0+\bm{W}, 
\end{equation}
where $\bm W$ is a symmetric matrix with entries satisfying 
\[
\revsag{\mathbb E(W_{ij})=0, \quad \mathbb E(W^2_{ij})\in\left\{\frac{a_p(1-a_p/p)}{p\widebar{d}_p(1-\widebar{d}_p)},\frac{b_p(1-b_p/p)}{p\widebar{d}_p(1-\widebar{d}_p)}\right\} \quad \mbox{and} \quad |W_{ij}| \le \frac{1}{\sqrt{\widebar{d}_p(1-\widebar{d}_p)}}.}
\]
\revsag{Further, under the assumption $\widebar{d}_p(1-\widebar{d}_p) \rightarrow \infty$, $S_{ij}=\mathbb E(W^2_{ij})$ converges to $1$ uniformly on $[p]\times[p]$.} This implies $\bm W/\sqrt{p}$ is a generalized Wigner matrix in the sense of Definition 2.3 of \citet{wang_zhong_fan}. \revsag{Moreover under the assumption $\widebar{d}_p(1-\widebar{d}_p)\ge C\,\log p/p$ we can show using Theorem 2.7 of \citet{10.1214/19-AIHP1033} and (2.4) of \citet{wang_zhong_fan} that $\|\bm W\|_{\mathsf{op}} \le C_1\sqrt{p}$ for an absolute constant $C_1>0$ almost surely for large $n,p$.} This implies we satisfy the assumptions of Theorem 2.4 of \citet{wang_zhong_fan}. Hence, we can combine the proof techniques of Theorem 2.4 of \citet{wang_zhong_fan} and those of Theorem \ref{thm:state_evol} to get the following result.
\begin{thm}
\label{thm:state_evol_graph}
Let us assume $\widebar{d}_p(1-\widebar{d}_p)\ge C\,\log p/p$ for some constant $C>0$. Further, let the functions $f_t, \zeta_t$ in \eqref{eq:amp_iterates_2} and \eqref{eq:amp_iterates_3} be Lipschitz with Lipschitz derivatives. Then, for the pseudo-Lipschitz functions $\psi: \mathbb{R}^4 \rightarrow \mathbb{R}$, $\phi: \mathbb{R}^2 \rightarrow \mathbb{R}$ and the AMP iterates $\{\bm \sigma^{t+1}\}_{t \ge 0}$ and $\{\bm z^t, \bm \beta^{t}\}_{t \ge 0}$ defined in \eqref{eq:amp_iterates} and \eqref{eq:amp_iterates_1}, we get the following relations.
\begin{align}
\lim\limits_{p \rightarrow \infty}\frac{1}{p}\sum\limits_{i=1}^{p}\psi([\bm S^\top\bm z^t+\bm \beta^t]_i,\sigma^{t+1}_i,\beta_{0i},\sigma_{0i})&\overset{a.s.}{=}\mathbb{E}[\psi(B+\tau_t Z_1,\eta_{t+1}\Sigma+\nu_{t+1}Z_2,B,\Sigma)],\\
\lim\limits_{n \rightarrow \infty}\frac{1}{n}\sum\limits_{i=1}^{n}\phi(z^t_i,\widebar\varepsilon_i)&\overset{a.s.}{=}\mathbb{E}[\phi(W+\widetilde{\tau}_tZ_3,W)],
\end{align}
where $Z_1,Z_2,Z_3,W,\Sigma,B$ are defined before and $\tau^2_t,\widetilde \tau^2_t,\nu^2_t$ and $\eta_t$ are defined in \eqref{eq:proof_state_evol}.
\end{thm}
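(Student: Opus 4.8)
The plan is to obtain Theorem~\ref{thm:state_evol_graph} by composing two ingredients that are already available: the linear-model AMP analysis carried out in the proof of Theorem~\ref{thm:state_evol} (which rests on \cite[Theorem 7.2]{ma_nandy} and \cite[Theorem 1]{BM11journal}), and the universality theorem for matrix AMP driven by a generalized Wigner matrix, \cite[Theorem 2.4]{wang_zhong_fan}. The point is that the graph-based orbit differs from the Gaussian-sensing orbit of Theorem~\ref{thm:state_evol} only in that the matrix block uses $\widebar{\bm A}/\sqrt p$ in place of $\widetilde{\bm A}/\sqrt p$, and these two matrices produce the same state evolution.

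First I would record the decomposition \eqref{eq:rank_one_graph_deformation}, $\widebar{\bm A}=\sqrt{\lambda/p}\,\bm\sigma_0\bm\sigma_0^\top+\bm W$, and check the structural hypotheses on $\bm W$: the entries are independent up to symmetry, mean zero, with variance profile $S_{ij}=\mathbb{E}(W_{ij}^2)$ taking the two stated values and $|W_{ij}|\le(\widebar d_p(1-\widebar d_p))^{-1/2}$. Under $\widebar d_p(1-\widebar d_p)\to\infty$ (implied by the standing assumption $\widebar d_p(1-\widebar d_p)\ge C\log p/p$) the profile $S_{ij}$ converges to $1$ uniformly on $[p]\times[p]$, so $\bm W/\sqrt p$ is a generalized Wigner matrix in the sense of \cite[Definition 2.3]{wang_zhong_fan}; moreover \cite[Theorem 2.7]{10.1214/19-AIHP1033} together with \cite[(2.4)]{wang_zhong_fan} yields $\|\bm W\|_{\mathsf{op}}\le C_1\sqrt p$ almost surely for large $p$. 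These are exactly the assumptions under which matrix AMP with $\widebar{\bm A}/\sqrt p$ has the same limiting behavior as matrix AMP with a Gaussian Wigner matrix. Crucially, because the limiting variance profile is the constant $1$, the resulting recursion for $\nu_t^2,\eta_t$ is unchanged from \eqref{eq:proof_state_evol}.

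Next I would rewrite the coupled iteration \eqref{eq:amp_iterates}--\eqref{eq:amp_iterates_1} in the same normal form used in the proof of Theorem~\ref{thm:state_evol}: set $\bm h^{t+1}=\bm\beta_0-(\bm S^\top\bm z^t+\bm\beta^t)$, $\bm q^t=\bm\beta^t-\bm\beta_0$, $\bm e^t=\bm y^\circ-\bm S\bm\beta_0-\bm z^t$, $\bm m^t=-\bm z^t$, $\bm r^t=\bm f_t(\bm\sigma^t,\bm\beta_0-\bm h^t)$, so that the linear-model block $(\bm q^t,\bm e^t,\bm m^t,\bm h^t)$ obeys \eqref{eq:gaussian_AMP} with the independent Gaussian matrix $\bm S$, while the matrix block obeys $\bm r^t=\bm f_t(\bm\sigma^t,\bm\beta_0-\bm h^{t-1})$, $\bm\sigma^{t+1}=p^{-1/2}\widebar{\bm A}\,\bm r^t-\upsilon_t\bm r^{t-1}$, with Onsager coefficients \eqref{eq:onsager_terms}. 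The two blocks communicate only through the non-linearities $\bm f_t,\bm\zeta_t$ (Lipschitz with Lipschitz derivatives by Lemma~\ref{lem:continuity_diff_denoise}) and through the deterministic state-evolution scalars. Since $\bm S$, $\bm W$, and $(\bm\beta_0,\bm\sigma_0,\bm\varepsilon)$ are mutually independent, the conditioning argument of \cite{ma_nandy,BM11journal} for the $\bm S$-block and the combined-conditioning/Lindeberg-swapping argument of \cite{wang_zhong_fan} for the $\bm W$-block can be run simultaneously on the joint $\sigma$-algebra generated by all past iterates of both blocks. The induction then gives, jointly, that $\bm S^\top\bm z^t+\bm\beta^t$ behaves like $B+\tau_tZ_1$ on the features, $\bm\sigma^{t+1}$ like $\eta_{t+1}\Sigma+\nu_{t+1}Z_2$, and $\bm z^t$ like $W+\widetilde\tau_tZ_3$ on the samples, with scalars from \eqref{eq:proof_state_evol}; the claimed identities for $\widetilde\psi,\widetilde\phi$ then follow by the same substitutions as at the end of the proof of Theorem~\ref{thm:state_evol}.

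The main obstacle is precisely this composition of the two conditioning arguments across the block boundary: one must verify that conditioning on the joint past of both orbits still leaves the relevant conditional laws Gaussian with the asserted covariances, even though the non-linearity feeding the $\bm W$-block, $\bm f_t(\bm\sigma^t,\bm\beta_0-\bm h^{t-1})$, has a second argument generated by the $\bm S$-block (and symmetrically for $\bm\zeta_t$), and that the Onsager terms in \eqref{eq:onsager_terms}, which mix derivatives across the two environments, concentrate on the correct deterministic limits — here Lemma~\ref{lem:continuity_diff_denoise} controls those derivatives. This is exactly the program of \cite{manoel2017multi,gerbelot2021graph} for multi-layer/hybrid AMP, specialized to a two-block system; once it is executed, the universality step of \cite{wang_zhong_fan} closes the gap between $\widebar{\bm A}$ and $\widetilde{\bm A}$ and the theorem follows.
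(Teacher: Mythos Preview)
Your proposal is correct and takes essentially the same route as the paper: decompose $\widebar{\bm A}$ into the rank-one signal plus a generalized Wigner noise $\bm W$, verify the hypotheses of \cite[Theorem~2.4]{wang_zhong_fan} (including the operator-norm bound via \cite{10.1214/19-AIHP1033}), and then run the universality machinery jointly with the linear-model analysis of Theorem~\ref{thm:state_evol}. The paper's execution makes the joint analysis explicit by introducing a ``synchronized system of diagonal tensor networks'' and passing through polynomial approximations of the denoisers (Lemmas~\ref{lem:pol_tree} and~\ref{lem:val_universality}), which is the moment-method incarnation of exactly the cross-block coupling you flag as the main obstacle; note that \cite{wang_zhong_fan} proceeds via this tensor-network/moment route rather than a Lindeberg swap, so your phrase ``Lindeberg-swapping'' is a slight mislabel, but the invoked theorem and the overall strategy are the same.
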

Since the proof of this theorem repeats the arguments of the proofs of Theorem \ref{thm:state_evol} and Theorem 2.4 of \citet{wang_zhong_fan}, we relegate its details to Section \ref{proof_amp_graph} of the appendix.
\paragraph{Asymptotics of \revsag{the reconstruction errors}}
To complete the proof of Theorem \ref{thm:mse_amp}, let us begin by observing the following equation.
\begin{align}
\frac{1}{p^2}\|\bm \sigma_0\bm\sigma^\top_0-\widehat{\bm \sigma}^t(\widehat{\bm \sigma}^t)^\top\|^2_F &= \frac{1}{p^2}\left(\sum\limits_{i=1}^{p}\sigma^2_{0i}\right)^2-2\,\frac{1}{p^2}\left(\sum\limits_{i=1}^{n}\sigma_i\widehat{\sigma}^t_i\right)^2+\frac{1}{p^2}\left(\sum\limits_{i=1}^{n}(\widehat{\sigma}^t_i)^2\right)^2.
\end{align}
By Theorem \ref{thm:state_evol_graph} we have the following.
\begin{align}
\lim\limits_{p \rightarrow \infty}\frac{1}{p}\sum\limits_{i=1}^{p}\sigma_{0i}\widehat{\sigma}^t_i &\overset{a.s}{=} \mathbb E\Bigg[\Sigma\;\mathbb E\{\Sigma|B+\sqrt{\Delta(1+\xi_{t-1})/\kappa}\,Z_1,\mu_{t}\Sigma+\nu_{t}Z_2\}\Bigg]\\
&= \mathbb E\Bigg\{\mathbb E[\Sigma|B+\sqrt{\Delta(1+\xi_{t-1})/\kappa}\,Z_1,\mu_{t}\Sigma+\nu_{t}Z_2]\Bigg\}^2\\
&\overset{a.s}{=}\lim\limits_{p \rightarrow \infty}\frac{1}{p}\sum\limits_{i=1}^{n}(\widehat{\sigma}^t_i)^2.
\end{align}
Now using the Strong law of Large numbers and the Dominated Convergence Theorem and the definitions of $\mu_t$ and \revsag{$\xi_t$ from \eqref{eq:fixed_point_iteration}}, we have
\[
\lim\limits_{p \rightarrow \infty}\frac{1}{p^2}\mathbb E[\|\bm \sigma_0\bm\sigma^\top_0-\widehat{\bm \sigma}^t(\widehat{\bm \sigma}^t)^\top\|^2_F] = (\mathbb{E}[\Sigma^2])^2-\frac{\revsag{\mu^2_{t+1}}}{\lambda^2}.
\]
Taking the limit as $t \rightarrow \infty$ and using \eqref{eq:fixed points}, we get the theorem.

Next, let us define
\[
\revsag{\omega^{t-1}=\frac{1}{\kappa p} \sum\limits_{i=1}^{p}\zeta^\prime_{t-1}([\bm S\bm z^{t-1}+\bm \beta^{t-1}]_i,\sigma^{t}_i),}
\]
where the derivative is with respect to the first co-ordinate of the $\zeta_{t-1}$ and
\[
\omega^{*}=\lim\limits_{t \rightarrow \infty}\lim\limits_{p \rightarrow \infty}\omega^{t-1}.
\]
This limit exists inductively using Theorem \ref{thm:state_evol_graph} and the property of the AMP state evolution.
We also have
\[
\lim\limits_{t \rightarrow \infty}\lim\limits_{n \rightarrow \infty}\frac{1}{n}\|\bm z^t-\bm z^{t-1}\|^2_2 \overset{a.s.}{=}0.
\]
Let us observe that by Theorem \ref{thm:state_evol_graph} and the Strong Law of Large Numbers
\begin{align}
\lim\limits_{t \rightarrow \infty}\lim\limits_{n \rightarrow \infty}\frac{1}{\kappa n}\|\bm \Phi(\bm \beta^t-\bm \beta_0)\|^2_2&=\lim\limits_{t \rightarrow \infty}\lim\limits_{n \rightarrow \infty}\frac{1}{n}\left\|\frac{1}{\sqrt{\kappa}}\bm\varepsilon-\bm z^t+\omega^{t-1}\bm z^{t-1}\right\|^2_2\\
&=\lim\limits_{n \rightarrow \infty}\frac{1}{\kappa n}\|\bm\varepsilon\|^2_2+(\omega^{*}-1)^2\lim\limits_{t \rightarrow \infty}\lim\limits_{n \rightarrow \infty}\frac{1}{n}\|\bm z^{t-1}\|^2_2\\
&\quad\quad+2(\omega^*-1)\lim\limits_{t \rightarrow \infty}\lim\limits_{n \rightarrow \infty}\frac{1}{n}\langle\bm \widebar{\bm \varepsilon},\bm z^{t-1}\rangle\\
&\overset{a.s.}{=}\frac{\Delta}{\kappa}+(\omega^{*}-1)^2\lim\limits_{t \rightarrow \infty}\mathbb E[(W+\widetilde\tau_{t-1}Z_3)^2]\\
&\hskip 2em+2(\omega^*-1)\lim\limits_{t \rightarrow \infty}\mathbb E[W(W+\widetilde\tau_{t-1}Z_3)]\\
&=\frac{\Delta}{\kappa}+(\omega^*-1)^2(\frac{\Delta}{\kappa}+\lim\limits_{t \rightarrow \infty}\widetilde \tau^2_{t-1})+2(\omega^*-1)\frac{\Delta}{\kappa}\\
&=\frac{\Delta}{\kappa}+(\omega^*-1)^2\frac{\Delta}{\kappa}(1+\xi^*)+2(\omega^*-1)\frac{\Delta}{\kappa}.
\end{align}
It can be verified that
\begin{align}
\omega^*&=\lim\limits_{t\rightarrow\infty}\lim\limits_{n \rightarrow \infty}\frac{1}{\kappa p} \sum\limits_{i=1}^{p}\zeta^\prime_{t-1}([\bm S^\top\bm z^{t-1}+\bm \beta^{t-1}]_i,\sigma^{t}_i)\\
&=\lim\limits_{t\rightarrow\infty}\frac{\widetilde\tau^2_{t-1}}{\tau^2_{t-1}}=\frac{\xi^*}{1+\xi^*}.
\end{align}
\revsagr{Plugging in \eqref{eq:connect_amp} and using the definition of $\widehat{\bm \beta}^t$, we get
\[
\lim\limits_{t \rightarrow \infty}\lim\limits_{n \rightarrow \infty}\frac{1}{n}\;\|\bm\Phi(\widehat{\bm \beta}^t-\bm \beta_0)\|^2_2= \frac{\Delta\;\xi^*}{(1+\xi^*)},
\]
almost surely.} 
\revsagr{
Furthermore, under the assumption that $P(\cdot|0)$ and $P(\cdot|1)$ have finite and bounded supports, the random variable,
\[
\lim\limits_{n \rightarrow \infty}\frac{1}{\kappa n}\;\|\bm\Phi(\widehat{\bm \beta}^t-\bm \beta_0)\|^2_2
\]
is uniformly bounded by an integrable random variable. Therefore, using the Dominated Convergence Theorem
\begin{align}
\label{eq:connect_amp}
\lim\limits_{t \rightarrow \infty}\lim\limits_{n \rightarrow \infty}\frac{1}{\kappa n}\mathbb E\|\bm \Phi(\widehat{\bm \beta}^t-\bm \beta_0)\|^2_2=\frac{\Delta\;\xi^*}{(1+\xi^*)}.
\end{align}
}

\section{Numerical Experiments}
\label{numerical}
In this section, we explore the finite sample performance of the proposed AMP-based methodology and further explore the consequences of this supporting theory. 

\begin{itemize}
    \item[(i)] In Section \ref{subsec_effect_graph}, we first explore the information-theoretic consequence of having the graph side information. To this end, we compare the limiting mutual information in the model with graph side information to one with no additional side information. This characterizes the information-theoretic gain in incorporating the graph side information. 
    
    \item[(ii)] In Section \ref{sec:comparison}, we compare the performance of our AMP-based algorithm to Laplacian penalized estimators, proposed in \cite{li_and_li}. Our findings indicate that the AMP-based algorithm significantly outperforms the Laplacian penalization-based method. We also explore the robustness of our findings to the distribution of the design in this section. 
    
    \item[(iii)] Finally, we explore the variable discovery performance of the AMP-based method. In particular, we compare the method to the Knockoff filter of \cite{barber2015controlling,candes2018panning}. We note that the Knockoff filter has emerged as the canonical method for variable discovery in the linear model. Our results indicate that the AMP-based method outperforms the Knockoff filter by incorporating the graph side information. Note that the Knockoff filter completely ignores the graph side information, and thus it should not be a surprise that our method outperforms the Knockoff filter. However, we believe it illustrates the power of leveraging network-side information for variable discovery. 
\end{itemize}



\subsection{Information theoretic effect of the graph side information} 
\label{subsec_effect_graph} 
For our experiments, let \revsag{$\bm \sigma_0=(\sigma_{01},\ldots,\sigma_{0p})^\top$ satisfy $\sigma_{0i} \overset{i.i.d}{\sim} \mathrm{Bernoulli}(0.4)$}. Given \revsag{$\sigma_{0i}=0$ we set $\beta_{0i}=0$} with probability $1$ and if \revsag{$\sigma_{0i}=1$, then $\beta_{0i}$} is generated from the discrete distribution that puts mass $1/5$ on each of $\{-2,-1,0,1,2\}$. We assume that $\kappa=1.5$ and  
plot the asymptotic per mutual information between \revsag{$(\bm \beta_0,\bm \sigma_0)$} and $(\bm y, \bm G)$ when the graph $\bm G$ is observed versus when it is not observed. Note that not observing the graph is equivalent to $\lambda=0$ in terms of the mutual information between \revsag{$(\bm \beta_0,\bm \sigma_0)$} and $(\bm y, \bm G)$. So we fix $\lambda=0, 1, 2$ and $3$ and plot the asymptotic per mutual information between \revsag{$(\bm \beta_0,\bm \sigma_0)$} and $(\bm y, \bm G)$ in Figure \ref{fig:inf_del}.

\begin{figure}[tb]
\begin{subfigure}{.5\textwidth}
  \centering
  \includegraphics[width=\linewidth]{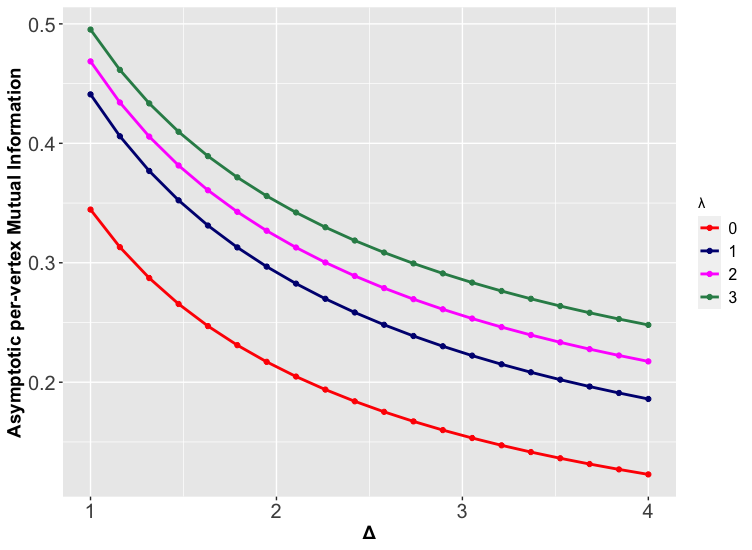}
  \caption{Mutual information vs $\Delta$}
  \label{fig:inf_del}
\end{subfigure}%
\begin{subfigure}{.5\textwidth}
  \centering
  \includegraphics[width=\linewidth]{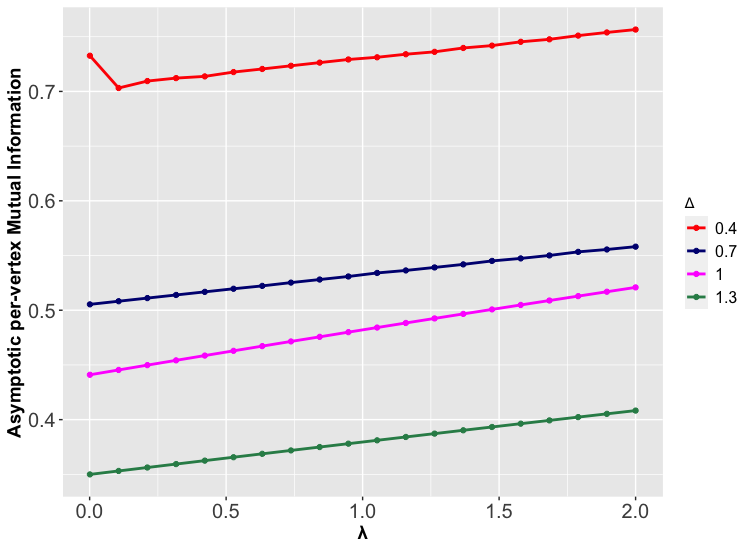}
  \caption{Mutual information vs $\lambda$}
  \label{fig:inf_lam}
\end{subfigure}%
\caption{Plot of asymptotic per vertex mutual information between \revsag{$(\bm \beta_0,\bm \sigma_0)$} and $(\bm y,\bm G)$. For Figure (a), we plot the mutual information (MI) as a function of $\Delta$. Note that as the noise strength $\Delta$ increases, the limiting MI decreases. The case $\lambda=0$ corresponds to the setting with no graph side information. We see that the graph information increases the limiting MI in the model. In Figure (b), we plot the limiting MI as a function of $\lambda$. We again see that as the signal strength $\lambda$ increases, the asymptotic MI increases in the model.}
\label{fig:inf}
\end{figure}

We observe in Figure \ref{fig:inf_del} that if $\Delta$ is large, that is, we have significant noise in $\bm y$, the observed graph $\bm G$ adds significant information to our estimation procedure which increases with increasing $\lambda$. But in Figure \ref{fig:inf_lam}, it is clear that the advantage in observing network information decreases with an increase in $\Delta$.

\subsection{Comparison of reconstruction error between AMP-based estimator and estimator based on Graph Constrained Regression}
\label{sec:comparison} 
\subsubsection{Gaussian Design Matrix}
\label{comp_method}
In this subsection, we compare our AMP-based estimation of $\bm \beta$ with an estimator based on Laplacian penalization \cite{li_and_li}. 
We set $n=p=3000$. 
For $1\leq i \leq p$, let $\sigma_{0i}$ be  independent samples from $\mathrm{Bernoulli}(0.7)$. Given $\sigma_{0i}=0$, set \revsag{$\beta_{0i}=0$} with probability $1$. If $\sigma_{0i}=1$, we generate \revsag{$\beta_{0i}$} from the uniform distribution on $\{\pm 1\}$. The graph $\bm G$ is generated according to \eqref{eq:graph}. The entries of the feature matrix $\bm \Phi$ are generated i.i.d from $\mathsf{N}(0,p^{-0.5})$, and the observation vector $\bm y$ is generated according to \eqref{eq:lin_model_1}. We fix $\lambda$ to be in the set $\{3,5\}$ and $b_p=0.7$. Using the relation \eqref{eq:def_snr}, the parameter $a_p$ is set to be equal to
\[
 \revsag{a_p = b_p+\sqrt{\lambda b_p\left(1-\frac{b_p}{p}\right)}.}
\]
We vary $\Delta$ in an equispaced grid with $20$ points in $[0.2,4]$. 

Next, we compute the estimates $\widehat{\bm \beta}_{\mathsf{Lap}}$ and $\widehat{\bm \beta}_{\mathsf{AMP}}$ where $\widehat{\bm \beta}_{\mathsf{Lap}}$ is the estimator computed using the method described in \cite{li_and_li} and $\widehat{\bm \beta}_{\mathsf{AMP}}$ is computed by the AMP iterations described by \eqref{eq:amp_iterates} and \eqref{eq:amp_iterates_1}. We run the AMP algorithm for 25 iterations to generate our estimates. For each combination of $(\lambda,\Delta)$ we repeat the experiment $20$ times independently and for each iteration we compute the empirical reconstruction errors
\[
\revsag{\mathcal E_{\mathsf{Lap}}=\frac{1}{p}\|\bm \Phi(\widehat{\bm \beta}_{\mathsf{Lap}}-\bm \beta_0)\|^2_2},
\]
and
\[
\revsag{\mathcal E_{\mathsf{AMP}}=\frac{1}{p}\|\bm \Phi(\widehat{\bm \beta}_{\mathsf{AMP}}-\bm \beta_0)\|^2_2}.
\]
We approximate the mean reconstruction errors by the sample average of the estimates across the independent replications. 
The plots of the estimated reconstruction errors are shown in Figure \ref{fig:amp}.

\begin{figure}[tb]
\begin{subfigure}{.5\textwidth}
  \centering
  \includegraphics[width=\linewidth]{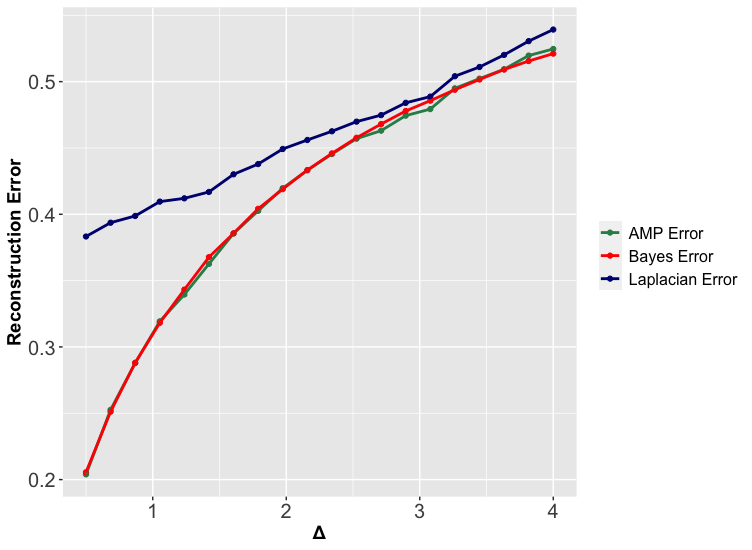}
  \caption{$\lambda=3$}
  \label{fig:amp_2}
\end{subfigure}%
\begin{subfigure}{.5\textwidth}
  \centering
  \includegraphics[width=\linewidth]{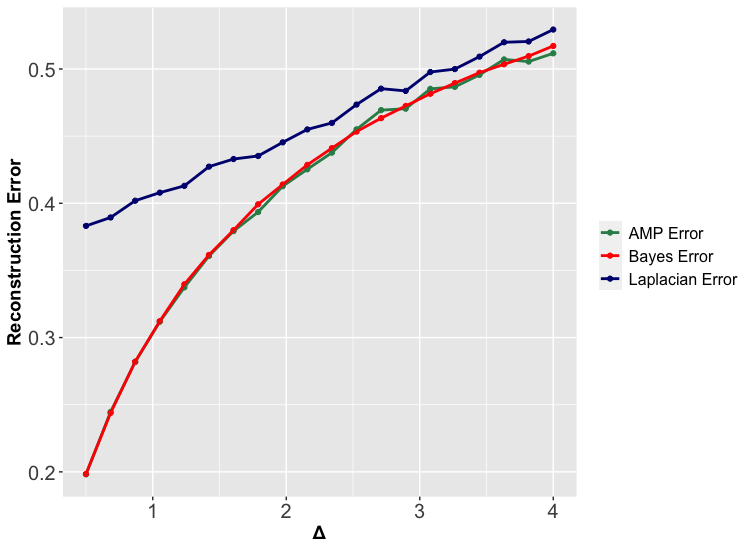}
  \caption{$\lambda=5$}
  \label{fig:amp_3}
\end{subfigure}%
\caption{Plot of the average reconstruction error of $\widehat{\bm \beta}_{\mathsf{Lap}}$ and $\widehat{\bm \beta}_{\mathsf{AMP}}$ for Gaussian design matrix}
\label{fig:amp}
\end{figure}

We observe that the AMP-based estimator $\widehat{\bm \beta}_{\mathsf{AMP}}$ performs consistently better than $\widehat{\bm \beta}_{\mathsf{Lap}}$ for both values of $\lambda$. In fact, the performance of $\widehat{\bm \beta}_{\mathsf{AMP}}$ in terms of the reconstruction error is approximately equal to the Bayes error in estimating $\bm \beta_0$ with the specified priors. The observed gaps are due to finite sample effects. The simulations demonstrate that if the prior is known, it is much more efficient to incorporate the prior information through AMP-based algorithms. 

\subsubsection{Robustness to design distribution}
Our theoretical results are derived under an iid Gaussian assumption on the entries of the design matrix. AMP style algorithms are known to be quite robust to the design distribution \cite{bayati2015universality,10.1214/21-EJP604,dudeja2022universality,wang_zhong_fan}, and we expect the algorithms introduced in this paper to enjoy similar universality properties. 

To this end, we provide initial numerical evidence in support of the universality properties of this algorithm. We work under the same setup reported in the previous subsection, but construct the design $\bm\Phi$ using iid centered and normalized $\mathrm{Bernoulli}(0.3)$ entries. We plot the reconstruction error in Figure \ref{fig:amp_u_3}. We note that the main takeaways remain the same---the AMP algorithm still outperforms the Laplacian penalization-based algorithm. We believe that using the ideas introduced in \cite{dudeja2022universality,wang_zhong_fan}, it should not be too difficult to establish the universality property for this class of AMP algorithms. 
%
%
%
%
%

\begin{figure}[tb]
\begin{subfigure}{.5\textwidth}
  \centering
  \includegraphics[width=\linewidth]{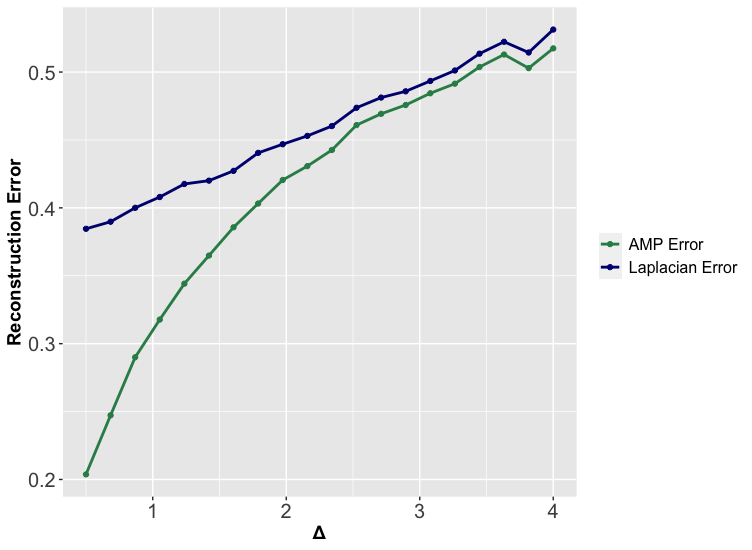}
  \caption{$\lambda=3$}
  \label{fig:amp_u_2}
\end{subfigure}%
\begin{subfigure}{.5\textwidth}
  \centering
  \includegraphics[width=\linewidth]{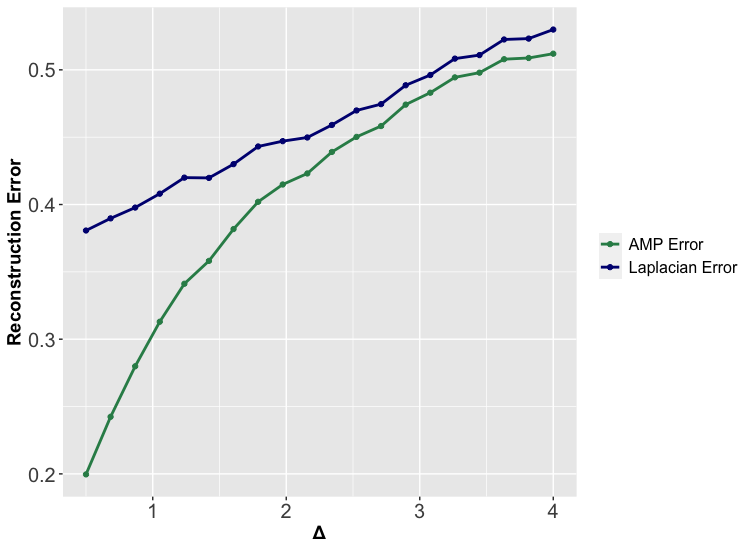}
  \caption{$\lambda=5$}
  \label{fig:amp_u_3}
\end{subfigure}%
\caption{Plot of the average reconstruction error of $\widehat{\bm \beta}_{\mathsf{Lap}}$ and $\widehat{\bm \beta}_{\mathsf{AMP}}$ for Bernoulli Design Matrix}
\label{fig:amp_u}
\end{figure}

\subsection{Variable discovery: AMP vs. Model-X Knockoff} 

In this section, we explore the variable discovery properties of the AMP based algorithm introduced in Section \ref{variable_discovery}. 

Knockoffs \cite{barber2015controlling,candes2018panning} have emerged as the canonical choice for variable discovery in supervised learning problems. This methodology is attractive due to the finite sample guarantees on the FDR, along with high power in most practical settings. 

In this section, we compare the performance of the AMP based algorithm to the statistical performance of the Model-X framework. Note that the Model-X framework ignores the graph information, and thus this is not a fair comparison in principle. Here, we invoke the Model-X methodology as the canonical algorithm for variable discovery in the absence of graph-side information---this helps us explore the practical gains obtained from the graph-side information.

\subsubsection{Power comparison}
In this section, we compare the performance of the variable discovery mechanism stated in Section \ref{variable_discovery} with Model-X knockoff in terms of the True Discovery Ratio given by
\[
\mathrm{TDR}(\alpha,t;n):=\mathbb{E}\left[\frac{|\hat{S}(\alpha;t) \cap \{i:\sigma_{0i}\neq0\}|}{1 \vee |\hat{S}(\alpha;t)|}\right].
\]
Here $\hat{S}(\alpha;t)$ is as defined in Section \ref{variable_discovery}. We take the generative model with the same $n, p$ and the prior distribution for \revsag{$\bm \beta_0$} given $\bm \sigma_0$ as Section \ref{comp_method}, but now we generate $\sigma_{0i}$ for $i=1,\cdots,p$ independently from $\mathrm{Bernoulli}(0.07)$ to induce sparsity in the model. We compare the performances of our methods versus Model-X knockoff for $\lambda \in \{5, 10\}$ and 5 equispaced $\Delta$ lying between $0.5$ and $4$. We tabulate the Monte Carlo estimates of the TDR over  20 independent runs of the experiment in Table \ref{tab:amp_v_knockoff}. We observe that irrespective of the values of $\Delta$ and $\lambda$, our variable selection procedure performs uniformly better than Model-X knockoff that ignores the graph side information.
\begin{table}[h!]
\centering
\begin{tabular}{|c|c|c|c|c|c|c|} 
 \hline
 $\Delta$ & \multicolumn{2}{c|}{$\lambda = 5$} & \multicolumn{2}{c|}{$\lambda = 10$} \\
 \hline
 {}   & AMP   & Knockoff  & AMP   & Knockoff  \\ 
 \hline
 0.5 & 0.805 & 0.641  & 0.856 & 0.787 \\
 \hline
 1.05 & 0.651 & 0.433 & 0.775 & 0.658\\
\hline
1.79 & 0.605 & 0.456 & 0.542 & 0.475\\
\hline
2.52 & 0.438 & 0.133 & 0.470 & 0.404\\
\hline
3.26 & 0.338 & 0.231 & 0.217 & 0.199\\
\hline
4.00 & 0.17 & 0.09 & 0.3 & 0.2\\
\hline
\end{tabular}
\caption{Performance Comparison between the TDR of AMP-based variable selection and Model-X Knockoff-based variable selection}
\label{tab:amp_v_knockoff}
\end{table}

\section{Discussion}
\label{sec:discussion} 
In this paper, we formulated the problem of supervised learning with graph-side information in terms of a simple generative model and introduced an AMP-based algorithm to combine the information from the two sources. We also derived the asymptotic mutual information in this model and established that in many settings, this algorithm is, in fact, Bayes optimal. Finally, our numerical experiments establish the improvements obtained by this aggregation scheme. 

In this section, we discuss some current limitations of these results, and opportunities to go beyond these barriers. In turn, this automatically suggests some natural directions for future inquiry. 

\begin{itemize}
    \item[(i)] Generalization to account for more than 1 planted community---In this paper, we study a simple setting with one planted community. Of course, in the applications discussed in the introduction, it is probably more natural to assume the presence of multiple planted communities, with different connection probabilities for variables in distinct communities. We expect that the technical framework introduced in this paper can be extended to this setting in a straightforward manner, and present the 1 community case for ease of exposition. 
    
    \item[(ii)] Incorporating correlation among the features---We assume independent Gaussian features in our regression model. In many practical settings, it might be more realistic to have correlated features. For example, one could study a setting where the rows $\phi_\mu \in \mathbb{R}^p$ are i.i.d. samples from $N(0,\Sigma)$. It should be possible to design AMP-based algorithms for estimation and variable discovery even in this setting, using the ideas in \cite{huang2022lasso,loureiro2021learning,clarte2023double}. However, it is particularly challenging to characterize the Bayes optimal performance in the correlated setting. Specifically, evaluation of the limiting mutual information will require new ideas. We believe this will be a very interesting direction for follow up investigations. 
    

    
    \item[(iii)] The need for empirical Bayes approaches---The AMP algorithm introduced in this paper explicitly uses knowledge of the problem parameters $\rho$, $\Delta$, $a_p$, $b_p$ and the Markov kernel $P(\cdot|\cdot)$. In our discussions, we assume that these problem parameters are known. In practice, some or all of these parameters might be unknown. To make the algorithms practicable in this case, the unknown parameters need to be estimated from the given data. We note that the estimation of the graph connectivity parameters $a_p$, $b_p$ has been explored in the previous literature \cite{mossel2012stochastic}. In a similar vein, the estimation of the noise variance $\Delta$ and the underlying sparsity $\rho$ have been explored in the statistical literature (see e.g. \cite{janson2017eigenprism} and references therein). In this context, the estimation of the kernel $P(\cdot|\cdot)$ is expected to be the most challenging. One natural idea to estimate the conditional distribution would be to use empirical Bayesian methods \cite{robbins1992empirical}. We refer the interested reader to a recent application of this idea to the PCA problem in \cite{zhong2022empirical}. While this would be extremely interesting to explore, we feel that this is substantially beyond the scope of this paper, and we defer this to follow-up investigations. We note here that even if the Markov kernel $P(\cdot|\cdot)$ is unknown, the AMP algorithm can be implemented with any arbitrary kernel $Q(\cdot|\cdot)$, and the performance of the algorithm can be tracked using state evolution. Of course, if $P$ and $Q$ are quite different, the AMP performance is expected to be sub-optimal compared to the Bayes optimal performance. 
    
    \item[(iv)] Statistical/Computational gaps in this problem---In Theorem \ref{thm:opt_amp}, we noted that the AMP algorithm attains Bayes optimal performance if $(\mu^*, \xi^*) = (\bar{\mu}, \bar{\xi})$. Of course, this equality could be violated for certain parameters $(a_p,b_p, \rho, \Delta, P)$. In this case, we conjecture the existence of a statistical-computational gap in this problem, and that the performance of the AMP algorithm represents the best statistical performance among computationally feasible algorithms. Statistical/Computational gaps have been conjectured in similar problems in the recent literature (see e.g. \cite{brennan2018reducibility,abbe2017community,kunisky2022notes,celentano2022fundamental,decelle2011asymptotic} and references therein for a very incomplete list), and there has been significant recent progress in favor of this conjecture by analyzing specific sub-classes of algorithms (e.g. based on convex penalized estimators \cite{celentano2022fundamental}, first-order methods \cite{celentano2020estimation}, low degree algorithms \cite{montanari2022equivalence,schramm2022computational}, query lower bounds \cite{diakonikolas2017statistical} etc.). We believe that it should be possible to adapt the existing techniques to establish statistical/computational gaps in this problem. We refrain from exploring this direction in this paper to keep our discussion focused.

    \item[(v)] Recent applications of AMP to genomics---Our work is motivated by problems arising in biology, genomics, neuroscience etc. We note that AMP ideas have been used recently to integrate EHR data with genomic information \cite{mondelli}. We believe that these works collectively demonstrate the usefulness of AMP-based algorithms for big-data applications, and hope that this will stimulate further investigations into the use of these ideas for other scientific applications.   
\end{itemize}

\paragraph{Acknowledgments:} SS thanks Jishnu Das for introducing him to the HotNet2 procedure, which motivated this work. SS gratefully acknowledges support from NSF (DMS CAREER 2239234), ONR (N00014-23-1-2489), AFOSR (FA9950-23-1-0429) and a Harvard FAS Dean’s competitive
fund award. SN thanks Zongming Ma for many helpful discussions. We thank Lenka Zdeborov\'{a} and Jean Barbier for pointing us to several relevant references, and helpful discussions. Finally, the authors thank the anonymous referees for their insightful comments which improved the quality and the presentation of the paper.

\bibliographystyle{apalike}
\bibliography{Regression_with_Graph_Side_Information.bib}
\newpage
\appendix

\section{Proof of Lemma \ref{lem:contractive}}
\begin{proof}[Proof of Lemma \ref{lem:contractive}]
By definition, the functions $\mathsf{mmse}_1$ and $\mathsf{mmse}_2$ are continuous bounded functions of their arguments with the additional property that for $a \le b$ and $x \ge y$, we have
\[
\mathsf{mmse}_1(a,x) \ge \mathsf{mmse}_1(b,y), \quad \mbox{and} \quad \mathsf{mmse}_2(a,x) \ge \mathsf{mmse}_2(b,y).
\]
Let us also observe that $\frac{1}{\kappa}\mathbb E[B^2] = \xi_0 \ge \xi_1$ and $0 = \mu_0 \le \mu_1$. Hence, we can inductively conclude that $\{\xi_t\}_{t \ge 0}$ and $\{\mu_t\}_{t \ge 0}$ are decreasing and increasing in $t$, respectively. Therefore, using the boundedness of the functions $\mathsf{mmse}_1$ and $\mathsf{mmse}_2$ we can conclude that there exists $\mu^*, \xi^* \ge 0$ satisfying
\begin{align}
\label{eq:fixed points}
\mu^*=\lambda(\rho-\mathsf{mmse}_1(\mu^*,\xi^*)),\,\,\,\,
\xi^* = \frac{1}{\Delta}\mathsf{mmse}_2(\mu^*,\xi^*),
\end{align}
such that $\mu_t \rightarrow \mu^*$ and $\xi_t \rightarrow \xi^*$ as $t \rightarrow \infty$.
\end{proof}
\section{Proof of Theorem \ref{thm:mut_inf_graph}}
\label{mut_inf}

We employ the \emph{adaptive interpolation} approach of \citet{Adaptive_Interpolation} (see also \cite{barbier2019optimal,barbier2020mutual} and references therein) to characterize the mutual information in this setting.

We first connect the Regression plus Graph model to an equivalent Regression plus Gaussian Orthogonal Ensemble model. The model is described as follows. Given $\sigma_{01},\cdots,\sigma_{0p}$ generated from $\mbox{Ber}(\rho)$ distribution, we still observe the pair $(\bm y, \bm \Phi)$ \revsag{satisfying} \eqref{eq:lin_model_1}, but the random network $\bm G$ replaced by a Gaussian random matrix described as follows:
\[
\widetilde{\bm A}=\sqrt{\frac{\lambda}{p}}\bm \sigma_0 \bm \sigma^\top_0 + \bm Z,
\]         
where $Z_{ij}=Z_{ji}\sim \mathsf{N}(0,1)$ if $i \neq j$ and $Z_{ii} \sim N(0,2)$ \revsag{for all $1 \le i, j \le p$}. From the definition of mutual information, we get the following.
\begin{align}
\label{eq:lim_mut_inform}
\lim\limits_{p \rightarrow \infty}\frac{1}{p}I(\bm \beta_0,\bm \sigma_0;\widetilde{\bm A},\bm y)&=\lim\limits_{p \rightarrow \infty}\frac{1}{p}\mathbb{E}_{(\bm \beta_0, \bm \sigma_0, \bm \Phi,\bm Z, \bm \varepsilon)}\left[\log\frac{P(\widetilde{\bm A},\bm y\,|\,\bm \beta_0, \bm \sigma_0)}{P(\widetilde{\bm A},\bm y)}\right]\\
&=\mathbb{E}_{(\Sigma,\revsag{B})}[\log P(\revsag{B}\,|\,\Sigma)Q(\Sigma)]+\frac{\lambda}{4}(\mathbb E[\Sigma^2])^2-\lim\limits_{p \rightarrow \infty}\frac{1}{p}\mathbb{E}[\log \mathcal{Z}(\revsagr{\bm \sigma_0,\bm \beta_0})],
\end{align}
where, $Q(\Sigma)=\rho^\Sigma(1-\rho)^{1-\Sigma}\mathbb I\{\Sigma \in \{0,1\}\}$, $B|\Sigma \sim P(\,\cdot\,|\,\Sigma)$ and
\[
\mathcal{Z}(\bm \sigma_0,\bm \beta_0) = \int\limits_{\bm x,\bm \beta}\left\{\prod\limits_{i=1}^{p}dx_i\,d\beta_i\,Q(x_i)P(\beta_i|x_i)\right\}\exp\left(-\mathcal{H}_n(\bm x,\bm \beta; \bm \sigma_0,\bm \beta_0,\bm \Phi, \bm \varepsilon, \bm Z)\right).
\]
The term $\mathcal H_n$ referred to as the \emph{Hamiltonian} is statistical physics literature is given by
\begin{align}
\mathcal{H}_n(\bm x,\bm \beta; \bm \sigma_0,\bm \beta_0,\bm \Phi, \bm \varepsilon, \bm Z) & =\lambda \sum\limits_{i \le j=1}^p\left\{\frac{x^2_ix^2_j}{2p}-\frac{x_ix_j\sigma_{0i}\sigma_{0j}}{p}-\frac{x_ix_jZ_{ij}}{\sqrt{\lambda p}}\right\}\\
& + \frac{1}{\Delta}\sum\limits_{\mu=1}^{n}\left\{\frac{1}{2}[\bm \Phi(\bm \beta-\bm \beta_0)]^2_\mu-[\bm \Phi(\bm \beta-\bm \beta_0)]_\mu\varepsilon_\mu\sqrt{\Delta}\right\}.
\end{align}
Therefore, in order to understand the asymptotics of the per-vertex mutual information, we focus on studying the limiting behavior of
\[
f_p:=-\frac{1}{p}\revsagr{\mathbb{E}_{(\bm \beta_0, \bm \sigma_0, \bm \Phi,\bm Z, \bm \varepsilon)}[\log \mathcal{Z}(\bm \sigma_0,\bm \beta_0)]},
\]
referred to as \emph{Bethe Free Energy} in statistical physics. In that direction, let us define the \emph{replica symmetric potential} function $f_{\mathrm{RS}}$ as follows: 
\[
f_{\rs}(\lambda,q;E,\Delta):=\frac{\lambda q^2}{4}+\psi(E;\Delta)+\wt{f}\left(\frac{1}{\lambda q},\Sigs(E,\Delta)^{2}\right),
\]
where 
\[
\revsagr{\Sigs(E,\Delta)^{-2}}=\frac{\kappa}{\Delta+E}; \quad \quad \psi(E,\Delta)=\frac{\kappa}{2}\left[\log\left(1+\frac{E}{\Delta}\right)-\frac{E}{E+\Delta}\right],
\] 
and
\begin{align}
\wt{f}(\widetilde{\sigma}^2_1,\widetilde{\sigma}^2_2)&:=-\mathbb{E}_\Theta\Bigg[\revsagr{\log\int_{x,\beta} P(\beta|x)Q(x)\exp\Bigg(\hspace{-0.04in}-\frac{1}{\widetilde{\sigma}^2_1}\left(\frac{x^2}{2}-\Sigma x-\widetilde{\sigma}_1x \widebar{Z}\right)}\\
&\hskip 1in -\frac{1}{\widetilde{\sigma}^2_2}\left(\frac{(\beta-B)^2}{2}-\widetilde{\sigma}_2(\beta-B)\widebar{\varepsilon}\right)\hspace{-0.04in}\Bigg)\Bigg],
\end{align}
for \revsagr{$\Theta = (B,\Sigma,\widebar{Z},\widebar{\varepsilon})$}, $B \sim P(\,\cdot\,|\,\Sigma)$, $\Sigma \sim \mathrm{Bernoulli}(\rho)$, and $\widebar{Z},\widebar{\varepsilon}\overset{i.i.d}{\sim} \mathsf{N}(0,1)$. Then, we have the following theorem characterizing the asymptotic behavior of $f_p$.
\begin{thm}
    \label{thm:lim_free_ener}
    If \revsagr{$P(\,\cdot\,|\,\Sigma\,)$ has discrete and bounded support for $\Sigma \in \{0,1\}$}, then we have 
    \[
    \revsagr{\lim_{p \rightarrow \infty}f_p = \inf\bigg\{f_{\rs}(\lambda,q;E,\Delta):q \ge 0, E \ge 0\bigg\}.}
    \]
\end{thm}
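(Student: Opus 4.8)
# Proof Proposal for Theorem \ref{thm:lim_free_ener}

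\textbf{Overall strategy.} The plan is to carry out the \emph{adaptive interpolation} method of \citet{Adaptive_Interpolation}, adapted to the coupled model that simultaneously contains a rank-one symmetric (GOE) observation channel for $\bm\sigma_0$ and a Gaussian linear-regression channel for $\bm\beta_0$. The target $\lim_p f_p$ will be squeezed between the two bounds $f_p \ge \inf_{q,E} f_{\rs}$ (lower) and $f_p \le \inf_{q,E} f_{\rs}$ (upper, or rather $\limsup$), each obtained from a suitable interpolating Hamiltonian indexed by two time-like parameters, one interpolating the ``strength'' seen by the $\bm x$-variables and one for the $\bm\beta$-variables. Throughout I would add a small perturbation (a vanishing side-information channel on $\bm\sigma_0$, of the form $\widetilde a_i = \sqrt{\epsilon}\,\sigma_{0i} + \widetilde Z_i$, with $\epsilon$ integrated over a shrinking interval) so that the Nishimori identities plus the Aizenman--Sims--Starr / Guerra machinery give \emph{overlap concentration} for $\frac1p\sum_i x_i\sigma_{0i}$ and for the regression overlap; this is exactly the point flagged in Section~\ref{sec:amp_side} of the excerpt.

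\textbf{Step 1: Interpolating Hamiltonian.} Introduce interpolation functions $q_1(s),q_2(s)$ (or, in the one-parameter packaging, a single $t\in[0,1]$ with free ``time-reparametrization'' functions chosen adaptively) and define $\mathcal H_{n,t}$ that at $t=0$ equals $\mathcal H_n$ and at $t=1$ decouples into $p$ single-site scalar channels: a rank-one scalar channel $y_1 = \sqrt{r_1}\,\Sigma + \bar Z$ for each coordinate of $\bm x$ and an effective scalar Gaussian channel $y_2 = (\beta - B) + \widetilde\sigma_2 \bar\varepsilon$ for each coordinate of $\bm\beta$, where the ``effective SNRs'' $r_1,\widetilde\sigma_2$ are built from the running overlap order parameters. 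Concretely I would mimic the construction in \cite{barbier2019optimal} for the symmetric channel and the construction in \cite{barbier2020mutual} (or the linear-regression interpolation of Barbier--Macris) for the regression channel, then \emph{superpose} them because the two channels are conditionally independent given $(\bm\sigma_0,\bm\beta_0)$.

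\textbf{Step 2: Fundamental sum rule (derivative in $t$).} Differentiate $\phi_p(t) := -\frac1p \mathbb E \log \mathcal Z_t$ in $t$. Using Gaussian integration by parts and the Nishimori identity, $\phi_p'(t)$ becomes a sum of two ``remainder'' terms, each of the form $\pm\frac12 \mathbb E\langle (\text{overlap} - q_i)^2\rangle_t$ up to the explicit replica-symmetric terms $\lambda q^2/4$, $\psi(E;\Delta)$, and the single-site free energy $\wt f$. Here the overlaps are $Q_1 = \frac1p\sum_i x_i \sigma_{0i}$ and $Q_2 = \frac1n \|\bm\Phi(\bm\beta-\bm\beta_0)\|_2^2$ (the ``energetic'' overlap natural for the regression part). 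Integrating in $t$ from $0$ to $1$ and using the endpoint identifications gives the exact identity
\[
f_p = f_{\rs}(\lambda,q;E,\Delta) + (\text{remainder involving } \textstyle\int_0^1 \mathbb E\langle(\cdot-q)^2\rangle).
\]

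\textbf{Step 3: Choosing the interpolation adaptively, and closing both bounds.} For the upper bound, pick the interpolation functions so that the remainder has a definite sign (this is the Guerra-type one-sided estimate), yielding $f_p \le \inf_{q,E} f_{\rs} + o(1)$. For the matching lower bound, invoke the adaptive-interpolation trick: choose $q_1(\cdot),q_2(\cdot)$ as solutions of the ODE that forces the running order parameters to equal the actual mean overlaps, so that the remainder integrand becomes $\int \mathbb E\langle (Q_i - \mathbb E\langle Q_i\rangle)^2\rangle$, i.e. a \emph{variance}; then the concentration-of-overlap lemma (proved via the $\epsilon$-perturbation + Nishimori, as in \cite{Adaptive_Interpolation,barbier2019optimal}) shows this is $o(1)$ uniformly, giving $f_p \ge \inf_{q,E} f_{\rs} - o(1)$. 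Combining, $\lim_p f_p = \inf_{q\ge0,E\ge0} f_{\rs}(\lambda,q;E,\Delta)$.

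\textbf{Main obstacle.} The technical heart—and the step I expect to be hardest—is establishing \emph{overlap concentration for both overlaps simultaneously} in the coupled model. The regression part is not a bounded-spin system; controlling $\frac1n\|\bm\Phi(\bm\beta-\bm\beta_0)\|^2$ requires moment bounds on $\bm\beta$ under the Gibbs measure, which is where the hypothesis that $P(\cdot|0),P(\cdot|1)$ have \emph{discrete bounded support} is used (it makes $\bm\beta$ a bounded-spin system, so the standard concentration arguments apply verbatim). One must also check that the two perturbation channels do not interfere—i.e. that a single scalar perturbation on $\bm\sigma_0$ suffices to concentrate \emph{both} overlaps, because conditionally on $\bm\sigma_0$ the regression channel is a clean linear model whose overlap concentrates by Gaussian concentration in $\bm\Phi,\bm\varepsilon$. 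A secondary nuisance is the GOE diagonal terms ($Z_{ii}\sim\mathsf N(0,2)$) and the $i\le j$ summation bookkeeping in $\mathcal H_n$, which only contribute $O(1/p)$ corrections but must be tracked carefully so the sum rule is exact. Finally, verifying that the variational formula obtained from the interpolation—naturally written in the $(q,E)$ order parameters—coincides with the stated $f_{\rs}$ (with $\Sigs(E,\Delta)^{-2} = \kappa/(\Delta+E)$ and the given $\psi$) is a direct but somewhat delicate change of variables that I would do at the end.
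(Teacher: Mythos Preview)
Your overall strategy is essentially the paper's: adaptive interpolation \`a la \citet{Adaptive_Interpolation}, superposing the symmetric rank-one interpolation for the $\bm x$-channel with the Barbier--Macris linear-regression interpolation for the $\bm\beta$-channel, differentiating to obtain a sum rule with square-overlap remainders, choosing the constant-parameter interpolation for the upper bound and the adaptive (overlap-matching) choice for the lower bound, and closing the lower bound via overlap concentration. The paper implements this with a $K$-step discretization ($k\in[K_p]$, $t\in[0,1]$, $K_p\to\infty$) rather than a single continuous $t$, but this is a packaging difference, not a conceptual one.

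The one genuine gap in your proposal is the perturbation architecture. You propose a \emph{single} side-information channel on $\bm\sigma_0$ and argue that the regression overlap will then concentrate ``by Gaussian concentration in $\bm\Phi,\bm\varepsilon$.'' That heuristic does not go through: Gaussian concentration of the design and noise gives concentration of the \emph{free energy} around its mean, not concentration of the Gibbs overlap $\frac1p\|\langle\bm\beta\rangle-\bm\beta_0\|^2$ (or of $\frac1n\|\bm\Phi(\langle\bm\beta\rangle-\bm\beta_0)\|^2$) under the interpolated Gibbs measure, which is what the sum rule needs. The paper instead introduces \emph{two} independent perturbations, $\bm\eta=(\eta_1,\eta_2)$: a scalar side channel on $\bm\sigma_0$ with strength $\eta_1$ \emph{and} a scalar side channel on $\bm\beta_0$ with strength $\eta_2$. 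Each overlap is then concentrated by integrating over the matching perturbation parameter (Lemmas~\ref{lem:x_overlap} and~\ref{lem:mmse_overlap}), and the monotonicity/regularity conditions on the adaptive parameters $q_k(\bm\eta)$ and $E_k(\bm\eta)$ needed for those lemmas are verified separately (Lemma~\ref{lem:valid_param}). Without the $\eta_2$ channel your lower-bound step would stall at the point where you need $\int\mathbb E\langle(Q_2-\mathbb E\langle Q_2\rangle)^2\rangle\,dt=o(1)$. The fix is straightforward---just add the second perturbation---but it is not optional.

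Two smaller points. First, the regression interpolation in the paper is not a naive SNR ramp: it uses auxiliary functions $\gamma_k(t),\lambda_k(t)$ satisfying the ODE $\lambda_k'(t)=-\gamma_k'(t)\,\kappa/(1+\gamma_k(t)E_k)^2$ with prescribed boundary values, which is exactly what produces the $\psi(E;\Delta)$ term and the clean square remainder $(E_k-\mathsf{mmse})^2$ with a definite sign; you allude to \cite{barbier2020mutual}, so you are aware of this, but it is worth flagging that the sign of $\gamma_k'\le 0$ is what makes the upper bound immediate. Second, for the lower bound the paper needs an approximation lemma (Lemma~\ref{lem:overlap_approx}) showing that the overlaps at time $(k,t)$ differ from those at $(k,0)$ by $O(p/K)$, so that the adaptive choice $q_k=\mathbb E\langle s\rangle_{k,0}$, $E_k=\mathsf{mmse}_{k,0}$ is well-defined and kills the remainder; this is where taking $K_p=\Omega(p^2)$ enters. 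Your ODE-based continuous formulation would need the analogous statement that the adaptive choice yields a well-posed initial-value problem, which is the content of Lemma~\ref{lem:valid_param}.
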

\subsection{Proof of Theorem \ref{thm:lim_free_ener}}
In order to prove Theorem \ref{thm:lim_free_ener}, let us consider a sequence of functions interpolating between the true Hamiltonian given by
\begin{align}
\label{eq:h_on_top}
h(\bm x,\bm \beta;\bm \sigma_0,\bm \beta_0,\bm \Phi,\bm \varepsilon,\bm Z,\theta^2_1,\theta^2_2)&:=\frac{1}{\theta^2_1}\sum\limits_{\substack{i \le j\\i,j=1}}^{p}\left\{\frac{x^2_i x^2_j}{2p}-\frac{x_ix_j\sigma_{0i}\sigma_{0j}}{p}-\theta_1\frac{x_ix_jZ_{ij}}{\sqrt{p}}\right\}\\
&+\frac{1}{\theta^2_2}\sum\limits_{\mu=1}^{n}\left\{\frac{[\bm \Phi(\bm \beta-\bm \beta_0)]^2_\mu}{2}-\theta_2[\bm \Phi(\bm \beta-\bm \beta_0)]_\mu\varepsilon_\mu\right\},
\end{align}
and the \emph{Mean Field Hamiltonian} given by
\begin{align}
\label{eq:h_mf_on_top}
h_{mf}(\bm x,\bm \beta;\bm \sigma_0,\bm \beta_0,\bm \Phi,\widebar{\bm \varepsilon},\widebar{\bm Z}, \theta^2_1,\theta^2_2)&:=\frac{1}{\theta^2_1}\sum\limits_{i=1}^{p}\left\{\frac{x^2_i}{2}-x_i\sigma_{0i}-\theta_1x_i\widebar{Z_{i}}\right\}\\
&+\frac{1}{\theta^2_2}\sum\limits_{j=1}^{p}\left\{\frac{(\beta_j-\beta_{0j})^2}{2}-\theta_2(\beta_j-\beta_{0j})\widebar{\varepsilon}_j\right\}.
\end{align}
Next, let us introduce the sequence of functions $\{\mathcal H_{k,t;\bm \eta}: t \in [0,1], k \in [K]\}$ interpolating between the true and the mean-field Hamiltonians characterized by a sequence of parameters $\{(q_k,E_k): k \in [K]\}$ and \revsagr{$\bm \eta =(\eta_1,\eta_2)$ with $\eta_1,\eta_2>0$} as follows:
\begin{align}
\mathcal{H}_{k,t;\bm \eta}\left(\bm x,\bm \beta;\bm \sigma_0,\bm \beta_0,\Theta\right)&:=\sum\limits_{\ell=k+1}^{K}h(\bm x,\bm \beta;\bm \sigma_0,\bm \beta_0,\bm \Phi,\bm \varepsilon_\ell,\bm Z_\ell,K/\lambda,K\Delta)\\
&\hspace{0.2in}+\sum\limits_{\ell=1}^{k-1}h_{mf}(\bm x,\bm \beta;\bm \sigma_0,\bm \beta_0,\bm \Phi,\widebar{\bm \varepsilon}_\ell,\widebar{\bm Z}_\ell,K/\lambda q_\ell,K\revsagr{\Psi^2_\ell})\\
&\hspace{0.2in}+h(\bm x,\bm \beta;\bm \sigma_0,\bm \beta_0,\bm \Phi,\bm \varepsilon_k,\bm Z_k,K/\lambda(1-t),K/\gamma_k(t))\\
&\hspace{0.2in}+h_{mf}(\bm x,\bm \beta;\bm \sigma_0,\bm \beta_0,\bm \Phi,\widebar{\bm \varepsilon}_k,\widebar{\bm Z}_k,K/\lambda tq_k,K/\lambda_k(t))\\
&\hspace{0.2in}+\eta_1\sum\limits_{i=1}^{p}\left\{\frac{x^2_i}{2}-\sigma_{0i}x_i-\frac{x_i\widehat{Z}_i}{\sqrt{\eta_1}}\right\}+\eta_2\sum\limits_{i=1}^{p}\left\{\frac{(\beta_i-\beta_{0i})^2}{2}-\frac{(\beta_i-\beta_{0i})\widehat{\widehat{Z}}_i}{\sqrt{\eta_2}}\right\}.
\end{align}
Here 
\begin{align}
\label{eq:def_big_bold_theta}
\Theta:=(\bm \Phi,\{\bm \varepsilon_\ell\}_{\ell=1}^{K},\{\bm Z_\ell\}_{\ell=1}^{K},\{\widebar{\bm \varepsilon}_\ell\}_{\ell=1}^{K},\{\widebar{\bm Z}_\ell\}_{\ell=1}^{K},\{\widehat{Z}_i\}_{i=1}^{p},\{\widehat{\widehat{Z}}_i\}_{i=1}^{p},\{q_k\}_{k=1}^{K},\{E_k\}_{k=1}^{K}),
\end{align}
with $\{\bm \varepsilon_\ell\}_{\ell=1}^{K} \overset{i.i.d}{\sim} \mathsf{N}_n(\bm 0, \bm I_n)$, $\{\widebar{\bm \varepsilon}_\ell\}_{\ell=1}^{K}\overset{i.i.d}{\sim} \mathsf{N}_p(\bm 0, \bm I_p)$, $\{\bm Z_\ell\}_{\ell=1}^{K} \overset{i.i.d}{\sim}\sqrt{p}\,\mathrm{GOE}(p)$, $\{\widebar{\bm Z}_\ell\}_{\ell=1}^{K}\overset{i.i.d}{\sim}\mathsf{N}_p(\bm 0, \bm I_p)$, $\{\widehat{Z}_\ell\}_{\ell=1}^{K} \overset{i.i.d}{\sim}\mathsf{N}(0,1)$ and $\{\widehat{\widehat{Z}}_\ell\}_{\ell=1}^{K}\overset{i.i.d}{\sim}\mathsf{N}(0,1)$. A matrix $\bm Z \in \R^{p \times p}$ is said to follow $\mathrm{GOE}(p)$ if for all $1 \le i < j \le p$, $\bm Z_{ij} \sim \mathsf{N}(0,p^{-1})$, $\bm Z_{ii} \sim \mathsf{N}(0,2p^{-1})$ and \revsagr{$\bm Z_{ij}=\bm Z_{ji}$ for all $i \neq j$}. Furthermore, the parameters 
\[
\revsagr{\Psi^{-2}_\ell:=\frac{\kappa}{\Delta+E_\ell},}
\]
and the functions $\gamma_k,\lambda_k:[0,1]\rightarrow \mathbb{R}$ for $k \in [K]$ are defined by the differential equation
\begin{equation}
\label{eq:param_lin_mod}
\frac{d\lambda_k(t)}{dt}=-\frac{d\gamma_k(t)}{dt}\frac{\kappa}{(1+\gamma_k(t)E_k)^2}.
\end{equation}
with boundary conditions
\[
\gamma_k(0)=\Delta^{-1}; \quad \lambda_k(0)=0; \quad \gamma_k(1)=0, \quad \mbox{and} \quad \lambda_k(1)=\revsagr{\Psi^{-2}_k}.
\]
\revsagr{Additionally, we require $\frac{d\gamma_k(t)}{dt} \le 0$ for all $t$.} Finally, we define the interpolating \emph{free energy} as follows:
\begin{equation}
\label{eq:free_energy_perturbed}
f_{k,t;\bm \eta}=-\frac{1}{p}\mathbb{E}_\Theta\left[\log\int\left\{\prod\limits_{i=1}^{p}P(\beta_i|x_i)Q(x_i)d\beta_idx_i\right\}\exp\left(-\mathcal{H}_{k,t;\bm \eta}(\bm x, \bm \beta;\bm \sigma_0,\bm \beta_0,\Theta)\right)\right].
\end{equation}
Next, let us consider the boundary cases \revsagr{$\mathcal{H}_{1,0;\bm 0}$ and $\mathcal{H}_{K,1;\bm 0}$.}

\begin{align}
\revsagr{\mathcal{H}_{1,0;\bm 0}}(\bm x,\bm \beta;\bm \sigma_0,\bm \beta_0,\Theta)&=\sum\limits_{\ell =1}^{K}h(\bm x,\bm \beta;\bm \sigma_0,\bm \beta_0,\bm \Phi,\bm \varepsilon_\ell,\bm Z_\ell,K/\lambda,K\Delta)\\
&=\frac{\lambda}{K}\sum\limits_{\ell=1}^{K}\left\{\sum\limits_{\substack{i\le j\\i,j=1}}^{p}\left[\frac{x^2_ix^2_j}{2p}-\frac{\sigma_{0i}\sigma_{0j}x_ix_j}{p}\right]\right\}-\sum\limits_{\substack{i\le j\\i,j=1}}^px_ix_j\sqrt{\frac{\lambda}{K}}\frac{1}{\sqrt{p}}\sum\limits_{\ell=1}^{K}(\bm Z_{\ell})_{ij}\\
& \hspace{0.3in}+\frac{1}{\Delta}\sum\limits_{\mu=1}^{n}\frac{1}{2}[\bm \Phi(\bm \beta-\bm \beta_0)]^2_\mu-\frac{1}{\Delta}\sum\limits_{\mu=1}^{n}[\bm \Phi(\bm \beta-\bm \beta_0)]_\mu\left(\frac{\sqrt{\Delta}}{\sqrt{K}}\sum\limits_{\ell=1}^{K}(\bm \varepsilon_{\ell})_\mu\right)\\
&\overset{d}{=}\lambda\left\{\sum\limits_{\substack{i\le j\\i,j=1}}^{p}\left[\frac{x^2_ix^2_j}{2p}-\frac{\sigma_{0i}\sigma_{0j}x_ix_j}{p}\right]\right\}-\sqrt{\frac{\lambda}{p}}\sum\limits_{\substack{i\le j\\i,j=1}}^{p}x_ix_j\bm Z_{ij} \\
& \hspace{0.3in}+\frac{1}{\Delta}\sum\limits_{\mu=1}^{n}\frac{1}{2}[\bm \Phi(\bm \beta-\bm \beta_0)]^2_\mu-\frac{1}{\Delta}\sum\limits_{\mu=1}^{n}[\bm \Phi(\bm \beta-\bm \beta_0)]_\mu\sqrt{\Delta}\varepsilon_\mu\\
&\hskip 12em \mbox{[where $\bm Z \sim \sqrt{p}\;\mathrm{GOE}(p)$ and  $\varepsilon_\mu \sim \mathsf{N}(0,1)$]}.
\end{align}
Furthermore, we also have
\begin{align}
\revsagr{\mathcal{H}_{K,1;\bm 0}}(\bm x,\bm \beta;\bm \sigma_0,\bm \beta_0,\Theta)&=\sum_{\ell=1}^{K}h_{mf}(\bm x,\bm \beta;\bm \sigma_0,\bm \beta_0,\bm \Phi,\widebar{\bm \varepsilon}_\ell,\widebar{\bm Z}_\ell,K/\lambda q_\ell,K\Psi^2_\ell)\\
& = \lambda \widebar{q}\left\{\sum\limits_{i=1}^{p}\left[\frac{x^2_i}{2}-\sigma_{0i}x_i-\sqrt{\frac{1}{\lambda\widebar{q}}}x_i\sum\limits_{\ell=1}^{K}(\widebar{\bm Z}_{\ell})_i\sqrt{\frac{q_\ell}{K\widebar{q}}}\right]\right\}\\
& \quad + \frac{1}{\widebar{\Psi}^2}\left\{\sum\limits_{i=1}^{p}\left[\frac{(\beta_i-\beta_{0i})^2}{2}-\widebar{\Psi}(\beta_i-\beta_{0i})\sum\limits_{\ell=1}^{K}(\widebar{\bm \varepsilon}_{\ell})_i\frac{\widebar\Psi}{\Psi_\ell}\frac{1}{\sqrt{K}}\right]\right\}\\
&=h_{mf}(\bm x,\bm \beta;\bm \sigma_0,\bm \beta_0,\bm \Phi,\bm \varepsilon,(\lambda\bar{q})^{-1},\widebar{\Psi}^2),
\end{align}
where 
\begin{equation}
\label{eq:effec_param}
\widebar{\Psi}^{-2}=\frac{1}{K}\sum_{\ell=1}^{K}\Psi^{-2}_\ell; \quad \quad \mbox{and} \quad \quad \widebar{q}=\frac{1}{K}\sum\limits_{\ell=1}^{K}q_\ell.
\end{equation}
Here it is worthwhile to observe that $\revsagr{f_{1,0,\bm 0}}=f_p$ and $\revsagr{f_{K,1,\bm 0}}=\wt f((\lambda\bar{q})^{-1},\widebar{\Psi}^2)$. Also, for all $k \in [K-1]$, we have $f_{k+1,0;\bm \eta}=f_{k,1;\bm \eta}$.

Let us consider the Gibbs measure with respect to the interpolating Hamiltonian $\mathcal H_{k,t;\bm \eta}$.
\begin{align}
\label{eq:gibbs_mesr}
P_{k,t;\bm \eta}(\bm x,\bm \beta|\theta):=\frac{\prod_{i=1}^{p}Q(x_i)P(\beta_i|x_i)\exp(-\revsagr{\mathcal{H}_{k,t;\bm \eta}}(\bm x,\bm \beta;\bm \sigma_0,\bm \beta_0,\theta))}{\int\{\prod_{i=1}^{p}Q(x_i)P(\beta_i|x_i)dx_id\beta_i\}\exp(-\revsagr{\mathcal{H}_{k,t;\bm \eta}}(\bm x,\bm \beta;\bm \sigma_0,\bm \beta_0,\theta)}.
\end{align}
Furthermore, the expectation of a function $A(\bm x,\bm \beta)$ with respect to $P_{k,t;\bm \eta}(\bm x,\bm \beta|\theta)$ will be denoted by $\langle A(\bm x, \bm \beta)\rangle_{\mathcal H_{k,t;\bm \eta}}$. We shall compute the limit $\lim_{p \rightarrow \infty}f_p$ by characterizing the change in free energy along the interpolation path specified by the functions $\{f_{k,t;\bm \eta}: t \in [0,1], k \in [K_p]\}$ for a sequence of numbers $K_p \rightarrow +\infty$ and appropriately chosen parameters $\{E_k,q_k: 1 \le k \le K_p\}$. In that direction, let us consider $\frac{df_{k,t;\bm \eta}}{dt}$.
From the definition
\[
\frac{df_{k,t;\bm \eta}}{dt}=\frac{1}{p}\mathbb{E}_\Theta\left[\left\langle\frac{d\mathcal{H}_{k,t;\bm \eta}}{dt}\right\rangle_{\mathcal{H}_{k,t;\bm \eta}}\right].
\]
Using the definition of $\mathcal{H}_{k,t;\bm \eta}$, we have the following identity.
\begin{align}
\label{eq:diff_hamiltonian}
\frac{d\mathcal{H}_{k,t;\bm \eta}}{dt}&=\frac{d}{dt}h_{mf}(\bm x,\bm \beta;\bm \sigma_0,\bm\beta_0,\bm \Phi,\widebar{\bm{\varepsilon}}_k,\widebar{\bm Z}_k,K/\lambda tq_k,K/\lambda_k(t))\\
&\quad\quad+\frac{d}{dt}h(\bm x,\bm \beta;\bm \sigma_0,\bm \beta_0,\bm \Phi,\bm \varepsilon_k,\bm Z_k,K/(1-t)\lambda,K/\gamma_k(t))\\
& = \frac{\lambda q_k}{K}\sum\limits_{i=1}^{p}\left(\frac{x^2_i}{2}-x_i\sigma_{0i}-\frac{x_i(\widebar{\bm Z}_{k})_{i}}{2}\sqrt{\frac{K}{tq_k\lambda}}\right)\\
& \quad + \frac{d\lambda_k(t)}{dt}\frac{1}{2K}\sum\limits_{j=1}^{p}\left((\beta_j-\beta_{0j})^2-\sqrt{\frac{K}{\lambda_k(t)}}(\beta_j-\beta_{0j})(\widebar{\bm\varepsilon}_{k})_{j}\right)\\
& -\frac{\lambda}{K}\sum\limits_{\substack{i \le j\\i,j=1}}^{p}\left\{\frac{x^2_ix^2_j}{2p}-\frac{x_ix_j\sigma_{0i}\sigma_{0j}}{p}-\frac{x_ix_j(\bm Z_{k})_{ij}}{2\sqrt{p}}\sqrt{\frac{K}{(1-t)\lambda}}\right\}\\
& \quad + \frac{d\gamma_k(t)}{dt}\frac{1}{2K}\sum\limits_{\mu=1}^{n}\left\{[\bm \Phi(\bm \beta-\bm \beta_0)]^2_\mu-\sqrt{\frac{K}{\gamma_k(t)}}[\bm \Phi(\bm \beta-\bm \beta_0)]_\mu(\bm \varepsilon_k)_\mu\right\}.
\end{align}
We consider the \emph{Nishimori Identity} given below.
\begin{equation}
\label{eq:nishimori}
\mathbb{E}\left[\langle g(\bm x,\bm \beta, \bm \sigma_0, \bm \beta_0)\rangle_{\mathcal{H}_{k,t;\bm \eta}}\right]=\mathbb{E}\left[\langle g(\bm x,\bm \beta, \bm x^\prime, \bm \beta^\prime)\rangle_{\mathcal{H}_{k,t;\bm \eta}}\right],
\end{equation}
where $(\bm x,\bm \beta)$ and $(\bm x^\prime,\bm \beta^\prime)$ are drawn i.i.d from the Gibbs measure $P_{k,t;\bm \eta}(\bm x,\bm \beta|\Theta)$. \revsagr{From \cite{Stein1981}, for $Z \sim \mathsf{N}(0,1)$ and $f$ differentiable, we have
\begin{align}
\label{eq:stein}
  \mathbb E[Zf(Z)]=\mathbb E[f^\prime(Z)].  
\end{align}
Consequently, we can verify that
\[
\sqrt{\frac{K}{tq_k\lambda}}\mathbb E\left[\left\langle\frac{x_i(\widebar{\bm Z}_{k})_{i}}{2}\right\rangle_{\mathcal{H}_{k,t;\bm \eta}}\right]=\mathbb E\left[\left\langle\frac{x_i^2}{2}-\frac{x_ix^\prime_i}{2}\right\rangle_{\mathcal{H}_{k,t;\bm \eta}}\right],
\]
for all $i \in [p]$ and $(\bm x,\bm \beta)$, $(\bm x^\prime,\bm \beta^\prime)$ drawn i.i.d from the Gibbs measure $P_{k,t;\bm \eta}(\bm x,\bm \beta|\Theta)$. Similarly, we can also verify that
\[
\sqrt{\frac{K}{(1-t)\lambda}}\mathbb E\left[\left\langle\frac{x_ix_j(\bm Z_{k})_{ij}}{2\sqrt{p}}\right\rangle_{\mathcal{H}_{k,t;\bm \eta}}\right]=\mathbb E\left[\left\langle\frac{x_i^2x^2_j}{2p}-\frac{x_ix_jx^\prime_ix^\prime_j}{2p}\right\rangle_{\mathcal{H}_{k,t;\bm \eta}}\right], \quad \mbox{for all $i,j \in [p]$,}
\]
\[
\sqrt{\frac{K}{\lambda_k(t)}}\mathbb E\left[\left\langle(\beta_j-\beta_{0j})(\widebar{\bm\varepsilon}_{k})_{j}\right\rangle_{\mathcal{H}_{k,t;\bm \eta}}\right]=\mathbb E\left[\left\langle(\beta_j-\beta_{0j})^2\right\rangle_{\mathcal{H}_{k,t;\bm \eta}}-\left\langle\beta_i-\beta_{0i}\right\rangle^2_{\mathcal{H}_{k,t;\bm \eta}}\right], \quad \mbox{for all $j \in [p]$,}
\]
and
\[
\sqrt{\frac{K}{\gamma_k(t)}}\mathbb E\left[\left\langle[\bm \Phi(\bm \beta-\bm \beta_0)]_\mu(\bm \varepsilon_k)_\mu\right\rangle_{\mathcal{H}_{k,t;\bm \eta}}\right]=\mathbb E\left[\left\langle[\bm \Phi(\bm \beta-\bm \beta_0)]^2_\mu\right\rangle_{\mathcal{H}_{k,t;\bm \eta}}-\left\langle[\bm\Phi(\bm \beta-\bm\beta_0)]_\mu\right\rangle^2_{\mathcal H_{k,t;\bm \eta}}\right]
\]
for all $\mu \in [n]$. Plugging the above equations in  \eqref{eq:diff_hamiltonian}, we get the following.}
\begin{align}
\frac{d\,f_{k,t;\bm \eta}}{dt}&=\frac{\lambda}{pK}\mathbb{E}\left[\left\langle q_k\sum\limits_{i=1}^{p}\left(\frac{x_ix^\prime_i}{2}-x_i\sigma_{0i}\right)-\sum\limits_{i \le j=1}^{p}\left(\frac{x_ix_jx^\prime_ix^\prime_j}{2p}-\frac{x_ix_j\sigma_{0i}\sigma_{0j}}{p}\right)\right\rangle_{\mathcal H_{k,t;\bm \eta}}\right]\\
& \quad + \frac{d\gamma_k(t)}{dt}\frac{1}{2pK}\sum\limits_{\mu=1}^{n}\mathbb{E}\left[\left\langle[\bm\Phi(\bm \beta-\bm\beta_0)]_\mu\right\rangle^2_{\mathcal H_{k,t;\bm \eta}}\right]+\frac{d\lambda_k(t)}{dt}\frac{1}{2pK}\sum\limits_{i=1}^{p}\mathbb{E}\left[\left\langle\beta_i-\beta_{0i}\right\rangle^2_{\mathcal H_{k,t;\bm \eta}}\right].
\end{align}
Let us define \revsagr{the following quantities}.
\begin{equation}
\label{eq:single_letter}
\widebar{\mathsf{y}}_{k,t;\bm \eta}:=\frac{1}{n}\mathbb{E}[\|\bm \Phi(\langle\bm \beta\rangle_{\mathcal{H}_{k,t;\bm \eta}}-\bm \beta_0)\|^2_2] \quad \mbox{and} \quad \mathsf{mmse}_{k,t;\bm \eta}:=\frac{1}{p}\mathbb{E}[\|\langle \bm \beta\rangle_{\mathcal{H}_{k,t;\bm \eta}}-\bm \beta_0\|^2_2].
\end{equation}
Therefore
\begin{align}
\frac{df_{k,t;\bm \eta}}{dt} & = \frac{\lambda}{pK}\mathbb{E}\left[\left\langle q_k\sum\limits_{i=1}^{p}\left(\frac{x_ix^\prime_i}{2}-x_i\sigma_{0i}\right)-\sum\limits_{\substack{i\le j\\i,j=1}}^{p}\left(\frac{x_ix_jx^\prime_ix^\prime_j}{2p}-\frac{x_ix_j\sigma_{0i}\sigma_{0j}}{p}\right)\right\rangle_{\mathcal{H}_{k,t;\bm \eta}}\right]\\
&\quad+\frac{d\gamma_k(t)}{dt}\frac{\kappa}{2K}\left[\widebar{\mathsf{y}}_{k,t;\bm \eta}-\frac{1}{(1+\gamma_k(t)E_k)^2}\mathsf{mmse}_{k,t;\bm \eta}\right],
\end{align}
where the last equality follows from \eqref{eq:param_lin_mod}. Now using \eqref{eq:nishimori}; we get
\begin{align}
\frac{df_{k,t;\bm \eta}}{dt}&=\mathbb{E}\left[\left\langle-\frac{\lambda}{pK}\frac{q_k}{2}\sum\limits_{i=1}^{p}\sigma_{0i}x_i+\frac{\lambda}{2Kp^2}\sum_{\substack{i \le j\\i,j=1}}^px_ix_j\sigma_{0i}\sigma_{0j}\right\rangle_{\revsag{\mathcal{H}_{k,t;\bm \eta}}}\right]\\
& \hskip 5em +\revsagr{\frac{d\gamma_k(t)}{dt}}\frac{\kappa}{2K}\left[\widebar{\mathsf{y}}_{\revsag{k,t;\bm \eta}}-\frac{1}{(1+\gamma_k(t)E_k)^2}\mathsf{mmse}_{\revsag{k,t;\bm \eta}}\right].
\end{align}
Furthermore, using the Cauchy Schwartz Inequality and the Nishimori Identity, we get
\[
\mathbb{E}\left[\left\langle\frac{1}{n}\sum\limits_{i=1}^{n}x^2_i\sigma^2_{0i}\right\rangle_{\revsag{\mathcal{H}_{k,t;\bm \eta}}}\right] \le \left(\mathbb{E}\left[\left\langle\frac{1}{n}\sum\limits_{i=1}^{p}x^4_i\right\rangle_{\revsag{\mathcal{H}_{k,t;\bm \eta}}}\right]\right)^{1/2} \left(\mathbb{E}\left[\left\langle\frac{1}{n}\sum\limits_{i=1}^{p}\sigma^4_{0i}\right\rangle_{\revsag{\mathcal{H}_{k,t;\bm \eta}}}\right]\right)^{1/2}=\mathbb{E}[\Sigma^4].
\]
Therefore, we have
\begin{align}
\label{eq:diff_free_energy}
\frac{df_{k,t;\bm \eta}}{dt}&=\mathbb{E}\left[\left\langle\frac{\lambda}{4K}s^2_{\bm x,\bm \sigma_0}-\frac{\lambda q_k}{\revsagr{2}K}s_{\bm x,\bm \sigma_0}\right\rangle_{\revsag{\mathcal{H}_{k,t;\bm \eta}}}\right]\\
&+\frac{\kappa}{2K}\revsagr{\frac{d\gamma_k(t)}{dt}}\left[\widebar{\mathsf{y}}_{k,t;\bm \eta}-\frac{1}{(1+\gamma_k(t)E_k)^2}\mathsf{mmse}_{k,t;\bm \eta}\right]+O\left(\frac{1}{nK}\right),
\end{align}
where $s_{\bm x,\bm \sigma_0}=(1/p)\sum\limits_{i=1}^{p}\sigma_{0i}x_i$ is the empirical overlap.
\revsag{Next, observing that $f_{k+1,0;\bm \eta}=f_{k,1;\bm \eta}$ for all $k \in \{1,\ldots,K-1\}$, we have} 
\begin{align}
\label{eq:34}
f_{1,0;\bm \eta}&=f_{K,1;\bm \eta}+\sum\limits_{k=1}^{K}(f_{k,0;\bm \eta}-f_{k,1;\bm \eta})\\
&=f_{K,1;\bm \eta}-\sum\limits_{k=1}^{K}\int\limits_{0}^{1}\frac{df_{k,t;\bm \eta}}{dt}\,dt.
\end{align}
We consider the following lemma characterizing the concentration of the empirical overlap $s_{\bm x,\bm \sigma_0}$ around its mean \revsag{under the Gibbs measure $P_{k,t;\bm \eta}$.}
\begin{lem}
\label{lem:x_overlap}
\revsagr{For $K_p \rightarrow \infty$, $0<a^{(1)}_p<b^{(1)}_p<1$, consider the sequence of parameters $\{q_k\}_{k=1}^{K_p}$, possibly depending on $\bm \eta$ such that as a functions of $\bm \eta$, $\{q_k(\bm \eta)\}$ are differentiable and bounded. Furthermore, for all $\eta_2$, the functions $\{q_k(\cdot,\eta_2)\}$ are increasing in $\eta_1$.} Then, we have an absolute constant $C_1>0$ and $0<\alpha<1/4$ such that,
\[
\int\limits_{a^{(1)}_p}^{b^{(1)}_p}d\eta_1\left(\frac{1}{K_p}\sum\limits_{k=1}^{K_p}\int\limits_{0}^{1}dt\,\mathbb{E}\left[\left\langle(s_{\bm x,\bm \sigma_0}-\mathbb{E}[\langle s_{\bm x,\bm \sigma_0}\rangle_{\revsag{\mathcal{H}_{k,t;\bm \eta}}}])^2\right\rangle_{\revsag{\mathcal{H}_{k,t;\bm \eta}}}\right]\right) \le \frac{C}{(a^{(1)}_p)^2p^\alpha},
\]
where the Hamiltonian is computed with respect to the Gibbs measure characterized by the parameters $\{q_k\}_{k=1}^{K_p}$.
\end{lem}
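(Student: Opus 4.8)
The crucial structural fact is that the last line of $\mathcal H_{k,t;\bm\eta}$, namely $\eta_1\sum_{i=1}^p(x_i^2/2-\sigma_{0i}x_i-x_i\widehat Z_i/\sqrt{\eta_1})$, is---up to an additive term independent of $\bm x$---the log-likelihood of a Gaussian side channel in which $\widehat y_i=\sqrt{\eta_1}\,\sigma_{0i}+\widehat Z_i$ is observed and $x_i$ is to be recovered; thus $\eta_1$ behaves as an auxiliary signal-to-noise ratio and the empirical overlap $s_{\bm x,\bm\sigma_0}=\frac1p\sum_i\sigma_{0i}x_i$ is precisely the order parameter conjugate to it. Writing $F_{k,t;\bm\eta}:=\frac1p\log\int\{\prod_iP(\beta_i|x_i)Q(x_i)\,d\beta_i\,dx_i\}\exp(-\mathcal H_{k,t;\bm\eta})$ so that $f_{k,t;\bm\eta}=-\mathbb E_\Theta[F_{k,t;\bm\eta}]$, the plan is to use the two--term decomposition $\mathbb E[\langle(s_{\bm x,\bm\sigma_0}-\mathbb E\langle s_{\bm x,\bm\sigma_0}\rangle)^2\rangle]=\mathbb E[\langle(s_{\bm x,\bm\sigma_0}-\langle s_{\bm x,\bm\sigma_0}\rangle)^2\rangle]+\mathrm{Var}(\langle s_{\bm x,\bm\sigma_0}\rangle)$ into a ``thermal'' and a ``disorder'' fluctuation, and to bound the $(\eta_1,k,t)$--average of each. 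Throughout, the compact support of $Q(\cdot)P(\cdot\,|\,\cdot)$ guarantees that $x_i,\beta_i$ have uniformly bounded moments under every Gibbs measure $P_{k,t;\bm\eta}$; in particular $|s_{\bm x,\bm\sigma_0}|\le s_{\max}$ almost surely.

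\textbf{Thermal part.} For the thermal fluctuation I would invoke the susceptibility identity attached to this perturbation channel: differentiating an appropriate overlap functional in $\eta_1$, applying Gaussian integration by parts on the $\widehat Z_i$ (Stein's identity \eqref{eq:stein}) and the Nishimori identity \eqref{eq:nishimori}, one obtains that $\mathbb E[\langle(s_{\bm x,\bm\sigma_0}-\langle s_{\bm x,\bm\sigma_0}\rangle)^2\rangle]$ is bounded, up to absolute constants and a remainder $R_{k,t}(\eta_1)$ of bounded variation in $\eta_1$, by $\frac1p\,\partial_{\eta_1}$ of a quantity lying in a fixed compact interval; here the monotonicity-plus-boundedness hypothesis on $\eta_1\mapsto q_\ell(\cdot,\eta_2)$ is exactly what makes $R_{k,t}$ of bounded variation uniformly in $k,t$ (it forces the relevant one-dimensional function of $\eta_1$ to be monotone up to a controlled correction). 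Integrating over $\eta_1\in[a^{(1)}_p,b^{(1)}_p]$, using $|s_{\bm x,\bm\sigma_0}|\le s_{\max}$ for the boundary values, and then averaging over $t\in[0,1]$ and over $k$ (the factor $1/K_p$ only helping), this bounds the thermal contribution to the left side of the lemma by $C/(a^{(1)}_p\,p)$, which is stronger than needed.

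\textbf{Disorder part.} This is the substantive step. Viewed as a function of the Gaussian families $(\bm\Phi,\{\bm\varepsilon_\ell\},\{\bm Z_\ell\},\{\widebar{\bm\varepsilon}_\ell\},\{\widebar{\bm Z}_\ell\},\{\widehat Z_i\},\{\widehat{\widehat{Z}}_i\})$, the free energy $F_{k,t;\bm\eta}$ has gradient with $\|\nabla F_{k,t;\bm\eta}\|_2^2=O(1/p)$ almost surely: each of these Gaussian variables other than $\bm\Phi$ enters $\mathcal H_{k,t;\bm\eta}$ linearly with an explicitly small coefficient (of order $1/\sqrt p$ for the entries of $\bm Z_\ell$, of order $1/\sqrt{K_p}$ for $\bm\varepsilon_\ell,\widebar{\bm\varepsilon}_\ell,\widebar{\bm Z}_\ell$, and of order $\sqrt{\eta_1}$ for $\widehat Z_i$), so the corresponding partial derivative of $\log\int e^{-\mathcal H_{k,t;\bm\eta}}$ is that coefficient times a Gibbs average of a bounded observable; crucially, the choice of the ODE \eqref{eq:param_lin_mod} with boundary condition $\gamma_k(1)=0$ makes the coefficient of $\bm Z_k$ scale like $\sqrt{(1-t)/K_p}$, so it vanishes as $t\to1$, and a symmetric remark holds at $t=0$, making the gradient bound uniform over $t\in[0,1]$ and $k\le K_p$. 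The quadratic $\bm\Phi$--dependence is handled separately using $\|\bm\Phi\|_{\mathsf{op}}=O(1)$ with high probability together with $\|\bm\beta-\bm\beta_0\|_2\le C\sqrt p$ under the (compactly supported) Gibbs measure. The Gaussian Poincar\'e inequality then yields $\mathrm{Var}(F_{k,t;\bm\eta})\le C/p$, uniformly in $k,t,\eta_1$. To convert this into a bound on $\mathrm{Var}(\langle s_{\bm x,\bm\sigma_0}\rangle)$, I would use that $\mathbb E_{\widehat Z}[F_{k,t;(\eta_1,\eta_2)}]$ is a concave function of $\eta_1$ (the standard concavity of a Gaussian-channel free energy in its signal-to-noise ratio) and concentrates in the remaining disorder, together with the classical lemma that the fluctuations of the derivative of a concave function are controlled by those of the function itself (as used throughout the adaptive-interpolation literature \cite{Adaptive_Interpolation,barbier2019optimal}); since $\partial_{\eta_1}F_{k,t;\bm\eta}$ is, up to a bounded additive term, a self-overlap of the configuration whose expectation equals $\tfrac12\mathbb E[\langle s_{\bm x,\bm\sigma_0}\rangle]$ by the Nishimori identity, this transfers $\mathrm{Var}(F_{k,t;\bm\eta})=O(1/p)$ into $\int_{a^{(1)}_p}^{b^{(1)}_p}\mathrm{Var}(\langle s_{\bm x,\bm\sigma_0}\rangle)\,d\eta_1\le C/((a^{(1)}_p)^2 p^\alpha)$ for some $0<\alpha<1/4$; the loss of exponent from $p^{-1}$ to $p^{-\alpha}$ is intrinsic to this convexity argument, and the factor $(a^{(1)}_p)^{-2}$ comes from the $\eta_1^{-1/2}$ factors produced when differentiating the perturbation term. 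Averaging over $t$ and $k$ once more only helps.

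\textbf{Conclusion and main obstacle.} Adding the thermal and disorder bounds proves the lemma with $\alpha<1/4$. The main obstacle is the disorder step: proving the free-energy concentration \emph{uniformly} over $t\in[0,1]$ and $k\le K_p$---in particular checking that the gradient bounds do not degenerate at the endpoints $t\in\{0,1\}$, which hinges on the precise form of \eqref{eq:param_lin_mod} and its boundary conditions---and then applying the concavity-transfer lemma in the presence of the extra mean-field channels whose strengths $q_k(\bm\eta)$ themselves depend on $\eta_1$, which is exactly where the monotonicity hypothesis on the $q_k$ enters.
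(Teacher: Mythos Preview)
Your overall architecture---thermal/disorder split, Gaussian--Poincar\'e for the free energy, and a concavity transfer to its derivative---matches the paper's exactly. But the disorder step, as you have written it, contains a genuine gap. The concavity argument, applied to (a modification of) $F_{k,t;\bm\eta}$, controls the disorder fluctuations of $\partial_{\eta}F_{k,t;\bm\eta}$, \emph{not} of $\langle s_{\bm x,\bm\sigma_0}\rangle$. You infer the latter from the fact that $\mathbb E[\partial_{\eta_1}F]=-\tfrac12\mathbb E[\langle s_{\bm x,\bm\sigma_0}\rangle]$ by Nishimori; but equality of expectations does not transfer to fluctuations: pointwise, $\langle s_{\bm x,\bm\sigma_0}\rangle=\tfrac1p\sum_i\sigma_{0i}\langle x_i\rangle$ and $\partial_{\eta_1}F=\tfrac1p\sum_i\bigl(\tfrac12\langle x_i^2\rangle-\sigma_{0i}\langle x_i\rangle-\tfrac{1}{2\sqrt{\eta_1}}\widehat Z_i\langle x_i\rangle\bigr)$ differ by terms whose disorder variances are not obviously small. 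The paper fills this hole by introducing the auxiliary observable $\mathcal L:=\partial_{\widetilde\eta}F$ (after combining all mean-field channels into one of strength $\widetilde\eta=\eta_1+\tfrac{\lambda}{K}(\sum_{\ell<k}q_\ell+tq_k)$) and proving, via Nishimori and Stein, the exact identity
\[
\mathbb E\bigl[\langle(\mathcal L-\mathbb E\langle\mathcal L\rangle)^2\rangle\bigr]=\tfrac14\bigl(\mathbb E\langle s^2\rangle-(\mathbb E\langle s\rangle)^2\bigr)+\tfrac12\bigl(\mathbb E\langle s^2\rangle-\mathbb E\langle s\rangle^2\bigr)+\tfrac{\mathbb E[\Sigma^2]}{4p\widetilde\eta},
\]
so that the \emph{total} $s$-fluctuation is bounded by $4$ times the \emph{total} $\mathcal L$-fluctuation. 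One then splits $\mathcal L$ (not $s$) into thermal and disorder pieces and bounds each; the split you propose directly on $s$ does not connect to the free-energy derivative.

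Two secondary points. First, the paper's reparametrization $\eta_1\mapsto\widetilde\eta$ is not cosmetic: the monotonicity hypothesis on $q_k(\cdot,\eta_2)$ is used precisely to make $\partial\widetilde\eta/\partial\eta_1\ge1$, so that the integral of $-\partial_{\widetilde\eta}^2 f$ telescopes to bounded boundary values; your ``remainder of bounded variation'' gestures at this but does not replace it. Second, a small confusion: the ODE \eqref{eq:param_lin_mod} governs $\gamma_k$ and $\lambda_k$ (the $\bm\varepsilon_k$ and $\widebar{\bm\varepsilon}_k$ channels), not the coefficient of the GOE matrix $\bm Z_k$, which is simply $\sqrt{\lambda(1-t)/(Kp)}$ by construction.
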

Similarly, also consider the following lemma characterizing the concentration of $\widebar{\mathsf{y}}_{\revsag{k,t;\bm \eta}}$.
\begin{lem}
\label{lem:mmse_overlap}
For $K_p \rightarrow \infty$, $0<a^{(2)}_p<b^{(2)}_p<1$, \revsagr{consider the sequence of parameters $\{E_k\}_{k=1}^{K_p}$, possibly depending on $\bm \eta$ such that the functions $\{E_k(\bm \eta)\}$ are differentiable and bounded. Furthermore, for all fixed $\eta_1$, the functions $\{E_k(\eta_1,\cdot)\}$ are non-increasing with respect to $\eta_2$. Then we have an absolute constant $C_1>0$ and the same $\alpha$ as Lemma \ref{lem:x_overlap} such that,}
\begin{align}
\label{eq:mmse_overlap}
\int\limits_{a^{(2)}_p}^{b^{(2)}_p}d\eta_2\left(\frac{1}{K_p}\sum\limits_{k=1}^{K_p}\int\limits_{0}^{1}dt\frac{d\gamma_k(t)}{dt}\left\{\widebar{\mathsf{y}}_{\revsag{k,t;\bm \eta}}-\frac{\mathsf{mmse}_{\revsag{k,t;\bm \eta}}}{1+\gamma_k(t)\mathsf{mmse}_{\revsag{k,t;\bm \eta}}}\right\}\right)=O\left(\frac{1}{(a^{(2)}_p)^2p^\alpha}\right).
\end{align}
\end{lem}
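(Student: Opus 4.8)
The plan is to run the proof of Lemma~\ref{lem:x_overlap} on the linear-observation block of the interpolating Hamiltonian $\mathcal{H}_{k,t;\bm\eta}$ in place of the rank-one matrix block; this is the regression analogue of the $\widebar{\mathsf{y}}$-concentration step in the adaptive-interpolation analysis of generalized linear models (\cite{barbier2019optimal,Adaptive_Interpolation}). It suffices to prove two facts, both holding after averaging over $\eta_2\in[a^{(2)}_p,b^{(2)}_p]$, $k\le K_p$ and $t\in[0,1]$: \emph{(a)} the $\beta$-overlap $\mathcal{Q}:=\tfrac{1}{p}\sum_{i=1}^{p}(\beta_i-\beta_{0i})(\beta'_i-\beta'_{0i})$, for two replicas $(\bm x,\bm\beta),(\bm x',\bm\beta')$ drawn i.i.d.\ from $P_{k,t;\bm\eta}$, concentrates in an averaged $L^2$ sense, with $\mathbb{E}\langle\mathcal{Q}\rangle_{\mathcal{H}_{k,t;\bm\eta}}=\mathsf{mmse}_{k,t;\bm\eta}$ by the Nishimori identity~\eqref{eq:nishimori}; and \emph{(b)} the Tse--Hanly relation $\widebar{\mathsf{y}}_{k,t;\bm\eta}\,(1+\gamma_k(t)\mathsf{mmse}_{k,t;\bm\eta})=\mathsf{mmse}_{k,t;\bm\eta}+o(1)$. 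Granting these, one multiplies by $d\gamma_k(t)/dt\le 0$, sums over $k$, integrates over $t$ and $\eta_2$, and uses $\int_0^1|d\gamma_k(t)/dt|\,dt=\gamma_k(0)-\gamma_k(1)=\Delta^{-1}$ (bounded uniformly in $k$) together with the $1/K_p$ prefactor to collapse the whole expression to the asserted $O\!\big(1/((a^{(2)}_p)^2 p^\alpha)\big)$ with the same $\alpha<1/4$ as in Lemma~\ref{lem:x_overlap}.

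For \emph{(a)} no new machinery is needed beyond what underlies \eqref{eq:diff_hamiltonian}--\eqref{eq:diff_free_energy}. The perturbation term $\eta_2\sum_i\{(\beta_i-\beta_{0i})^2/2-(\beta_i-\beta_{0i})\widehat{\widehat{Z}}_i/\sqrt{\eta_2}\}$ in $\mathcal{H}_{k,t;\bm\eta}$ is a Gaussian side-channel of signal-to-noise ratio $\eta_2$ on $\bm\beta_0$, so Gaussian integration by parts (Stein's identity~\eqref{eq:stein}) gives $\partial_{\eta_2}f_{k,t;\bm\eta}=\tfrac{1}{2}\mathsf{mmse}_{k,t;\bm\eta}$ plus a chain-rule contribution from $\partial_{\eta_2}E_k$, and $-\partial^2_{\eta_2}f_{k,t;\bm\eta}$ equals, up to $O(1/p)$ and again modulo $\partial_{\eta_2}E_k$-terms, the thermal variance $\mathbb{E}\langle(\mathcal{Q}-\langle\mathcal{Q}\rangle)^2\rangle$. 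Because $P(\cdot\,|\,0),P(\cdot\,|\,1)$ have bounded support, $f_{k,t;\bm\eta}$ is uniformly bounded, so integrating $-\partial^2_{\eta_2}f_{k,t;\bm\eta}$ over $\eta_2\in[a^{(2)}_p,b^{(2)}_p]$ bounds the averaged thermal variance and yields the $1/(a^{(2)}_p)^2$ factor; the hypothesis that $E_k(\eta_1,\cdot)$ is non-increasing is exactly what signs the stray $\partial_{\eta_2}E_k$ contributions, as in Lemma~\ref{lem:x_overlap}. The remaining ``disorder'' fluctuation $\mathbb{E}[(\langle\mathcal{Q}\rangle-\mathbb{E}\langle\mathcal{Q}\rangle)^2]$ is $O(1/p)$ by the Gaussian--Poincar\'e inequality applied to the Gaussian disorder $(\bm\Phi,\{\bm\varepsilon_\ell\},\{\bm Z_\ell\},\{\widebar{\bm\varepsilon}_\ell\},\{\widebar{\bm Z}_\ell\},\{\widehat{Z}_i\},\{\widehat{\widehat{Z}}_i\})$ plus a bounded-difference estimate in $(\bm\sigma_0,\bm\beta_0)$, again using bounded supports. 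Combining, $\mathcal{Q}\to\mathsf{mmse}_{k,t;\bm\eta}$ in the averaged $L^2$ sense, the weaker of the two rates dictating $\alpha$.

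For \emph{(b)}, the factor $1+\gamma_k(t)\mathsf{mmse}_{k,t;\bm\eta}$ is the standard Tse--Hanly ``effective-SNR'' correction for a linear channel with i.i.d.\ Gaussian design: estimating $\bm\Phi\bm\beta_0$ is easier than estimating $\bm\beta_0$ coordinatewise because one also observes $\bm\Phi\bm\beta_0$ through the block-$k$-at-time-$t$ linear channel of SNR $\gamma_k(t)$. Rigorously, I would introduce an auxiliary Gaussian side observation of $\bm\Phi\bm\beta_0$ with infinitesimal SNR, identify $\widebar{\mathsf{y}}_{k,t;\bm\eta}$ as twice the derivative of $f_{k,t;\bm\eta}$ in that SNR at $0$, and evaluate this derivative a second way by Gaussian integration by parts in the design $\bm\Phi$ (and in the noise $\bm\varepsilon_k$) appearing in the linear-model terms of $\mathcal{H}_{k,t;\bm\eta}$; the ``response'' term obtained by differentiating the Gibbs average $\langle\cdot\rangle$ in the entries of $\bm\Phi$ carries the factor $\gamma_k(t)$, and, once the overlap concentration of \emph{(a)} is used to replace the empirical overlaps that appear by $\mathsf{mmse}_{k,t;\bm\eta}$, it closes into the self-consistent identity $\widebar{\mathsf{y}}_{k,t;\bm\eta}(1+\gamma_k(t)\mathsf{mmse}_{k,t;\bm\eta})=\mathsf{mmse}_{k,t;\bm\eta}+o(1)$. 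The main obstacle is precisely this last step: showing that the Gaussian integration by parts reproduces the denominator $1+\gamma_k(t)\mathsf{mmse}_{k,t;\bm\eta}$ cleanly and that all higher-order response and remainder terms are $o(1)$ \emph{uniformly} in $k\le K_p$, $t\in[0,1]$ and $\eta_2$ --- this requires uniform control of the response functions $\partial\langle\beta_i\rangle/\partial\Phi_{\mu j}$ and of fourth Gibbs moments of $\bm\beta$, for which the bounded-support assumption on $P(\cdot\,|\,\cdot)$ is essential, and it must be interlocked with the averaged rate in \emph{(a)} so that the error survives multiplication by $d\gamma_k(t)/dt$ and the integration over $\eta_2$.
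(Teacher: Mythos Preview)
Your proposal is essentially correct and follows the same route the paper points to: the paper's proof of Lemma~\ref{lem:mmse_overlap} is a one-line deferral to the Nishimori identity~\eqref{eq:nishimori}, Stein's identity~\eqref{eq:stein}, and Lemma~4.6 of \cite{barbier2020mutual}, and your two-step outline (overlap concentration for $\mathcal{Q}$ via the $\eta_2$-perturbation, then the Tse--Hanly relation via Gaussian integration by parts in $\bm\Phi$) is exactly the content of that lemma. Two small cleanups: in your definition of $\mathcal{Q}$ the second replica should carry the same ground truth $\beta_{0i}$, not a separate $\beta'_{0i}$; and the role of the monotonicity hypothesis on $E_k(\eta_1,\cdot)$ is, just as for $q_k$ in Lemma~\ref{lem:x_overlap}, to make the effective side-channel SNR $\eta_2+\tfrac{1}{K}\big(\sum_{\ell<k}\Psi_\ell^{-2}+\lambda_k(t)\big)$ monotone in $\eta_2$ so that the change of variables has Jacobian $\ge 1$, rather than merely ``signing stray terms''.
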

\revsagr{The proof of this lemma follows using the definition of $\widebar{\mathsf{y}}_{\revsag{k,t;\bm \eta}}$, \eqref{eq:nishimori} and \eqref{eq:stein}. The details of the proof closely follow the techniques outlined in the proof of Lemma 4.6 of \cite{barbier2020mutual}.}
Next, let us consider the sequence of parameters,
\[
V_{K}=\frac{1}{K}\sum\limits_{k=1}^{K}q^2_k-\left(\frac{1}{K}\sum\limits_{k=1}^{K}q_k\right)^2.
\]
\revsagr{Let us define 
\begin{align}
\label{eq:combined_free_energy}
f_{\rs}\left(\{q_k\}_{k=1}^{K_p},\{E_k\}_{k=1}^{K_p},\Delta,\lambda\right)=\frac{\lambda}{4}\left(\frac{1}{K_p}\sum\limits_{k=1}^{K_p}q_k\right)^2+\frac{1}{K_p}\sum\limits_{k=1}^{K_p}\psi(\Delta;E_k)+\wt{f}\left(\frac{1}{\lambda\widebar{q}},\widebar{\Psi}^{2}\right),\nonumber\\
\end{align}
and $\widebar{q},\widebar \Psi^2$ are defined in \eqref{eq:effec_param}. Observe that
\[
\psi(E_k;\Delta)=\frac{\kappa}{2}\int\limits_{0}^{1}\frac{d\gamma_k(t)}{dt}\left(\frac{E_k}{(1+\gamma_k(t)E_k)^2}-\frac{E_k}{1+\gamma_k(t)E_k}\right)dt.
\]
Therefore, combining \eqref{eq:diff_free_energy},\eqref{eq:34},\eqref{eq:mmse_overlap}, we get}
\begin{align}
\label{eq:calc_1}
&\int\limits_{a^{(1)}_p}^{b^{(1)}_p}\int\limits_{a^{(2)}_p}^{b^{(2)}_p}f_{1,0;\bm \eta}d\eta_2d\eta_1\\
&=\int\limits_{a^{(1)}_p}^{b^{(1)}_p}\int\limits_{a^{(2)}_p}^{b^{(2)}_p}\Bigg\{(f_{K_p,1;\bm\eta}-f_{K_p,1;\bm 0})+\frac{\lambda}{4}V_{K_p}\Bigg\}d\eta_2d\eta_1\\
&\quad\quad-\frac{\lambda}{4}\int\limits_{a^{(1)}_p}^{b^{(1)}_p}\int\limits_{a^{(2)}_p}^{b^{(2)}_p}\left[\frac{1}{K_p}\sum\limits_{k=1}^{K_p}\int\limits_{0}^{1}\mathbb{E}\left[\langle(s_{\bm x,\bm \sigma_0}-q_k)^2\rangle_{\mathcal H_{k,t;\bm \eta}}\right]dt\right]d\eta_2d\eta_1\\
&\quad-\frac{\kappa}{2}\int\limits_{a^{(1)}_p}^{b^{(1)}_p}\int\limits_{a^{(2)}_p}^{b^{(2)}_p}\bigg\{\frac{1}{K_p}\sum\limits_{k=1}^{K_p}\int\limits_{0}^{1}\frac{d\gamma_k(t)}{dt}\Bigg[\frac{\mathsf{mmse}_{k,t;\bm \eta}}{1+\gamma_k(t)\mathsf{mmse}_{k,t;\bm \eta}}-\frac{\mathsf{mmse}_{k,t;\bm \eta}}{\revsagr{(1+\gamma_k(t)E_k)^2}}\\
& \quad\quad\quad +\frac{E_k}{(1+\gamma_k(t)E_k)^2}-\frac{E_k}{(1+\gamma_k(t)E_k)}\Bigg]\,dt\bigg\}d\eta_2d\eta_1\\
&\quad+\int\limits_{a^{(1)}_p}^{b^{(1)}_p}\int\limits_{a^{(2)}_p}^{b^{(2)}_p}f_{\rs}\left(\{q_k\}_{k=1}^{K_p},\{E_k\}_{k=1}^{K_p},\Delta,\lambda\right)d\eta_2d\eta_1+O\left(\frac{1}{(a^{(1)}_p\wedge a^{(2)}_p)^2p^\alpha}\right),
\end{align}

Now by using \eqref{eq:calc_1} and Lemmas \ref{lem:x_overlap} and \ref{lem:mmse_overlap}, we get the following equation.
\begin{align}
\label{eq:calc_3}
\int\limits_{a^{(1)}_p}^{b^{(1)}_p}\int\limits_{a^{(2)}_p}^{b^{(2)}_p}f_{1,0;\bm \eta}\,d\eta_2d\eta_1&=\int\limits_{a^{(1)}_p}^{b^{(1)}_p}\int\limits_{a^{(2)}_p}^{b^{(2)}_p}\Bigg\{(f_{K_p,1;\bm\eta}-f_{K_p,1;\bm 0})+\frac{\lambda}{4}V_{K_p}\Bigg\}d\eta_2d\eta_1\\
&\quad\quad-\frac{\lambda}{4}\int\limits_{a^{(1)}_p}^{b^{(1)}_p}\int\limits_{a^{(2)}_p}^{b^{(2)}_p}\left[\frac{1}{K_p}\sum\limits_{k=1}^{K_p}\int\limits_{0}^{1}\revsag{\left\{\mathbb{E}[\langle s_{\bm x,\bm \sigma_0}\rangle_{\mathcal H_{k,t;\bm \eta}}]-q_k\right\}^2}dt\right]d\eta_2d\eta_1\\
&\quad\revsagr{+}\frac{\kappa}{2}\int\limits_{a^{(1)}_p}^{b^{(1)}_p}\int\limits_{a^{(2)}_p}^{b^{(2)}_p}\left[\frac{1}{K_p}\sum\limits_{k=1}^{K_p}\int\limits_{0}^{1}\frac{d\gamma_k(t)}{dt}\frac{\gamma_k(t)(E_k-\mathsf{mmse}_{k,t;\bm \eta})^2}{(1+\gamma_k(t)E_k)^2(1+\gamma_k(t)\mathsf{mmse}_{k,t;\bm \eta})}\,dt\right]d\eta_2d\eta_1\\
&\quad+\int\limits_{a^{(1)}_p}^{b^{(1)}_p}\int\limits_{a^{(2)}_p}^{b^{(2)}_p}f_{\rs}\left(\{q_k\}_{k=1}^{K_p},\{E_k\}_{k=1}^{K_p},\Delta,\lambda\right)d\eta_2d\eta_1+O\left(\frac{1}{(a^{(1)}_p\wedge a^{(2)}_p)^2p^\alpha}\right),
\end{align}
\paragraph{Upper Bound to the limit of the Free Energy} To get the upper bound to the asymptotic limit of the free energy let us consider the following lemma.
\begin{lem}
\label{lem:terminal_cond}
For $\bm \eta=(\eta_1,\eta_2)$ and $f_{k,t;\bm \eta}$ defined in \eqref{eq:free_energy_perturbed}, we have constants $C_1,C_2>0$,
\[
\left|f_{1,0;\bm \eta}-f_{1,0;\bm 0}\right| \le C_1\|\bm \eta\| \quad \mbox{and} \quad \left|f_{K,1;\bm \eta}-f_{K,1;\bm 0}\right| \le C_2\|\bm \eta\|.
\]
\end{lem}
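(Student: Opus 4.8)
The plan is to exploit the fact that the dependence of $\mathcal{H}_{k,t;\bm\eta}$ on $\bm\eta=(\eta_1,\eta_2)$ enters \emph{only} through the two additive perturbation blocks on its last line,
\[
\eta_1\sum_{i=1}^p\Big\{\tfrac{x_i^2}{2}-\sigma_{0i}x_i-\tfrac{x_i\widehat Z_i}{\sqrt{\eta_1}}\Big\}
\quad\text{and}\quad
\eta_2\sum_{i=1}^p\Big\{\tfrac{(\beta_i-\beta_{0i})^2}{2}-\tfrac{(\beta_i-\beta_{0i})\widehat{\widehat Z}_i}{\sqrt{\eta_2}}\Big\},
\]
and that these blocks are identical for every $(k,t)$. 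So I would prove the uniform statement that $f_{k,t;\bm\eta}$ is Lipschitz in $\bm\eta$ with a constant independent of $k,t,p$, and then specialise to $(k,t)=(1,0)$ and $(k,t)=(K,1)$. Concretely, I would bound $\partial_{\eta_1}f_{k,t;\bm\eta}$ and $\partial_{\eta_2}f_{k,t;\bm\eta}$ on the open quadrant and integrate along a coordinate path from $\bm 0$ to $\bm\eta$.

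For the first derivative, differentiating \eqref{eq:free_energy_perturbed} and using the standard Gibbs identity $\frac{d}{ds}\big(-\tfrac1p\mathbb E\log\int e^{-\mathcal H}\big)=\tfrac1p\mathbb E[\langle dH/ds\rangle]$ gives, for $\eta_1>0$,
\[
\frac{\partial f_{k,t;\bm\eta}}{\partial\eta_1}=\frac1p\sum_{i=1}^p\mathbb E\Big[\Big\langle\tfrac{x_i^2}{2}-\sigma_{0i}x_i-\tfrac{x_i\widehat Z_i}{2\sqrt{\eta_1}}\Big\rangle_{\mathcal H_{k,t;\bm\eta}}\Big].
\]
I would then apply Gaussian integration by parts \eqref{eq:stein} in $\widehat Z_i$, using $\partial_{\widehat Z_i}\langle x_i\rangle_{\mathcal H_{k,t;\bm\eta}}=\sqrt{\eta_1}\big(\langle x_i^2\rangle-\langle x_i\rangle^2\big)$: the factor $\sqrt{\eta_1}$ cancels the $1/\sqrt{\eta_1}$, the $\langle x_i^2\rangle$ terms cancel, and the Nishimori identity \eqref{eq:nishimori} replaces $\mathbb E[\sigma_{0i}\langle x_i\rangle]$ by $\mathbb E[\langle x_i\rangle^2]$, leaving
\[
\frac{\partial f_{k,t;\bm\eta}}{\partial\eta_1}=-\frac1{2p}\sum_{i=1}^p\mathbb E\big[\langle x_i\rangle^2_{\mathcal H_{k,t;\bm\eta}}\big].
\]
Since $x_i$ ranges over $\{0,1\}$ under the base measure $Q$, this is bounded in absolute value by $1/2$. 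The identical computation with $\widehat{\widehat Z}_i$ (using that $\beta_{0i}$ is a fixed disorder variable) yields $\partial_{\eta_2}f_{k,t;\bm\eta}=\tfrac1{2p}\sum_i\mathbb E[\langle\beta_i-\beta_{0i}\rangle^2_{\mathcal H_{k,t;\bm\eta}}]$, and because $P(\cdot\,|\,0),P(\cdot\,|\,1)$ are supported in $[-s_{\max},s_{\max}]$, both $\beta_i$ and $\beta_{0i}$ lie there, so this is bounded by $2s_{\max}^2$. Integrating, $|f_{k,t;\bm\eta}-f_{k,t;\bm 0}|\le\tfrac12\eta_1+2s_{\max}^2\eta_2\le(\tfrac12+2s_{\max}^2)\|\bm\eta\|$, which proves the lemma with $C_1=C_2=\tfrac12+2s_{\max}^2$ on taking $(k,t)\in\{(1,0),(K,1)\}$.

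The only delicate point is the apparent $1/\sqrt{\eta_1}$ (resp.\ $1/\sqrt{\eta_2}$) singularity at the origin: the derivative formula above is derived for $\eta_1>0$, but after the integration by parts its right-hand side $-\tfrac1{2p}\sum_i\mathbb E[\langle x_i\rangle^2]$ is bounded and extends continuously down to $\eta_1=0$, so the fundamental theorem of calculus does apply on $[0,\eta_1]$. The interchanges of $\partial_{\eta_1}$ with the expectation $\mathbb E_\Theta$ and with the base-measure integral are justified by dominated convergence using the uniform boundedness of $x_i$ and $\beta_i$ (which also ensures $f_{k,t;\bm\eta}$ is finite and $C^1$ in $\bm\eta$ on the open quadrant). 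I expect this boundary regularity to be the main (though minor) obstacle; the rest is a routine Gaussian-interpolation manipulation that parallels the terminal estimates in \citet{Adaptive_Interpolation} and \citet{barbier2020mutual}.
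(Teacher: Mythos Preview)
Your proposal is correct and follows essentially the same route as the paper: differentiate $f_{k,t;\bm\eta}$ in each $\eta_j$, apply Gaussian integration by parts to remove the $1/\sqrt{\eta_j}$ factor, invoke the Nishimori identity, and bound the resulting gradient using the boundedness of $x_i\in\{0,1\}$ and $|\beta_i|\le s_{\max}$, then integrate. Your version is in fact slightly cleaner---you prove the gradient bound uniformly in $(k,t)$ and specialise, whereas the paper treats $(1,0)$ explicitly and handles $(K,1)$ by referring to the decoupled scalar system; your explicit discussion of the endpoint regularity at $\eta_j=0$ is also a point the paper glosses over.
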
 
Let us choose the interpolation parameters as 
\begin{align}
\label{eq:choice_interp_up_bound}
(q_k,E_k):=\argmin\bigg\{f_{\rs}(\lambda,q;E,\Delta):q \ge 0, E \ge 0\bigg\} \quad \mbox{for all $1 \le k \le K_p.$}
\end{align}
Then using the Mean Value Theorem, we get the following:
\begin{equation}
\label{eq:mvt_argument}
\int\limits_{a^{(1)}_p}^{b^{(1)}_p}\int\limits_{a^{(2)}_p}^{b^{(2)}_p}f_{1,0;\bm \eta}d\eta_2d\eta_1 = (b^{(1)}_p-a^{(1)}_p)(b^{(2)}_p-a^{(2)}_p)f_{1,0;\bm \eta^*},
\end{equation}
where $\bm \eta^*=(\eta^*_1,\eta^*_2)$ for $\eta^*_1 \in (a^{(1)}_p,b^{(1)}_p)$ and $\eta^*_2 \in (a^{(2)}_p,b^{(2)}_p)$. \revsagr{Similarly, we can get $\check{\bm \eta}=(\check{\eta}_1,\check{\eta}_2)$ for $\check{\eta}_1 \in (a^{(1)}_p,b^{(1)}_p)$ and $\check{\eta}_2 \in (a^{(2)}_p,b^{(2)}_p)$, such that
\begin{equation}
\label{eq:mvt_argument_2}
\int\limits_{a^{(1)}_p}^{b^{(1)}_p}\int\limits_{a^{(2)}_p}^{b^{(2)}_p}(f_{K_p,1;\bm\eta}-f_{K_p,1;\bm 0})d\eta_2d\eta_1 = (b^{(1)}_p-a^{(1)}_p)(b^{(2)}_p-a^{(2)}_p)(f_{K_p,1;\check{\bm\eta}}-f_{K_p,1;\bm 0})
\end{equation}
By \eqref{eq:choice_interp_up_bound}, we have $V_{K_p}=0$, and
\[
f_{\rs}\left(\{q_k\}_{k=1}^{K_p},\{E_k\}_{k=1}^{K_p},\Delta,\lambda\right)=\inf\bigg\{f_{\rs}(\lambda,q;E,\Delta):q \ge 0, E \ge 0\bigg\}.
\]
Since $\frac{d\gamma_k(t)}{dt} \le 0$ for all $t$, plugging in the above equalities along with \eqref{eq:mvt_argument}, \eqref{eq:mvt_argument_2} in \eqref{eq:calc_3}, along with Lemma \ref{lem:terminal_cond}, we get
\begin{align}
&\limsup_{p \rightarrow \infty}\,(b^{(1)}_p-a^{(1)}_p)(b^{(2)}_p-a^{(2)}_p)f_{1,0;\bm \eta^*}\\ &\le \limsup_{p \rightarrow \infty}\,(b^{(1)}_p-a^{(1)}_p)(b^{(2)}_p-a^{(2)}_p)(f_{K_p,1;\check{\bm\eta}}-f_{K_p,1;\bm 0}) \\
&+ \limsup_{p \rightarrow \infty}\,(b^{(1)}_p-a^{(1)}_p)(b^{(2)}_p-a^{(2)}_p)\inf\bigg\{f_{\rs}(\lambda,q;E,\Delta):q \ge 0, E \ge 0\bigg\}.
\end{align}
} 
Now, taking $b^{(i)}_p=2a^{(i)}_p$ for $i=1,2$, and $a^{(i)}_p = p^{-\alpha/4}$ in \eqref{eq:calc_3}, we get the following upper bound using the continuity of $f_{1,0;\bm \eta}$ in $\bm \eta$ as $\bm \eta \rightarrow \bm 0$:
\[
\limsup\limits_{p \rightarrow \infty}f_p \le \min\limits_{(q,E)\ge 0}f_{\rs}(\lambda,q;E,\Delta).
\] 
\paragraph{Lower Bound to the limit of the Free Energy}
To show the lower bound, we consider the following lemma.
\begin{lem}
\label{lem:overlap_approx}
Let us fix $K,\bm \eta$ and $\{q_k\}_{k=1}^{K}$. Furthermore, let \revsagr{$P(\,\cdot\,|\,\Sigma)$ satisfy
\[\mathbb E[|B|^4|\Sigma=i] \le C, \quad \mbox{for $i \in \{0,1\}$ and $C>0$ (independent of $i$).}\]
Then, for all $k \in \{1,\cdots,K\}$ and $t \in (0,1)$, we have}
\[
\left|\mathbb{E}\left[\left\langle s_{\bm x, \bm \sigma_0}\right\rangle_{\revsag{\mathcal{H}_{k,t;\bm \eta}}}\right]-\mathbb{E}\left[\left\langle s_{\bm x, \bm \sigma_0}\right\rangle_{\revsag{\mathcal{H}_{k,0;\bm \eta}}}\right]\right|=O\left(\frac{p}{K}\right).
\]
\end{lem}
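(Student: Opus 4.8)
The plan is to differentiate the scalar map $t \mapsto \mathbb{E}[\langle s_{\bm x, \bm \sigma_0}\rangle_{\mathcal{H}_{k,t;\bm \eta}}]$ on $(0,1)$ and to integrate the resulting bound back up. Recall that for any observable $A = A(\bm x, \bm \beta)$ one has $\frac{d}{dt}\mathbb{E}[\langle A\rangle_{\mathcal{H}_{k,t;\bm \eta}}] = -\mathbb{E}[\langle A\,;\,\frac{d\mathcal{H}_{k,t;\bm \eta}}{dt}\rangle_{\mathcal{H}_{k,t;\bm \eta}}]$, where $\langle A\,;\,B\rangle := \langle AB\rangle - \langle A\rangle\langle B\rangle$ is the connected Gibbs correlation. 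I would take $A = s_{\bm x,\bm \sigma_0}$ and use the explicit expression for $\frac{d\mathcal{H}_{k,t;\bm \eta}}{dt}$ from \eqref{eq:diff_hamiltonian}. The key structural observation is that each of the finitely many summands of $\frac{d\mathcal{H}_{k,t;\bm \eta}}{dt}$ carries a prefactor $1/K$ in front of a sum over at most $p$ (or $n \asymp p$) coordinates, so morally $\frac{d\mathcal{H}_{k,t;\bm \eta}}{dt}$ has size $O(p/K)$ in $L^2(P_{k,t;\bm \eta})$ and the derivative of the overlap should inherit this order.

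To make this rigorous I would proceed exactly as in the main-text computation of $\frac{df_{k,t;\bm \eta}}{dt}$ leading to \eqref{eq:diff_free_energy}: first apply Gaussian integration by parts \eqref{eq:stein} in the auxiliary noise variables $\bm Z_k, \widebar{\bm Z}_k, \bm \varepsilon_k, \widebar{\bm \varepsilon}_k$ to remove the explicit Gaussians, and then invoke the Nishimori identity \eqref{eq:nishimori} to rewrite all replica moments in terms of moments of $(\bm \sigma_0, \bm \beta_0)$ and of the Gaussian design $\bm \Phi$. After these reductions $\mathbb{E}[\langle s_{\bm x,\bm \sigma_0}\,;\,\frac{d\mathcal{H}_{k,t;\bm \eta}}{dt}\rangle]$ is a finite combination of terms of the form $\frac1K$ times a $p$-fold sum of uniformly bounded correlations, e.g.\ $\frac{\lambda}{K}\,\mathbb{E}[\langle s_{\bm x,\bm \sigma_0}\,;\,\sum_i x_i\sigma_{0i}\rangle] = \frac{\lambda p}{K}\,\mathbb{E}[\langle s_{\bm x,\bm \sigma_0}\,;\,s_{\bm x,\bm \sigma_0}\rangle]$ and its $\bm\beta$-side analogues. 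The moment inputs needed here — $\mathbb{E}[\langle s_{\bm x,\bm \sigma_0}^2\rangle]\le \rho$, which follows from $\sigma_{0i}\in\{0,1\}$ together with $\mathbb{E}[\langle x_i^2\rangle] = \mathbb{E}[\sigma_{0i}^2]$ (Nishimori), and control of the fourth moments entering the $\bm\beta$-side terms — are precisely supplied by the hypothesis $\mathbb{E}[|B|^4 \mid \Sigma = i] \le C$. This yields $|\frac{d}{dt}\mathbb{E}[\langle s_{\bm x,\bm \sigma_0}\rangle_{\mathcal{H}_{k,t;\bm \eta}}]| = O(p/K)$ for $t$ bounded away from $\{0,1\}$, with constant depending only on $\lambda, \Delta, \kappa, \rho$ and $C$ (hence uniform in $k$).

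Integrating over $t \in (0,1)$ then gives the claim, once the endpoint behaviour is handled. The noise terms in \eqref{eq:diff_hamiltonian} carry the diverging factors $\sqrt{K/(t q_k \lambda)}$ and $\sqrt{K/((1-t)\lambda)}$ near $t = 0$ and $t = 1$; paired with their respective $\lambda q_k / K$ and $\lambda / K$ prefactors (the first combination being $\sqrt{\lambda q_k/(tK)}$, well defined even when $q_k = 0$) their contribution to $\int_0^1 |\frac{d}{dt}\mathbb{E}[\langle s_{\bm x,\bm \sigma_0}\rangle]|\,dt$ is of order $\sqrt{p/K}\int_0^1 t^{-1/2}\,dt = O(\sqrt{p/K}) = O(p/K)$ in the relevant regime $K = K_p \le p$, while the factors $\frac{d\lambda_k(t)}{dt}$, $\frac{d\gamma_k(t)}{dt}$ are absorbed through \eqref{eq:param_lin_mod} and $\int_0^1 |\frac{d\gamma_k(t)}{dt}|\,dt = \gamma_k(0) - \gamma_k(1) = \Delta^{-1}$. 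I expect this endpoint analysis — controlling the connected correlations of $s_{\bm x,\bm \sigma_0}$ against the singular noise contributions — to be the only delicate point; everything else is bookkeeping on top of \eqref{eq:stein} and \eqref{eq:nishimori}, in the same spirit as the free-energy derivative computation already carried out above.
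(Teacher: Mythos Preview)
Your core strategy---differentiate in $t$, express the derivative as a Gibbs covariance with $\frac{d\mathcal H_{k,t;\bm\eta}}{dt}$, apply Stein's identity to eliminate the noise variables, then use Nishimori and Cauchy--Schwarz---is exactly what the paper does, and it is correct.

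The one point where you go astray is the endpoint discussion. Once you carry out Gaussian integration by parts, the singular prefactors $\sqrt{K/(tq_k\lambda)}$ and $\sqrt{K/((1-t)\lambda)}$ \emph{disappear entirely}: each such factor multiplies a noise variable whose dependence in $\mathcal H_{k,t;\bm\eta}$ carries the reciprocal factor, so Stein's identity converts the product into a pure replica moment with no $t$-singularity. Concretely, after integration by parts the integrand becomes $\tfrac{1}{K}\,\mathbb E[\langle s_{\bm x,\bm\sigma_0}(g(\bm x',\bm x'',\bm\beta',\bm\beta'')-g(\bm x,\bm x',\bm\beta,\bm\beta'))\rangle]$ for a function $g$ that is a sum of $O(p)$ bounded terms and involves no noise; Cauchy--Schwarz then gives the uniform bound $O(p/K)$ directly, with no separate endpoint treatment.

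This matters because your fallback endpoint estimate $O(\sqrt{p/K})$ is \emph{not} $O(p/K)$ in the regime where the lemma is actually invoked: the paper later takes $K_p=\Omega(p^2)$, for which $\sqrt{p/K_p}\asymp p^{-1/2}\gg p/K_p\asymp p^{-1}$. So your proposed endpoint argument, if it were needed, would fail to deliver the stated $O(p/K)$. The fix is simply to recognise that integration by parts already handles the whole interval; drop the last paragraph and your proof is complete and matches the paper's.
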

\revsagr{Using Gaussian integration by parts and Cauchy Schwartz inequality as in the  proof of Lemma~\ref{lem:overlap_approx} and utilizing the fact that $P(\,\cdot\,|\,\Sigma)$ has discrete and bounded support for $\Sigma \in \{0,1\}$, we can show that for $k \in \{1,\cdots,K\}$ and $t \in (0,1)$,
\[
\left|\mathsf{mmse}_{k,t;\bm \eta}-\mathsf{mmse}_{k,0;\bm \eta}\right|=O\left(\frac{p}{K}\right).
\]}
Now let us take $K_p=\Omega(n^b)$ with $b=2$ \revsag{using} Lemma \ref{lem:overlap_approx}; we can re-write \eqref{eq:calc_3} as
\begin{align}
\label{eq:lower_bound}
\int\limits_{a^{(1)}_p}^{b^{(1)}_p}\int\limits_{a^{(2)}_p}^{b^{(2)}_p}f_{1,0;\bm \eta}d\eta_2d\eta_1&=\int\limits_{a^{(1)}_p}^{b^{(1)}_p}\int\limits_{a^{(2)}_p}^{b^{(2)}_p}\Bigg\{(f_{K_p,1;\bm\eta}-f_{K_p,1;\bm 0})+\frac{\lambda}{4}V_{K_p}\Bigg\}d\eta_2d\eta_1\\
&\quad\quad-\frac{\lambda}{4}\int\limits_{a^{(1)}_p}^{b^{(1)}_p}\int\limits_{a^{(2)}_p}^{b^{(2)}_p}\left[\frac{1}{K_p}\sum\limits_{k=1}^{K_p}\int\limits_{0}^{1}\left\{\mathbb{E}\left[\langle s_{\bm x,\bm \sigma_0}\rangle_{\revsag{\mathcal{H}_{k,0;\bm \eta}}}\right]-q_k\right\}^2dt\right]d\eta_2d\eta_1\\
&\revsagr{+}\frac{\kappa}{2}\int\limits_{a^{(1)}_p}^{b^{(1)}_p}\int\limits_{a^{(2)}_p}^{b^{(2)}_p}\left[\frac{1}{K_p}\sum\limits_{k=1}^{K_p}\int\limits_{0}^{1}dt\,\frac{d\gamma_k(t)}{dt}\frac{\gamma_k(t)(E_k-\mathsf{mmse}_{k,0;\bm \eta})^2}{(1+\gamma_k(t)E_k)^2(1+\gamma_k(t)\mathsf{mmse}_{k,0;\bm \eta})}\right]d\eta_2d\eta_1\\
&\quad+\int\limits_{a^{(1)}_p}^{b^{(1)}_p}\int\limits_{a^{(2)}_p}^{b^{(2)}_p}f_{\rs}\left(\{q_k\}_{k=1}^{K_p},\{E_k\}_{k=1}^{K_p},\Delta,\lambda\right)d\eta_2d\eta_1+O\left(\frac{1}{(a^{(1)}_p\wedge a^{(2)}_p)^2p^\alpha}\right),
\end{align}
Let us take $E_k=\mathsf{mmse}_{k,0;\bm \eta}$ and $q_k=\mathbb{E}[\langle s_{\bm x,\bm \sigma_0}\rangle_{\revsag{\mathcal{H}_{k,0;\bm \eta}}}]$. Now we need to show that $E_k$ and $q_k$ are valid parameters, \revsag{that satisfies the assumptions of Lemmas \ref{lem:x_overlap} and \ref{lem:mmse_overlap}}.
\begin{lem}
\label{lem:valid_param}
\revsagr{For each $n,p \in \mathbb N$, the parameters $E_k$'s and $q_k$ defined as
\[E_k=\mathsf{mmse}_{k,0;\bm \eta}, \quad \mbox{and,} \quad q_k=\mathbb{E}[\langle s_{\bm x,\bm \sigma_0}\rangle_{\revsag{\mathcal{H}_{k,0;\bm \eta}}}]\]
 are bounded and differentiable as functions of $\bm \eta$. Furthermore, for all $1 \le k \le K_p$, $E_k(\eta_1,\cdot)$ is non-increasing in $\eta_2$ for all $\eta_1>0$ and $q_k(\cdot,\eta_2)$ is non-decreasing in $\eta_1$ for all $\eta_2>0$. Furthermore, the parameters $\{E_k:k \in [K_p]\}$ and $\{q_k:k \in [K_p]\}$ are well defined.}
\end{lem}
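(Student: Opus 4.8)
The plan is to establish all four claims of the lemma --- well-definedness, boundedness, differentiability in $\bm\eta$, and the monotonicities --- by a single induction on $k$, exploiting throughout the fact that every Gaussian channel occurring in $\mathcal H_{k,0;\bm\eta}$ observes either a function of $\bm\sigma_0$ alone or a function of $\bm\beta_0$ alone. Consequently $P_{k,0;\bm\eta}$ is the Bayes posterior of a genuine planted model (the prior being $\prod_i Q(x_i)P(\beta_i|x_i)$, i.e. the generative law of $(\sigma_{0i},\beta_{0i})$), so the Nishimori identity \eqref{eq:nishimori} applies. First I would record that at $t=0$ the two block-$k$ terms of $\mathcal H_{k,0;\bm\eta}$ degenerate: the mean-field block-$k$ term carries the (infinite at $t=0$) noise parameters $K/(\lambda t q_k)$ and $K/\lambda_k(t)$ and hence vanishes, while the interpolating block-$k$ $h$-term uses only $\gamma_k(0)=\Delta^{-1}$ and $\lambda$. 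Thus $\mathcal H_{k,0;\bm\eta}$ depends on the interpolation parameters only through $(q_\ell,E_\ell)_{\ell<k}$, entering as $K/(\lambda q_\ell)$ and $\Psi_\ell^2=(\Delta+E_\ell)/\kappa$; the recursion $E_k=\mathsf{mmse}_{k,0;\bm\eta}$, $q_k=\mathbb E[\langle s_{\bm x,\bm\sigma_0}\rangle_{\mathcal H_{k,0;\bm\eta}}]$ is therefore acyclic and well-defined from $k=1$ upward. Boundedness is immediate from the finite-support hypothesis on $P(\cdot|0),P(\cdot|1)$: $P_{k,0;\bm\eta}$ is supported on a fixed compact set, so $|s_{\bm x,\bm\sigma_0}|\le s_{\max}$ and $\mathsf{mmse}_{k,0;\bm\eta}\le (2s_{\max})^2$ uniformly in $k,n,p,\bm\eta$. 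Finally, Nishimori yields the interpretations $q_k=\rho-\mathsf{MMSE}_k(\bm\sigma_0)$ and $E_k=\mathsf{MMSE}_k(\bm\beta_0)$, where $\mathsf{MMSE}_k(\cdot)$ is the normalized Bayes risk for recovering the indicated signal from all observations encoded in $\mathcal H_{k,0;\bm\eta}$; in particular $q_k\in[0,\rho]$ and $E_k\ge 0$, so the produced parameters lie in the admissible range.

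For differentiability I would again induct on $k$. For $\eta_1,\eta_2>0$ the Hamiltonian $\mathcal H_{k,0;\bm\eta}$ depends on $\bm\eta$ smoothly: directly through the perturbations $\eta_1\sum_i(x_i^2/2-\sigma_{0i}x_i-x_i\widehat Z_i/\sqrt{\eta_1})$ and $\eta_2\sum_i(\,\cdots)$ --- the only non-analyticity, from $\sqrt{\eta_i}$, sits at $\eta_i=0$, which is excluded --- and through $(q_\ell(\bm\eta),E_\ell(\bm\eta))_{\ell<k}$, which are $C^1$ by the inductive hypothesis, composed with the smooth map $E_\ell\mapsto\Psi_\ell^2=(\Delta+E_\ell)/\kappa$. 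Since the integrand defining $f_{k,0;\bm\eta}$, the observables $s_{\bm x,\bm\sigma_0}$ and $\|\langle\bm\beta\rangle_{\mathcal H_{k,0;\bm\eta}}-\bm\beta_0\|_2^2$, and all of their $\bm\eta$-derivatives are bounded uniformly on compact subsets of $\{\eta_1>0,\eta_2>0\}$ (finite support of the prior, and $\eta_i$ bounded away from $0$), one may differentiate under $\int\!\mathrm dx\,\mathrm d\beta$ and under $\mathbb E_\Theta$ by dominated convergence; hence $q_k,E_k\in C^1(\{\eta_1>0,\eta_2>0\})$, closing the induction. This step is routine for free-energy analyses of this type.

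The monotonicity is the main point, and I would prove a strictly stronger statement. The structural input is that each parameter controls the signal-to-noise ratio of one independent Gaussian side-channel in the planted model behind $\mathcal H_{k,0;\bm\eta}$: $\eta_1$ and every $q_\ell$ strengthen a channel observing $\bm\sigma_0$, $\eta_2$ strengthens a channel observing $\bm\beta_0$, and a larger $E_\ell$ weakens (via the noise variance $K\Psi_\ell^2=K(\Delta+E_\ell)/\kappa$) a channel observing $\bm\beta_0$. Since the Bayes risk of a fixed signal never increases when an independent observation is adjoined --- equivalently, $\mathsf{MMSE}$ is non-increasing in the SNR of a Gaussian channel, which a one-line Gaussian integration by parts via \eqref{eq:stein} reveals to be a nonnegative variance --- and since $q_k=\rho-\mathsf{MMSE}_k(\bm\sigma_0)$, $E_k=\mathsf{MMSE}_k(\bm\beta_0)$, the functionals $q_k,E_k$, viewed as functions of $(\eta_1,\eta_2,\{q_\ell,E_\ell\}_{\ell<k})$ with all arguments free, obey
\[
\partial_{\eta_1}q_k\ge 0,\quad \partial_{\eta_2}q_k\ge 0,\quad \partial_{q_\ell}q_k\ge 0,\quad \partial_{E_\ell}q_k\le 0,
\]
and the mirrored signs $\partial_{\eta_1}E_k,\partial_{\eta_2}E_k,\partial_{q_\ell}E_k\le 0$ and $\partial_{E_\ell}E_k\ge 0$. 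I then prove by induction on $k$ the reinforced claim that $q_k(\bm\eta)$ is non-decreasing in \emph{both} $\eta_1$ and $\eta_2$ and $E_k(\bm\eta)$ is non-increasing in both: by the chain rule,
\[
\frac{\mathrm dq_k}{\mathrm d\eta_1}=\partial_{\eta_1}q_k+\sum_{\ell<k}\Big(\partial_{q_\ell}q_k\,\frac{\mathrm dq_\ell}{\mathrm d\eta_1}+\partial_{E_\ell}q_k\,\frac{\mathrm dE_\ell}{\mathrm d\eta_1}\Big)\ge 0,
\]
each summand being $\ge 0$ by the sign table together with the inductive hypotheses $\mathrm dq_\ell/\mathrm d\eta_1\ge 0$ and $\mathrm dE_\ell/\mathrm d\eta_1\le 0$; the three other total derivatives are handled identically. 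The base case $k=1$ is trivial, since $\mathcal H_{1,0;\bm\eta}$ has no mean-field blocks and $q_1,E_1$ depend on $\bm\eta$ only through the direct perturbations, where the sign table \emph{is} the claim. The lemma's two assertions --- $E_k(\eta_1,\cdot)$ non-increasing in $\eta_2$ and $q_k(\cdot,\eta_2)$ non-decreasing in $\eta_1$ --- are then special cases.

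I expect the coupled induction in the last step to be the main obstacle: a priori both $q_k$ and $E_k$ depend on both components of $\bm\eta$ and the two recursions cannot be decoupled, so tracking monotonicity of $q_k$ in $\eta_1$ in isolation fails --- the chain rule for $\mathrm dq_k/\mathrm d\eta_1$ unavoidably drags in $\mathrm dE_\ell/\mathrm d\eta_1$ for $\ell<k$. The argument closes only if one carries all four monotonicities simultaneously and verifies that every cross-term has the correct sign, which is precisely what the Nishimori-based sign table delivers. The auxiliary ingredients --- Gaussian integration by parts and the monotonicity of the Bayes $\mathsf{MMSE}$ under additional observations --- are standard and can be imported from \cite{barbier2019optimal,Adaptive_Interpolation,barbier2020mutual}.
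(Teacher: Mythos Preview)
Your proposal is correct and follows essentially the same approach as the paper: the paper's own proof is a one-line reference to Lemma~4 of \cite{Adaptive_Interpolation}, and what you have written is precisely a careful unpacking of that inductive argument in the present two-channel setting. Your observation that the induction only closes if one carries the stronger claim (monotonicity of $q_k$ and $E_k$ in \emph{both} components of $\bm\eta$) is exactly the mechanism behind the Barbier--Macris proof, and your Nishimori-based sign table for the partial derivatives is the standard way to feed the chain rule.
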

\begin{proof}
The proof follows inductively as in the proof of Lemma 4 of \citet{Adaptive_Interpolation}.
\end{proof}
Using Lemma \ref{lem:valid_param}, \eqref{eq:lower_bound}, and using the \revsagr{Mean Value Theorem}, we get
\[
\liminf\limits_{p \rightarrow \infty} f_p \ge \min\limits_{(q,E) \ge 0}f_{\rs}(\lambda,q;E,\Delta).
\]
Combining the upper and the lower bound, we have the following limit.
\begin{equation}
\label{eq:lim_free_energy}
\lim\limits_{p \rightarrow \infty} f_p = \min\limits_{(q,E) \ge 0}f_{\rs}(\lambda,q;E,\Delta).
\end{equation}
\subsection{Limit of the Mutual Information}
\paragraph{Limit under the Gaussian Wigner Model.} From \eqref{eq:lim_mut_inform} and \eqref{eq:lim_free_energy}, we get
\[
\lim\limits_{p \rightarrow \infty}\frac{1}{p}I(\bm \beta_0,\bm \sigma_0;\widetilde{\bm A},\bm y)= \mathbb{E}_{(\Sigma,B)}[\log P(B\,|\,\Sigma)Q(\Sigma)]+\frac{\lambda}{4}(\mathbb E[\Sigma^2])^2+\lim\limits_{p \rightarrow \infty} f_p.
\]
From the definition this implies \revsagr{
\begin{align}
\label{eq:final_lim}
\lim\limits_{p \rightarrow \infty}\frac{1}{p}I(\bm \beta_0,\bm \sigma_0;\widetilde{\bm A},\bm y)&=\min\limits_{q,E \ge 0}\Bigg\{\mathbb{E}_{(\Sigma,B)}[\log P(B\,|\,\Sigma)Q(\Sigma)]+\frac{\lambda}{4}(\mathbb E[\Sigma^2])^2+\frac{\lambda q^2}{4}\\
&\quad\quad +\frac{\kappa}{2}\left[\log\left(1+\frac{E}{\Delta}\right)-\frac{E}{\Delta+E}\right]+\wt{f}\left(\frac{1}{\lambda q},\Psi(E,\Delta)^2\right)\Bigg\}\\
&= \min\limits_{q,E\ge 0}\Bigg\{\mathbb{E}_{(\Sigma,B)}[\log P(B\,|\,\Sigma)Q(\Sigma)]+\frac{\lambda}{4}(\mathbb E[\Sigma^2])^2+\frac{\lambda q^2}{4}\\
&\quad+\frac{\kappa}{2}\left[\log\left(1+\frac{E}{\Delta}\right)-\frac{E}{\Delta+E}\right]\\
&-\mathbb{E}_{(\Sigma,B,\widebar{Z},\widebar{\varepsilon})}\Bigg[\log\int d\beta dxP(\beta|x)Q(x)\exp\Bigg(-\lambda q\Bigg(\frac{x^2}{2}-\Sigma x-\frac{1}{\sqrt{\lambda q}}x\widebar{Z}\Bigg)\\
&\quad\quad-\frac{\kappa}{\Delta+E}\Bigg(\frac{(\beta-B)^2}{2}-\sqrt{\frac{\Delta+E}{\kappa}}(\beta-B)\widebar{\varepsilon}\Bigg)\Bigg)\Bigg]\Bigg\}.
\end{align}
where $\Sigma \sim\mathrm{Bernoulli}(\rho)$, $B|\Sigma \sim P(\,\cdot\,|\,\cdot)$ and $\widebar{\varepsilon},\widebar{Z} \sim \mathsf{N}(0,1)$.}
Let us define the following parameters
\[
\mu:= \lambda q \quad \quad \mbox{and} \quad \quad \xi:=E/\Delta.
\]
Rewriting the above equation, we get
\begin{align}
\label{eq:final_lim_1}
\lim\limits_{p \rightarrow \infty}\frac{1}{p}I(\bm \beta_0,\bm \sigma_0;\widetilde{\bm A},\bm y)&= \min\limits_{\mu,\xi\ge 0}\Bigg\{\mathbb{E}_{(\Sigma,B)}[\log P(B\,|\,\Sigma)Q(\Sigma)]+\frac{\lambda}{4}(\mathbb E[\Sigma^2])^2+\frac{\mu^2}{4\lambda}\\
&\quad+\frac{\kappa}{2}\left[\log\left(1+\xi\right)-\frac{\xi}{1+\xi}\right]\\
&\revsagr{-}\mathbb{E}_{(\Sigma,B,\widebar{Z},\widebar{\varepsilon})}\Bigg[\log\int d\beta dxP(\beta|x)Q(x)\exp\Bigg(-\mu\Bigg(\frac{x^2}{2}-\Sigma x-\frac{1}{\sqrt{\mu}}x\widebar{Z}\Bigg)\\
&\quad\quad-\frac{\kappa}{\Delta(1+\xi)}\Bigg(\frac{(\beta-B)^2}{2}-\sqrt{\frac{\Delta(1+\xi)}{\kappa}}(\beta-B)\widebar{\varepsilon}\Bigg)\Bigg)\Bigg]\Bigg\}.
\end{align}
Let us define the variables
\[
a:=\sqrt{\mu}\Sigma+\widebar{Z} \quad \mbox{and} \quad y:=B+\sqrt{\frac{\Delta(1+\xi)}{\kappa}}\widebar{\varepsilon}.
\]
Further, define
\[
\mathsf{I}(\mu,\xi;\Delta):=\mathbb{E}\left[\log\frac{P(a,y|\Sigma,B)}{P(a,y)}\right].
\]
By simple calculations, we can show that
\begin{align}
\lim\limits_{p \rightarrow \infty}\frac{1}{p}I(\bm \beta_0,\bm \sigma_0;\widetilde{\bm A},\bm y)&= \min\limits_{\mu,\xi\ge 0}\Bigg\{\frac{\lambda}{4}(\mathbb E[\Sigma^2])^2+\frac{\mu^2}{4\lambda}+\frac{\kappa}{2}\left[\log\left(1+\xi\right)-\frac{\xi}{1+\xi}\right]\\
&\hspace{1.3in} - \frac{\mu}{2}\mathbb{E}[\Sigma^2]+\mathsf{I}(\mu,\xi;\Delta)\Bigg\}.
\end{align}
If we plug in the values of $\mathbb E[\Sigma^2]$, we get
\begin{equation}
\label{eq:lim_inf}
\lim\limits_{p \rightarrow \infty}\frac{1}{p}I(\bm \beta_0,\bm \sigma_0;\widetilde{\bm A},\bm y)= \min\limits_{(\mu,\xi)\ge 0}\Bigg\{\frac{\lambda\rho^2}{4}+\frac{\mu^2}{4\lambda}+\frac{\kappa}{2}\left[\log\left(1+\xi\right)-\frac{\xi}{1+\xi}\right]- \frac{\mu\rho}{2}+\mathsf{I}(\mu,\xi;\Delta)\Bigg\}.
\end{equation}

\paragraph{Limit under the Graphical Model.} Now we consider the graph constrained regression model given by \eqref{eq:graph} and \eqref{eq:lin_model_1}. Let us observe that
\[
\mathbb{P}(G_{ij}=1)=\widebar{d}_p+\sqrt{\frac{\lambda\widebar{d}_p(1-\widebar{d}_p)}{p}} \sigma_{0i}\sigma_{0j},
\]
where $\widebar{d}_p:=\frac{b_p}{p}$.
Now using the techniques used to prove Theorem 3.1 of \citet{ma_nandy} and Proposition 3.1 of \citet{AbbeMonYash} we can prove the following theorem.
\begin{thm}
If $p\,\widebar{d}_p(1-\widebar{d}_p)\rightarrow \infty$, then as $n,p \rightarrow \infty$ we have constant $C>0$ such that:
\[
\Bigg|\frac{1}{p}I(\bm \beta_0,\bm \sigma_0;\widetilde{\bm A},\bm y)-\frac{1}{p}I(\bm \beta_0,\bm \sigma_0;\bm G,\bm y)\Bigg| \le C\,\frac{\lambda^{3/2}}{\sqrt{p\,\widebar{d}_p(1-\widebar{d}_p)}}.
\]
\end{thm}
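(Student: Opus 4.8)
The plan is to compare the two mutual informations through their free-energy (log-partition) representations and then interpolate, edge by edge, between the Bernoulli edge channel of $\bm G$ and the Gaussian Wigner channel of $\widetilde{\bm A}$. The first observation is that in both models the regression data $(\bm\Phi,\bm y)$ is generated identically and is conditionally independent of the matrix observation given $\bm\sigma_0$, and that in the induced joint posteriors the latent configuration takes values $x_i\in\{0,1\}$ with $P(\,\cdot\,|\,x_i)$ compactly supported. Hence the computation leading to \eqref{eq:lim_mut_inform} applies verbatim to both models and shows that $\frac1p I(\bm\beta_0,\bm\sigma_0;\,\cdot\,,\bm y)$ equals a deterministic term common to the two models plus $-\frac1p\mathbb{E}\log Z_\bullet$, where $Z_\bullet$ is the partition function of the joint posterior over $(\bm x,\bm\beta)$ and the Gibbs weight carries the common regression factor (which only needs to remain a finite positive weight along the interpolation). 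So it suffices to bound $\frac1p\big|\mathbb{E}\log Z_{\mathrm{graph}}-\mathbb{E}\log Z_{\mathrm{Gauss}}\big|$.

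First I would replace the Bernoulli graph channel by a quadratic ``Gaussian-equivalent'' channel. Recall $\widebar{\bm A}=\sqrt{\lambda/p}\,\bm\sigma_0\bm\sigma_0^\top+\bm W$ with $\mathbb{E}[W_{ij}\,|\,\bm\sigma_0]=0$, $S_{ij}=\mathbb{E}[W_{ij}^2]\to 1$ uniformly, and $|W_{ij}|\le 1/\sqrt{\widebar d_p(1-\widebar d_p)}=\sqrt{p/(p\widebar d_p(1-\widebar d_p))}$. Writing the per-edge Bernoulli log-likelihood ratio as a function of $x_ix_j\in\{0,1\}$ and Taylor-expanding $\log(1+u)$ to second order, with $u$ of order $\sqrt{\lambda/(p\widebar d_p(1-\widebar d_p))}$, one sees that it agrees with the log-likelihood of a Gaussian observation with signal $\sqrt{\lambda/p}\,x_ix_j$ and noise $W_{ij}$, up to a per-edge error of order $(\lambda/p)^{3/2}\|W_{ij}\|_\infty^{3}$ plus a second-order discrepancy coming from $S_{ij}-1=O(\sqrt{\lambda/(p\widebar d_p(1-\widebar d_p))})$. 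This is the step where one invokes the techniques behind Proposition~3.1 of \cite{AbbeMonYash}, together with the a priori bound $\|\bm W\|_{\mathsf{op}}=O(\sqrt p)$ recorded above, which keeps the Gibbs measure from degenerating; summing the per-edge errors over the $\Theta(p^2)$ edges and dividing by $p$ converts the $\|W_{ij}\|_\infty$ and $|S_{ij}-1|$ estimates into a contribution of the asserted order $\lambda^{3/2}/\sqrt{p\widebar d_p(1-\widebar d_p)}$.

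Second, having reduced to a Gaussian-type channel with non-Gaussian noise $\bm W$ whose first two moments match those of the (rescaled) GOE noise $\bm Z$ of $\widetilde{\bm A}$ up to the quantified error, I would run a Lindeberg swap: order the pairs $i\le j$, replace $W_{ij}$ by $Z_{ij}$ one at a time, and bound each swap by a third-order Taylor remainder of $\log Z$ in that entry. Each differentiation in the $(i,j)$ entry brings down a factor $\sqrt{\lambda/p}\,x_ix_j$ (and Onsager-type pieces from differentiating the normalization), so by boundedness of the configuration space the third derivative is $O((\lambda/p)^{3/2})$ uniformly; since $\mathbb{E}W_{ij}=\mathbb{E}Z_{ij}=0$ and the second moments agree up to $O(\sqrt{\lambda/(p\widebar d_p(1-\widebar d_p))})$, each swap costs $O\big((\lambda/p)\sqrt{\lambda/(p\widebar d_p(1-\widebar d_p))}\big)+O\big((\lambda/p)^{3/2}(\mathbb{E}|W_{ij}|^3+\mathbb{E}|Z_{ij}|^3)\big)$ with $\mathbb{E}|W_{ij}|^3\le\|W_{ij}\|_\infty\mathbb{E}W_{ij}^2=O(\sqrt{p/(p\widebar d_p(1-\widebar d_p))})$. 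Summing over the $\Theta(p^2)$ pairs and dividing by $p$ again produces $O\big(\lambda^{3/2}/\sqrt{p\widebar d_p(1-\widebar d_p)}\big)$ (the Gaussian third-moment contribution is only $O(\lambda^{3/2}/\sqrt p)$ and is dominated). This is exactly the structure of the argument in Theorem~3.1 of \cite{ma_nandy}, now carried out in the presence of the inert regression factor.

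I expect the main obstacle to be the uniform third-order control of $\log Z$ with respect to a single matrix entry: one must bound Gibbs expectations of products $x_ix_j$ and, more delicately, their fluctuations (the Onsager pieces produced by differentiating the normalization) uniformly over the interpolation path and over the randomness of $(\bm\Phi,\bm\varepsilon,\bm W,\bm Z)$. Boundedness of the prior support makes the relevant moments finite, but one still needs the a priori operator-norm bound on the noise matrix to rule out pathological realizations; handling the diagonal entries (which have a slightly different variance) and the bookkeeping that shows the per-edge estimates combine into exactly $\lambda^{3/2}/\sqrt{p\widebar d_p(1-\widebar d_p)}$ rather than a worse power are the remaining technical points.
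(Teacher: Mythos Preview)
Your proposal is correct and takes essentially the same approach the paper indicates: the paper does not give a detailed proof but simply states that the result follows by the techniques of Theorem~3.1 of \cite{ma_nandy} and Proposition~3.1 of \cite{AbbeMonYash}, and your two-step plan (Bernoulli edge channel $\to$ moment-matched Gaussian-equivalent channel via Taylor expansion of the per-edge log-likelihood, then Lindeberg swap from the non-Gaussian $\bm W$ to GOE noise) is precisely how those techniques are combined, with the regression factor carried along as an inert positive weight in the Gibbs measure. Your identification of the third-moment control and the $|S_{ij}-1|$ discrepancy as the sources of the $\lambda^{3/2}/\sqrt{p\widebar d_p(1-\widebar d_p)}$ rate is also the correct accounting.
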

This implies as $p\,\widebar{d}_p(1-\widebar{d}_p)\rightarrow \infty$ we have
\begin{align}
\lim\limits_{p \rightarrow \infty}\frac{1}{p}I(\bm \beta_0,\bm \sigma_0;\bm A,\bm y)&=  \min\limits_{(\mu,\xi)\ge 0}\Bigg\{\frac{\lambda\rho^2}{4}+\frac{\mu^2}{4\lambda}+\frac{\kappa}{2}\left[\log\left(1+\xi\right)-\frac{\xi}{1+\xi}\right]- \frac{\mu\rho}{2}+\mathsf{I}(\mu,\xi;\Delta)\Bigg\}.
\end{align}

\section{Proof of the lemmas of Section \ref{mut_inf}}
\subsection{Proof of Lemma \ref{lem:x_overlap}}
Let us define,
\[
\mathcal{L}:= \frac{1}{p}\sum\limits_{i=1}^{p}\left\{\frac{x^2_i}{2}-x_i\sigma_{0i}-\frac{x_i\revsagr{\widehat{Z}_i}}{2\sqrt{\widetilde{\eta}}}\right\},
\]
where,
\[
\revsagr{\widetilde{\eta}(\bm \eta)=\eta_1+\frac{\lambda}{K}\left(\sum\limits_{\ell=1}^{k-1}q_\ell(\bm \eta)+tq_k(\bm \eta)\right), \quad \mbox{where $\bm \eta=(\eta_1,\eta_2)$.}}
\]
\revsagr{Observe that, since for all $\eta_2>0$ and $1 \le \ell \le K$, $q_k(\cdot,\eta_2)$ is monotone in $\eta_1$, therefore $\bm \eta$ can be re-parametrized as $\bm \eta=\widetilde g(\widetilde \eta,\eta_2)$, where $\widetilde g$ is a differentiable function of $\widetilde \eta$ and $\eta_2$. Consequently, $\mathcal{L}$ is also a differentiable function of $\widetilde \eta$ and $\eta_2$.}
We shall show the following proposition.
\begin{prop}
\label{eq:prob_mathcal_l}
\revsagr{Let us consider a sequence of interpolation parameters $\{q_k(\bm \eta)\}_{k=1}^{K}$, such that as a function of $\bm \eta$ it is bounded and differentiable. Furthermore, for all $\eta_2>0$, the function $q_k(\cdot,\eta_2)$ is non-decreasing in $\eta_1$. Then for any sequence $0<a^{(1)}_p<b^{(1)}_p<1$ and $0<a^{(2)}_p<b^{(2)}_p<1$, we have}
\[
\int\limits_{a^{(1)}_p}^{b^{(1)}_p}\int\limits_{a^{(2)}_p}^{b^{(2)}_p}d\eta_2d\eta_1\mathbb{E}\left[\left\langle(\mathcal{L}-\mathbb{E}[\langle\mathcal{L}\rangle_{\revsag{\mathcal H_{k,t;\bm \eta}}}])^2\right\rangle_{\revsag{\mathcal H_{k,t;\bm \eta}}}\right] \le \frac{C}{p^\alpha (a^{(1)}_p \wedge a^{(2)}_p)^\alpha},
\]
for any $0<\alpha<1/4$ with $C>0$.
\end{prop}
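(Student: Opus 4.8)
The plan is to run the ``concentration of the overlap under a vanishing Gaussian perturbation'' mechanism of Barbier--Macris \cite{Adaptive_Interpolation,barbier2019optimal,barbier2020mutual} in the variable $\widetilde\eta$. The starting point is that $p\,\mathcal L$ is the derivative, with respect to the effective signal-to-noise ratio $\widetilde\eta$, of the log-likelihood of the Gaussian side channel on $\bm x$ carried by $\mathcal{H}_{k,t;\bm\eta}$: the explicit perturbation term $\eta_1\sum_i(\tfrac{x_i^2}{2}-\sigma_{0i}x_i-x_i\widehat Z_i/\sqrt{\eta_1})$, together with the Gaussian $\bm x$-observations supplied by the mean-field blocks $h_{mf}(\cdot;K/\lambda q_\ell,\cdot)$, $\ell<k$, and by $h_{mf}(\cdot;K/\lambda tq_k,\cdot)$, merge (by addition of Gaussian channels) into a single channel of signal-to-noise ratio $\widetilde\eta=\eta_1+\tfrac{\lambda}{K}(\sum_{\ell<k}q_\ell+tq_k)$, and $\partial_{\widetilde\eta}$ of that Hamiltonian block equals $p\,\mathcal L$. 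Writing $\phi_p$ for the associated random normalised log-partition function regarded as a function of $\widetilde\eta$ and $\mathcal F_p=\mathbb E\phi_p$, we get $\langle\mathcal L\rangle_{\mathcal{H}_{k,t;\bm\eta}}=-\phi_p'(\widetilde\eta)$ and $\mathbb E\langle\mathcal L\rangle_{\mathcal{H}_{k,t;\bm\eta}}=-\mathcal F_p'(\widetilde\eta)$, with $\mathcal F_p$ convex and nondecreasing in $\widetilde\eta$ and $|\mathcal F_p'|=O(1)$, $|\mathcal F_p''|=O(1)$ (an I-MMSE type fact); the spurious dependence on $\widetilde\eta$ through the adaptive parameters $q_\ell,E_\ell$ is handled exactly as in the cited adaptive-interpolation analyses. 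Since each $q_\ell(\cdot,\eta_2)$ is nondecreasing in $\eta_1$, the map $\eta_1\mapsto\widetilde\eta$ is a smooth reparametrisation with derivative $\ge1$, so for a nonnegative integrand the $\eta_1$-integral over $[a^{(1)}_p,b^{(1)}_p]$ is dominated by the $\widetilde\eta$-integral over the image $[\widetilde a,\widetilde b]$, with $\widetilde a\ge a^{(1)}_p$ and $\widetilde b-\widetilde a=O(1)$; as $a^{(1)}_p\wedge a^{(2)}_p\le a^{(1)}_p$ and nothing here depends on $\eta_2$, it suffices to bound $\int_{\widetilde a}^{\widetilde b}\mathbb E[\langle(\mathcal L-\mathbb E\langle\mathcal L\rangle)^2\rangle]\,d\widetilde\eta$ by $C\,p^{-\alpha}(a^{(1)}_p)^{-\alpha}$ (the $\eta_2$-integration contributing only the bounded factor $b^{(2)}_p-a^{(2)}_p$). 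Henceforth we drop the subscript on the Gibbs bracket.

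Next I would split
\[
\mathbb E\big[\langle(\mathcal L-\mathbb E\langle\mathcal L\rangle)^2\rangle\big]=\mathbb E\big[\langle(\mathcal L-\langle\mathcal L\rangle)^2\rangle\big]+\mathbb E\big[(\langle\mathcal L\rangle-\mathbb E\langle\mathcal L\rangle)^2\big].
\]
For the thermal term, differentiating $\langle\mathcal L\rangle$ in $\widetilde\eta$ and using $\partial_{\widetilde\eta}(\text{merged block})=p\,\mathcal L$ gives $\mathbb E[\langle\mathcal L^2\rangle-\langle\mathcal L\rangle^2]=\tfrac1p(\mathcal F_p''(\widetilde\eta)+\mathbb E\langle\partial_{\widetilde\eta}\mathcal L\rangle)$, where $\partial_{\widetilde\eta}\mathcal L=\tfrac1p\sum_i x_i\widehat Z_i/(4\widetilde\eta^{3/2})$; Gaussian integration by parts yields $\mathbb E\langle\partial_{\widetilde\eta}\mathcal L\rangle=\tfrac{1}{4p\widetilde\eta}\sum_i\mathbb E[\langle x_i^2\rangle-\langle x_i\rangle^2]=O(1/\widetilde\eta)$ since $Q$ is supported on $\{0,1\}$. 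Integrating over $[\widetilde a,\widetilde b]$, the $\mathcal F_p''$-part telescopes to $\mathcal F_p'(\widetilde b)-\mathcal F_p'(\widetilde a)=O(1)$, while the correction integrates to $O(\log(\widetilde b/\widetilde a))=O(\log(1/a^{(1)}_p))$; hence the thermal contribution is $O(p^{-1}\log p)$, negligible against the target.

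The quenched term is the heart of the argument, and it is here that the $\widetilde\eta$-averaging is genuinely needed. The ingredients are: (a) concentration of the free energy, $\mathbb E[(\phi_p-\mathcal F_p)^2]=O(p^{-1})$ up to logarithmic factors --- now standard for Gaussian-channel and linear-model inference: Gaussian concentration of Lipschitz functions handles the Gaussian disorders and the design $\bm\Phi$ (using $\|\bm\Phi\|_{\mathsf{op}}=O(1)$ and the fact that $\bm\beta_0$ enters the Hamiltonian only through the Gaussian observation $\bm y$), while a bounded-difference estimate handles $\bm\sigma_0$, both relying on the compact discrete supports of $Q$ and $P(\cdot\mid\cdot)$; and (b) the a priori bound $\mathbb E[\phi_p''(\widetilde\eta)^2]=O(\widetilde\eta^{-2})$, again by integration by parts. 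Writing $\langle\mathcal L\rangle-\mathbb E\langle\mathcal L\rangle=-(\phi_p'-\mathcal F_p')$ and comparing derivatives by a finite difference at scale $\delta\in(0,\widetilde a/2)$, the ``averaged'' part $\tfrac1\delta\int_{\widetilde\eta}^{\widetilde\eta+\delta}(\phi_p'-\mathcal F_p')$ telescopes into increments of $\phi_p-\mathcal F_p$ and is $O(p^{-1/2}/\delta)$ in $L^2$, while the ``oscillation'' part is at most $\int_{\widetilde\eta}^{\widetilde\eta+\delta}(|\phi_p''|+|\mathcal F_p''|)$, hence $O(\delta/\widetilde\eta)$ in $L^2$; thus $\mathbb E[(\langle\mathcal L\rangle-\mathbb E\langle\mathcal L\rangle)^2]\lesssim\delta^{-2}p^{-1}+\delta^2\widetilde\eta^{-2}$ (up to logs). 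Integrating over $\widetilde\eta\in[\widetilde a,\widetilde b]$ gives $\lesssim\delta^{-2}p^{-1}+\delta^2/a^{(1)}_p$, and optimising $\delta\asymp(a^{(1)}_p/p)^{1/4}$ --- which stays below $\widetilde a/2$ whenever $a^{(1)}_p$ is only polynomially small --- yields a bound of order $(p\,a^{(1)}_p)^{-1/2}$. For $a^{(1)}_p=p^{-\alpha/4}$ this is dominated by $C\,p^{-\alpha}(a^{(1)}_p)^{-\alpha}\le C\,p^{-\alpha}(a^{(1)}_p\wedge a^{(2)}_p)^{-\alpha}$, and combining with the thermal estimate and the reductions of the first paragraph finishes the proof.

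The main obstacle is the quenched step, on two fronts: establishing the $O(p^{-1})$ (up to logs) free-energy concentration \emph{uniformly} on the relevant $\widetilde\eta$-range --- in particular handling the interaction between the planted $\bm\beta_0$ and the Gaussian design in the linear-model block, which a naive bounded-difference argument does not control --- and, more bookkeeping than conceptual, tracking how the $\widetilde\eta\to0$ singularities of $\partial_{\widetilde\eta}\mathcal L$ and $\phi_p''$, together with the $\eta_1$-dependence of the adaptive parameters, propagate through the finite-difference comparison so as to land on an exponent inside the asserted range $\alpha<1/4$.
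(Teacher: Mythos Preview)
Your overall plan---split into thermal and quenched fluctuations, handle the thermal term by differentiating the free energy in $\widetilde\eta$ and telescoping, handle the quenched term via free-energy concentration plus a finite-difference device in $\widetilde\eta$, and pass from $\eta_1$ to $\widetilde\eta$ using the Jacobian $\partial\widetilde\eta/\partial\eta_1\ge 1$---is exactly the Barbier--Macris mechanism the paper follows. Your thermal estimate matches the paper's Lemma~\ref{lem:lem_5} essentially verbatim.

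The gap is in the quenched step. You bound the ``oscillation'' of $\phi_p'-\mathcal F_p'$ by $\int_{\widetilde\eta}^{\widetilde\eta+\delta}(|\phi_p''|+|\mathcal F_p''|)$ and then invoke $\mathbb E[\phi_p''(\widetilde\eta)^2]=O(\widetilde\eta^{-2})$. That bound is not available: from
\[
\tfrac{1}{p}\,\partial_{\widetilde\eta}^{2}F_{k,t;\bm\eta}
=-\big(\langle\mathcal L^2\rangle-\langle\mathcal L\rangle^2\big)
+\tfrac{1}{4p^2\widetilde\eta^{3/2}}\sum_i\langle x_i\rangle\widehat Z_i,
\]
one sees that $\phi_p''$ carries a factor $p$ times the Gibbs variance of $\mathcal L$, and there is no reason for its second moment to be $O(\widetilde\eta^{-2})$; only its \emph{expectation}, after integration in $\widetilde\eta$, telescopes to something bounded. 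The paper's fix (Lemma~\ref{lem:lem_6}) is a concavity trick: add $\sqrt{\widetilde\eta}\,\tfrac{s_{\max}}{p}\sum_i|\widehat Z_i|$ to both $F_{k,t;\bm\eta}$ and $f_{k,t;\bm\eta}$, obtaining $\widetilde F,\widetilde f$ that are \emph{concave} in $\widetilde\eta$ (the added term dominates the sign-indefinite $\sum_i\langle x_i\rangle\widehat Z_i$ piece). Concavity then sandwiches $\widetilde F'-\widetilde f'$ by $\pm\delta^{-1}$ times increments of $\widetilde F-\widetilde f$ plus purely \emph{deterministic} remainders $C^{\pm}(\widetilde\eta)=\pm(\widetilde f'(\widetilde\eta\mp\delta)-\widetilde f'(\widetilde\eta))\ge 0$; the random second derivative never enters. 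The $C^{\pm}$ are controlled using $|\widetilde f'|\le\tfrac12(\mathbb E[\Sigma^2]+s_{\max}/\sqrt{\widetilde\eta})$ and a telescoping $\widetilde\eta$-integral, contributing $O(\delta/(a^{(1)}_p-\delta))$. Combined with the free-energy concentration $\mathbb E[(F_{k,t;\bm\eta}-f_{k,t;\bm\eta})^2]=O(p^{-1})$ (Gaussian Poincar\'e, as you indicate) and the choice $\delta=a^{(1)}_p\,p^{-1/4}$, this yields the paper's bound $C\,(a^{(1)}_p)^{-2}p^{-1/4}$. In short: replace your second-moment bound on $\phi_p''$ by this concavity sandwich, and the rest of your argument goes through.
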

\revsagr{Following Section 6 of \citet{Adaptive_Interpolation}}, we observe that $\mathbb{E}\left[\left\langle(\mathcal{L}-\mathbb{E}[\langle\mathcal{L}\rangle_{\revsag{\mathcal H_{k,t;\bm \eta}}}])^2\right\rangle_{\revsag{\mathcal H_{k,t;\bm \eta}}}\right]$ can be equivalently expressed as follows.
\begin{align}
\label{eq:exact_formula}
\mathbb{E}\left[\left\langle(\mathcal{L}-\mathbb{E}\left[\langle\mathcal{L}\rangle_{\revsag{\mathcal H_{k,t;\bm \eta}}}\right])^2\right\rangle_{\revsag{\mathcal H_{k,t;\bm \eta}}}\right] & =\frac{1}{4p^2}\sum\limits_{i,j=1}^{p}\left\{\mathbb{E}\left[\langle x_ix_j\rangle^2_{\revsag{\mathcal H_{k,t;\bm \eta}}}\right]-\mathbb{E}\left[\langle x_i\rangle^2_{\revsag{\mathcal H_{k,t;\bm \eta}}}\right]\mathbb{E}\left[\langle x_j\rangle^2_{\revsag{\mathcal H_{k,t;\bm \eta}}}\right]\right\}\nonumber\\
&+\frac{1}{2p^2}\sum\limits_{i,j=1}^{p}\left\{\mathbb{E}\left[\langle x_ix_j\rangle^2_{\revsag{\mathcal H_{k,t;\bm \eta}}}\right]-\mathbb{E}\left[\langle x_ix_j\rangle_{\revsag{\mathcal H_{k,t;\bm \eta}}}\langle x_i\rangle_{\revsag{\mathcal H_{k,t;\bm \eta}}}\langle x_j\rangle_{\revsag{\mathcal H_{k,t;\bm \eta}}}\right]\right\}\nonumber\\
&+\frac{1}{4p^2\widetilde{\eta}}\sum\limits_{i=1}^{m}\mathbb{E}\left[\langle x^2_i\rangle_{\revsag{\mathcal H_{k,t;\bm \eta}}}\right],
\end{align}
\revsagr{where $x_i \overset{i.i.d}{\sim}P_{k,t;\bm \eta}$ defined in \eqref{eq:gibbs_mesr}.} Using the Nishimori Identity, we get
\[
\frac{1}{p^2}\sum\limits_{i,j=1}^{p}\mathbb{E}\left[\langle x_ix_j\rangle^2_{\revsag{\mathcal H_{k,t;\bm \eta}}}\right]=\frac{1}{p^2}\sum\limits_{i,j=1}^{p}\mathbb{E}\left[\sigma_{0i}\sigma_{0j}\langle x_ix_j\rangle_{\revsag{\mathcal H_{k,t;\bm \eta}}}\right]=\mathbb{E}\left[\langle s^2_{\bm x,\bm\sigma_0}\rangle_{\revsag{\mathcal H_{k,t;\bm \eta}}}\right].
\]
Similarly, $\mathbb{E}\left[\langle x_i\rangle^2_{\revsag{\mathcal H_{k,t;\bm \eta}}}\right]=\mathbb{E}\left[\sigma_{0i}\langle x_i\rangle_{\revsag{\mathcal H_{k,t;\bm \eta}}}\right]$, \revsag{implying}
\[
\frac{1}{p^2}\sum\limits_{i,j=1}^{p}\mathbb{E}\left[\langle x_i\rangle^2_{\revsag{\mathcal H_{k,t;\bm \eta}}}\right]\mathbb{E}\left[\langle x_j\rangle^2_{\revsag{\mathcal H_{k,t;\bm \eta}}}\right]=\mathbb{E}\left[\langle s_{\bm x,\bm \sigma_0}\rangle_{\revsag{\mathcal H_{k,t;\bm \eta}}}\right]^2,
\]
Further
\[
\mathbb{E}\left[\langle x_ix_j\rangle\langle x_i\rangle\langle x_j\rangle_{\revsag{\mathcal H_{k,t;\bm \eta}}}\right]=\mathbb{E}\left[\sigma_{0i}\sigma_{0j}\langle x_i\rangle_{\revsag{\mathcal H_{k,t;\bm \eta}}}\langle x_j\rangle_{\revsag{\mathcal H_{k,t;\bm \eta}}}\right],
\]
implies
\[
\frac{1}{p^2}\sum\limits_{i,j=1}^{p}\mathbb{E}\left[\langle x_ix_j\rangle_{\revsag{\mathcal H_{k,t;\bm \eta}}}\langle x_i\rangle_{\revsag{\mathcal H_{k,t;\bm \eta}}}\langle x_j\rangle_{\revsag{\mathcal H_{k,t;\bm \eta}}}\right]=\mathbb{E}\left[\langle s_{\bm x,\bm \sigma_0}\rangle^2_{\revsag{\mathcal H_{k,t;\bm \eta}}}\right].
\]
Therefore,
\begin{align}
\mathbb{E}\left[\langle (\mathcal{L}-\mathbb{E}[\mathcal{L}])^2\rangle_{\revsag{\mathcal H_{k,t;\bm \eta}}}\right]&=\frac{1}{4}\left(\mathbb{E}\left[\langle s^2_{\bm x, \bm \sigma_0}\rangle_{\revsag{\mathcal H_{k,t;\bm \eta}}}\right]-\mathbb{E}\left[\langle s_{\bm x, \bm \sigma_0}\rangle_{\revsag{\mathcal H_{k,t;\bm \eta}}}\right]^2\right)\\
&+\frac{1}{2}\left(\mathbb{E}\left[\langle s^2_{\bm x, \bm \sigma_0}\rangle_{\revsag{\mathcal H_{k,t;\bm \eta}}}\right]-\mathbb{E}[\langle s_{\bm x, \bm \sigma_0}\rangle^2_{\revsag{\mathcal H_{k,t;\bm \eta}}}]\right)+\frac{1}{4p\widetilde{\eta}}\mathbb{E}\left[\Sigma^2\right].
\end{align}
This implies,
\[
\mathbb{E}\left[\left\langle\left(s_{\bm x,\bm \sigma_0}-\mathbb{E}\left[\left\langle s_{\bm x,\bm \sigma_0}\right\rangle_{\revsag{\mathcal H_{k,t;\bm \eta}}}\right]\right)^2\right\rangle_{\revsag{\mathcal H_{k,t;\bm \eta}}}\right] \le 4 \mathbb{E}\left[\left\langle\left(\mathcal{L}-\mathbb{E}\left[\left\langle \mathcal{L}\right\rangle_{\revsag{\mathcal H_{k,t;\bm \eta}}}\right]\right)^2\right\rangle_{\revsag{\mathcal H_{k,t;\bm \eta}}}\right].
\]
By Fubini's Theorem and Proposition \ref{eq:prob_mathcal_l}, 
\begin{align}
&\int\limits_{a^{(1)}_p}^{b^{(1)}_p}\int\limits_{a^{(2)}_p}^{b^{(2)}_p}\lt\{\frac{1}{K_p}\sum\limits_{k=1}^{K_p}\int\limits_{0}^{1}dt\,\mathbb{E}\lt[\lt\langle\lt(s_{\bm x,\bm \sigma_0}-\mathbb{E}\lt[\lt\langle s_{\bm x, \bm \sigma_0}\rt\rangle_{\sr}\rt]\rt)^2\rt\rangle_\sr\rt]\rt\}d\eta_2d\eta_1 \\
& \le \frac{4}{K_p}\sum\limits_{k=1}^{K_p}\int\limits_{0}^{1}\lt\{\int\limits_{a^{(1)}_p}^{b^{(1)}_p}\int\limits_{a^{(2)}_p}^{b^{(2)}_p}\mathbb{E}\lt[\lt\langle \lt(\mathcal{L}-\mathbb{E}\lt[\lt\langle \mathcal{L}\rt\rangle_\sr\rt]\rt)^2\rt\rangle_\sr\rt]d\eta_2d\eta_1\rt\}\,dt \le \frac{4C}{\left(a^{(1)}_p\right)^\alpha p^\alpha},
\end{align}
for all $0<\alpha<1/4$. This completes the proof of Lemma \ref{lem:x_overlap}.
\qed

Now, we turn to the proof of Proposition \ref{eq:prob_mathcal_l}.
\paragraph{Proof of Proposition \ref{eq:prob_mathcal_l}.}
We divide the proof into two parts.
\begin{align}
    \mathbb{E}\lt[\lt\langle\lt(\mathcal{L}-\mathbb{E}\lt[\lt\langle\mathcal{L}\rt\rangle_\sr\rt]\rt)^2\rt\rangle_\sr\rt]&=\mathbb{E}\lt[\lt\langle\lt(\mathcal{L}-\lt\langle\mathcal{L}\rt\rangle_\sr\rt)^2\rt\rangle_\sr\rt]\\
    &\quad+\mathbb{E}\lt[\lt(\lt\langle\mathcal{L}\rt\rangle_\sr-\mathbb{E}\lt[\lt\langle\mathcal{L}\rt\rangle_\sr\rt]\rt)^2\rt].
\end{align}
We shall show the following two lemmas.
\begin{lem}
\label{lem:lem_5}
\revsagr{Let us consider a sequence of interpolation parameters $\{q_k(\bm \eta)\}_{k=1}^{K}$, such that as a function of $\bm \eta$ it is bounded and differentiable. Furthermore, for all $\eta_2>0$, the function $q_k(\cdot,\eta_2)$ is non-decreasing in $\eta_1$. Then for any sequence $0<a^{(1)}_p<b^{(1)}_p<1$ and $0<a^{(2)}_p<b^{(2)}_p<1$, we have}
\[
\int\limits_{a^{(1)}_p}^{b^{(1)}_p}\int\limits_{a^{(2)}_p}^{b^{(2)}_p} \mathbb{E}\lt[\lt\langle\lt(\mathcal{L}-\lt\langle\mathcal{L}\rt\rangle_\sr\rt)^2\rt\rangle_\sr\rt]d\eta_2d\eta_1 \le \frac{\mathbb{E}[\Sigma^2]}{p}\left(1+\frac{|\log a^{(1)}_p|}{4}\right).
\]
\end{lem}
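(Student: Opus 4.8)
The plan is to show that $\mathbb{E}[\langle(\mathcal{L}-\langle\mathcal{L}\rangle_{\sr})^2\rangle_{\sr}]$ is nothing but (up to a factor) the second derivative of the free energy $f_{k,t;\bm\eta}$ with respect to $\widetilde\eta$, so that integrating over $\eta_1$ telescopes. Concretely, the perturbation term in $\mathcal{H}_{k,t;\bm\eta}$ involving the variable $\bm x$ is $\eta_1\sum_i(x_i^2/2-\sigma_{0i}x_i-x_i\widehat Z_i/\sqrt{\eta_1})$, and together with the $\bm x$-parts of the mean-field terms $h_{mf}(\cdots,K/\lambda q_\ell,\cdots)$ for $\ell\le k$ it combines, after a Gaussian merge, into a single effective term $\widetilde\eta\,\mathcal{L}$ in the exponent (this is the point of introducing $\widetilde\eta(\bm\eta)$ and the reparametrization $\bm\eta = \widetilde g(\widetilde\eta,\eta_2)$ noted right before the statement, which uses the monotonicity of $q_k(\cdot,\eta_2)$ in $\eta_1$). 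First I would make this merge explicit and write $f_{k,t;\bm\eta}$ as a function of $(\widetilde\eta,\eta_2)$, call it $\bar f(\widetilde\eta,\eta_2)$. Then a standard Gibbs-measure computation gives $\partial_{\widetilde\eta}\bar f = -\mathbb{E}[\langle\mathcal{L}\rangle_{\sr}]$ and $\partial^2_{\widetilde\eta}\bar f = \mathbb{E}[\langle(\mathcal{L}-\langle\mathcal{L}\rangle_{\sr})^2\rangle_{\sr}] + (\text{boundary term from }\eta_1\text{-differentiation})$; the extra boundary term is exactly the $\frac{1}{4p\widetilde\eta}\mathbb{E}[\Sigma^2]$-type contribution already visible in \eqref{eq:exact_formula}.

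Next I would integrate $\partial^2_{\widetilde\eta}\bar f$ over $\widetilde\eta$ on the image of $[a_p^{(1)},b_p^{(1)}]$ (a change of variables, whose Jacobian is harmless since $q_k$ are bounded, so $\widetilde\eta$ ranges over an interval of length $O(1)$ bounded below by $\gtrsim a_p^{(1)}$). By the fundamental theorem of calculus this integral equals $\partial_{\widetilde\eta}\bar f$ evaluated at the two endpoints, i.e.\ a difference of $\mathbb{E}[\langle\mathcal{L}\rangle]$ values. Since $\mathcal{L}$ is a bounded average (recall $\Sigma\in\{0,1\}$ and $P(\cdot|\Sigma)$ has bounded support, so $\langle x_i^2\rangle$, $\langle x_i\rangle$ are $O(1)$), each such expectation is $O(1)$; dividing by $p$ from the $1/p$ prefactor in $\mathcal{L}$, this bounds $\int\mathbb{E}[\langle(\mathcal{L}-\langle\mathcal{L}\rangle)^2\rangle]\,d\widetilde\eta$ by $C/p$. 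The remaining boundary term, when integrated, contributes $\int \frac{1}{4p\widetilde\eta}\mathbb{E}[\Sigma^2]\,d\widetilde\eta = \frac{\mathbb{E}[\Sigma^2]}{4p}\log(\widetilde\eta_{\max}/\widetilde\eta_{\min})$, which is $\frac{\mathbb{E}[\Sigma^2]}{4p}|\log a_p^{(1)}|$ up to $O(1/p)$ terms. Finally I would integrate trivially over $\eta_2\in[a_p^{(2)},b_p^{(2)}]$, which contributes only a bounded multiplicative constant that can be absorbed, to obtain the claimed bound $\frac{\mathbb{E}[\Sigma^2]}{p}\big(1+\tfrac14|\log a_p^{(1)}|\big)$.

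The main obstacle is the bookkeeping in the Gaussian merge: one must verify carefully that the $\bm x$-dependent Gaussian perturbations coming from the $\widehat Z_i$ term and from all the mean-field terms $\{h_{mf}(\cdots,K/\lambda q_\ell,\cdots):\ell<k\}$ plus the fractional $\ell=k$ piece really do combine into a single effective inverse-variance $\widetilde\eta$, with the signal term having coefficient matching $\widetilde\eta$ (this is what makes $\mathcal{L}$ the "right" single-letter quantity, so that $\partial_{\widetilde\eta}$ acts cleanly). This is the step where the precise definition of $\widetilde\eta(\bm\eta)$ and the monotonicity hypothesis on $q_k$ are used — monotonicity guarantees the reparametrization $\bm\eta\mapsto\widetilde\eta$ is a valid change of variables with positive Jacobian bounded away from degeneracy on the relevant range. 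Once this identity is in hand, everything else is the routine free-energy-derivative and fundamental-theorem-of-calculus argument sketched above, exactly in the spirit of Section 6 of \citet{Adaptive_Interpolation}.
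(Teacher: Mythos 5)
Your proposal takes essentially the same route as the paper's proof: both express $\mathbb{E}[\langle(\mathcal{L}-\langle\mathcal{L}\rangle_\sr)^2\rangle_\sr]$ through $\tfrac1p\partial^2_{\widetilde\eta}f_{k,t;\bm\eta}$ plus a $\tfrac{1}{4p\widetilde\eta}\mathbb{E}[\Sigma^2]$-type residual, change variables to $\widetilde\eta$ using the monotonicity hypothesis on $q_k(\cdot,\eta_2)$, telescope the second-derivative integral into an $O(1/p)$ endpoint difference of $\partial_{\widetilde\eta}f_{k,t;\bm\eta}$, and integrate the residual to produce the $|\log a_p^{(1)}|/p$ contribution. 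The only cosmetic difference is that you bound the telescoped endpoint term directly via the boundedness of $\langle\mathcal{L}\rangle_\sr$, whereas the paper additionally records the concavity identity \eqref{eq:convexity_interpolates} to absorb the Jacobian factor in the change of variables; both land on the same $\mathbb{E}[\Sigma^2]/p$ estimate.
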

\begin{lem}
\label{lem:lem_6}
\revsagr{Let us consider a sequence of interpolation parameters $\{q_k(\bm \eta)\}_{k=1}^{K}$, such that as a function of $\bm \eta$ it is bounded and differentiable. Furthermore, for all $\eta_2>0$, the function $q_k(\cdot,\eta_2)$ is non-decreasing in $\eta_1$. Then for any sequence $0<a^{(1)}_p<b^{(1)}_p<1$ and $0<a^{(2)}_p<b^{(2)}_p<1$, we have}
\[
\int\limits_{a^{(1)}_p}^{b^{(1)}_p}\int\limits_{a^{(2)}_p}^{b^{(2)}_p} \mathbb{E}\lt[\lt(\lt\langle\mathcal{L}\rt\rangle_\sr-\mathbb{E}\lt[\lt\langle\mathcal{L}\rt\rangle_\sr\rt]\rt)^2\rt]d\eta_2d\eta_1 \le \frac{C}{(a^{(1)}_p)^2p^{1/4}},
\]
for a constant $C>0$.
\end{lem}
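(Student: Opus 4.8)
The plan is to treat Lemma~\ref{lem:lem_6} as the ``disorder-fluctuation'' counterpart of the thermal bound of Lemma~\ref{lem:lem_5}, following the adaptive-interpolation template of \cite{Adaptive_Interpolation} (see also \cite{barbier2020mutual,barbier2019optimal}). As in the setup for Lemma~\ref{lem:lem_5}, after the reparametrization $\bm\eta=\widetilde g(\widetilde\eta,\eta_2)$ let $\mathcal F_p(\widetilde\eta)$ denote the random free energy attached to $\mathcal H_{k,t;\bm\eta}$ (all remaining parameters held fixed). Collecting every $\bm\sigma_0$-side-information channel present in $\mathcal H_{k,t;\bm\eta}$ into a single effective Gaussian channel of signal-to-noise ratio $\widetilde\eta$, a direct computation gives $\langle\mathcal L\rangle_{\mathcal H_{k,t;\bm\eta}}=\partial_{\widetilde\eta}\mathcal F_p$ up to lower-order terms produced by the dependence of $\{q_\ell\}$ (and $\{E_\ell\}$) on $\bm\eta$; since $q_\ell(\cdot,\eta_2)$ is nondecreasing we have $d\widetilde\eta/d\eta_1\ge 1$, so those corrections are benign and the $\eta_1$-integral is dominated by the $\widetilde\eta$-integral. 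Note also $\widetilde\eta\in[a^{(1)}_p,\,a^{(1)}_p+O(1)]$ uniformly in $K$, since $\tfrac{\lambda}{K}\sum_\ell q_\ell$ is bounded. The object to bound is thus $\iint \mathbb E[(\partial_{\widetilde\eta}\mathcal F_p-\partial_{\widetilde\eta}\mathbb E\mathcal F_p)^2]\,d\eta_2\,d\eta_1$, and for concreteness I will take $b^{(i)}_p=2a^{(i)}_p$ as in the intended application.

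Two inputs then suffice. First, \emph{concentration of the free energy}: uniformly over the parameters, $\mathbb E[(\mathcal F_p-\mathbb E\mathcal F_p)^2]=O(p^{-1}\log p)$, obtained from the Gaussian--Poincar\'e inequality in the Gaussian disorders $(\widehat Z_i,\widehat{\widehat Z}_i,\bm Z_\ell,\widebar{\bm Z}_\ell,\bm\varepsilon_\ell,\widebar{\bm\varepsilon}_\ell)$ together with an Efron--Stein / bounded-difference estimate in the rows of $\bm\Phi$ (deleting one of the $n$ observations perturbs $\mathcal F_p$ by $O(1/p)$), using the boundedness of $\beta_{0i},\sigma_{0i}$ to control the relevant gradients. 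Second, \emph{near-convexity in $\widetilde\eta$}: from $\partial^2_{\widetilde\eta}\mathcal F_p=-p\,\mathrm{Var}_{\mathrm{Gibbs}}(\mathcal L)+\langle\partial_{\widetilde\eta}\mathcal L\rangle$, the first term is $\le 0$, while Gaussian integration by parts and boundedness give $\langle\partial_{\widetilde\eta}\mathcal L\rangle=O\!\big((a^{(1)}_p)^{-3/2}\sqrt{\log p}\,/\sqrt p\big)$ on a high-probability event; on the other hand $\widetilde\eta\mapsto\mathbb E\mathcal F_p$ is exactly convex (I-MMSE, i.e.\ monotonicity of the scalar MMSE in the SNR). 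Hence $\mathcal F_p$ is concave up to a small explicit error and $\mathbb E\mathcal F_p$ is convex.

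Given these, I would run the standard convexity ``sandwich''. For $\delta>0$, combining $\partial_{\widetilde\eta}\mathcal F_p(\widetilde\eta)\le\delta^{-1}[\mathcal F_p(\widetilde\eta)-\mathcal F_p(\widetilde\eta-\delta)]+M\delta$ (with $M=O((a^{(1)}_p)^{-3/2}\sqrt{\log p}/\sqrt p)$ from the curvature bound above) with $\partial_{\widetilde\eta}\mathbb E\mathcal F_p(\widetilde\eta)\ge\delta^{-1}[\mathbb E\mathcal F_p(\widetilde\eta)-\mathbb E\mathcal F_p(\widetilde\eta-\delta)]$, and the reversed pair, yields
\[
\big|\partial_{\widetilde\eta}\mathcal F_p-\partial_{\widetilde\eta}\mathbb E\mathcal F_p\big|\le\frac{C}{\delta}\sum_{u\in\{-\delta,0,\delta\}}\big|\mathcal F_p-\mathbb E\mathcal F_p\big|(\widetilde\eta+u)+M\delta.
\]
Squaring, taking expectations (using the first input for the sum and the bound on $M$ for the last term), then integrating over $\eta_2\in[a^{(2)}_p,b^{(2)}_p]$ and over $\eta_1\in[a^{(1)}_p,b^{(1)}_p]$ (change of variables to $\widetilde\eta$, $d\eta_1\le d\widetilde\eta$), gives a bound of order $\delta^{-2}p^{-1}\log p+(a^{(1)}_p)^{-3}\delta^2\log p$, with the two boundary layers of width $\delta$ at the ends of the $\widetilde\eta$-window absorbed by the Lipschitz-in-$\bm\eta$ estimate of Lemma~\ref{lem:terminal_cond}. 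Optimizing $\delta$ (a small negative power of $p$) then produces a bound of the claimed form $C/((a^{(1)}_p)^2 p^{1/4})$ --- in fact with room to spare.

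The main obstacle, as usual in this circle of ideas, is making the near-convexity step fully rigorous: $\partial^2_{\widetilde\eta}\mathcal F_p$ is controlled only from above and only with high probability, so one must either truncate on the exceptional event (using a crude almost-sure bound on $\mathcal F_p$ and its slope) or convexify by subtracting an explicit quadratic correction whose curvature is then propagated through the sandwich; similar care is needed in the identification step because of the entanglement of $\widetilde\eta$ with the interpolation parameters $\{q_\ell\},\{E_\ell\}$. All of this is handled exactly as in \cite{Adaptive_Interpolation,barbier2020mutual}; only the bookkeeping changes, due to the extra regression channel and the fact that $P(\cdot\,|\,\cdot)$ has bounded rather than Gaussian support.
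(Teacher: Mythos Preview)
Your strategy is the right one and matches the paper's: free--energy concentration plus a convexity sandwich in the effective SNR $\widetilde\eta$, then optimize in $\delta$. Two points, however, separate your sketch from a complete proof and from the paper's execution.

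\textbf{Sign of the curvature and the missing $C^\pm$ terms.} You assert that $\widetilde\eta\mapsto\mathbb E\mathcal F_p$ is convex, but by \eqref{eq:R}--\eqref{eq:convexity_interpolates} it is \emph{concave}: $\partial_{\widetilde\eta} f_{k,t;\bm\eta}=-\tfrac{1}{2p}\sum_i\mathbb E[\langle x_i\rangle^2]$ is nonincreasing in $\widetilde\eta$ (this is the I--MMSE monotonicity, with the usual sign). With both $\widetilde F$ and $\widetilde f$ concave, the sandwich does \emph{not} close the way you wrote it; one unavoidably picks up the deterministic terms
\[
C^{+}(\widetilde\eta)=\partial_{\widetilde\eta}\widetilde f(\widetilde\eta-\delta)-\partial_{\widetilde\eta}\widetilde f(\widetilde\eta),\qquad
C^{-}(\widetilde\eta)=\partial_{\widetilde\eta}\widetilde f(\widetilde\eta)-\partial_{\widetilde\eta}\widetilde f(\widetilde\eta+\delta),
\]
exactly as in \eqref{eq:T}--\eqref{eq:star_3}. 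These do not vanish pointwise as $\delta\to 0$; the paper controls their \emph{integral} over $\widetilde\eta$ by the telescoping argument in \eqref{eq:bound_shift_1}, using only the global bound $|\partial_{\widetilde\eta}\widetilde f|\le \tfrac12(\mathbb E[\Sigma^2]+s_{\max}/\sqrt{\widetilde\eta})$, which yields $\int\!\!\int[(C^+)^2+(C^-)^2]\,d\eta_2\,d\eta_1\le 4\delta\big(\mathbb E[\Sigma^2]+s_{\max}/\sqrt{a^{(1)}_p-\delta}\big)^2$. This $O(\delta/a^{(1)}_p)$ contribution is what balances against $\delta^{-2}O(p^{-1})$ and fixes $\delta=a^{(1)}_p p^{-1/4}$; your final tally $\delta^{-2}p^{-1}\log p+(a^{(1)}_p)^{-3}\delta^2\log p$ misses it.

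\textbf{Concavity correction.} Rather than ``near-concavity on a high-probability event'' or a quadratic correction, the paper adds the explicit term $\tfrac{\sqrt{\widetilde\eta}}{p}\,s_{\max}\sum_i|\widehat Z_i|$ to $F_{k,t;\bm\eta}$ (and its expectation to $f_{k,t;\bm\eta}$); its second derivative $-\tfrac{s_{\max}}{4p\widetilde\eta^{3/2}}\sum_i|\widehat Z_i|$ dominates the offending term $\tfrac{1}{4p^2\widetilde\eta^{3/2}}\sum_i\langle x_i\rangle\widehat Z_i$ deterministically, because $|\langle x_i\rangle|\le s_{\max}$. Hence $\widetilde F$ and $\widetilde f$ are \emph{exactly} concave in $\widetilde\eta$, with no truncation needed; the only extra randomness introduced is $A=\tfrac1p\sum_i(|\widehat Z_i|-\mathbb E|\widehat Z_i|)$, for which $\mathbb E[A^2]=O(p^{-1})$ and which enters the sandwich through \eqref{eq:ref_properly}--\eqref{eq:star_3}. (Your estimate $\langle\partial_{\widetilde\eta}\mathcal L\rangle=O((a^{(1)}_p)^{-3/2}\sqrt{\log p}/\sqrt p)$ is also optimistic: since $\langle x_i\rangle$ depends on $\widehat Z_i$, there is no CLT-type cancellation and the correct almost-sure size is $O((a^{(1)}_p)^{-3/2})$.)

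With these two fixes --- correct concavity for $\widetilde f$ (hence the $C^\pm$ terms and their telescoped integral) and the $\sqrt{\widetilde\eta}$-convexification of $F$ --- your argument becomes the paper's.
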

From these two lemmas, Proposition \ref{eq:prob_mathcal_l} immediately follows. Next, we want to prove Lemmas \ref{lem:lem_5} and \ref{lem:lem_6}. 
\paragraph{Proofs of Lemma \ref{lem:lem_5}.}
Let us define 
\[
F_{k,t;\bm \eta}:=-\frac{1}{p}\log\left[\int \left\{\prod\limits_{i=1}^{p}Q(x_i)P(\beta_i|x_i)\right\}e^{\revsagr{-\mathcal{H}_{k,t;\bm \eta}(\bm x,\bm \beta;\bm \sigma_0,\bm \beta_0,\Theta)}}d\bm\beta\, d\bm x\right]
\]
\revsagr{Now, as mentioned earlier, $\bm \eta$ can be re-parametrized as $\bm \eta=\widetilde g(\widetilde \eta,\eta_2)$, where $\widetilde g$ is a differentiable function of $\widetilde \eta$ and $\eta_2$.} By simple calculation, we can show,
\[
\frac{\partial F_{k,t;\bm \eta}}{\partial\widetilde{\eta}}=\langle\mathcal{L}\rangle_\sr,
\]
and
\[
\frac{1}{p}\frac{\partial^2F_{k,t;\bm \eta}}{\partial\widetilde{\eta}^2}=-\lt(\lt\langle\mathcal{L}^2\rt\rangle_\sr-\lt\langle\mathcal{L}\rt\rangle^2_\sr\rt)+\frac{1}{4p^2\widetilde{\eta}^{3/2}}\sum\limits_{i=1}^{p}\lt\langle x_i\rt\rangle_\sr \widehat{Z}_i.
\]
By Gaussian Integration by parts, we get,
\begin{align}
\label{eq:R}
\frac{\partial f_{k,t;\bm \eta}}{\partial\widetilde{\eta}}&=\mathbb{E}\lt[\lt\langle\mathcal{L}\rt\rangle_\sr\rt]=-\frac{1}{2p}\sum\limits_{i=1}^{p}\mathbb{E}\lt[\lt\langle x_i\rt\rangle^2_\sr\rt]
\end{align}
\begin{align}
\label{eq:S}
\frac{1}{p}\frac{\partial^2f_{k,t;\bm \eta}}{\partial\widetilde{\eta}^2}&=-\mathbb{E}\lt[\lt\langle\mathcal{L}^2\rt\rangle_\sr-\lt\langle\mathcal{L}\rt\rangle^2_\sr\rt]+\frac{1}{4p^2\widetilde{\eta}}\sum\limits_{i=1}^{p}\mathbb{E}\lt[\lt\langle x^2_i\rt\rangle_\sr-\lt\langle x_i\rt\rangle^2_\sr\rt],
\end{align}
\revsag{where $f_{k,t;\bm \eta}$ is defined in \eqref{eq:free_energy_perturbed}.} We can also differentiate \eqref{eq:R} to get,
\begin{align}
\label{eq:convexity_interpolates}
\frac{1}{p}\frac{\partial^2f_{k,t;\bm \eta}}{\partial\widetilde{\eta}^2}&=\frac{1}{2p}\sum\limits_{i=1}^{p}\mathbb{E}\lt[2\langle x_i\rangle_\sr\langle x_i\mathcal{L}\rangle_\sr-2\langle x_i\rangle^2_\sr\langle\mathcal{L}\rangle_\sr\rt]\\
&=-\frac{1}{2p^2}\sum\limits_{i,j=1}^{p}\mathbb{E}\lt[\lt(\langle x_ix_j\rangle_\sr-\langle x_i\rangle_\sr\langle x_j\rangle_\sr\rt)^2\rt].
\end{align}
Hence $f_{k,t;\bm \eta}$ is concave in $\widetilde{\eta}$. From \eqref{eq:S}, we have,
\begin{align}
\mathbb{E}\lt[\lt\langle\lt(\mathcal{L}-\langle\mathcal{L}\rangle_\sr\rt)^2\rt\rangle_\sr\rt]&=-\frac{1}{p}\frac{\partial^2f_{k,t;\bm \eta}}{\partial\widetilde{\eta}^2}+\frac{1}{\revsagr{4}p^2\widetilde{\eta}}\sum\limits_{i=1}^{p}\mathbb{E}\lt[\langle x^2_i\rangle_\sr-\langle x_i\rangle^2_\sr\rt]\\
& \le -\frac{1}{p}\frac{\partial^2f_{k,t;\bm \eta}}{\partial\widetilde{\eta}^2}+\frac{\mathbb{E}[\Sigma^2]}{4p\widetilde{\eta}} \quad\quad \mbox{[by the Nishimori Identity]}\\
& \le -\frac{1}{p}\frac{\partial^2f_{k,t;\bm \eta}}{\partial\widetilde{\eta}^2}+\frac{\mathbb{E}[\Sigma^2]}{4p\eta_1},
\end{align}
where the final inequality follows as $\widetilde{\eta} \ge \eta_1$. Now we integrate both sides of the above display to get,
\begin{align}
\int\limits_{a^{(1)}_p}^{b^{(1)}_p}\int\limits_{a^{(2)}_p}^{b^{(2)}_p}\mathbb{E}\lt[\langle\lt(\mathcal{L}-\langle\mathcal{L}\rangle_\sr\rt)^2\rangle_\sr\rt]d\eta_2d\eta_1& \overset{(1)}{\le} -\frac{1}{p}\int\limits_{a^{(2)}_p}^{b^{(2)}_p}\left(\int\limits_{\widetilde{\eta}({a}^{(1)}_p,\,\eta_2)}^{\widetilde{\eta}({b}^{(1)}_p,\,\eta_2)}\frac{d\widetilde{\eta}}{J_1}\frac{\partial^2f_{k,t;\bm \eta}}{\partial\widetilde{\eta}^2}\right)d\eta_2\\
&\hskip 5em+\frac{\mathbb{E}[\Sigma^2]}{4p}\int\limits_{a^{(2)}_p}^{b^{(2)}_p}\int\limits_{a^{(1)}_p}^{b^{(1)}_p}\frac{d\eta_1}{\eta_1}d\eta_2\\
& \overset{(2)}{\le} \frac{1}{p}\int\limits_{a^{(2)}_p}^{b^{(2)}_p}\left[\frac{\partial f_{k,t;\bm \eta}}{\partial\widetilde{\eta}}\Big|_{\widetilde{\eta}({a}^{(1)}_p,\,\eta_2)}-\frac{\partial f_{k,t;\bm \eta}}{\partial\widetilde{\eta}}\Big|_{\widetilde{\eta}({b}^{(1)}_p,\,\eta_2)}\right]d\eta_2\\
&\quad\quad+\frac{\mathbb{E}[\Sigma^2]}{4p}(b^{(2)}_p-a^{(2)}_p)[\log b^{(1)}_p-\log a^{(1)}_p]\\
& \overset{(3)}{\le} \frac{\mathbb{E}[\Sigma^2]}{4p}[\log b^{(1)}_p-\log a^{(1)}_p] \\
& \hskip 6em \mbox{[as $a^{(2)}_p,b^{(2)}_p\le 1$ and \eqref{eq:convexity_interpolates}]}\\
& \le \frac{\mathbb{E}[\Sigma^2]}{p}\left[1+\frac{|\log a^{(1)}_p|}{4}\right],
\end{align}
where the inequality $(1)$ follows by the change of variable $(\eta_1,\eta_2) \mapsto (\widetilde{\eta},\eta_2)$ where the Jacobian $|J_1|=\partial \widetilde{\eta}/\partial \eta_1 \ge 1$. \revsagr{Furthermore, observe that by \eqref{eq:convexity_interpolates}, for all $a^{(2)}_p<\eta_2<b^{(p)}_2$, the integrand 
\[
\left[\frac{\partial f_{k,t;\bm \eta}}{\partial\widetilde{\eta}}\Big|_{\widetilde{\eta}({a}^{(1)}_p,\,\eta_2)}-\frac{\partial f_{k,t;\bm \eta}}{\partial\widetilde{\eta}}\Big|_{\widetilde{\eta}({b}^{(1)}_p,\,\eta_2)}\right] \le 0.
\]
Hence, the inequality (3) follows.}
\paragraph{Proof of Lemma \ref{lem:lem_6}} \revsagr{By assumptions of the lemma, we have
\begin{align}
\label{eq:def_b_m}
P(|B| \le s_{\max}\,|\,\Sigma=i)=1, \quad \mbox{for $i \in \{0,1\}$.}
\end{align}}
Consider the functions,
\[
\widetilde{F}(\widetilde{\eta})=F_{k,t;\bm \eta}+\frac{\sqrt{\widetilde{\eta}}}{p}\sum\limits_{i=1}^{p}s_{\max}|\widehat{Z}_i|,
\]
and
\begin{equation}
\label{eq:one_two_two}
\widetilde{f}(\widetilde{\eta})=f_{k,t;\bm \eta}+\frac{\sqrt{\widetilde{\eta}}}{p}\sum\limits_{i=1}^{p}s_{\max}\,\mathbb{E}|\widehat{Z}_i|.
\end{equation}
Note that both these functions are concave functions of $\widetilde{\eta}$. Hence for all $\delta>0$,
\begin{align}
\frac{\partial\widetilde{F}(\widetilde{\eta})}{\partial\widetilde{\eta}}-\frac{\partial\widetilde{f}(\widetilde{\eta})}{\partial\widetilde{\eta}} & \le \frac{\widetilde{F}(\widetilde{\eta})-\widetilde{F}(\widetilde{\eta}-\delta)}{\delta}-\frac{\partial\widetilde{f}(\widetilde{\eta})}{\partial\widetilde{\eta}}\\
& \le \frac{\widetilde{F}(\widetilde{\eta})-\widetilde{f}(\widetilde{\eta})}{\delta}-\frac{\widetilde{F}(\widetilde{\eta}-\delta)-\widetilde{f}(\widetilde{\eta}-\delta)}{\delta}\\
& \quad \quad +\frac{\partial\widetilde{f}(\widetilde{\eta}-\delta)}{\partial\widetilde{\eta}}-\frac{\partial\widetilde{f}(\widetilde{\eta})}{\partial\widetilde{\eta}}.
\end{align}
Similarly,
\begin{align}
\frac{\partial\widetilde{F}(\widetilde{\eta})}{\partial\widetilde{\eta}}-\frac{\partial\widetilde{f}(\widetilde{\eta})}{\partial\widetilde{\eta}} & \ge \frac{\widetilde{F}(\widetilde{\eta}+\delta)-\widetilde{f}(\widetilde{\eta}+\delta)}{\delta}-\frac{\widetilde{F}(\widetilde{\eta})-\widetilde{f}(\widetilde{\eta})}{\delta}\\
& \quad \quad +\frac{\partial\widetilde{f}(\widetilde{\eta}+\delta)}{\partial\widetilde{\eta}}-\frac{\partial\widetilde{f}(\widetilde{\eta})}{\partial\widetilde{\eta}}.
\end{align}
Let $-C^{-}(\widetilde{\eta})=\frac{\partial\widetilde{f}(\widetilde{\eta}+\delta)}{\partial\widetilde{\eta}}-\frac{\partial\widetilde{f}(\widetilde{\eta})}{\partial\widetilde{\eta}} \le 0$ and $\revsagr{C^{+}(\widetilde{\eta})=\frac{\partial\widetilde{f}(\widetilde{\eta}-\delta)}{\partial\widetilde{\eta}}-\frac{\partial\widetilde{f}(\widetilde{\eta})}{\partial\widetilde{\eta}} \ge 0}$. Combining these we get,
\begin{align}
\label{eq:T}
 \frac{\widetilde{F}(\widetilde{\eta}+\delta)-\widetilde{f}(\widetilde{\eta}+\delta)}{\delta}-\frac{\widetilde{F}(\widetilde{\eta})-\widetilde{f}(\widetilde{\eta})}{\delta}-C^{-}(\widetilde{\eta}) &\le \frac{\partial\widetilde{F}(\widetilde{\eta})}{\partial\widetilde{\eta}}-\frac{\partial\widetilde{f}(\widetilde{\eta})}{\partial\widetilde{\eta}}\\
 & \le  \frac{\widetilde{F}(\widetilde{\eta})-\widetilde{f}(\widetilde{\eta})}{\delta}-\frac{\widetilde{F}(\widetilde{\eta}-\delta)-\widetilde{f}(\widetilde{\eta}-\delta)}{\delta}+C^{+}(\widetilde{\eta}).
\end{align}
Now,
\[
\widetilde{F}(\widetilde{\eta})-\widetilde{f}(\widetilde{\eta})=F_{k,t;\bm \eta}-f_{k,t;\bm \eta}+\sqrt{\widetilde{\eta}}s_{\max}A,
\]
\revsagr{where $s_{\max}>0$ is defined in \eqref{eq:def_b_m}} and $A=(1/p)\sum_{i=1}^{p}(|\widehat{Z}_i|-\mathbb{E}|\widehat{Z}_i|)$. Using \eqref{eq:R}, we get,
\begin{align}
\label{eq:ref_properly}
    \frac{\partial\widetilde{F}(\widetilde{\eta})}{\partial\widetilde{\eta}}-\frac{\partial\widetilde{f}(\widetilde{\eta})}{\partial\widetilde{\eta}}=\langle\mathcal{L}\rangle_\sr-\mathbb{E}\lt[\langle\mathcal{L}\rangle_\sr\rt]+\frac{s_{\max}}{2\sqrt{\widetilde{\eta}}}A, 
\end{align}
\revsagr{where $s_{\max}>0$ is defined in \eqref{eq:def_b_m}.} From \eqref{eq:one_two_two}  and \eqref{eq:ref_properly}, it is easy to observe that \eqref{eq:T} implies the following:
\begin{align}
\label{eq:star_3}
\lt|\langle \mathcal{L}\rangle_\sr-\mathbb{E}\lt[\langle\mathcal{L}\rangle_\sr\rt]\rt| & \le \delta^{-1}\sum\limits_{u_1 \in \{\widetilde{\eta}-\delta,\widetilde{\eta},\widetilde{\eta}+\delta\}}\left(|F_{k,t;\bm u}-f_{k,t;\bm u}|+s_{\max}|A|\sqrt{u_1}\right)\\
& \quad \quad +C^{+}(\widetilde{\eta})+C^{-}(\widetilde{\eta})+\frac{s_{\max}}{2\sqrt{\widetilde{\eta}}}|A|.
\end{align}
We shall show,
\begin{equation}
\label{eq:div_f_kl}
\revsagr{\mathbb{E}[(F_{k,t;\bm \eta}-f_{k,t;\bm \eta})^2]=O(p^{-1}).}
\end{equation}
Since $\widetilde{\eta}\ge \eta_1$, taking expectation on both sides of \eqref{eq:star_3} and using $\mathbb{E}[A^2]=O(p^{-1})$, and \eqref{eq:div_f_kl} coupled with the Cauchy Schwartz inequality, we can show that
\revsagr{\begin{align}
\label{eq:dev_el}
\mathbb{E}\lt[\lt(\langle\mathcal{L}\rangle_\sr-\mathbb{E}\lt[\langle\mathcal{L}\rangle_\sr\rt]\rt)^2\rt] &\le \delta^{-2}O(p^{-1}+\delta^{-2}s_{\max}^{2})(\widetilde{\eta}+\delta)O(p^{-1})\nonumber\\
&\quad +2\,C^{+}(\widetilde{\eta})^2+2\,C^{-}(\widetilde{\eta})^2+\frac{s_{\max}^2}{4\widetilde\eta}O(p^{-1}).
\end{align}}
By the change of variable $(\eta_1,\eta_2)\mapsto (\widetilde{\eta},\eta_2)$, with the determinant of the Jacobian greater than or equal to $1$, we have the following inequalities.
\begin{align}
\label{eq:bound_shift_1}
\int\limits_{a^{(1)}_p}^{b^{(1)}_p}\int\limits_{a^{(2)}_p}^{b^{(2)}_p}(C^{+}(\widetilde{\eta})^2+C^{-}(\widetilde{\eta})^2)d\eta_2d\eta_1 & = \int\limits_{a^{(2)}_p}^{b^{(2)}_p}\left[\int\limits_{\widetilde{\eta}(a^{(1)}_p,\,\eta_2)}^{\widetilde{\eta}(b^{(1)}_p,\,\eta_2)}\frac{d\widetilde{\eta}}{J_1}(C^{+}(\widetilde{\eta})^2+C^{-}(\widetilde{\eta})^2)\right]d\eta_2\\
& \overset{(1)}{\le} 2\left(\mathbb{E}[\Sigma^2]+\frac{s_{\max}}{\sqrt{\widetilde{\eta}}}\right)\int\limits_{a^{(2)}_p}^{b^{(2)}_p}\left[\int\limits_{\widetilde{\eta}(a^{(1)}_p,\,\eta_2)}^{\widetilde{\eta}(b^{(1)}_p,\,\eta_2)}(C^{+}(\widetilde{\eta})+C^{-}(\widetilde{\eta}))d\widetilde{\eta}\right]d\eta_2\\
& \quad \quad \mbox{\Bigg[as $\left|\frac{\partial\widetilde{f}(\widetilde{\eta})}{\partial\widetilde{\eta}}\right|\le \frac{1}{2}\left(\mathbb{E}[\Sigma^2]+\frac{s_{\max}}{\sqrt{\widetilde{\eta}}}\right)$ \revsagr{by \eqref{eq:R} and \eqref{eq:one_two_two}.}\Bigg]}\\
& \le 2\left(\mathbb{E}[\Sigma^2]+\frac{s_{\max}}{\sqrt{\widetilde{\eta}}}\right)\int\limits_{a^{(2)}_p}^{b^{(2)}_p}\Bigg[(\widetilde{f}(\widetilde{\eta}(b^{(1)}_p,\,\eta_2)-\delta)\\
&\hskip 15em-\widetilde{f}(\widetilde{\eta}(b^{(1)}_p,\,\eta_2)+\delta))\\
&\hskip 6em+(\widetilde{f}(\widetilde{\eta}(a^{(1)}_p,\,\eta_2)+\delta)-\widetilde{f}(\widetilde{\eta}(a^{(1)}_p,\,\eta_2)-\delta))\Bigg]d\eta_2\\
& \le 4\delta(b^{(2)}_p-a^{(2)}_p)\left[\mathbb{E}[\Sigma^2]+\frac{s_{\max}}{\sqrt{\widetilde{\eta}(a^{(1)}_p,a^{(2)}_p)-\delta}}\right]^2\\
& \le 4\delta\left[\mathbb{E}[\Sigma^2]+\frac{s_{\max}}{\sqrt{\widetilde{\eta}(a^{(1)}_p,a^{(2)}_p)-\delta}}\right]^2\\
&\le 4\delta\left(\mathbb{E}[\Sigma^2]+\frac{s_{\max}}{\sqrt{a^{(1)}_p-\delta}}\right)^2.
\end{align}
\revsagr{As $C^+,C^- \ge 0$, we have
\begin{align}
(C^{+}(\widetilde{\eta})^2+C^{-}(\widetilde{\eta})^2) &\le (C^{+}(\widetilde{\eta})+C^{-}(\widetilde{\eta}))^2\\
& \le \left[\left|\frac{\partial\widetilde{f}(\widetilde{\eta}+\delta)}{\partial\widetilde{\eta}}\right|+2\,\left|\frac{\partial\widetilde{f}(\widetilde{\eta})}{\partial\widetilde{\eta}}\right|+\left|\frac{\partial\widetilde{f}(\widetilde{\eta}-\delta)}{\partial\widetilde{\eta}}\right|\right](C^{+}(\widetilde{\eta})+C^{-}(\widetilde{\eta}))\\
& \le 2\left(\mathbb{E}[\Sigma^2]+\frac{s_{\max}}{\sqrt{\widetilde{\eta}}}\right)(C^{+}(\widetilde{\eta})+C^{-}(\widetilde{\eta})) 
\end{align}
where the last inequality follows from the Nishimori Identity, property of Gaussian variables, \eqref{eq:R} and \eqref{eq:one_two_two}. This implies inequality (1) of \eqref{eq:bound_shift_1}.}
Plugging in \eqref{eq:dev_el}, we get that 
\revsagr{
\begin{align}
&\int\limits_{a^{(1)}_p}^{b^{(1)}_p}\int\limits_{a^{(2)}_p}^{b^{(2)}_p}\mathbb{E}\lt[\lt(\langle\mathcal{L}\rangle_\sr-\mathbb{E}\lt[\langle\mathcal{L}\rangle_\sr\rt]\rt)^2\rt]d\eta_2d\eta_1\\
&\le \delta^{-2}O(p^{-1})+\delta^{-2}s^2_{\max}(B+\delta)O(p^{-1})\\
& +\frac{s^2_{\max}}{4}|\log a^{(1)}_p|O(p^{-1})+\revsagr{4\delta}\left(\mathbb{E}[\Sigma^2]+\frac{s_{\max}}{\sqrt{a^{(1)}_p-\delta}}\right)^2,
\end{align}}
for a constant $B\ge \widetilde{\eta}$, because $\widetilde{\eta}$ is bounded as $q_k$'s are bounded and $\eta_1 \le 1$. \revsagr{Then setting $\delta=a^{(1)}_pp^{-1/4}$, we get a constant $C>0$ such that
\[
\int\limits_{a^{(1)}_p}^{b^{(1)}_p}\int\limits_{a^{(2)}_p}^{b^{(2)}_p}\mathbb{E}\lt[\lt(\langle\mathcal{L}\rangle_\sr-\mathbb{E}[\langle\mathcal{L}\rangle_\sr\rt]\rt)^2]d\eta_2d\eta_1 \le C (a^{(1)}_p)^{-2}p^{-1/4}.
\]}
Finally, to show \eqref{eq:div_f_kl} we consider the following two lemmas.
\revsagr{
\begin{lem}
    \label{lem:second_lem}
    For $\Theta_1=(\{\bm \varepsilon_\ell\}_{\ell=1}^{K},\{\bm Z_\ell\}_{\ell=1}^{K},\{\widebar{\bm \varepsilon}_\ell\}_{\ell=1}^{K},\{\widebar{\bm Z}_\ell\}_{\ell=1}^{K},\{\widehat{Z}_i\}_{i=1}^{p},\{\widehat{\widehat{Z}}_i\}_{i=1}^{p})$, we have
    \[
    \mathbb{E}[(F_{k,t;\bm \eta}-\mathbb E_{\bm \Theta_1}[F_{k,t;\bm \eta}|\bm \Phi,\bm \beta_0,\bm \sigma_0])^2]=O(p^{-1}).
    \]
    The random variables contained in $\bm \Theta_1$ are defined in \eqref{eq:def_big_bold_theta}.    
\end{lem}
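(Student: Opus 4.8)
The plan is to establish the bound via the Gaussian Poincar\'e inequality, exploiting that every random variable collected in $\bm \Theta_1$ is jointly Gaussian (and that $\bm \Phi,\bm \beta_0,\bm \sigma_0$ are held fixed). First I would rewrite the quantity of interest as an expected conditional variance,
\[
\mathbb{E}\big[(F_{k,t;\bm \eta}-\mathbb{E}_{\bm \Theta_1}[F_{k,t;\bm \eta}\mid\bm \Phi,\bm \beta_0,\bm \sigma_0])^2\big]=\mathbb{E}\big[\mathrm{Var}_{\bm \Theta_1}(F_{k,t;\bm \eta}\mid\bm \Phi,\bm \beta_0,\bm \sigma_0)\big],
\]
and then, conditionally on $(\bm \Phi,\bm \beta_0,\bm \sigma_0)$, apply Gaussian Poincar\'e. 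Since the Gaussian coordinates in $\bm \Theta_1$ have variances bounded by an absolute constant, this gives $\mathrm{Var}_{\bm \Theta_1}(F_{k,t;\bm \eta})\le C\,\mathbb{E}_{\bm \Theta_1}\|\nabla_{\bm \Theta_1}F_{k,t;\bm \eta}\|^2$, so the lemma reduces to showing $\mathbb{E}\|\nabla_{\bm \Theta_1}F_{k,t;\bm \eta}\|^2=O(p^{-1})$.

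The next step is to compute the partial derivatives of $F_{k,t;\bm \eta}=-\tfrac1p\log\int\{\cdots\}e^{-\mathcal H_{k,t;\bm \eta}}$ explicitly. Each one is a Gibbs average of the corresponding derivative of the Hamiltonian, $\partial_\xi F_{k,t;\bm \eta}=\tfrac1p\langle\partial_\xi \mathcal H_{k,t;\bm \eta}\rangle_{\mathcal H_{k,t;\bm \eta}}$, and since every Gaussian variable enters $\mathcal H_{k,t;\bm \eta}$ linearly with a coefficient of the form $c\,x_ix_j$, $c\,x_i$, $c\,[\bm \Phi(\bm \beta-\bm \beta_0)]_\mu$, or $c\,(\beta_i-\beta_{0i})$ (with $|c|$ controlled through the corresponding $\theta$-parameter), the derivatives are explicit local fields averaged under the Gibbs measure. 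I would then use $\langle a\rangle^2\le\langle a^2\rangle$ together with boundedness of the priors: $Q$ is supported on $\{0,1\}$, so $\sum_i\langle x_i^2\rangle\le p$ and $\sum_{i,j}\langle x_ix_j\rangle^2\le(\sum_i\langle x_i^2\rangle)^2\le p^2$; and $P(\cdot\mid\Sigma)$ is supported in $[-s_{\max},s_{\max}]$, so $\langle\|\bm \beta-\bm \beta_0\|^2\rangle\le 4ps_{\max}^2$ and, deterministically in the integration variable, $\|\bm \Phi(\bm \beta-\bm \beta_0)\|^2\le 4ps_{\max}^2\,\|\bm \Phi\|_{\mathsf{op}}^2$ with $\mathbb{E}\|\bm \Phi\|_{\mathsf{op}}^2=O(1)$ for the Gaussian design. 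Summing the squared derivatives block by block then gives: the $\{\bm Z_\ell\}$ (Wigner) block contributes $\sum_\ell\tfrac{\lambda}{Kp}=O(p^{-1})$ (after $\sum_{i,j}\langle x_ix_j\rangle^2\le p^2$); the $\{\widebar{\bm Z}_\ell\}$ block contributes $\sum_\ell\tfrac1{p^2}\tfrac{\lambda q_\ell}{K}p=O(p^{-1})$; the $\{\bm \varepsilon_\ell\}$ block contributes $\sum_\ell\tfrac1{p^2}\tfrac1{K\Delta}\,O(p)=O(p^{-1})$ (using $\theta_2^2\ge K\Delta$ and the bound above on $\|\bm \Phi(\bm \beta-\bm \beta_0)\|^2$); the $\{\widebar{\bm \varepsilon}_\ell\}$ block is analogous; and the two $\bm \eta$-perturbation blocks built from $\{\widehat Z_i\}$ and $\{\widehat{\widehat Z}_i\}$ contribute $\tfrac{\eta_1}{p}$ and $O(\eta_2 p^{-1})$. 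Adding up gives $\mathbb{E}\|\nabla_{\bm \Theta_1}F_{k,t;\bm \eta}\|^2=O(p^{-1})$, which is the claim.

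The feature that makes this work, and which I would highlight, is that although the number $K=K_p$ of interpolation layers diverges and each layer carries of order $p$ (or of order $p^2$ for the Wigner part) Gaussian coordinates, the variance of each layer's noise is damped by a factor $1/K$ through the $\theta$-parameters, so the per-layer contribution is of order $1/(Kp)$ and the sum over $\ell$ telescopes to $O(1/p)$ uniformly in $K$ --- this is exactly the scaling built into the adaptive-interpolation construction. The main technical obstacle is the $\{\bm \varepsilon_\ell\}$ (and $\{\widebar{\bm \varepsilon}_\ell\}$) block, where the local field $[\bm \Phi(\bm \beta-\bm \beta_0)]_\mu$ involves the design matrix; there one must first use that $\bm \beta$ is confined to the bounded support of the prior so that $\|\bm \Phi(\bm \beta-\bm \beta_0)\|^2\le\|\bm \Phi\|_{\mathsf{op}}^2\cdot O(p)$ pointwise in $\bm \beta$ (no Gibbs average needed), and then the standard estimate $\mathbb{E}\|\bm \Phi\|_{\mathsf{op}}^2=O(1)$, so that this block stays at $O(p^{-1})$ and not merely $o(1)$; the remaining blocks are routine bookkeeping with Gibbs averages and the Nishimori identity.
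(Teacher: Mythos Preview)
Your proposal is correct and follows essentially the same route as the paper: apply the Gaussian Poincar\'e inequality conditionally on $(\bm\Phi,\bm\beta_0,\bm\sigma_0)$, compute the partial derivatives $\partial_\xi F_{k,t;\bm\eta}=\tfrac1p\langle\partial_\xi\mathcal H_{k,t;\bm\eta}\rangle$ block by block, and use boundedness of the priors together with the $1/K$ damping of each layer to sum to $O(p^{-1})$. The only cosmetic difference is in the $\{\bm\varepsilon_\ell\}$ block, where you bound $\langle\|\bm\Phi(\bm\beta-\bm\beta_0)\|^2\rangle$ via $\|\bm\Phi\|_{\mathsf{op}}^2\cdot O(p)$ and then use $\mathbb E\|\bm\Phi\|_{\mathsf{op}}^2=O(1)$, whereas the paper bounds the same quantity through $\tfrac{1}{p^2}\sum_\mu(\sum_i\phi_{\mu i})^2$ and computes its expectation directly; both give the required $O(p^{-1})$.
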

\begin{lem}
    \label{lem:first_lem}
    For $\Theta_1$ defined in Lemma \ref{lem:second_lem}, we have
    \[
    \mathbb{E}[(\mathbb E_{\bm \Theta_1}[F_{k,t;\bm \eta}|\bm \Phi,\bm \beta_0,\bm \sigma_0]-f_{k,t;\bm \eta})^2]=O(p^{-1}).
    \]
    Here $f_{k,t;\bm \eta}$ is defined in \eqref{eq:free_energy_perturbed}.    
\end{lem}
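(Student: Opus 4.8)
Write $g(\bm\Phi,\bm\beta_0,\bm\sigma_0):=\mathbb E_{\bm\Theta_1}[F_{k,t;\bm\eta}\mid\bm\Phi,\bm\beta_0,\bm\sigma_0]$, and recall that $f_{k,t;\bm\eta}=\mathbb E[F_{k,t;\bm\eta}]$ is the average of the random free energy over all the disorder, including the signal $(\bm\beta_0,\bm\sigma_0)$; thus the claim is exactly $\mathrm{Var}(g)=O(p^{-1})$. My plan is to split this variance according to the three mutually independent blocks $\bm\Phi$ (i.i.d.\ centred Gaussians of variance $\asymp 1/p$), $\bm\sigma_0$ (i.i.d.\ Bernoulli) and $\bm\beta_0$ (i.i.d.\ with discrete bounded support in $[-s_{\max},s_{\max}]$), via the iterated law of total variance $\mathrm{Var}(g)=\mathbb E[\mathrm{Var}(g\mid\bm\sigma_0,\bm\beta_0)]+\mathbb E[\mathrm{Var}(\mathbb E[g\mid\bm\sigma_0,\bm\beta_0]\mid\bm\beta_0)]+\mathrm{Var}(\mathbb E[g\mid\bm\beta_0])$, and to show each term is $O(p^{-1})$. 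Throughout I will use two uniform facts: under the Gibbs measure $P_{k,t;\bm\eta}$ one has $x_i\in\{0,1\}$ and $\beta_i\in[-s_{\max},s_{\max}]$, so all Gibbs moments of $x_i$, $\beta_i$ and of $\langle\|\bm\beta-\bm\beta_0\|^2\rangle$ are bounded by $Cp$; and $\|\bm\Phi\|_{\mathsf{op}}=O(1)$ outside an event of probability $e^{-\Omega(p)}$ on which a crude polynomial bound always holds.

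For the $\bm\Phi$-block I will invoke the Gaussian--Poincar\'e inequality. Since $\nabla_{\bm\Phi}g=\mathbb E_{\bm\Theta_1}[\nabla_{\bm\Phi}F_{k,t;\bm\eta}]$, Jensen gives $\mathrm{Var}(g\mid\bm\sigma_0,\bm\beta_0)\le \tfrac{C}{p}\,\mathbb E\big[\|\nabla_{\bm\Phi}F_{k,t;\bm\eta}\|^2\big]$. Collecting the $\bm\Phi$-dependent terms of $\mathcal H_{k,t;\bm\eta}$ into $a\|\bm\Phi(\bm\beta-\bm\beta_0)\|^2-\bm b^\top\bm\Phi(\bm\beta-\bm\beta_0)$, where $a=O(1)$ (a weighted sum of reciprocal output-channel variances over the $K$ blocks) and $\bm b$ is a Gaussian vector with $\mathbb E\|\bm b\|^2=O(p)$, a direct differentiation yields $\partial F_{k,t;\bm\eta}/\partial\Phi_{\mu i}=\tfrac1p\big\langle(\beta_i-\beta_{0i})\big(2a[\bm\Phi(\bm\beta-\bm\beta_0)]_\mu-b_\mu\big)\big\rangle_{\mathcal H_{k,t;\bm\eta}}$. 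Applying Cauchy--Schwarz inside the Gibbs bracket and summing over $(\mu,i)$ gives $\|\nabla_{\bm\Phi}F_{k,t;\bm\eta}\|^2\le \tfrac{C}{p^2}\,\langle\|\bm\beta-\bm\beta_0\|^2\rangle\big(\|\bm\Phi\|_{\mathsf{op}}^2\langle\|\bm\beta-\bm\beta_0\|^2\rangle+\|\bm b\|^2\big)=O(1)$ on the good event, and the exponentially small complement contributes $o(p^{-1})$ against the Gaussian tail; hence this block is $O(p^{-1})$.

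The $\bm\sigma_0$-block is straightforward by bounded differences: flipping one coordinate $\sigma_{0i}\in\{0,1\}$ changes $\mathcal H_{k,t;\bm\eta}$ by at most $O(1)$ (each affected spin term is $\tfrac{1}{\theta_1^2}\tfrac1p\sum_j x_ix_j\sigma_{0j}=O(1/K)$, summed over the $K$ blocks, plus the $O(1)$ perturbation term), hence $F_{k,t;\bm\eta}$, $g$ and $\mathbb E[g\mid\bm\sigma_0,\bm\beta_0]$ by $O(1/p)$, so Efron--Stein/McDiarmid give $\tfrac14 p\cdot O(p^{-2})=O(p^{-1})$. The $\bm\beta_0$-block is the main obstacle, since a worst-case difference bound fails: resampling $\beta_{0i}$ changes $F_{k,t;\bm\eta}$ through the quadratic $\tfrac12\|\bm\Phi(\bm\beta-\bm\beta_0)\|^2$ by an amount that is $O(1)$ only on $\{\|\bm\Phi\|_{\mathsf{op}}=O(1)\}$, not uniformly. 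I will instead apply Efron--Stein directly to $\phi(\bm\beta_0):=\mathbb E[g\mid\bm\beta_0]=\mathbb E_{\bm\Phi,\bm\sigma_0,\bm\Theta_1}[F_{k,t;\bm\eta}\mid\bm\beta_0]$: a mean-value argument plus Gaussian integration by parts gives $|F_{k,t;\bm\eta}(\beta_{0i})-F_{k,t;\bm\eta}(\beta_{0i}')|\le \tfrac{2s_{\max}}{p}\sup_{|s|\le s_{\max}}\big|\langle\partial_{\beta_{0i}}\mathcal H_{k,t;\bm\eta}\rangle_s\big|$, where $\langle\partial_{\beta_{0i}}\mathcal H_{k,t;\bm\eta}\rangle_s$ is, modulo $O(1)$-bounded and $O(1)$-variance Gaussian terms, an $O(1)$-linear combination of $[\bm\Phi^\top\bm\Phi\langle\bm\beta-\bm\beta_0^{(s)}\rangle]_i$ and $[\bm\Phi^\top\bm\varepsilon]_i$-type coordinates. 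Summing squares over $i$ and taking expectations, $\sum_i\mathbb E\big[\sup_s|\langle\partial_{\beta_{0i}}\mathcal H_{k,t;\bm\eta}\rangle_s|^2\big]\le C\big(\mathbb E\|\bm\Phi\|_{\mathsf{op}}^4\cdot\sup_s\|\langle\bm\beta-\bm\beta_0^{(s)}\rangle\|^2+\mathbb E\|\bm\Phi\|_{\mathsf{op}}^2\,\mathbb E\|\bm\varepsilon\|^2+p\big)=O(p)$, using the boundedness of the Gibbs second moments, $\mathbb E\|\bm\varepsilon\|^2=O(p)$, and the (exponential) tail of $\|\bm\Phi\|_{\mathsf{op}}$. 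Plugging this into Efron--Stein, $\mathrm{Var}(\phi(\bm\beta_0))\le\tfrac12\sum_i\mathbb E[(F_{k,t;\bm\eta}(\beta_{0i})-F_{k,t;\bm\eta}(\beta_{0i}'))^2]\le\tfrac{2s_{\max}^2}{p^2}\,O(p)=O(p^{-1})$. Summing the three contributions proves the lemma. This mirrors the free-energy self-averaging estimates of \citet{Adaptive_Interpolation} and \cite{barbier2019optimal}; the only extra bookkeeping is the coupled linear-regression block, which is precisely why the $\bm\beta_0$-fluctuation needs the Efron--Stein refinement rather than a plain bounded-difference bound.
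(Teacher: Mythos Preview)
Your overall strategy---Gaussian--Poincar\'e for $\bm\Phi$ and Efron--Stein/bounded differences for the signal---is exactly what the paper intends (it only says ``similar techniques'' and points to \cite{doi:10.1073/pnas.1802705116}), and your treatment of the $\bm\Phi$ and $\bm\sigma_0$ blocks is correct. Two points need fixing, though.

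First, $\bm\sigma_0$ and $\bm\beta_0$ are \emph{not} mutually independent in this model: $\beta_{0i}\mid\sigma_{0i}\sim P(\cdot\mid\sigma_{0i})$. Your iterated law of total variance is still valid, and since the pairs $(\sigma_{0i},\beta_{0i})$ are i.i.d., the conditional Efron--Stein arguments go through, but it would be cleaner to apply Efron--Stein once to the i.i.d.\ pairs $(\sigma_{0i},\beta_{0i})$ rather than separating the two.

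Second, and more importantly, there is a genuine gap in your $\bm\beta_0$ block. You claim
\[
\sum_i\mathbb E\Big[\sup_s\big|\langle\partial_{\beta_{0i}}\mathcal H_{k,t;\bm\eta}\rangle_s\big|^2\Big]\le C\Big(\mathbb E\|\bm\Phi\|_{\mathsf{op}}^4\cdot\sup_s\|\langle\bm\beta-\bm\beta_0^{(s)}\rangle\|^2+\cdots\Big)=O(p),
\]
but the sup over $s$ sits \emph{inside} the sum over $i$, and for each $i$ the Gibbs measure $\langle\cdot\rangle_s$ corresponds to replacing the $i$-th coordinate of $\bm\beta_0$ by $s$. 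Thus the vectors $\langle\bm\beta-\bm\beta_0^{(s)}\rangle_s$ are different for different $i$, and you cannot collapse $\sum_i[\bm\Phi^\top\bm\Phi v_i]_i^2$ into a single $\|\bm\Phi^\top\bm\Phi v\|^2\le\|\bm\Phi\|_{\mathsf{op}}^4\|v\|^2$. Naive per-coordinate bounds (Cauchy--Schwarz with $\|\bm\Phi_{\cdot i}\|$) yield $O(p^2)$ here, not $O(p)$. The clean fix is to avoid the mean-value/sup step altogether: use the two-sided Jensen inequality
\[
\tfrac{1}{p}\langle\delta_i\mathcal H\rangle_{\bm\beta_0^{(i)}}\;\le\;F(\bm\beta_0^{(i)})-F(\bm\beta_0)\;\le\;\tfrac{1}{p}\langle\delta_i\mathcal H\rangle_{\bm\beta_0},
\]
with $\delta_i\mathcal H=\mathcal H(\bm\beta_0^{(i)})-\mathcal H(\bm\beta_0)$, so that $(F(\bm\beta_0)-F(\bm\beta_0^{(i)}))^2\le \tfrac{2}{p^2}\big(|\langle\delta_i\mathcal H\rangle_{\bm\beta_0}|^2+|\langle\delta_i\mathcal H\rangle_{\bm\beta_0^{(i)}}|^2\big)$. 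By exchangeability of $(\bm\beta_0,\bm\beta_0^{(i)})$ the two expectations coincide, and now the Gibbs bracket is always at the \emph{same} $\bm\beta_0$ for every $i$; then $\sum_i|[\bm\Phi^\top\bm\Phi\langle\bm\beta-\bm\beta_0\rangle]_i|^2=\|\bm\Phi^\top\bm\Phi\langle\bm\beta-\bm\beta_0\rangle\|^2\le C\|\bm\Phi\|_{\mathsf{op}}^4\,p$ goes through legitimately, and the rest of your bookkeeping gives the desired $O(p^{-1})$.
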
}

From the above lemmas, \eqref{eq:div_f_kl} follows immediately. Now, we shall prove Lemma \ref{lem:second_lem}. The proof of Lemma \ref{lem:first_lem} follows using similar techniques (see, \citet{doi:10.1073/pnas.1802705116}).

\begin{proof}[Proof of Lemma \ref{lem:second_lem}]
    \revsagr{To prove the above lemma we shall use the Gaussian Poincar\'e inequality from \citet{doi:10.1073/pnas.1802705116}: Let $U=(U_1,...,U_n)$ be a vector of $n$ independent standard normal random variables. Let $g:\R^n\rightarrow \R$ be a continuously differentiable function. Then 
    \begin{align}
    \label{eq:poincr_ineq}
    \mbox{Var}(g(U)) \le \E[\|\nabla g(U)\|^2].
    \end{align}
    Observe that conditioned on $\bm \Phi,\bm \beta_0$ and $\bm \sigma_0$, $F_{k,t;\bm \eta}$ is a continuous and differential function of the Gaussian variables contained in $\bm \Theta_1$. We shall bound the expectation of 
    \begin{align}
    \sum_{\ell=1}^{K}\sum_{\substack{i \le j\\i,j=1}}^p\left|\frac{\partial F_{k,t;\bm \eta}}{\partial (\bm Z_\ell)_{ij}}\right|^2+\sum_{\ell=1}^{K}\sum_{\mu=1}^{n}\left|\frac{\partial F_{k,t;\bm \eta}}{\partial (\bm \varepsilon_\ell)_\mu}\right|^2
    +\sum_{\ell=1}^{K}\sum_{j=1}^{p}\left|\frac{\partial F_{k,t;\bm \eta}}{\partial (\widebar{\bm \varepsilon}_\ell)_j}\right|^2\\+\sum_{\ell=1}^{K}\sum_{j=1}^{p}\left|\frac{\partial F_{k,t;\bm \eta}}{\partial (\widebar{\bm Z}_\ell)_j}\right|^2+\sum_{j=1}^{p}\left|\frac{\partial F_{k,t;\bm \eta}}{\partial \widehat{Z}_j}\right|^2+\sum_{j=1}^{p}\left|\frac{\partial F_{k,t;\bm \eta}}{\partial \widehat{\widehat{Z}}_j}\right|^2.
    \end{align}
    Now, observe that from \eqref{eq:h_on_top}
    \begin{align}
    \mathbb E_{\Theta_1}\left[\sum_{\ell=1}^{K}\sum_{\substack{i \le j\\i,j=1}}^p\left|\frac{\partial F_{k,t;\bm \eta}}{\partial (\bm Z_\ell)_{ij}}\right|^2\Bigg|(\bm \Phi,\bm \sigma_0,\bm \beta_0)\right]&=\frac{\lambda}{p^3K}\sum_{\ell=k+1}^{K}\sum_{\substack{i \le j\\i,j=1}}^p \left[\mathbb E_{\Theta_1}\lt[\langle x_ix_j\rangle^2_{\mathcal H_{\ell,t;\bm \eta}}\Bigg|(\bm \Phi,\bm \sigma_0,\bm \beta_0)\rt] \right.\\
    &\left.+\frac{\lambda(1-t)}{p^3K}\mathbb E_{\Theta_1}\lt[\langle x_ix_j\rangle^2_{\mathcal H_{k,t;\bm \eta}}\Bigg|(\bm \Phi,\bm \sigma_0,\bm \beta_0)\rt] \right]= O(p^{-1}).
    \end{align}
    Similarly, we can show that
    \begin{align}
    \mathbb E_{\Theta_1}\left[\sum_{\ell=1}^{K}\sum_{i=1}^p\left|\frac{\partial F_{k,t;\bm \eta}}{\partial (\widebar{\bm Z}_\ell)_{i}}\right|^2\Bigg|(\bm \Phi,\bm \sigma_0,\bm \beta_0)\right]&=\sum_{\ell=1}^{k-1}\frac{\lambda q_\ell}{p^2K}\sum_{i=1}^p\left[\mathbb E_{\Theta_1}\lt[\langle x_i\rangle^2_{\mathcal H_{\ell,t;\bm \eta}}\Bigg|(\bm \Phi,\bm \sigma_0,\bm \beta_0)\rt]\right.\\
    &\left.+\frac{\lambda tq_k}{p^2K}\mathbb E_{\Theta_1}\lt[\langle x_i\rangle^2_{\mathcal H_{k,t;\bm \eta}}\Bigg|(\bm \Phi,\bm \sigma_0,\bm \beta_0)\rt]\right]= O(p^{-1}),
    \end{align}
    and
    \begin{align}
    \mathbb E_{\Theta_1}\left[\sum_{j=1}^p\left|\frac{\partial F_{k,t;\bm \eta}}{\partial \widehat{Z}_j}\right|^2\Bigg|(\bm \Phi,\bm \sigma_0,\bm \beta_0)\right]&=\frac{\eta_1}{p^2}\sum_{i=1}^p\mathbb E_{\Theta_1}\lt[\langle x_i\rangle^2_{\mathcal H_{k,t;\bm \eta}}\Bigg|(\bm \Phi,\bm \sigma_0,\bm \beta_0)\rt]= O(p^{-1}).
    \end{align}
    As $P(\,\cdot\,|\,\Sigma\,)$ is discretely supported with bounded support for $\Sigma \in \{0,1\}$, we can also show that
    \begin{align}
    \mathbb E_{\Theta_1}\left[\sum_{\ell=1}^{K}\sum_{i=1}^p\left|\frac{\partial F_{k,t;\bm \eta}}{\partial (\widebar{\bm \varepsilon}_\ell)_{i}}\right|^2\Bigg|(\bm \Phi,\bm \sigma_0,\bm \beta_0)\right]&=\frac{1}{p^2K\Psi^2_\ell}\sum_{\ell=k+1}^{K}\sum_{i=1}^p\left[\mathbb E_{\Theta_1}\lt[\langle (\beta_i-\beta_{0i})\rangle^2_{\mathcal H_{\ell,t;\bm \eta}}\Bigg|(\bm \Phi,\bm \sigma_0,\bm \beta_0)\rt]\right.\\
    &\left.+\frac{\lambda_k(t)}{p^2K}\mathbb E_{\Theta_1}\lt[\langle (\beta_i-\beta_{0i})\rangle^2_{\mathcal H_{k,t;\bm \eta}}\Bigg|(\bm \Phi,\bm \sigma_0,\bm \beta_0)\rt]\right]= O(p^{-1}),
    \end{align}
    and
    \begin{align}
    \mathbb E_{\Theta_1}\left[\sum_{j=1}^p\left|\frac{\partial F_{k,t;\bm \eta}}{\partial \widehat{\widehat{Z}}_j}\right|^2\Bigg|(\bm \Phi,\bm \sigma_0,\bm \beta_0)\right]&=\frac{\eta_2}{p^2}\sum_{i=1}^p\mathbb E_{\Theta_1}\lt[\langle (\beta_i-\beta_{0i})\rangle^2_{\mathcal H_{k,t;\bm \eta}}\Bigg|(\bm \Phi,\bm \sigma_0,\bm \beta_0)\rt]= O(p^{-1}).
    \end{align}
    Finally, we can get a constant $C_1>0$ independent of $p$, such that 
    \begin{align}
    \mathbb E_{\Theta_1}\left[\sum_{\ell=1}^{K}\sum_{\mu=1}^n\left|\frac{\partial F_{k,t;\bm \eta}}{\partial (\bm \varepsilon_\ell)_{\mu}}\right|^2\Bigg|(\bm \Phi,\bm \sigma_0,\bm \beta_0)\right]&=\frac{1}{p^2K\Delta}\sum_{\ell=k+1}^{K}\sum_{\mu=1}^n\left[\mathbb E\lt[\langle [\bm \Phi(\bm \beta-\bm\beta_{0})]_\mu\rangle^2_{\mathcal H_{\ell,t;\bm \eta}}\Bigg|(\bm \Phi,\bm \sigma_0,\bm \beta_0)\rt]\right.\\
    &\left.+\frac{\gamma_k(t)}{p^2K}\mathbb E\lt[\langle [\bm \Phi(\beta_i-\beta_{0i})]_\mu\rangle^2_{\mathcal H_{k,t;\bm \eta}}\Bigg|(\bm \Phi,\bm \sigma_0,\bm \beta_0)\rt]\right]\\
    &\le \frac{C_1}{p^2}\sum_{\mu=1}^{n}\lt(\sum_{i=1}^{p}\phi_{\mu i}\rt)^2.
    \end{align}
    The above relations along with \eqref{eq:poincr_ineq} imply
    \begin{align}
    \mathbb{E}\lt[(F_{k,t;\bm \eta}-\mathbb E_{\bm \Theta_1}[F_{k,t;\bm \eta}|\bm \Phi,\bm \beta_0,\bm \sigma_0])^2\Bigg|(\bm \Phi,\bm \sigma_0,\bm \beta_0)\rt]=\frac{C_1}{p^2}\sum_{\mu=1}^{n}\lt(\sum_{i=1}^{p}\phi_{\mu i}\rt)^2+O(p^{-1}).
    \end{align}
    Observing that $\phi_{\mu i} \sim \mathsf{N}(0,n^{-1})$ for all $(\mu,i) \in [n] \times [p]$, we can conclude that
    \[
    \mathbb{E}\lt[(F_{k,t;\bm \eta}-\mathbb E_{\bm \Theta_1}[F_{k,t;\bm \eta}|\bm \Phi,\bm \beta_0,\bm \sigma_0])^2\rt]=O(p^{-1}).
    \]}
\end{proof}

\subsection{Proof of Lemma \ref{lem:terminal_cond}}
We compute the partial derivatives of $f_{1,0;\bm \eta}$ with respect to $\eta_1$ and $\eta_2$. Let us observe that,
\[
\frac{\partial f_{1,0;\bm \eta}}{\partial \eta_1}=\frac{1}{p}\sum\limits_{i=1}^{p}\mathbb E\Bigg\{\frac{1}{2}\langle x^2_i\rangle_{\mathcal{H}_{1,0;\bm \eta}}-\langle x_i\rangle_{\mathcal{H}_{1,0;\bm \eta}}\sigma_{0i}-\frac{1}{2\sqrt{\eta_1}}\langle x_i\rangle_{\mathcal{H}_{1,0;\bm \eta}}\widehat{Z}_i\Bigg\}.
\]
Using Gaussian integration by parts and Nishimori Identity we get,
\[
\frac{\partial f_{1,0;\bm \eta}}{\partial \eta_1}=-\frac{1}{2p}\sum\limits_{i=1}^{p}\mathbb E[\langle x^2_i\rangle_{1,0;\bm \eta}].
\]
Next, observe that,
\[
\frac{\partial f_{1,0;\bm \eta}}{\partial \eta_2}=\frac{1}{p}\sum\limits_{i=1}^{p}\mathbb E\Bigg\{\frac{1}{2}\langle (\beta_i-\beta_{0i})^2\rangle_{\mathcal{H}_{1,0;\bm \eta}}-\frac{1}{2\sqrt{\eta_2}}\langle (\beta_i-\beta_{0i})\rangle_{\mathcal{H}_{1,0;\bm \eta}}\widehat{\widehat{Z}}_i\Bigg\}.
\]
Again, by similar argument we have,
\[
\frac{\partial f_{1,0;\bm \eta}}{\partial \eta_2}=-\frac{1}{2p}\sum\limits_{i=1}^{p}\mathbb E[\langle (\beta_i-\beta_{0i})^2\rangle_{\mathcal{H}_{1,0;\bm \eta}}].
\]
\revsagr{Since, $\mathbb E[\langle x^2_i\rangle_{\mathcal{H}_{1,0;\bm \eta}}]$ and $\mathbb E[\langle (\beta_i-\beta_{0i})^2\rangle_{\mathcal{H}_{1,0;\bm \eta}}]$ are uniformly bounded by the assumptions of the theorem}, we get a constant $C>0$ such that,
\[
\|\nabla_{\bm \eta}f_{1,0;\bm \eta}\| \le \frac{1}{2p}\sqrt{\Bigg(\sum\limits_{i=1}^{p}\mathbb E[\langle x^2_i\rangle_{\mathcal{H}_{1,0;\bm \eta}}]\Bigg)^2+\Bigg(\sum\limits_{i=1}^{p}\mathbb E[\langle (\beta_i-\beta_{0i})^2\rangle_{\mathcal{H}_{1,0;\bm \eta}}]\Bigg)^2} \le C.
\]
Therefore, by the Mean Value Theorem,
\[
|f_{1,0;\bm \eta}-f_{1,0;\bm 0}| \le C\,\|\bm \eta\|.
\]
The last inequality follows from the Lipschitz continuity of the free energy $f_{K,1;\bm \eta}$; of the decoupled scalar system. We refer the readers to \citet{5730572} for further explanation.
\subsection{Proof of Lemma \ref{lem:overlap_approx}}
Let us observe that,
\begin{align}
&\mathbb{E}[\langle s_{\bm \sigma_0,\bm x}\rangle_{\mathcal{H}_{k,t;\bm \eta}}]-\mathbb{E}[\langle s_{\bm \sigma_0,\bm x}\rangle_{\mathcal{H}_{k,0;\bm \eta}}]\\
& = \int\limits_0^t \left(\frac{d}{ds}\mathbb{E}[\langle s_{\bm \sigma_0,\bm x}\rangle_{\mathcal{H}_{k,s;\bm \eta}}]\right)\,ds\\
& = \int\limits_0^t \mathbb{E}\left[\langle s_{\bm \sigma_0,\bm x}\rangle_{\mathcal{H}_{k,s;\bm \eta}}\left\langle\frac{d\mathcal{H}_{k,s;\bm \eta}}{ds}\right\rangle_{\mathcal{H}_{k,s;\bm \eta}}-\left\langle s_{\bm \sigma_0,\bm x}\frac{d\mathcal{H}_{k,s;\bm \eta}}{ds}\right\rangle_{\mathcal{H}_{k,s;\bm \eta}}\right]\,ds\\
& = \int\limits_0^t \mathbb{E}\left[\langle s_{\bm \sigma_0,\bm x}\rangle_{\mathcal{H}_{k,s;\bm \eta}}\left\langle\left(\frac{d\mathcal{H}_{k,s;\bm \eta}}{ds}(\bm x^\prime,\bm \beta^\prime;\revsagr{\bm \sigma_0,\bm \beta_0},\bm \Theta)-\frac{d\mathcal{H}_{k,s;\bm \eta}}{ds}(\bm x,\bm \beta;\revsagr{\bm \sigma_0,\bm \beta_0},\bm \Theta)\right)\right\rangle_{\mathcal{H}_{k,s;\bm \eta}}\right]\,ds,
\end{align}
where $(\bm x,\bm \beta)$ and $(\bm x^\prime,\bm \beta^\prime),$  are i.i.d replicas. Let,
\begin{align}
g(\bm x,\bm x^\prime,\bm \beta,\bm \beta^\prime;\bm \sigma_0,\bm \beta_0)&=\lambda q_k\sum\limits_{i=1}^{p}\left(\frac{x_ix^\prime_i}{2}-x_i\sigma_{0i}\right)-\lambda\sum\limits_{i \le j=1}^{p}\left(\frac{x_ix_jx^\prime_ix^\prime_j}{2p}-\frac{x_ix_j\sigma_{0i}\sigma_{0j}}{p}\right)\\
& +\frac{d\gamma_k(t)}{dt}\frac{1}{2}\sum\limits_{\mu=1}^{n}\mathbb{E}\left[\langle [\bm \Phi(\bm \beta-\bm \beta_0)]_\mu\rangle_{\mathcal{H}_{k,s;\bm \eta}}\langle [\bm \Phi(\bm \beta^\prime-\bm \beta_0)]_\mu\rangle_{\mathcal{H}_{k,s;\bm \eta}}\right]\\
& +\frac{d\lambda_k(t)}{dt}\frac{1}{2}\sum\limits_{i=1}^{p}\mathbb{E}\left[\langle(\bm \beta_i-\bm \beta_{0i})\rangle_{\mathcal{H}_{k,s;\bm \eta}}\langle(\bm \beta^\prime_{i}-\bm \beta_{0i})\rangle_{\mathcal{H}_{k,s;\bm \eta}}\right].
\end{align}
Then observe that,
\begin{align}
&\mathbb{E}[\langle s_{\bm \sigma_0,\bm x}\rangle_{\mathcal{H}_{k,t;\bm \eta}}]-\mathbb{E}[\langle s_{\bm \sigma_0,\bm x}\rangle_{\mathcal{H}_{k,0;\bm \eta}}]\\
&=\frac{1}{K}\int\limits_{0}^{t}\mathbb{E}\Bigg[\left\langle s_{\bm \sigma_0,\bm x}(g(\bm x^\prime,\bm x^{\prime\prime},\bm \beta^\prime,\bm \beta^{\prime\prime};\bm \sigma_0,\bm \beta_0)-g(\bm x,\bm x^\prime,\bm \beta,\bm \beta^\prime;\bm \sigma_0,\bm \beta_0))\right\rangle_{\mathcal{H}_{k,s;\bm \eta}}\Bigg]\,ds.
\end{align}
\revsagr{Here, $(\bm x^{\prime\prime},\bm \beta^{\prime\prime})$ is an i.i.d replica of $(\bm x, \bm \beta)$ under the Gibbs measure $P_{k,s;\bm \eta}$.}
Using the Cauchy-Schwartz Inequality, we get
\begin{align}
&\Bigg|\mathbb{E}[\langle s_{\bm \sigma_0,\bm x}\rangle_{\mathcal{H}_{k,t;\bm \eta}}]-\mathbb{E}[\langle s_{\bm \sigma_0,\bm x}\rangle_{\mathcal{H}_{k,0;\bm \eta}}]\Bigg|\\
&=O\left(\frac{1}{K}\sqrt{\mathbb{E}[\langle s^2_{\bm \sigma_0,\bm x}\rangle_{\mathcal{H}_{k,s;\bm \eta}}]\mathbb{E}[\langle g^2(\bm x,\bm x^\prime,\bm \beta,\bm \beta^\prime;\bm \sigma_0,\bm \beta_0)\rangle_{\mathcal{H}_{k,s;\bm \eta}}]}\right)
\end{align}
\revsagr{Since $\bm \beta$ and $\bm x$ have bounded support under the Gibbs measure $P_{k,s;\bm \eta}$ and $(\bm \sigma_0,\bm \beta_0)$ is almost surely bounded}, we have,
\[
\mathbb{E}[\langle g^2(\bm x,\bm x^\prime,\bm \beta,\bm \beta^\prime;\bm \sigma_0,\bm \beta_0)\rangle_{\mathcal{H}_{k,s;\bm \eta}}]=O(p^2),
\]
and
\[
\mathbb{E}[\langle s^2_{\bm \sigma_0,\bm x}\rangle_{\mathcal{H}_{k,s;\bm \eta}}]=O(1).
\]
This implies the result.

\section{Proof of Theorem \ref{thm:state_evol_graph}}
\label{proof_amp_graph}
\revsag{We shall show the case when $\lambda=0$. For $\lambda>0$, the proof can be extended using the techniques used to prove Theorem 7.2 of \citet{ma_nandy}.} Let us consider the following AMP orbits.
\begin{align}
\label{eq:amp_iterates_poly}
\widetilde{\bm \sigma}^{t+1} &= \frac{\bm W}{\sqrt{p}} \widetilde{\bm f}_t(\widetilde{\bm \sigma}^t,\bm S^\top\widetilde{\bm z}^{t-1}+\widetilde{\bm \beta}^{t-1})\\
&\quad\quad-(\mathcal A\widetilde{\bm f}_t)(\widetilde{\bm \sigma}^t,\bm S^\top\widetilde{\bm z}^{t-1}+\widetilde{\bm \beta}^{t-1})\widetilde{\bm f}_{t-1}(\widetilde{\bm \sigma}^{t-1},\bm S^\top\widetilde{\bm z}^{t-2}+\widetilde{\bm \beta}^{t-2}),\\
\end{align}
and,
\begin{align}
\label{eq:amp_iterates_1_poly}
\widetilde{\bm z}^t & = \bm y^\circ - \bm S\widetilde{\bm \beta}^t-\frac{1}{\kappa}\widetilde{\bm z}^{t-1}(\mathcal A\widetilde{\bm \zeta}_{t-1})(\bm S^\top\widetilde{\bm z}^{t-1}+\widetilde{\bm \beta}^{t-1},\widetilde{\bm \sigma}^{t})\\
\widetilde{\bm \beta}^{t+1} &= \widetilde{\bm \zeta}_t(\bm S^\top\widetilde{\bm z}^t+\widetilde{\bm \beta}^t,\widetilde{\bm \sigma}^{t+1})\\
\end{align}
where $\bm W$ is defined by \eqref{eq:rank_one_graph_deformation}. The state evolution of this AMP is characterized by a set of parameters $\widetilde{\tau}_t,\widetilde{\eta}_t,\widetilde{\nu}_t$ and $\widetilde{\sigma}_t$ defined using the formula \eqref{eq:state_evol} and the polynomial update functions $\widetilde{f}_t$ and $\widetilde{\zeta}_t$. \revsag{Given $\varepsilon>0$, let us consider polynomial denoisers $\wt f_t:\mathbb R^2 \rightarrow \R$ and $\wt \zeta_t:\R^2 \rightarrow \R$ satisfying} 
\begin{align}
\label{eq:conc_1}
    \mathbb{E}[(\widetilde{f}_t(\widetilde{\nu}_{t}Z_2,B+\widetilde{\tau}_{t-1}Z_1)-f_t(\widetilde{\nu}_{t}Z_2,B+\widetilde{\tau}_{t-1}Z_1))^2]<\varepsilon, 
\end{align}
and
\begin{align}
\label{eq:conc_2}
    \mathbb{E}[(\widetilde{\zeta}_t(B+\widetilde{\tau}_tZ_1,\widetilde{\nu}_{t+1}Z_2)-\zeta_t(B+\widetilde{\tau}_tZ_1,\widetilde{\nu}_{t+1}Z_2))^2]<\varepsilon,
\end{align}
for all $t\ge 0$. 
The above AMP orbits can be rewritten as
\revsag{
\begin{align}
\label{eq:graph_AMP_pol}
 \widetilde{\bm q}^t&=\widetilde{\bm \ell}_t(\widetilde{\bm h}^t,\widetilde{\bm\sigma}^{t+1},\bm \beta_0),\\
 \widetilde{\bm e}^t&= \bm S \widetilde{\bm q}^t-\widetilde{\lambda}_t\widetilde{\bm m}^{t-1}\\
 \widetilde{\bm m}^t&=\widetilde{\bm g}_t(\widetilde{\bm e}^t,\widebar\varepsilon)\\
\wt{\bm h}^{t+1} & =  \bm S^\top\widetilde{\bm m}^t-\widetilde{c}_t\widetilde{\bm q}^t,
\end{align}
and
\begin{align}
\label{eq:gaussian_AMP_2}
\wt{\bm r}^{t} & =\wt{\bm f}_t(\wt{\bm \sigma}^{t},\bm \beta_0-\wt{\bm h}^{t-1}),\\
\wt{\bm \sigma}^{t+1}&= \frac{\bm W}{\sqrt{p}}\wt{\bm r}^{t}-\wt{\upsilon}_t\wt{\bm r}^{t-1},
\end{align}
where
\begin{equation}
\label{eq:onsager_terms_graPH}
    \wt{c}_t=\frac{1}{n}\sum_{i=1}^{n}\wt{g}^\prime_t(\wt{e}^t_i,\widebar\varepsilon_i), \quad  \wt{\lambda}_t=\frac{1}{p\kappa}\sum_{i=1}^p \wt{\ell}^\prime_t(\wt{h}^t_i,\wt{\sigma}^{t+1}_i,\beta_{0i}), \quad \wt\upsilon_t=\frac{1}{p}\sum_{i=1}^p \wt{f}^{\prime}_t(\wt{\sigma}^t_i,\beta_{0i}-\wt{h}^t_i).
\end{equation}}
where the derivative is with respect to the first argument of the functions, $\bm \varepsilon$ is defined in \eqref{eq:lin_model_1} and $\widebar\varepsilon = \varepsilon/\sqrt{\kappa}$. Further, the denoisers $\widetilde \ell_t$ and $\widetilde g_t$ are defined as:
\begin{align}
\widetilde{\ell}_t(s,r,x_0)&=\widetilde{\zeta}_{t-1}(x_0-s,r)-x_0\\
\widetilde{g}_t(s,w)&=s-w.
\end{align}
Next, let us define a \emph{synchronised system of diagonal tensor networks}.
\begin{defn}
A \emph{synchronized system of diagonal tensor networks} indexed by the matrices $\bm W_1$ and $\bm W_2$ is given by the collection $T=\{\mathcal{U},\mathcal{V},\mathcal{E}_1,\mathcal{E}_2,\{p_u\}_{u \in \mathcal{U}},\{q_v\}_{v \in \mathcal{V}}\}$ in $(k_1,k_2,k_3,\ell)$ variables is comprised of two tree like graphs $\mathcal G_1$ and $\mathcal G_2$, where $\mathcal G_1=(\mathcal U,\mathcal E_1)$ with $\mathcal E_1 \subseteq \mathcal U\times\mathcal U$, $\mathcal G_2=(\mathcal U \cup \mathcal V,\mathcal E_2)$ with $\mathcal E_2 \subseteq \mathcal U\times\mathcal V$ and a value function given by
\begin{align}
&\mathsf{val}_T(\bm W_1,\bm W_2,\bm x_1,\cdots,\bm x_{k_1};\widebar{\bm x}_1,\cdots,\widebar{\bm x}_{k_1};\bm y_1,\cdots,\bm y_{k_1};\bm f_1,\cdots,\bm f_{k_2};\bm g_1,\cdots,\bm g_\ell)\\
&=\frac{1}{n}\sum_{\bm \alpha \in [p]^{\mathcal U}}\sum_{\bm \beta \in [n]^{\mathcal{V}}}p_{\bm \alpha|T}q_{\bm \beta|T}W_{1,\bm \alpha|T}W_{2,\alpha,\beta|T},
\end{align}
\end{defn}
where 
\begin{align}
& p_{\bm \alpha|T}=\prod_{u \in \mathcal U}p_u(x_1[\alpha_u],\cdots,x_{k_1}[\alpha_u],\widebar{x}_1[\alpha_u],\cdots,\widebar{x}_{k_1}[\alpha_u],f_1[\alpha_u],\cdots,f_{k_2}[\alpha_u])\\
& q_{\bm \beta|T}=\prod_{v \in \mathcal V}q_v(y_1[\beta_v],\cdots,y_{k_1}[\beta_v],g_1[\beta_v],\ldots,g_{\ell}[\beta_v]),\\
& W_{1,\alpha|T}=\prod_{(a,b)\in \mathcal E_1}W_1(\alpha_a,\alpha_b), \quad \mbox{and}\\
& W_{2,\alpha,\beta|T}=\prod_{(a,b)\in \mathcal E_2}W_2(\alpha_a,\beta_b).
\end{align}
Next, we consider the following lemma.
\begin{lem}
\label{lem:pol_tree}
Fix any $t \ge 1$ and $\{\widetilde{\bm h}^{t},\widetilde{\bm m}^{t},\widetilde{\bm \sigma}^{t},\widetilde{\bm r}^{t},\widetilde{\bm e}^{t},\widetilde{\bm q}^{t}:t \ge 1\}$ be the AMP iterates. For any polynomials $p:\mathbb{R}^{4t+2}\rightarrow \mathbb{R}$ and $q:\mathbb{R}^{2t+2}\rightarrow \mathbb{R}$, and for two finite sets $\mathcal F_1$ and $\mathcal F_2$ of \emph{synchronized system of diagonal tensor networks} in $(1,1,0,1)$ variables with value functions $\mathsf{val}^{(1)}_T$ and $\mathsf{val}^{(2)}_T$ satisfying
\begin{align}
\label{eq:p_val_tree}
   &\frac{1}{p}\sum_{i=1}^{p}p(\widetilde{h}^{0}_{i},\cdots,\widetilde{h}^{t}_{i};\widetilde{r}^{1}_{i},\cdots,\widetilde{r}^{t}_{i};\beta_{0i},\widebar{\varepsilon}_i) =\sum_{T \in \mathcal{F}_1}\mathsf{val}^{(1)}_T\left(\bm W/\sqrt{p},\bm S,\bm h^0,\bm \sigma^1,\bm \beta_0,\widebar{\bm \varepsilon}\right),
\end{align}
and
\begin{align}
\label{eq:q_val_tree}
   &\frac{1}{n}\sum_{i=1}^{n}q(\widetilde{e}^{0}_{i},\cdots,\widetilde{e}^{t}_{i};\beta_{0i},\varepsilon_i) =\sum_{T \in \mathcal{F}_2}\mathsf{val}^{(2)}_T\left(\bm W/\sqrt{p},\bm S,\bm h^0,\bm \sigma^1,\bm \beta_0,\widebar{\bm \varepsilon}\right).
\end{align}
\end{lem}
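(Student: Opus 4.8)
The plan is to argue by induction on $t$, establishing a strengthened ``rooted'' form of the statement from which \eqref{eq:p_val_tree} and \eqref{eq:q_val_tree} follow by averaging. Concretely, I would prove that for every $s\le t$ each coordinate $\wt h^s_i,\wt q^s_i,\wt r^s_i,\wt\sigma^s_i$ (for $i\in[p]$) and each coordinate $\wt e^s_i,\wt m^s_i$ (for $i\in[n]$) can be written as a \emph{finite} sum of values of synchronized diagonal tensor networks in $(1,1,0,1)$ variables, rooted at the index $i$, evaluated on the fixed inputs $(\bm W/\sqrt p,\bm S,\bm h^0,\bm\sigma^1,\bm\beta_0,\widebar{\bm\varepsilon})$. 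Granting this, applying the polynomial in the lemma coordinatewise to $(\wt h^0,\dots,\wt h^t;\wt r^1,\dots,\wt r^t;\beta_0,\widebar\varepsilon)$ and then averaging over the coordinates closes each root into a summation over $[p]$, turning every rooted network into a genuine closed value function $\mathsf{val}^{(1)}_T$; likewise for the other polynomial and $\mathsf{val}^{(2)}_T$. Collecting like terms across the finitely many monomials of the two polynomials and the finitely many intermediate networks produced during $t$ steps yields the finite families $\mathcal F_1,\mathcal F_2$.

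For the base case one checks directly that $\wt{\bm h}^0,\wt{\bm\sigma}^1,\wt{\bm e}^0,\wt{\bm m}^0$ are, under the stated initialization, explicit combinations of $\bm h^0,\bm\sigma^1,\bm\beta_0,\widebar{\bm\varepsilon}$ with at most one application of $\bm S$, $\bm S^\top$, or $\bm W/\sqrt p$, hence trivially (sums of) rooted tensor network values. The inductive step propagates the representation through the recursions \eqref{eq:graph_AMP_pol} and \eqref{eq:gaussian_AMP_2} using three operations that are stable within the class. \emph{(a) Matrix action:} left multiplication by $\bm W/\sqrt p$ appends one edge to the tree $\mathcal G_1$ and relocates the root within $[p]$, while multiplication by $\bm S$ or $\bm S^\top$ appends an edge to the bipartite tree $\mathcal G_2$ and moves the root between $[p]$ and $[n]$; tree-likeness and the bipartite structure survive because a fresh edge and a fresh leaf are introduced each time. \emph{(b) Polynomial denoisers:} the denoisers $\wt f_t,\wt\zeta_t$ (and hence $\wt\ell_t,\wt g_t$) are polynomials, so applying them coordinatewise sends a monomial in several rooted networks that share a common root to a single rooted network (fuse the trees at the shared root and multiply the vertex weights $p_u$), and a general polynomial to a finite sum of such. \emph{(c) Onsager scalars:} the corrections $\wt c_t,\wt\lambda_t,\wt\upsilon_t$ of \eqref{eq:onsager_terms_graPH} are empirical averages of derivatives of the polynomial denoisers; a derivative of a polynomial is a polynomial, so each correction is a closed (scalar-valued) tensor network value, and multiplying a rooted network by such a scalar amounts to taking a disjoint union with the corresponding closed network. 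Performing (a)--(c) in the order dictated by \eqref{eq:graph_AMP_pol}--\eqref{eq:gaussian_AMP_2} yields rooted representations for $\wt{\bm h}^{t+1},\wt{\bm q}^{t+1},\wt{\bm r}^{t+1},\wt{\bm\sigma}^{t+1},\wt{\bm e}^{t+1},\wt{\bm m}^{t+1}$, closing the induction.

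The main obstacle, and the place where the bookkeeping must be done with care, is the \emph{synchronization} of the two matrices: the denoiser $\wt f_t$ takes one argument from the graph layer ($\wt{\bm\sigma}^t$, built from $\bm W$) and one from the regression layer ($\bm S^\top\wt{\bm z}^{t-1}+\wt{\bm\beta}^{t-1}$, built from $\bm S$), so a single network rooted at $\alpha\in[p]$ must simultaneously carry a $\bm W$-tree on a subset of $[p]$ and an $\bm S$-tree connecting that subset to $[n]$; one must verify that fusing at a shared root never creates a cycle in $\mathcal G_1$ or collapses the bipartite structure of $\mathcal G_2$. A secondary but genuinely necessary point is the finiteness claim, which follows because each $\wt f_t,\wt\zeta_t$ has finitely many monomials, each step adds a bounded number of edges, and $t$ is fixed. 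All of these checks are notation-heavy but routine; they parallel the construction of the tensor networks in \citet{wang_zhong_fan} and the argument behind Theorem~7.2 of \citet{ma_nandy} (which handles the $\bm S$-only part), and the plan is to adapt those arguments to the present two-matrix, multi-layer setting.
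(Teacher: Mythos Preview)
Your sketch is correct and is precisely the construction the paper has in mind: the paper's own proof is a two-line deferral to ``the definition of synchronized system of diagonal tensor networks and the techniques of the proofs of Lemma~3.8 and Lemma~4.3 of \citet{wang_zhong_fan},'' and your induction with the three closure operations (matrix action, polynomial denoisers, Onsager scalars) is exactly how those lemmas are proved, adapted to the two-matrix setting. You have in fact supplied more detail than the paper does, including the correct identification of the synchronization issue where $\mathcal G_1$ and $\mathcal G_2$ share the vertex set $\mathcal U$.
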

\begin{proof}
The proof of the lemma follows using the definition of \emph{synchronized system of diagonal tensor networks} and the techniques of the proofs of Lemma 3.8 and Lemma 4.3 of \citet{wang_zhong_fan}.
\end{proof}
Now, we consider the following lemma which characterizes the universality of the terms in the right-hand sides of \eqref{eq:p_val_tree} and \eqref{eq:q_val_tree}.
\begin{lem}
\label{lem:val_universality}
Let us consider $\bm W$ as defined in \eqref{eq:rank_one_graph_deformation} and its variance profile $\bm W_{V}$ as defined in \citet{wang_zhong_fan}. If $\widebar{d}_p(1-\widebar{d}_p)\ge C\,\log p/p$ for some constant $C>0$ then we have a deterministic value $\mathrm{V}^{(1)}_T$ and $\mathrm{V}^{(2)}_T$ such that:
\begin{align}
    &\lim_{n \rightarrow \infty}\mathsf{val}^{(1)}_T(\bm W/\sqrt{p},\bm S,\bm h^0,\bm \sigma^1,\bm \beta_0,\widebar{\bm \varepsilon})=\lim_{n \rightarrow \infty}\mathsf{val}_T(\bm Z,\bm S,\bm h^0,\bm \sigma^1,\bm \beta_0,\widebar{\bm \varepsilon})=\mathrm{V}^{(1)}_T,
\end{align}
and
\begin{align}
    &\lim_{n \rightarrow \infty}\mathsf{val}^{(2)}_T(\bm W/\sqrt{p},\bm S,\bm h^0,\bm \sigma^1,\bm \beta_0,\widebar{\bm \varepsilon})=\lim_{n \rightarrow \infty}\mathsf{val}^{(2)}_T(\bm Z,\bm S,\bm h^0,\bm \sigma^1,\bm \beta_0,\widebar{\bm \varepsilon})=\mathrm{V}^{(2)}_T,
\end{align}
where $\bm Z$ is the Gaussian matrix defined in \eqref{eq:Gaussian_model}.
\end{lem}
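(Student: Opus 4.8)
The plan is to deduce Lemma~\ref{lem:val_universality} directly from the universality machinery for generalized Wigner matrices developed in \citet{wang_zhong_fan}, combined with the fact that the matrix $\bm W/\sqrt p$ arising from the centered and normalized adjacency matrix has a variance profile converging uniformly to the constant profile $1$. The starting point is the structural observation that, because the underlying graphs $\mathcal G_1,\mathcal G_2$ are tree-like and the network is \emph{diagonal}, the value function $\mathsf{val}^{(i)}_T(\bm W/\sqrt p,\bm S,\bm h^0,\bm \sigma^1,\bm \beta_0,\widebar{\bm\varepsilon})$ is a finite sum over tree-indexed tuples $(\bm\alpha,\bm\beta)\in[p]^{\mathcal U}\times[n]^{\mathcal V}$ of products of entries of $\bm W/\sqrt p$ along the edges of $\mathcal E_1$, entries of $\bm S$ along the edges of $\mathcal E_2$, and node weights $p_u,q_v$ evaluated at the appropriate coordinates of the fixed input vectors. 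In particular it is a polynomial in the entries of $\bm W$ (with $\bm S$ unchanged under the swap), and only the $\bm W\mapsto\bm Z$ replacement is at issue.

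First I would carry out the moment/graph-theoretic expansion in the spirit of \cite{wang_zhong_fan}: taking expectations over $\bm W$ conditionally on everything else, the index configurations that contribute at leading order are those in which each edge factor $W(\alpha_a,\alpha_b)$ is matched with another identical edge factor, so that only $S_{ij}=\mathbb E[W_{ij}^2]$ enters; configurations producing odd powers of a given $W_{ij}$ vanish by centering, and those producing higher even powers are suppressed by the $1/\sqrt p$ normalization. This is exactly the point where the density hypothesis is used: by Theorem~2.7 of \citet{10.1214/19-AIHP1033} and (2.4) of \citet{wang_zhong_fan}, $\widebar d_p(1-\widebar d_p)\ge C\log p/p$ gives $\|\bm W\|_{\mathsf{op}}\le C_1\sqrt p$ almost surely for large $n,p$, and the uniform bound $|W_{ij}|\le 1/\sqrt{\widebar d_p(1-\widebar d_p)}$ together with $S_{ij}\to 1$ uniformly on $[p]\times[p]$ controls the contribution of the fourth and higher moments of the $W_{ij}$, which are of order $1/(\widebar d_p(1-\widebar d_p))$ and hence tend to $0$. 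These are precisely the hypotheses under which Theorem~2.4 of \citet{wang_zhong_fan} and its supporting combinatorial estimates (Lemmas~3.8 and 4.3 there) apply, yielding (i) concentration of $\mathsf{val}^{(i)}_T(\bm W/\sqrt p,\ldots)$ around its expectation as $n\to\infty$, and (ii) convergence of that expectation to a deterministic limit depending on $\bm W$ only through its limiting variance profile.

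Since that limiting variance profile is the constant profile $1$, the limiting value coincides with the one obtained when $\bm W/\sqrt p$ is replaced by the Gaussian matrix $\bm Z$ of \eqref{eq:Gaussian_model}, whose entries have variance exactly $1$ after the normalization: the expansion for $\bm Z$ is the canonical Wigner computation and produces the same leading term, with no higher-moment corrections at all. Denoting the common limits by $\mathrm V^{(1)}_T$ and $\mathrm V^{(2)}_T$, we obtain for each fixed $T$ that $\lim_{n\to\infty}\mathsf{val}^{(i)}_T(\bm W/\sqrt p,\bm S,\bm h^0,\bm\sigma^1,\bm\beta_0,\widebar{\bm\varepsilon})=\lim_{n\to\infty}\mathsf{val}^{(i)}_T(\bm Z,\bm S,\bm h^0,\bm\sigma^1,\bm\beta_0,\widebar{\bm\varepsilon})=\mathrm V^{(i)}_T$ for $i=1,2$, which is the claim. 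The main obstacle is the bookkeeping in step two: one must show that the deviation of the variance profile from constant and the non-uniformly-bounded higher moments of $W_{ij}$ each contribute only $o(1)$ to every $\mathsf{val}_T$, which requires faithfully adapting the combinatorial truncation and pairing arguments of \cite{wang_zhong_fan} to the present AMP orbit with side-information structure; once this is in place, the rest is a direct citation.
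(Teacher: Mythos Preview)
Your proposal is correct and follows essentially the same approach as the paper: verify that under the density condition $\widebar d_p(1-\widebar d_p)\ge C\log p/p$ the matrix $\bm W/\sqrt p$ is a generalized Wigner matrix with variance profile converging uniformly to $1$, and then invoke the universality machinery of \citet{wang_zhong_fan} to conclude that the tensor-network values coincide in the limit with those for the GOE matrix $\bm Z$. The paper's proof is even terser than yours---it simply checks the hypotheses and cites Lemmas~2.11 and~4.4 of \citet{wang_zhong_fan} (rather than Theorem~2.4 and Lemmas~3.8, 4.3 that you point to; the latter pair is what the paper uses for the preceding representation lemma, not the universality step), but the substance is the same.
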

\begin{proof}
If $\widebar{d}_p(1-\widebar{d}_p)\ge C\,\log p/p$ then the matrix $\bm W/\sqrt{p}$ satisfies the assumptions of Lemma 2.11 of \citet{wang_zhong_fan}. Then using the techniques to prove Lemma 2.11 and Lemma 4.4 of \citet{wang_zhong_fan} we the result follows.
\end{proof}
Since $\|\bm W/\sqrt{p}\|_{op}<\infty$ and the update functions of the AMP orbits \eqref{eq:amp_iterates_poly} satisfies \eqref{eq:conc_1} and \eqref{eq:conc_2}, using the techniques used to prove Lemmas 2.10 and 3.12 of \citet{wang_zhong_fan}, for any pseudo-Lipschitz functions and $\{\widecheck{\bm \sigma}^t,\widecheck{\bm z}^t,\widecheck{\bm \beta}^t: t \ge 0\}$ defined as:
\begin{align}
\label{eq:amp_iterates_mean_zero}
\widecheck{\bm \sigma}^{t+1} &= \frac{\bm W}{\sqrt{p}} \bm f_t(\widecheck{\bm \sigma}^t,\bm S^\top\widecheck{\bm z}^{t-1}+\widecheck{\bm \beta}^{t-1})\\
&\quad\quad-(\mathcal A\bm f_t)(\widecheck{\bm \sigma}^t,\bm S^\top\widecheck{\bm z}^{t-1}+\widecheck{\bm \beta}^{t-1})\bm f_{t-1}(\widecheck{\bm \sigma}^{t-1},\bm S^\top\widecheck{\bm z}^{t-2}+\widecheck{\bm \beta}^{t-2}),\\
\end{align}
and,
\begin{align}
\label{eq:amp_iterates_1_mean_zero}
\widecheck{\bm z}^t & = \bm y^\circ - \bm S\widecheck{\bm \beta}^t-\frac{1}{\kappa}\widecheck{\bm z}^{t-1}(\mathcal A\bm \zeta_{t-1})(\bm S^\top\widecheck{\bm z}^{t-1}+\widecheck{\bm \beta}^{t-1},\widecheck{\bm \sigma}^{t})\\
\widecheck{\bm \beta}^{t+1} &= \bm \zeta_t(\bm S^\top\widecheck{\bm z}^t+\widecheck{\bm \beta}^t,\widecheck{\bm \sigma}^{t+1}),\\
\end{align}
we have:
\begin{align}
&\frac{1}{p}\sum\limits_{i=1}^{p}\psi([\bm S^\top\widetilde{\bm z}^t+\widetilde{\bm \beta}^t]_i,\widetilde{\sigma}^{t+1}_i,\beta_{0i},\sigma_{i})-\frac{1}{p}\sum\limits_{i=1}^{p}\psi([\bm S^\top\widecheck{\bm z}^t+\widecheck{\bm \beta}^t]_i,\widecheck{\sigma}^{t+1}_i,\beta_{0i},\sigma_{i}) \xrightarrow{a.s.}0. 
\end{align}
Next, let us consider the AMP iterates given by
\begin{align}
\label{eq:amp_iterates_mean_zero_g}
\breve{\bm \sigma}^{t+1} &= \frac{\bm Z}{\sqrt{p}} \widetilde{\bm f}_t(\breve{\bm \sigma}^t,\bm S^\top\breve{\bm z}^{t-1}+\breve{\bm \beta}^{t-1})\\
&\quad\quad-(\mathcal A\widetilde{\bm f}_t)(\breve{\bm \sigma}^t,\bm S^\top\breve{\bm z}^{t-1}+\breve{\bm \beta}^{t-1})\widetilde{\bm f}_{t-1}(\breve{\bm \sigma}^{t-1},\bm S^\top\breve{\bm z}^{t-2}+\breve{\bm \beta}^{t-2}),\\
\end{align}
and,
\begin{align}
\label{eq:amp_iterates_1_mean_zero_g}
\breve{\bm z}^t & = \bm y^\circ - \bm S\breve{\bm \beta}^t-\frac{1}{\kappa}\breve{\bm z}^{t-1}(\mathcal A\widetilde{\bm \zeta}_{t-1})(\bm S^\top\breve{\bm z}^{t-1}+\breve{\bm \beta}^{t-1},\breve{\bm \sigma}^{t})\\
\breve{\bm \beta}^{t+1} &= \widetilde{\bm \zeta}_t(\bm S^\top\breve{\bm z}^t+\breve{\bm \beta}^t,\breve{\bm \sigma}^{t+1}),\\
\end{align}
where $\bm Z$ is defined in \eqref{eq:Gaussian_model}. Using Lemmas \ref{lem:pol_tree}, \ref{lem:val_universality} and the techniques used to prove Lemma 2.10 of \citet{wang_zhong_fan}, we get
\begin{align}
\label{eq:gaussian_pol_connect}
&\frac{1}{p}\sum\limits_{i=1}^{p}\psi([\bm S^\top\widetilde{\bm z}^t+\widetilde{\bm \beta}^t]_i,\widetilde{\sigma}^{t+1}_i,\beta_{0i},\sigma_{0i})-\frac{1}{p}\sum\limits_{i=1}^{p}\psi([\bm S^\top\breve{\bm z}^t+\breve{\bm \beta}^t]_i,\breve{\sigma}^{t+1}_i,\beta_{0i},\sigma_{0i}) \xrightarrow{a.s.}0.
\end{align}
Using the proof of Theorem \ref{thm:state_evol}, Theorem 7.1 of \citet{ma_nandy}, Lemma 3.8 of \citet{wang_zhong_fan} and \eqref{eq:gaussian_pol_connect}, we have
\begin{align}
&\lim\limits_{p \rightarrow \infty}\frac{1}{p}\sum\limits_{i=1}^{p}\psi([\bm S^\top\widecheck{\bm z}^t+\widecheck{\bm \beta}^t]_i,\widecheck{\sigma}^{t+1}_i,\beta_{0i},\sigma_{0i})\overset{a.s.}{=}\mathbb{E}[\psi(B+\tau_t Z_1,\nu_{t+1}Z_2,B,\Sigma)]. 
\end{align}
where $\tau_t,\nu_t,\sigma_t$ are defined in Theorem \ref{thm:state_evol}. This implies the result.

\end{document}